\newtheorem{theorem}{Theorem}[subsection]
\newtheorem*{theorem*}{Theorem}
\newtheorem{corollary}[theorem]{Corollary}
\newtheorem*{corollary*}{Corollary}
\newtheorem{lemma}[theorem]{Lemma}
\newtheorem{proposition}[theorem]{Proposition}
\theoremstyle{definition}
\newtheorem{example}[theorem]{Example}
\newtheorem*{example*}{Example}
\newtheorem*{examples*}{Examples}
\newtheorem{remark}[theorem]{Remark}
\newtheorem{definition}[theorem]{Definition}
\newtheorem*{con}{Conventions}
\newsavebox{\eqbox}
\newenvironment{longequation*} {\begin{lrbox}{\eqbox}$} {$\end{lrbox}\begin{equation*}\resizebox{\linewidth}{!}{\ensuremath{\displaystyle\usebox{\eqbox}}}\end{equation*}}
\newcommand*\bigcdot{\mathpalette\bigcdot@{.5}}
\newcommand*\bigcdot@[2]{\mathbin{\vcenter{\hbox{\scalebox{#2}{\(\m@th#1\bullet\)}}}}}
\def\bydef{\coloneqq}
\DeclarePairedDelimiter{\abs}{\lvert}{\rvert}
\DeclarePairedDelimiter{\Set}{\lbrace}{\rbrace}
\newcommand{\suchthat}{\hskip.5ex\color{black!66}\middle/\color{black}\hskip1ex}
\DeclareMathOperator{\diff}{d}
\DeclareMathOperator{\Flag}{Flag}
\DeclareMathOperator{\Grass}{Grass}
\DeclareMathOperator{\rank}{rank}
\DeclareMathOperator{\codim}{codim}
\DeclareMathOperator{\Image}{Im}
\DeclareMathOperator{\pr}{pr}
\DeclareMathOperator{\Id}{Id}
\DeclareMathOperator{\Ker}{Ker}
\DeclareMathOperator{\Coker}{Coker}
\DeclareMathOperator{\GL}{GL}
\let\S\relax 
\DeclareMathOperator{\S}{S}
\DeclareMathOperator{\Ext}{\mathchoice
{\scalebox{1.2}{$\mathsf{\Lambda}$}}
{\scalebox{1.1}{$\mathsf{\Lambda}$}}
{\scalebox{0.8}{$\mathsf{\Lambda}$}}
{\scalebox{.65}{$\mathsf{\Lambda}$}}
}
\newcommand{\C}{\mathbb{C}}
\renewcommand{\L}{\mathcal{L}} 
\renewcommand{\O}{\mathcal{O}} 
\renewcommand{\P}{\mathbf{P}} 
\newcommand{\N}{\mathbb{N}}
\newcommand{\Z}{\mathbb{Z}}
\newcommand{\kk}{\mathbf{k}}
\let\mathbi\boldsymbol
\newcommand{\E}{\mathcal{E}}
\author{Antoine Etesse}
\email{antoine.etesse@math.univ-toulouse.fr}
\address{Institut Mathématique de Toulouse (IMT), Université Paul Sabatier}
\title{Cohomology of twisted symmetric powers of cotangent bundles of smooth complete intersections}
\subjclass{}
\keywords{Cohomology, Complete intersections, Cotangent bundles}
\begin{document}
\sloppy

\begin{abstract}
In this paper, we provide two different resolutions of structural sheaves of projectivized tangent bundles of smooth complete intersections. These resolutions allow in particular to obtain convenient (and completely explicit) descriptions of cohomology of twisted symmetric powers of cotangent bundles of complete intersections, which are easily implemented on computers. We then provide several applications. First, we recover the known vanishing theorems on the subject, and show that they are optimal via some non-vanishing theorems. Then, we study the symmetric algebra \(\bigoplus_{m \in \N} H^{0}(X, S^{m}\Omega_{X}(m))\) of a smooth complete intersection of codimension \(c < \frac{N}{2}\), improving the known results in the literature. We also study partial ampleness of cotangent bundles of general hypersurfaces. Finally, we illustrate how the explicit descriptions of cohomology can be implemented on computer. In particular, this allows to exhibit new and simple examples of family of surfaces along which the canonically twisted pluri-genera do not remain constant.
\end{abstract}
\maketitle

\tableofcontents

\section*{Introduction.}
The main goal of this paper is to provide a convenient and explicit description of cohomology groups of (negatively) twisted symmetric powers of cotangent bundles of complete intersections, and provide several applications. Some of them are, to our knowledge, new and some others consist of alternative proofs of known results. 

The study of this subject is not new, but there seems to be, as far as we know, surprisingly few references. In \cite{Bruck}, the author proves a vanishing theorem for global sections of tensor powers of cotangent bundles of hypersurfaces. In \cite{Sakai}, the author proves that a smooth complete intersection in \(\P^{N}\) of  codimension \(c<\frac{N}{2}\) has no global symmetric differentials. (Note that in both previous references, twists by the Serre line bundle are not considered).
In \cite{BR}, the authors consider more generally Schur powers of cotangent bundles of complete intersections, and prove strong vanishing results (see \cite{BR}[Theorem 4 (i), (ii), (iii)]). 
In our first application (see Section \ref{sect: vanish and non-vanish}), we provide new proofs of these vanishing theorems, and show that they are optimal via non-vanishing theorems. These non-vanishing results are,  to our knowledge, new, and allowed us to generalize results of Bogomolov--De Oliveira in \cite{Bog}: see Theorem \ref{cor: generalization intro} below (or Theorem \ref{cor: generalization}).

Let us continue our overview. In \cite{Sch}, the author proves that vanishing theorems \cite{BR}[Theorem 4 (i), (ii) for symmetric powers] are actually true for any smooth subvarieties (within the codimension range allowed by the hypothesis of the theorem). Schneider's result was then generalized by Brotbek in \cite{Brotdiff}, allowing to handle the case of a general Schur power. It remains, to our knowledge, unknown whether \cite{BR}[Theorem 4 (iii) for symmetric powers] remains valid for any smooth subvariety in \(\P^{N}\) (of codimension \(c < \frac{N}{2}\)). Note that, by Hartshorne's conjecture \cite{Laz1}[Conjecture 3.2.8], this should be the case at least when \(c < \frac{N}{3}\).
In \cite{Bog}, the authors studied more specifically symmetric powers of \(\Omega_{X}(1)\), where \(X \subset \P^{N}\) is a smooth subvariety. Amongst other things, they recovered \cite{BR}[Theorem (iii) for symmetric powers] in the case of hypersurfaces (see Theorem B in \textsl{loc.cit}), and they showed that a very particular case of \cite{BR}[Theorem (iii) for symmetric powers] holds for smooth subvarieties of codimension \(2\): see Section \ref{sect: vanish and non vanish 1} for more details. Later, in \cite{DeOl}, the study of the algebra \(\bigoplus_{m \in \N} H^{0}(X,S^{m}\Omega_{X}(m))\) was pushed further. In \textsl{loc.cit}, the authors proved in particular that, for a smooth complete intersection \(X\) of codimension \(c < \frac{(N+2)}{3}\), there is an isomorphism of algebra
\[
\bigoplus_{m \in \N} H^{0}(X,S^{m}\Omega_{X}(m))
\simeq
\bigoplus_{m \in \N} S^{m}(H^{0}(X, \mathcal{I}_{X}(2)),
\]
where \(\mathcal{I}_{X}\) is the ideal sheaf of the smooth complete intersection \(X\).
We improved their result by showing that the statement holds more generaly in codimension \(c<\frac{N}{2}\) (via a completely different method): see Theorem \ref{thm: sym alg intro} below or Theorem \ref{thm: sym alg}.

In \cite{Deb}, the author studied cohomology of twisted symmetric powers of cotangent bundles of complete intersections in abelian varieties. Most notably, he showed that a sufficiently general and sufficiently ample complete intersection in an abelian variety, with codimension at least as large as its dimension, has ample cotangent bundle. By analogy, he suggested that the same statement should hold for projective spaces: for a bit more than a decade, this was known as \textsl{Debarre's conjecture on ampleness}. This conjecture attracted a lot of attention, and the strategy to tackle it was initiated by Brotbek in \cite{Brot15}. In \textsl{loc.cit}, the author provides an explicit description of cohomology of (negatively) twisted symmetric powers of cotangent bundles of smooth complete intersections (see \cite{Brot15}[Theorem A], and Section 2.5 in \textsl{loc.cit} for details). The description provided is rather technical. It is probably worth specifying that the descriptions we provide in Theorem \ref{thm: coho ci 1} or Theorem \ref{thm: coho ci 2} are different, and, at least it seems to us, more concise and less technical. Using this description, Brotbek was able to prove particular cases of Debarre's conjecture (see \cite{Brot15}[Theorem D]). A bit later, Brotbek's strategy was further developed by Brotbek himself and Darondeau in \cite{BD}, and they proved in \textsl{loc.cit} Debarre's conjecture in full generality. Around the same period, with a similar but more technical approach, Xie provided a different proof of Debarre's conjecture in \cite{Xie}. 

After this introductory overview on the literature on symmetric powers of cotangent bundles, let us now detail the content of the present paper.
\newline

In Section \ref{sect: coho proj}, we recall the study of twisted symmetric powers of cotangent bundles (and tangent bundles) of projective spaces, which allows to compute their cohomology. Whereas this is very classical, we provide here a very explicit description: it may very well be possible that our approach has already been described elsewhere, but if so, we were not aware of it. 

The starting point is the following generalization of the so-called \textsl{Euler exact sequence} (see Section \ref{subs: gen Euler seq}):
\begin{theorem}[Generalized Euler exact sequence]
\label{thm: gen Euler intro}
Let \(N \in \N_{\geq 1}\). For any \(m\in \N_{\geq 1}\), and any \(n \in \Z\), one has the following short exact sequence:
\begin{equation*}
\xymatrix{
0 \ar[r] 
& 
S^{m}\Omega_{\P^{N}}(m+n) 
\ar[r] 
& 
\C[Y]_{m}\otimes\O_{\P^{N}} (n)
\ar[r]^-{\delta} 
&
 \C[Y]_{m-1}\otimes\O_{\P^{N}}(n+1)  
 \ar[r] 
&
0,
}
\end{equation*}
where \(\delta\bydef\sum\limits_{i=0}^{N} X_{i} \frac{\partial}{\partial Y_{i}}\).
\end{theorem}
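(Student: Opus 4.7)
My plan is to derive this by applying the \(m\)-th symmetric power functor to the classical Euler exact sequence
\[
0 \to \Omega_{\P^N}(1) \to \C[Y]_1 \otimes \O_{\P^N} \xrightarrow{\pi} \O_{\P^N}(1) \to 0,
\]
in which \(\pi\) sends the basis vector \(Y_i\) of \(\C[Y]_1\) to the tautological section \(X_i\) of \(\O_{\P^N}(1)\), and then twisting by \(\O_{\P^N}(n)\).

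The key ingredient is the following general statement about symmetric powers: for any short exact sequence \(0 \to A \to B \xrightarrow{\pi} C \to 0\) of vector bundles with \(\rank(C) = 1\), the formula
\[
D(b_1 \cdots b_m) \bydef \sum_{i=1}^m b_1 \cdots \hat{b}_i \cdots b_m \otimes \pi(b_i)
\]
defines a globally well-defined morphism \(D \colon S^m B \to S^{m-1}B \otimes C\) fitting into a short exact sequence \(0 \to S^m A \to S^m B \xrightarrow{D} S^{m-1}B \otimes C \to 0\). The formula is manifestly symmetric in the \(b_i\), so \(D\) descends from \(B^{\otimes m}\) to \(S^m B\). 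To check exactness I would argue locally: on a trivializing open, pick a splitting \(B \simeq A \oplus C\); then \(S^m B \simeq \bigoplus_{k=0}^m S^{m-k}A \otimes C^{\otimes k}\), and a direct inspection shows that \(D\) acts as multiplication by \(k\) on the \(k\)-th summand. Surjectivity and the identification of the kernel with \(S^m A\) are then immediate.

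Applying this to the Euler sequence, together with the canonical isomorphism \(S^m\Omega_{\P^N}(1) = S^m\Omega_{\P^N}(m)\), yields
\[
0 \to S^m\Omega_{\P^N}(m) \to \C[Y]_m \otimes \O_{\P^N} \xrightarrow{D} \C[Y]_{m-1} \otimes \O_{\P^N}(1) \to 0.
\]
Evaluating \(D\) on a monomial \(Y^{\alpha}\) gives \(\sum_j \alpha_j Y^{\alpha - e_j} \otimes X_j = \sum_j X_j\,\partial Y^{\alpha}/\partial Y_j\), so \(D\) coincides with the operator \(\delta\) in the statement. Tensoring the whole sequence with \(\O_{\P^N}(n)\) then produces the stated sequence for arbitrary \(n \in \Z\).

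The only real obstacle in this approach is the symmetric-power lemma above; it is elementary, but requires careful bookkeeping of the graded decomposition of \(S^m B\) induced by a local splitting of the quotient map \(\pi\).
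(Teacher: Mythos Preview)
Your argument is correct and, at the level of computations, coincides with the paper's. The paper proves the result via two lemmas: surjectivity of \(\delta\) is checked on the chart \(\{X_0\neq 0\}\) using the local frame \(L_0=Y_0/X_0\), \(L_i=Y_i-(X_i/X_0)Y_0\) (which is exactly your local splitting \(B\simeq A\oplus C\)), and the identification \(\Ker\delta\simeq S^m\Omega_{\P^N}(m)\) is written out explicitly via the Euler vector fields. Your packaging of the argument as a general symmetric-power lemma for sequences with rank-one quotient is a bit cleaner and makes the mechanism transparent, but the underlying computation---\(D\) acting as multiplication by \(k\) on the \(k\)-th graded piece of \(S^mB\)---is the same one the paper performs by hand when it computes \(\delta(L_0^{\alpha_0+1}L_1^{\alpha_1}\cdots L_N^{\alpha_N})=(\alpha_0+1)L_0^{\alpha_0}\cdots L_N^{\alpha_N}\).
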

This result allows us to interpret local twisted symmetric differentials of the projective space \(\P^{N}\) (i.e. local sections of twisted symmetric powers of \(\Omega_{\P^{N}}\)) as solutions in \(\C(X)[Y]\) of the partial differential equation 
\[
\delta
\bydef
\sum\limits_{i=0}^{N} X_{i} \frac{\partial}{\partial Y_{i}},
\] 
 where \(X\bydef (X_{0}, \dotsc, X_{N})\) and \(Y\bydef (Y_{0}, \dotsc, Y_{N})\). This is the point of view that we are going to adopt throughout the paper. 

The geometric interpretation is quite straightforward once one interprets the projectivized\footnote{Throughout the paper,   the projectivization \(\P(E)\) of a vector bundle \(E\) on a variety means the projectivization of lines.} tangent bundle \(\P(T\P^{N})\) as the flag variety \(\Flag_{(1,2)} \C^{N+1}\) of lines included in planes in \(\C^{N+1}\): see Section \ref{subs: geom inter}.

The generalized Euler exact sequence allows to compute the cohomology of \(\Ker \delta\) (or equivalently, twisted symmetric powers of \(\Omega_{\P^{N}}\)). The key fact in the computations (at least for the \(H^{0}\) and the \(H^{1}\)) is that the partial differential equation \(\delta\) is equivariant with respect to the natural action of \(\GL_{N+1}\C\) on \(\C[Y,X]\) (which allows to use representation theory). Using rather the geometric interpretation \(\P(T\P^{N}) \simeq \Flag_{(1,2)}\C^{N+1}\), the cohomology of \(\Ker \delta\) follows immediately from Bott's formulas (see Appendix \ref{appendix: Bott}). These two approaches are detailed in Section \ref{subs: coho proj space}.

By dualizing the generalized Euler exact sequence, i.e. by applying the functor \(\mathcal{H}om(\cdot, \O_{\P^{N}})\), one also gets a description of twisted symmetric powers of the tangent bundle:
\begin{equation*}
\xymatrix{
0 \ar[r] 
& 
\C[Y]_{m-1}\otimes\O_{\P^{N}}(n-1)
\ar[r]^-{\delta^{*}} 
&
 \C[Y]_{m}\otimes\O_{\P^{N}}(n)
 \ar[r] 
&
S^{m}T\P^{N}(n-m) 
\ar[r] 
& 
0.
}
\end{equation*}
Here, \(m\) is a positive natural number, \(n\) an integer, and one uses the natural isomorphism \(\mathcal{H}om(\O_{\P^{N}}(i), \O_{\P^{N}})\simeq \O_{\P^{N}}(-i)\). It turns out that, doing so, the dual map \(\delta^{*}\) is quite unaesthetic. In order to obtain a convenient description, one has to renormalize the vector space \(\C[Y]\) by the following map:
\[
u \colon\left(
\begin{array}{ccc}
  \C[Y] & \longrightarrow   &  \C[Y]
  \\
  Y^{\mathbi{\alpha}} & \longmapsto  &  \frac{Y^{\alpha}}{\mathbi{\alpha}!}
  \end{array}
\right).
\]
After this renormalization, the modified map (denoted \(\delta_{*}\)) becomes simply the multiplication by the quadratic polynomial
\[
q \bydef \sum\limits_{i=0}^{N} X_{i}Y_{i}.
\]
We refer to Section \ref{subs: dual Euler} for more details. We chose to insist on this renormalization map in the introduction because it will be used repeatedly throughout the paper. This might seem somewhat anecdotal, but it will turn out to be crucial to obtain neat descriptions (of cohomology of twisted symmetric powers of cotangent bundles of complete intersections).

In Section \ref{subs: notations}, we set some natural notations, important for the whole paper.

We conclude Section \ref{sect: coho proj} with Section \ref{subs: a particular class}, in which we study partial differential equations of the form
\[
\sum\limits_{i=0}^{N} P_{i} \frac{\partial}{\partial Y_{i}},
\]
where \(P_{0}, \dotsc, P_{N}\) are homogeneous polynomials of same degree \(d \geq 1\), sharing only the origin as common zero. More precisely, we want to understand the set of bi-homogeneous polynomials satisfying such a partial differential equation. The main result of this section is that, in order to understand this set, it is enough to:
\begin{enumerate}
\item{} understand the set of polynomials satisfying the differential equation \(\delta\);
\item{} understand the push-forward sheaf
\(
f_{*}\O_{\P^{N}},
\)
where 
\[
f\bydef [P_{0}: \dotsb : P_{N}]
\]
is the finite map induced by the polynomials \(P_{0}, \dotsc, P_{N}\).
\end{enumerate}
The first item is dealt with (as an outcome of the computation of the cohomology of \(\Ker \delta\)), and the real difficulty lies in the second item. It is known that \(f_{*}\O_{\P^{N}}\) splits as a direct sum of line bundles (it follows from a result of Horrocks, see e.g. \cite{Beauville}[Section 1] for details), but the question of identifying the factors appearing in the decomposition seems very difficult in general. Let us state the main result of Section \ref{subs: a particular class}:
\begin{theorem}
\label{thm: partial intro}
The partial differential equation 
\[
\sum\limits_{i=0}^{N} P_{i} \frac{\partial}{\partial Y_{i}}
\]
admits a bi-homogeneous polynomial \(A \in \C[Y,X]_{m,n}\) as a (non-trivial) solution if and only if \(dm \leq n\).
\end{theorem}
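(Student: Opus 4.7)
The plan is to reduce everything to the (already understood) operator \(\delta\) on \(\P^{N}\) by pushing the differential equation forward along the finite morphism
\[
f \bydef [P_{0} : \dotsb : P_{N}] \colon \P^{N} \longrightarrow \P^{N},
\]
well-defined and finite because the \(P_{i}\) share only the origin as common zero. Since \(P_{i} = f^{*}(\text{\(i\)-th homogeneous coordinate on the target})\) and \(f^{*}\O_{\P^{N}}(1) = \O_{\P^{N}}(d)\), the projection formula provides a canonical isomorphism \(f_{*}\O_{\P^{N}}(n+d) \simeq f_{*}\O_{\P^{N}}(n)\otimes\O_{\P^{N}}(1)\). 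Under this identification, the \(\O_{\P^{N}}\)-linear map
\[
\delta_{P} \colon \C[Y]_{m} \otimes \O_{\P^{N}}(n) \longrightarrow \C[Y]_{m-1} \otimes \O_{\P^{N}}(n+d)
\]
pushed forward by \(f\) becomes precisely the operator \(\delta\) of Theorem \ref{thm: gen Euler intro} on the target \(\P^{N}\), tensored with the sheaf \(f_{*}\O_{\P^{N}}(n)\); the latter is locally free by Horrocks' splitting theorem, cited in the introduction.

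Let \(\Gamma_{m,n}\) denote the space of bi-homogeneous solutions of \(\delta_{P}\) of bidegree \((m,n)\). Pushing forward the kernel sheaf (using left-exactness of \(f_{*}\)) and applying the generalized Euler sequence of Theorem \ref{thm: gen Euler intro} on the target tensored with the locally free sheaf \(f_{*}\O_{\P^{N}}(n)\), I would obtain
\[
\Gamma_{m,n} \simeq H^{0}\bigl(\P^{N},\, S^{m}\Omega_{\P^{N}}(m) \otimes f_{*}\O_{\P^{N}}(n)\bigr).
\]
Writing the Horrocks decomposition \(f_{*}\O_{\P^{N}}(n) \simeq \bigoplus_{j} \O_{\P^{N}}(b_{j})\) for some integers \(b_{j} = b_{j}(n) \in \Z\), this becomes
\[
\Gamma_{m,n} \simeq \bigoplus_{j} H^{0}\bigl(\P^{N},\, S^{m}\Omega_{\P^{N}}(m+b_{j})\bigr).
\]

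To conclude, I would invoke the computation of the cohomology of twisted symmetric cotangents of \(\P^{N}\) performed in Section \ref{subs: coho proj space} (via Bott's formulas): \(H^{0}(\P^{N}, S^{m}\Omega_{\P^{N}}(k)) \neq 0\) if and only if \(k \geq 2m\). Hence \(\Gamma_{m,n} \neq 0\) if and only if some \(b_{j}\) is at least \(m\), which in turn is equivalent to the non-vanishing of
\[
H^{0}\bigl(\P^{N},\, f_{*}\O_{\P^{N}}(n) \otimes \O_{\P^{N}}(-m)\bigr).
\]
Applying the projection formula once more, this last group is isomorphic to \(H^{0}(\P^{N}, \O_{\P^{N}}(n-dm))\), which is non-zero precisely when \(n \geq dm\). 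This yields the desired equivalence.

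The pleasant feature of this strategy is that it entirely bypasses the main difficulty emphasized in the introduction --- identifying the explicit Horrocks decomposition of \(f_{*}\O_{\P^{N}}\), which is notoriously hard in general: only the \emph{existence} of a summand of positivity at least \(m\) is relevant, and the projection formula controls precisely this existence. The (mild) remaining obstacle in turning this plan into a rigorous proof is the bookkeeping verification that \(f_{*}\delta_{P}\) coincides, under the projection-formula isomorphism, with \(\delta \otimes \mathrm{id}_{f_{*}\O_{\P^{N}}(n)}\), which is ultimately a formal check.
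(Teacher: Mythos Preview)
Your argument is correct and follows essentially the same route as the paper (Section \ref{subs: a particular class}, culminating in Theorem \ref{prop: bound inj}): both reduce the problem to
\[
\Gamma_{m,n} \;\simeq\; H^{0}\bigl(\P^{N},\, S^{m}\Omega_{\P^{N}}(m)\otimes f_{*}\O_{\P^{N}}(n)\bigr)
\]
via the key observation that the kernel of \(\delta_{P}\) is the pullback by \(f\) of the kernel of \(\delta\), followed by the projection formula, Horrocks' splitting, and the Bott computation of \(H^{0}(\P^{N},S^{m}\Omega_{\P^{N}}(m+k))\).

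There are two minor differences worth noting. First, the paper sidesteps your ``bookkeeping verification'' by never pushing forward the operator \(\delta_{P}\) itself: it works directly with the kernel sheaf, observing fiberwise that \(\Ker\delta(f)_{1,0}=f^{*}\Ker\delta_{1,0}\), whence \(\Ker\delta(f)_{m,0}=f^{*}S^{m}\Omega_{\P^{N}}(m)\) and the projection formula applies to the \emph{sheaf}, not the map. Second, your endgame is slightly slicker than the paper's: the paper proves a separate lemma (Lemma \ref{lemma: direct image}) bounding the Horrocks degrees of \(f_{*}\O_{\P^{N}}(r)\) by zero for \(0\le r<d\), and then argues via the Euclidean division \(n=qd+r\); your second application of the projection formula,
\[
H^{0}\bigl(\P^{N},\, f_{*}\O_{\P^{N}}(n)\otimes\O_{\P^{N}}(-m)\bigr)\;\simeq\;H^{0}\bigl(\P^{N},\,\O_{\P^{N}}(n-dm)\bigr),
\]
extracts both directions of the equivalence in one stroke and renders that lemma unnecessary for the present statement.
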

\
\newline

In Section \ref{sect: resolutions}, we provide two different locally free resolutions of the structural sheaf \(\O_{\P(TX)}\) of the projectivized tangent bundle \(\P(TX) \rightarrow X\) of a smooth complete intersection \(X \subset \P^{N}\), and we describe their push-forward on the base \(X\) (more precisely, we describe the push-forward of various twists of these resolutions).

For both resolutions, we start with the case of smooth hypersurfaces (see Sections \ref{section: Koszul complex 1: hyp} and \ref{section: Koszul complex 2: hyp}), and then move onto the general case of smooth complete intersections (see Sections \ref{sect: Koszul complex 1: ci} and \ref{sect: Koszul complex 2: ci}). The reason for this presentation is the following. The first (resp. the second) resolution is given by a Koszul complex associated to sections of rank \(1\)(resp. rank \(2\)) vector bundles on \(\P(T\P^{N}_{\vert X})\)(resp. on \(\P(T\P^{N})\)). Therefore, we take the time in Sections \ref{section: Koszul complex 1: hyp} and \ref{section: Koszul complex 2: hyp} to do properly the construction for a single section, which is easy in the first case, and less straightforward in the second case. 

The push-forward on the base of the previous twisted resolutions provide locally free resolutions of twisted symmetric powers of the cotangent bundle \(\Omega_{X}\) of the complete intersection \(X\). It is thus important to obtain an explicit description. For the first resolution provided, it turns out to be easy (see also Theorem \ref{thm: resolution base 1}):
\begin{theorem}
\label{thm: complex 1 intro}
Let \(X \bydef \Set{P_{1}= \dotsb = P_{c}=0} \subset \P^{N}\) be a smooth complete intersection of codimension \(c\) and multi-degree \(\mathbi{d}=(d_{1}, \dotsc, d_{c})\). Denote by \(\alpha(P_{i})\) the multiplication map by the bi-homogeneous polynomial \(\frac{1}{d_{i}}(\diff P_{i})_{X}(Y) \in \C[Y,X]\).
The Koszul complex
\[
\mathcal{K}\big(\alpha(P_{1}), \dotsc, \alpha(P_{c})\big)
\] 
induced by the multiplication maps \(\big(\alpha(P_{i})\big)_{1 \leq i \leq c}\) on \((\Ker \delta)_{\vert X}\)
provides a locally free bi-graded\footnote{The bi-graduations on \(\bigoplus_{m \geq c, n \in \Z} S^{m}\Omega_{X}(m+n)\) and on \(\Ker \delta\) are the natural ones.} resolution of
\[
\bigoplus_{m \geq c, n \in \Z} S^{m}\Omega_{X}(m+n).
\]
\end{theorem}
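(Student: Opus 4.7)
The plan is to re-interpret the abstract Koszul complex on $(\Ker \delta)_{|X}$ as the direct image, summed over all bi-degrees, of a geometric Koszul complex on $Y \bydef \P(T\P^{N})_{|X}$ resolving $\O_{\P(TX)}$, and to deduce its exactness from the vanishing of $R^{i}\pi_{*}\O_{Y}(m)$ for $i>0$ and $m \geq 0$, where $\pi : Y \to X$ is the natural projection.

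For the geometric input, restricting $\diff P_{i}$ to $X$ yields a section of $\Omega_{\P^{N}|X}(d_{i})$, equivalently a section $\sigma_{i}$ of the line bundle $L_{i} \bydef \O_{Y}(1) \otimes \pi^{*}\O_{X}(d_{i})$ on $Y$, whose scheme-theoretic zero locus is precisely $\P(TX)$ --- this is the content of Section \ref{sect: Koszul complex 1: ci}. Since $X$ is smooth, $\P(TX) \subset Y$ has the expected codimension $c$; hence $(\sigma_{1}, \dotsc, \sigma_{c})$ forms a regular sequence, and the associated Koszul complex
\[
0 \longrightarrow \bigotimes_{i=1}^{c} L_{i}^{-1} \longrightarrow \dotsb \longrightarrow \bigoplus_{i=1}^{c} L_{i}^{-1} \longrightarrow \O_{Y} \longrightarrow \O_{\P(TX)} \longrightarrow 0
\]
is a locally free resolution of $\O_{\P(TX)}$ on $Y$.

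The next step is to tensor this Koszul complex with $\O_{Y}(m) \otimes \pi^{*}\O_{X}(m+n)$ for every bi-degree $(m,n)$ with $m \geq c$ and $n \in \Z$, and to push forward via $\pi$. At Koszul position $p$, the twisted term has the form $\O_{Y}(m-p) \otimes \pi^{*}(\text{line bundle on } X)$ with $m-p \geq 0$; the classical computation of cohomology of projective bundles, combined with the projection formula, then gives $R^{i}\pi_{*}=0$ for $i>0$, while $\pi_{*}$ produces a copy of $S^{m-p}\Omega_{\P^{N}|X}(\star)$, which Theorem \ref{thm: gen Euler intro} identifies with the corresponding bi-graded piece of $(\Ker \delta)_{|X}$. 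Termwise acyclicity ensures that the pushed-forward complex remains exact, and the Koszul differentials --- originally multiplication by the $\sigma_{i}$ --- become, up to the harmless rescaling $\sigma_{i} = d_{i}\cdot \alpha(P_{i})$, the multiplication maps $\alpha(P_{i})$ of the statement. The rightmost term pushes forward to $S^{m}\Omega_{X}(m+n)$ since $\pi_{*}\O_{\P(TX)}(k) = S^{k}\Omega_{X}$, and summing over all admissible $(m,n)$ produces the desired bi-graded resolution.

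The main difficulty lies in the geometric input: verifying that the $\sigma_{i}$ cut out $\P(TX)$ scheme-theoretically and form a regular sequence. The first point requires the identification of $TX$ as the common kernel of the $\diff P_{i}$ inside $T\P^{N}_{|X}$, and crucially uses the smoothness of $X$; the regularity of the sequence then follows from a dimension count together with the Cohen--Macaulay property of $Y$. Everything else --- the termwise vanishing of higher direct images, the identification of $\pi_{*}$ with graded pieces of $\Ker \delta$ via Theorem \ref{thm: gen Euler intro}, and the matching of differentials with the $\alpha(P_{i})$ --- is essentially formal, relying only on the projection formula and the generalized Euler sequence.
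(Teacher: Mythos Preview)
Your proposal is correct and follows essentially the same route as the paper: construct the Koszul complex associated to the sections $s(P_i)$ on $\P(T\P^{N})_{|X}$ cutting out $\P(TX)$ (exact by smoothness of $X$ and of the ambient), twist by $\O_{Y}(m)\otimes\pi^{*}\O_{X}(m+n)$ with $m\geq c$, push forward to $X$ using the vanishing of higher direct images for $\O_{Y}(k)$ with $k\geq 0$, and identify the resulting terms and differentials with the bi-graded pieces of $(\Ker\delta)_{|X}$ and the multiplication maps $\alpha(P_{i})$ via the generalized Euler exact sequence. The only cosmetic difference is that the paper builds in the normalization $\frac{1}{d_i}$ from the outset (so that $\delta\circ\alpha(P_i)-\alpha(P_i)\circ\delta=\cdot\times P_i$), whereas you introduce it as a rescaling at the end.
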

However, for the second resolution, the interpretation of the push-forward twisted resolution is much more complicated, and we choose to provide an explicit description only in codimension \(1\) and \(2\) (see also Theorem \ref{thm: resol hyp} and Theorem \ref{thm: resol ci}):

\begin{theorem}
\label{thm: complex 2 intro cod 1}
Let \(H\bydef \Set{P=0} \subset \P^{N}\) be a smooth hypersurface of degree \(d \geq 1\).
Denote by \(\beta(P)\) the following map
\[
\beta(P)(\cdot)
\bydef 
\cdot \times P - \alpha(P) \circ \delta (\cdot),
\]
where one recalls that \(\alpha(P)\) is the multiplication map by \(\frac{1}{d} (\diff P)_{X}(Y)\).

The complex 
\begin{equation*}
\xymatrix{
0 
\ar[r]
&
\Ker \delta[-1,-2d+1]
\ar[r]^-{\alpha(P)}
&
\Ker \delta^{2}[0,-d]
\ar[r]^-{\beta(P)}
&
\Ker \delta
}
\end{equation*}
provides a locally free bi-graded resolution of
 \[
 \bigoplus_{m \geq 1, n \in \Z} S^{m}\Omega_{H}(m+n).
 \]
\end{theorem}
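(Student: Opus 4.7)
The plan is to exhibit the claimed complex as the pushforward via $\pi \colon \P(T\P^{N})_{|H} \to H$ of a Koszul complex on $\P(T\P^{N})$ associated with a regular section of a rank-$2$ vector bundle cutting out $\P(TH)$, and then to identify the resulting differentials explicitly using the renormalization map $u$ introduced in Section \ref{subs: dual Euler}.

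First, I would realize $\P(TH) \hookrightarrow \P(T\P^{N})$ as the zero locus of a regular section $s = (s_{1}, s_{2})$ of a rank-$2$ vector bundle $E \simeq \pi^{*}\O_{\P^{N}}(d) \oplus \mathcal{L}_{2}$ on $\P(T\P^{N})$. The first component $s_{1}$ corresponds to the defining polynomial $P$, pulled back from $\P^{N}$, and encodes the condition $x \in H$; the second $s_{2}$ is the evaluation $(\diff P)_{x}(y)$ of the differential against the tautological direction on $\P(T\P^{N}) \simeq \Flag_{(1,2)}\C^{N+1}$, and encodes tangency of the plane to $H$. Smoothness of $H$ forces $s$ to be a regular section cutting out $\P(TH)$ in the expected codimension $2$, so the Koszul complex
\[
0 \to \det E^{*} \to E^{*} \to \O_{\P(T\P^{N})} \to \O_{\P(TH)} \to 0
\]
is exact.

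The next step is to twist this complex by $\bigoplus_{m,n} \O_{\pi}(m) \otimes \pi^{*}\O_{\P^{N}}(m+n)$ and push down by $\pi$. In the relevant bi-degree range, higher direct images vanish (this is the content underlying the generalized Euler sequence and is controlled by Bott's formulas), so the pushforward remains exact. The rightmost term yields $\bigoplus_{m \geq 1, n \in \Z} S^{m}\Omega_{\P^{N}}(m+n)_{|H}$, which maps onto $\bigoplus S^{m}\Omega_{H}(m+n)$ via restriction $\Omega_{\P^{N}|H} \twoheadrightarrow \Omega_{H}$, making $\Ker\delta$ the first term resolving the target. For the middle and left terms, $E$ having rank $2$ produces pushforwards naturally indexed by the \emph{second-order} kernel of $\delta$: the identification $\pi_{*}(E^{*} \otimes \text{twist}) \simeq \Ker\delta^{2}[0,-d]$ comes from a second-order refinement of Theorem \ref{thm: gen Euler intro} (where the twist $[0,-d]$ matches the $\pi^{*}\O(-d)$ summand of $E^{*}$), while $\det E^{*} \simeq \O_{\pi}(-1) \otimes \pi^{*}\O_{\P^{N}}(-2d+1) \otimes \pi^{*}\O_{\P^{N}}(\cdot)$ contributes the bi-degree shift $[-1, -2d+1]$.

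Finally, I would identify the Koszul differentials. The map $\det E^{*} \to E^{*}$ is contraction with $s$: its $s_{1}$-component is multiplication by $P$, and its $s_{2}$-component, after applying the renormalization $u$ of Section \ref{subs: dual Euler}, becomes multiplication by $\tfrac{1}{d}(\diff P)_{X}(Y)$, i.e.\ precisely $\alpha(P)$. The map $E^{*} \to \O$ is the pairing $(b_{1}, b_{2}) \mapsto P \cdot b_{1} + (\diff P)_{X}(Y) \cdot b_{2}$; when restricted to the image of $\Ker\delta^{2}$ inside $\pi_{*}E^{*}$, the compatibility constraint forces $b_{2}$ to be (up to a $\tfrac{1}{d}$ factor) the image of $\delta$ applied to the element $B$ representing $b_{1}$, and the pairing collapses to $B \cdot P - \alpha(P)(\delta B) = \beta(P)(B)$. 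The main obstacle will be this last identification: checking that the pushforward of $E^{*}$ really is $\Ker\delta^{2}$ in the correct bi-degree, and that the renormalized Koszul differential $E^{*} \to \O$ matches $\beta(P)$. Both steps require a careful second-order analogue of the generalized Euler sequence, where the correction term $-\alpha(P)\circ\delta$ in $\beta(P)$ emerges naturally as the obstruction to the two Koszul factors $s_{1}$ and $s_{2}$ being independent after pushforward.
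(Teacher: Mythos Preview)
Your overall strategy matches the paper's: build a Koszul complex on $\P(T\P^{N}) \simeq \Flag_{(1,2)}\C^{N+1}$ from a regular section of a rank-$2$ bundle cutting out $\P(TH)$, push forward, and identify the differentials. However, there is a genuine gap: the rank-$2$ bundle cannot be the direct sum $\pi^{*}\O_{\P^{N}}(d)\oplus\mathcal{L}_{2}$ you propose. On $\Flag_{(1,2)}\C^{N+1}$ a point is a flag $(\C x\subsetneq \C x\oplus\C v)$, with $v$ defined only modulo $x$; under $v\mapsto v+\lambda x$ the expression $(\diff P)_{x}(v)$ changes by $d\lambda P(x)$, so it is well-defined as a section of a line bundle only over $\{P=0\}$. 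Equivalently, $\delta\bigl(\tfrac{1}{d}(\diff P)_{X}(Y)\bigr)=P\neq 0$, so $(\diff P)_{X}(Y)$ is \emph{not} a global section of any $\L_{1,d-1}$. The paper fixes this by taking $\E$ to be the \emph{non-split} extension
\[
0 \to \L_{1,0} \to \E \to \L_{0,1} \to 0,
\]
whose class generates $\mathrm{Ext}^{1}(\L_{0,1},\L_{1,0})\simeq H^{1}(\P^{N},\Omega_{\P^{N}})\simeq\C$. The off-diagonal term in the transition matrices of $\E$ absorbs exactly the $P$-ambiguity, and the section is locally $\bigl(\tfrac{P}{X_{i}},\,\tfrac{1}{d}(\diff P)_{X}(Y)-\tfrac{Y_{i}}{X_{i}}P\bigr)$. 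This non-splitness is also what makes $\pi_{*}\E_{r,s}\simeq\Ker\delta^{2}_{r+1,s}$ rather than a sum of two copies of $\Ker\delta$; the correction $-\alpha(P)\circ\delta$ in $\beta(P)$ is precisely the trace of the extension after pushforward.

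Two smaller points. First, the pushforward is along $\pi\colon\Flag_{(1,2)}\C^{N+1}\to\P^{N}$, not to $H$: the terms $\Ker\delta$ and $\Ker\delta^{2}$ in the statement are sheaves on $\P^{N}$, not their restrictions to $H$ (restricting to $H$ is what the \emph{first} Koszul complex does, and yields $(\Ker\delta)_{|H}$). Second, the renormalization map $u$ plays no role here; it enters only later, when one dualizes the cohomology complexes via Serre duality.
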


\begin{theorem}
\label{thm: complex 2 intro cod 2}
Let \(X\bydef \Set{P_{1}=P_{2}=0}\subset \P^{N}\) be a smooth complete intersection of codimension \(2\) and multi-degree \(\mathbi{d}=(d_{1}, d_{2})\).
The complex
\[
\xymatrix{
0 
\ar[r]
&
\Ker \delta[-2,-2\abs{\mathbi{d}}+2]
\ar@{-}[r]^-{(f_{1i})_{1 \leq i \leq 2}}
&
\\
\ar[r]
&
\Ker \delta^{2}[-1,-2\abs{\mathbi{d}}+d_{2}+1]
\oplus 
\Ker \delta^{2}[-1,-2\abs{\mathbi{d}}+d_{1}+1]
\ar@{-}[r]^-{(f_{2i})_{1 \leq i \leq 4}}
&
\\
\ar[r]
&
(\underset{i}{\oplus}\Ker \delta[-1,-2d_{i}+1])
 \oplus 
 \Ker \delta^{3}[0,-\abs{\mathbi{d}}]
 \oplus
  \Ker \delta[-1,-\abs{\mathbi{d}}+1]
  \ar@{-}[r]
  &
\\
\ar[r]^-{(f_{3i})_{1 \leq i \leq 2}}
&
\Ker \delta^{2}[0, -d_{1}]
\oplus 
\Ker \delta^{2}[0, -d_{2}]
\ar[r]^-{f_{41}}
&
\Ker \delta,
}
\]
where
\begin{equation*}
\left\{
\begin{array}{ll}
f_{1i}=\alpha(P_{i}), 1 \leq i \leq 2;
\\
f_{21}(A,B)=\beta(P_{2})(B) \ \text{and} \ f_{22}(A,B)=\beta(P_{1})(A);
\\
f_{23}(A,B)=\alpha(P_{2})(A)-\alpha(P_{1})(B);
\\
f_{24}(A,B)=\beta(P_{2})(A)+\beta(P_{1})(B);
\\
f_{31}(A,B,C,D)=\alpha(P_{1})(A) + \frac{1}{2}\big(\beta(P_{2})(C)+P_{2}C - \alpha(P_{2})(D)\big);
\\
f_{32}(A,B,C,D)=\alpha(P_{2})(B) - \frac{1}{2}\big(\beta(P_{1})(C) +P_{1}C + \alpha(P_{1})(D)\big);
\\
f_{4}(A,B)=\beta(P_{1})(A) + \beta(P_{2})(B).
\end{array}
\right.
\end{equation*}
provides a locally free bi-graded resolution of 
\[
\bigoplus_{m \geq 2, n \in \Z}
S^{m}\Omega_{X}(m+n).
\]
\end{theorem}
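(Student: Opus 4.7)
The plan is to extend, in codimension two, the construction behind Theorem~\ref{thm: complex 2 intro cod 1}, by combining the data of $P_1$ and $P_2$ via a direct sum of the rank-$2$ bundles used in the hypersurface case. Concretely, for each $i = 1, 2$, I would reuse the rank-$2$ vector bundle $E_i$ on $\P(T\P^N)$ and the section $s_i \in H^0(\P(T\P^N), E_i)$ constructed in Section~\ref{section: Koszul complex 2: hyp}, whose zero locus is $\pi^{-1}(H_i)$ for $H_i = \{P_i = 0\}$ and $\pi \colon \P(T\P^N) \to \P^N$ the natural projection. Since $X = H_1 \cap H_2$ is a smooth complete intersection of codimension $2$ in $\P^N$, the preimage $\pi^{-1}(X)$ is the zero locus of the regular section $(s_1, s_2)$ of the rank-$4$ bundle $E_1 \oplus E_2$. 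The associated Koszul complex
\[
0 \to \Lambda^{4}(E_{1}\oplus E_{2}) \to \Lambda^{3}(E_{1}\oplus E_{2}) \to \Lambda^{2}(E_{1}\oplus E_{2}) \to E_{1}\oplus E_{2} \to \O_{\P(T\P^{N})} \to 0
\]
is thus a locally free resolution of $\O_{\pi^{-1}(X)}$ of length $4$, matching the length of the complex to prove.

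After twisting by $\O_{\P(T\P^{N})}(m)\otimes \pi^{*}\O_{\P^{N}}(n)$ and pushing forward by $\pi$, this resolution should yield a locally free resolution of $\bigoplus_{m \geq 2, n \in \Z} S^{m}\Omega_{X}(m+n)$, exactly as in the hypersurface case and in the spirit of Theorem~\ref{thm: complex 1 intro}. The key observation is the natural decomposition
\[
\Lambda^{k}(E_{1}\oplus E_{2}) = \bigoplus_{i+j = k} \Lambda^{i}E_{1} \otimes \Lambda^{j}E_{2},
\]
whose summands account one-by-one for the direct summands appearing in the statement at each position: the pure factors $\Lambda^{i}E_{1}$ and $\Lambda^{j}E_{2}$ produce the $\Ker \delta$, $\Ker \delta^{2}$ and $\Ker \delta[-1, -2d_{i}+1]$ terms, whose push-forwards have already been computed in Section~\ref{section: Koszul complex 2: hyp}; the mixed factor $E_{1}\otimes E_{2}$ at position $-2$ accounts for the combined summand $\Ker \delta^{3}[0, -\abs{\mathbi{d}}] \oplus \Ker \delta[-1, -\abs{\mathbi{d}}+1]$, after a suitable decomposition of the relevant push-forwards.

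The next step is the identification of the differentials. On $\P(T\P^{N})$ the Koszul differentials are contractions with $(s_{1}, s_{2})$; after renormalization by the map $u$ of Section~\ref{subs: dual Euler} and after taking direct images, contraction with $s_{i}$ induces, on the relevant $\Ker \delta^{r}$-factors, the operations $\alpha(P_{i})$ and $\beta(P_{i})$ already constructed in the hypersurface case. Combining these contributions according to the Koszul sign conventions, and keeping track of the gluing between the two push-forward summands coming from $E_{1}\otimes E_{2}$, should reproduce the explicit formulas $f_{1i}, f_{2i}, f_{3i}, f_{41}$ of the statement. Exactness of the resulting complex then follows from exactness of the Koszul complex upstairs (since $(s_{1}, s_{2})$ is regular) together with the vanishing of the higher direct images of its twisted terms, as in Sections~\ref{section: Koszul complex 2: hyp} and~\ref{sect: Koszul complex 2: ci}.

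The main obstacle will be the detailed analysis of the mixed term $E_{1}\otimes E_{2}$: determining the exact decomposition of its twisted push-forward as $\Ker \delta^{3}[0, -\abs{\mathbi{d}}] \oplus \Ker \delta[-1, -\abs{\mathbi{d}}+1]$, and checking that the incoming and outgoing Koszul differentials produce, under the renormalization $u$, precisely the maps $f_{2i}$ and $f_{3i}$ with the $\tfrac{1}{2}$ factors, the signs, and the intertwining of $\alpha(P_{i})$ and $\beta(P_{i})$ as written. This is ultimately a careful coefficient computation, but it is the one place where the codimension-two case genuinely goes beyond a formal combination of two copies of the hypersurface resolution of Theorem~\ref{thm: complex 2 intro cod 1}.
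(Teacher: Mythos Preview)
Your approach is essentially the paper's own: one forms the Koszul complex of the section $(s_{1},s_{2})$ of the rank-$4$ bundle $\E_{0,d_{1}-1}\oplus\E_{0,d_{2}-1}$ on $\Flag_{(1,2)}\C^{N+1}\simeq\P(T\P^{N})$, decomposes the exterior powers via $\bigwedge^{k}(\E_{0,d_{1}-1}\oplus\E_{0,d_{2}-1})=\bigoplus_{i+j=k}\bigwedge^{i}\E_{0,d_{1}-1}\otimes\bigwedge^{j}\E_{0,d_{2}-1}$, pushes forward by $\pi$, and identifies the mixed term $\pi_{*}(\E^{\otimes 2})_{r,s}\simeq\Ker\delta^{3}_{r+2,s}\oplus\Ker\delta_{r+1,s+1}$ (this is Proposition~\ref{prop: push 2} in the paper). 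The verification of the explicit arrows $f_{ij}$ is, as you say, a careful local computation that the paper leaves to the reader.

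One correction, though: the zero locus of $s_{i}$ is \emph{not} $\pi^{-1}(H_{i})$ but $j(\P(TH_{i}))$, and likewise the common zero locus of $(s_{1},s_{2})$ is $j(\P(TX))$, not $\pi^{-1}(X)$. This matters: $\pi^{-1}(X)$ has codimension $2$ in $\P(T\P^{N})$, so a section of a rank-$4$ bundle cutting it out could not be regular, and your Koszul complex would fail to be exact. The actual zero locus $\P(TX)$ has codimension $4$, which is what makes the construction work. Also, the renormalization map $u$ plays no role at this stage; it enters only later, when one dualizes via Serre duality to pass from the resolution to the cohomology complexes of Section~\ref{sect: coho sym power ci}.
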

At this point, one may wonder why bother using such a complicated complex, whereas the complex given in Theorem \ref{thm: complex 1 intro} (in the codimension \(2\) case) seems much easier to apprehend. We must admit that we were not able \textsl{yet} to take advantage of it (it is not used in the applications, only the resolution upstairs from which this resolution comes from is used). However, we do believe that this resolution is better than the one in Theorem \ref{thm: complex 1 intro}. We discuss this further below, but in a few words the reason is roughly the following. In the complex of Theorem \ref{thm: complex 1 intro}, both the objects and the arrows depend on the complete intersection, whereas in the complex of Theorem \ref{thm: complex 2 intro cod 1} or Theorem \ref{thm: complex 2 intro cod 2}, the dependency on \(X\) only lies in the arrows. We believe that this should somehow make the study more tractable.
\newline

In Section \ref{sect: coho sym power ci}, we use the resolutions provided in Theorem \ref{thm: complex 1 intro}, \ref{thm: complex 2 intro cod 1} and \ref{thm: complex 2 intro cod 2} to exhibit complexes computing the cohomology of (negatively) twisted symmetric powers of cotangent bundles of smooth complete intersections: see respectively Section \ref{sect: coho ci 1}, \ref{sect: coho hyp 2} and \ref{sect: coho ci 2}.
 
 The basic observation is that suitable bi-graded parts of the above complexes have their cohomology supported in maximal degree. Therefore, by a simple exercice in cohomological algebra, one can compute the cohomology by applying the functor \(H^{top}\) to the (suitable graded parts of the) complexes. However, for computational purposes, it is not practical to work with \(H^{top}\). Therefore, we use Serre duality to provide a reformulation of the complex. In order to obtain a neat description, it is crucial to use the renormalization map defined earlier in the introduction: see Section \ref{sect: coho hyp 1} for details (the ideas are the same in the subsequent Sections \ref{sect: coho ci 1}, \ref{sect: coho hyp 2} and \ref{sect: coho ci 2}). 
 
Using the resolution given in Theorem \ref{thm: complex 1 intro}, we obtain the following (see Theorem \ref{thm: coho ci 1}):
\begin{theorem}
\label{thm: coho ci 1 intro}
Let \(X \bydef \Set{P_{1}= \dotsb = P_{c}=0} \subset \P^{N}\) be a smooth complete intersection of codimension \(c\) and multi-degree \(\mathbi{d}=(d_{1}, \dotsc, d_{c})\). Denote by \(\alpha^{*}(P_{i})\) the following partial differential equation:
\[
 \alpha^{*}(P_{i})(\cdot) \bydef \frac{1}{d_{i}}(\sum\limits_{i=0}^{N} \frac{\partial P}{\partial X_{i}} \frac{\partial}{\partial Y_{i}})(\cdot),
\]
which induces a map of bi-graded algebra \(\frac{\C[Y,X]}{(P_{1}, \dotsc, P_{c}, q)}[-1,-d_{i}+1] \longrightarrow \frac{\C[Y,X]}{(P_{1}, \dotsc, P_{c}, q)}\).

Then, the cohomology of 
\[
\bigoplus_{m \geq c,  n > 1} S^{m}\Omega_{X}(m-n)
\]
can be computed via the Koszul complex on \(\frac{\C[Y,X]}{(P_{1}, \dotsc, P_{c}, q)}\):
\[
K(\alpha^{*}(P_{1}), \dotsc, \alpha^{*}(P_{c}))[-c, -(N+1)+2\abs{\mathbi{d}}-c].
\]
Namely, the \(i\)th cohomology group of one graded component of 
\(
\bigoplus_{m \geq 1, n > 1 }
S^{m}\Omega_{X}(m-n)
\) 
is isomorphic to the \((N-c-i)\)th cohomology group of the corresponding graded part of the Koszul complex.

\end{theorem}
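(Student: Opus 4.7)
The plan is to extract the statement from the resolution of Theorem \ref{thm: complex 1 intro} by restricting to a bi-graded piece, running a hypercohomology spectral sequence, and then rewriting the resulting complex of vector spaces via Serre duality, relying crucially on the renormalization $u$ introduced in the introduction.

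I would first fix $m \geq c$ and $n > 1$ and look at the bi-graded piece of the Koszul complex $\mathcal{K}(\alpha(P_{1}), \dotsc, \alpha(P_{c}))$ whose rightmost term contributes $S^{m}\Omega_{X}(m-n)$. Each entry of this bi-graded piece is a direct sum of twists of $(\Ker \delta)_{|X}$, with the twists recorded by the degrees $d_{1}, \dotsc, d_{c}$ of the $P_{i}$'s. The next, and key, step is a concentration statement: in this range of bi-degrees, the cohomology on $X$ of every term of the complex is concentrated in top degree $N-c$. I would derive this from the cohomology of $S^{\bullet}\Omega_{\P^{N}}(\bullet)$ on $\P^{N}$ established in Section \ref{subs: coho proj space} (itself a consequence of the generalized Euler sequence of Theorem \ref{thm: gen Euler intro} and Bott's formulas) combined with the Koszul resolution of $\O_{X}$ inside $\P^{N}$; the hypothesis $n > 1$ is precisely what places us in the range where only the top cohomology survives.

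Once concentration is granted, the hypercohomology spectral sequence of the Koszul complex collapses to a single row and gives $H^{j}(X, S^{m}\Omega_{X}(m-n)) \simeq H^{j-(N-c)}$ of the complex of $\C$-vector spaces obtained by applying $H^{N-c}(X, -)$ termwise; the Koszul indexing convention (the top exterior power $\bigwedge^{c}$ sitting in homological position $-c$) accounts for the first component of the shift $[-c, -(N+1)+2\abs{\mathbf{d}}-c]$ in the statement.

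The remaining step is to identify these vector-space data with bi-graded pieces of $\C[Y,X]/(P_{1}, \dotsc, P_{c}, q)$ and to identify the induced differentials with the $\alpha^{*}(P_{i})$. Serre duality on $X$, with $K_{X} = \O(\abs{\mathbf{d}}-N-1)_{|X}$, converts each $H^{N-c}$ into an $H^{0}$ of the dual twisted by $K_{X}$; the dualized Euler sequence from Section \ref{subs: dual Euler} then realizes this $H^{0}$ as a bi-graded piece of the kernel of multiplication by $q$ inside $\C[Y,X]$, while the Koszul resolution of $\O_{X}$ inside $\P^{N}$ is what introduces the additional relations $P_{1}, \dotsc, P_{c}$; the normalization $u$ is what makes all of this line up cleanly to $\C[Y,X]/(P_{1}, \dotsc, P_{c}, q)$ and turns the dualized Koszul differentials into the $\alpha^{*}(P_{i})$. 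The main obstacle I anticipate is precisely this last bookkeeping: checking that after transport through Serre duality and the renormalization $u$ the multiplication operators $\alpha(P_{i})$ dualize exactly to the partial differential operators $\alpha^{*}(P_{i})$, and that the sum of all internal twists collected along the way produces exactly the second shift $-(N+1)+2\abs{\mathbf{d}}-c$.
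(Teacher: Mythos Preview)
Your proposal is correct and follows the same overall architecture as the paper: concentrate the cohomology of every term of the Koszul resolution of Theorem \ref{thm: complex 1 intro} in top degree $N-c$ (using the Koszul resolution of $\O_{X}$ in $\P^{N}$ together with Theorem \ref{thm: coho sym proj}), apply $H^{N-c}$ termwise, then dualize and renormalize by $u$ to obtain the Koszul complex on $\C[Y,X]/(P_{1},\dotsc,P_{c},q)$ with differentials $\alpha^{*}(P_{i})$.

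There is one small divergence worth noting. You propose to apply Serre duality directly on $X$ (using $K_{X}=\O_{X}(\abs{\mathbi{d}}-N-1)$) and then identify the resulting $H^{0}$ via the dualized Euler sequence restricted to $X$. The paper instead first lifts $(\Ker\delta_{r,s})_{|X}$ to $\P^{N}$ via the Koszul resolution $\mathcal{K}(P_{1},\dotsc,P_{c})\otimes\Ker\delta_{r,s}$, applies $H^{N}(\P^{N},-)$, and only then dualizes using Serre duality on $\P^{N}$ (Lemmas \ref{lemma: Serre duality} and \ref{lemma: reformulation Serre 2}). The two routes are equivalent, but the paper's ordering sidesteps having to check any $H^{1}$-vanishing on $X$ when passing from the short exact dualized Euler sequence to $H^{0}$. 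Also, a minor slip: the dualized Euler sequence of Section \ref{subs: dual Euler} exhibits $S^{m}T\P^{N}(-m)$ as the \emph{cokernel} of multiplication by $q$, not the kernel; your later sentence (``introduces the additional relations $P_{1},\dotsc,P_{c}$'') shows you had the quotient in mind.
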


Using the resolutions given in Theorem \ref{thm: complex 2 intro cod 1} and \ref{thm: complex 2 intro cod 2}, we obtain respectively (see Theorem \ref{thm: coho hyp 2} and \ref{thm: coho ci 2}):
\begin{theorem}
\label{thm: coho hyp 2 intro}
Let \(H\bydef \Set{P=0} \subset \P^{N}\) be a smooth hypersurface of degree \(d \geq 1\).
Denote by \(\beta^{*}(P)\) the following map
\[
\beta^{*}(P)(\cdot)
\bydef 
\cdot \times P - q\times \alpha^{*}(P)(\cdot),
\]
where one recalls that \(\alpha^{*}(P)\) is the partial differential equation
 \(
 \frac{1}{d}(\sum\limits_{i=0}^{N} \frac{\partial P}{\partial X_{i}} \frac{\partial}{\partial Y_{i}})(\cdot)
 \).

Then, the cohomology of 
\(
\bigoplus_{m \geq 1, n > 1 }
S^{m}\Omega_{H}(m-n)
\)
can be computed via the graded complex
\[
\xymatrix{
\frac{S}{(q)}[0, -(N+1)]
\ar[rr]^-{\beta^{*}(P)}
&
&
\frac{S}{(q^{2})}[0,-(N+1)+d]
\ar[rr]^-{\alpha^{*}(P)}
&
&
\frac{S}{q}[-1, -(N+1)+2d-1],
}
\]
where \(S\bydef \C[Y,X]\).

Namely, the \(i\)th cohomology group of one graded component of 
\(
\bigoplus_{m \geq 1, n > 1 }
S^{m}\Omega_{H}(m-n)
\) 
is isomorphic to the \((N-i)\)th cohomology group of the corresponding graded part of the graded complex.

\end{theorem}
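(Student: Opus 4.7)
The plan is to follow the recipe sketched in Section \ref{sect: coho hyp 1}: start from the bi-graded locally free resolution of \(\bigoplus_{m\geq 1,\, n\in\Z} S^{m}\Omega_{H}(m+n)\) given by Theorem \ref{thm: complex 2 intro cod 1}, extract the bi-graded components corresponding to the negative twist regime \(n<-1\) (i.e.\ to \(S^{m}\Omega_{H}(m-n)\) with \(n>1\)), and then apply top cohomology on \(\P^{N}\) followed by Serre duality.

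For each such fixed bidegree, the three terms \(\Ker\delta[-1,-2d+1]\), \(\Ker\delta^{2}[0,-d]\) and \(\Ker\delta\) are coherent sheaves on \(\P^{N}\) whose cohomology is concentrated in degree \(N\). For the outer terms this is Bott's formulas (Appendix \ref{appendix: Bott}) applied to the graded pieces of \(\Ker\delta\), which identify with the twisted symmetric powers \(S^{m'}\Omega_{\P^{N}}(m'-n')\) with \(n'>1\); for the middle term one uses the short exact sequence
\[
0 \longrightarrow \Ker\delta_{m,n} \longrightarrow \Ker\delta^{2}_{m,n} \xrightarrow{\ \delta\ } \Ker\delta_{m-1,n+1} \longrightarrow 0
\]
(whose surjectivity is inherited from the surjectivity of \(\delta\) in the usual Euler sequence) combined with Bott again. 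Once concentration in top degree is established, a standard telescoping long-exact-sequence argument (the same one invoked in the proof of Theorem \ref{thm: coho ci 1 intro}) identifies \(H^{i}(H,S^{m}\Omega_{H}(m-n))\) with the cohomology in position \(i+2-N\) of the complex obtained by applying \(H^{N}(\P^{N},-)\) to the specialized resolution.

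To conclude, Serre duality on \(\P^{N}\) converts each \(H^{N}(\P^{N},E)\) into \(H^{0}(\P^{N},E^{\vee}\otimes\omega_{\P^{N}})^{*}\), which reverses the arrows of the complex. Using the renormalized dual Euler exact sequence of Section \ref{subs: dual Euler}, the dual of \(\Ker\delta\) identifies with \(S/(q)\) and the dual of \(\Ker\delta^{2}\) with \(S/(q^{2})\); absorbing the Serre twist \(\omega_{\P^{N}}=\O(-N-1)\) together with the upstairs shifts \([-1,-2d+1]\) and \([0,-d]\) produces exactly the shifts \([0,-(N+1)]\), \([0,-(N+1)+d]\) and \([-1,-(N+1)+2d-1]\) appearing in the statement. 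The renormalization map \(u\colon Y^{\mathbi{\alpha}}\mapsto Y^{\mathbi{\alpha}}/\mathbi{\alpha}!\) then makes the dualized arrows transparent: multiplication by \(\alpha(P)=\frac{1}{d}(\diff P)_{X}(Y)\) dualizes to the partial differential operator \(\alpha^{*}(P)=\frac{1}{d}\sum_{i}\frac{\partial P}{\partial X_{i}}\frac{\partial}{\partial Y_{i}}\), and since \(\delta\) dualizes to multiplication by \(q\), the composite \(\beta(P)(\cdot)=\cdot\times P-\alpha(P)\circ\delta(\cdot)\) dualizes precisely to \(\cdot\times P-q\cdot\alpha^{*}(P)(\cdot)=\beta^{*}(P)\). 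The main obstacle I expect is the simultaneous bookkeeping of the bigrading shifts, the Serre twist and the renormalization, so as to recover the formula for \(\beta^{*}(P)\) with exactly the stated sign and factor; the cohomological concentration for \(\Ker\delta^{2}\) is then a short but essential additional check.
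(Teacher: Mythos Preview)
Your proposal is correct and follows essentially the same path as the paper's proof of Theorem \ref{thm: coho hyp 2}: start from the push-forward resolution of Theorem \ref{thm: resol hyp}, check concentration of cohomology in top degree on \(\P^{N}\), apply \(H^{N}(\P^{N},-)\), then dualize via Serre and the renormalization map to obtain the complex with arrows \(\beta^{*}(P)\) and \(\alpha^{*}(P)\). The only cosmetic difference is that the paper packages the concentration step through Lemma \ref{lemma: support} (stated for \(\E^{\otimes k}\) on the flag variety) and Lemma \ref{lemma: reformulation ci}, whereas you argue directly on \(\P^{N}\) with the short exact sequence \(0\to\Ker\delta\to\Ker\delta^{2}\to\Ker\delta\to 0\); both routes are equivalent.
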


\begin{theorem}
\label{thm: coho ci 2 intro}
Let \(X\bydef \Set{P_{1}=P_{2}=0}\subset \P^{N}\) be a smooth complete intersection of codimension \(2\) and multi-degree \(\mathbi{d}=(d_{1}, d_{2})\).
The cohomology of 
\[
\bigoplus_{m \geq 2, n > 1 }
S^{m}\Omega_{X}(m-n)
\]
can be computed via the following bi-graded complex
\begin{equation*}
\xymatrix{
\Big(\frac{S}{(q)}
\ar[r]^-{(g_{1i})_{1 \leq i \leq 2}}
&
\frac{S}{(q^{2})}[0, d_{1}]\oplus \frac{S}{(q^{2})}[0, d_{2}]
\ar@{-}[r]^-{(g_{2i})_{1 \leq i \leq 4}}
&
\\
\ar[r]
&
(\oplus_{i}
\frac{S}{(q)}[-1, 2d_{i}-1])
\oplus 
 \frac{S}{(q^{3})}[0, -\abs{\mathbi{d}}]
\oplus \frac{S}{(q)}[-1, \abs{\mathbi{d}}-1]
\ar@{-}[r]
&
\\
\ar[r]^-{(g_{3i})_{1 \leq i \leq 2}}
&
\frac{S}{(q^{2})}[-1, +2\abs{\mathbi{d}}-d_{2}-1]
\oplus \frac{S}{(q^{2})}[-1, 2\abs{\mathbi{d}}+d_{1}-1]
\ar@{-}[r]
&
\\
\ar[r]^-{g_{4}}
&
\frac{S}{(q)}[-2, +2\abs{\mathbi{d}}-2]\Big)[0,-(N+1)],
}
\end{equation*}
where \(S\bydef \C[Y,X]\), and where one has:
\begin{equation*}
\left\{
\begin{array}{ll}
g_{11}=\beta^{*}(P_{1}) \ \text{and} \ g_{12}=\beta^{*}(P_{2});
\\
g_{21}(A,B)=\alpha^{*}(P_{1})(A) \ \text{and} \ g_{22}(A,B)=\alpha^{*}(P_{2})(B);
\\
g_{23}(A,B)=\frac{1}{2}\big(\beta^{*}(P_{2})(A)+P_{2}A-\beta^{*}(P_{1})(B)-P_{1}B\big);
\\
g_{24}=-\frac{1}{2}\big(\alpha^{*}(P_{2})(A)+\alpha^{*}(P_{1})(B)\big);
\\
g_{31}(A,B,C,D)=\beta^{*}(P_{1})(B)+\alpha^{*}(P_{2})(C)+\beta^{*}(P_{2})(D);
\\
g_{32}(A,B,C,D)=\beta^{*}(P_{2})(A)-\alpha^{*}(P_{1})(C)-\beta^{*}(P_{1})(D);
\\
g_{4}(A,B)=\alpha^{*}(P_{1})(A)+\alpha^{*}(P_{2})(B).
\end{array}
\right.
\end{equation*}
Namely, the \(i\)th cohomology group of one graded component of 
\(
\bigoplus_{m \geq 2, n > 1 }
S^{m}\Omega_{X}(m-n)
\) 
is isomorphic to the \((N-i)\)th cohomology group of the corresponding graded part of the above complex.
\end{theorem}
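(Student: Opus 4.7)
The plan is to reproduce in codimension 2 the template developed in Sections \ref{sect: coho hyp 1}, \ref{sect: coho ci 1} and \ref{sect: coho hyp 2} (which yielded Theorems \ref{thm: coho ci 1 intro} and \ref{thm: coho hyp 2 intro}). Concretely, I would restrict the resolution of Theorem \ref{thm: complex 2 intro cod 2} to the bi-graded parts of total twist $(m,-n)$ with $m\geq 2$ and $n>1$, compute cohomology via the hypercohomology spectral sequence on $\P^N$, and then Serre dualize each piece, using the renormalization $u$ of Section \ref{subs: dual Euler} to turn the description into one involving the quadratic form $q=\sum_i X_i Y_i$.

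First I would establish the cohomological vanishing that drives the argument: for every term $\Ker \delta^k[\cdot,\cdot]$ appearing in the resolution of Theorem \ref{thm: complex 2 intro cod 2}, restricted to a bi-degree $(m,-n)$ with $m\geq 2$ and $n>1$, the cohomology on $\P^N$ is concentrated in top degree $N$. This follows by iterating Theorem \ref{thm: gen Euler intro} to present $\Ker \delta^k$ as a successive extension of (twisted) symmetric powers of $\Omega_{\P^N}$ and then invoking Bott's formulas (Appendix \ref{appendix: Bott}) to handle each layer in the prescribed negative-twist range. The hypercohomology spectral sequence then collapses, and $H^i\bigl(X, S^m\Omega_X(m-n)\bigr)$ is read off as the $i$-th cohomology of the complex obtained by applying $H^N(\P^N,\cdot)$ termwise to the restricted resolution.

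Second, I would Serre-dualize via $H^N(\P^N, \mathcal F) \simeq H^0\bigl(\P^N, \mathcal F^\vee(-N-1)\bigr)^\vee$, which reverses the direction of the complex and hence turns the index $i$ into $N-i$, as announced. Here the dual generalized Euler sequence, composed with the renormalization $u$ (so that $\delta^*$ becomes multiplication by $q$), identifies the Serre dual of each relevant bi-graded piece of $\Ker \delta^k$ with the correspondingly twisted $S/(q^k)$, up to the uniform shift $[0,-(N+1)]$ coming from the canonical class. The arrows transpose cleanly: the Serre dual of multiplication by $\tfrac{1}{d_i}(\diff P_i)_X(Y)$ is the partial differential operator $\alpha^*(P_i)$, so the dual of $\beta(P_i)=\,\cdot\,\times P_i - \alpha(P_i)\circ\delta$ becomes $\beta^*(P_i)=\,\cdot\,\times P_i - q\times \alpha^*(P_i)$ (since $\delta$ transposes to multiplication by $q$ under $u$). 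The $\tfrac12$-weighted combinations appearing in $g_{23},g_{24},g_{31},g_{32}$ are exactly the transposes of the matching combinations in $f_{23},f_{24},f_{31},f_{32}$.

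The main obstacle is bookkeeping, not conceptual. One must track the bi-degree shifts across all five columns of the complex of Theorem \ref{thm: complex 2 intro cod 2} so that each dualized term lands precisely at the quotient $S/(q^k)$ with the twist announced in the statement, and one must verify that the dualized differentials descend to these quotients; in particular $\alpha^*(P_i)$ must lower the order of vanishing along $q$ by exactly one unit, and the $\tfrac12$-factors must survive transposition with their signs intact. Beyond this combinatorial work, the argument is formally the same as in Section \ref{sect: coho hyp 2}, only applied to the richer resolution of codimension two.
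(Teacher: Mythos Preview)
Your proposal is correct and follows essentially the same route as the paper: apply $H^{N}(\P^{N},\cdot)$ to the resolution of Theorem \ref{thm: complex 2 intro cod 2}, use that all terms have cohomology concentrated in top degree (the paper packages this as Lemma \ref{lemma: support} on the flag variety, equivalent via push-forward to your extension argument for $\Ker\delta^{k}$), and then Serre-dualize with the renormalization $u$ to obtain the quotients $S/(q^{k})$ and the transposed maps via the recipe of Remark \ref{remark: recipee}. The bookkeeping of shifts and the transposition of the $f_{ij}$ into the $g_{ij}$ is exactly what the paper leaves to the reader in Section \ref{sect: coho ci 2}.
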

The main issue with the complex given in Theorem \ref{thm: coho ci 1 intro} is that we have to deal with quotients. Consider the simplest case of an hypersurface: in order to study the cohomology of the complex, one has to understand the kernel of the following map
\[
\alpha^{*}(P): \frac{S}{(P,q)}[-1,-d] \to \frac{S}{(P,q)}.
\]
Said otherwise, one has to study the solutions of the partial differential equation \(\sum\limits_{i=0}^{N} \frac{\partial P}{\partial X_{i}} \frac{\partial}{\partial Y_{i}}\) \textsl{modulo} the ideal \((P,q)\). We managed to do so when the polynomial \(P\) has a particular shape: see Section \ref{sect: intermediate ampleness}. The rough idea is that, in this case, one can reduce the problem to a situation where there is no quotient, so that one can use Theorem \ref{thm: partial intro}.

Whereas there are still quotients in the complexes in Theorems \ref{thm: coho hyp 2 intro} and \ref{thm: coho ci 2 intro}, the upshot is that they are of relatively mild form, namely of the following form
\[
\frac{S}{(q^{i})}
\]
where \(i \in \N_{\geq 1}\). Note that the quadratic form \(q=\sum\limits_{i=0}^{N}X_{i}Y_{i}\) is left unchanged under the following action of \(\GL_{N+1}(\C)\)
\[
A \in \GL_{N+1}(\C),
\ \
A \cdot \big(Y,X\big)
\bydef
\big((A^{-1})^{T}Y, AX\big).
\]
Therefore, representation theory of the general linear group \(\GL_{N+1}(\C)\) applies. In theory, this should allow to get rid of the quotient via a section of the quotient map \(S \to \frac{S}{(q^{i})} \). The real difficulty is to understand how the arrows in the complex behaves with respect to these sections: this is the object of a current work.
\newline

Finally, in Section \ref{sect: applications}, we provide several applications.

In Section \ref{sect: vanish and non-vanish}, we use the Koszul complex on \(\Flag_{(1,2)}\C^{N+1}\simeq \P(T\P^{N})\), whose construction is detailed in Section \ref{sect: Koszul complex 2: ci}, in order to recover all known vanishing theorems on twisted symmetric powers of cotangent bundles of complete intersection (namely, \cite{BR}[Theorem (i), (ii), (iii)]). Then, we prove non-vanishing theorems, which show in particular that the above vanishing theorems are optimal: these non-vanishing statements are, to our knowledge, new. In particular, we prove the following (see also Theorem \ref{thm: application 1.1 non-vanish 1}):
\begin{theorem}
\label{thm: application 1.1 non-vanish 1 intro}
Let \(n \in \N\), \(\mathbi{d}=(d_{1}, \dotsc, d_{c})\), \(d \bydef \min\Set{d_{i} \ | \ 1 \leq i \leq c}\), and suppose furthermore that \(d \geq 2\). 
A smooth complete intersection \(X \subset \P^{N}\) of codimension \(c < \frac{N+1}{2}\) and multi-degree \(\mathbi{d}\) satisfies the following non-vanishing statements:
\begin{itemize}
\item{} for \(d \geq 3\), \(H^{0}(X,S^{d-1}\Omega_{X}(2d-3)) \neq (0)\);
\item{} for \(d=2\), \(H^{0}(X,S^{2}\Omega_{X}(2)) \neq (0)\).
\end{itemize}
\end{theorem}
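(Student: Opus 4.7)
The plan is to reduce the non-vanishing to a cohomology computation on $\P^N$ via the Koszul resolution of Theorem \ref{thm: complex 1 intro}, then produce an explicit class using Theorem \ref{thm: partial intro} and dimension counts in the generalized Euler sequence (Theorem \ref{thm: gen Euler intro}). Set $(m,n)=(d-1,d-2)$ for $d\geq 3$ and $(m,n)=(2,0)$ for $d=2$.

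I first treat the hypersurface case $H=\{P=0\}\subset\P^N$ of degree $d$. Theorem \ref{thm: complex 1 intro} specialized to $c=1$ gives, in bi-degree $(m,n)$, the short exact sequence on $H$
\begin{equation*}
0 \to S^{m-1}\Omega_{\P^N}(m+n-d)\vert_H \xrightarrow{\alpha(P)} S^m\Omega_{\P^N}(m+n)\vert_H \to S^m\Omega_H(m+n) \to 0.
\end{equation*}
Combining with the restriction sequence $0\to\mathcal{F}(-d)\to\mathcal{F}\to\mathcal{F}\vert_H\to 0$ to transfer cohomology onto $\P^N$, and using Theorem \ref{thm: partial intro} to kill the $H^0$'s in the range $m>n$, the long exact sequence reduces to
\begin{equation*}
0 \to H^0(H,S^m\Omega_H(m+n)) \to H^1(H,S^{m-1}\Omega_{\P^N}(m+n-d)\vert_H) \xrightarrow{\alpha(P)_*} H^1(H,S^m\Omega_{\P^N}(m+n)\vert_H).
\end{equation*}
For $d\geq 3$, Euler-sequence dimension counts give both $H^1(\P^N,S^{d-1}\Omega_{\P^N}(2d-3))=0$ (the map $\delta\colon\C[Y,X]_{d-1,d-2}\to\C[Y,X]_{d-2,d-1}$ is an isomorphism of equidimensional spaces) and $H^\bullet(\P^N,S^{d-1}\Omega_{\P^N}(d-3))=0$ for $\bullet\leq 2$ (the corresponding Euler terms have twists $-1,-2$, with trivial cohomology on $\P^N$). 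Hence the target of $\alpha(P)_*$ vanishes, while its source is identified with $H^1(\P^N,S^{d-2}\Omega_{\P^N}(d-3))\cong \C[Y]_{d-3}$ of dimension $\binom{d-3+N}{N}>0$, yielding $H^0(H,S^{d-1}\Omega_H(2d-3))\cong \C[Y]_{d-3}\neq 0$.

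The $d=2$ case is more delicate, since the target $H^1(S^2\Omega_{\P^N}(2))$ no longer vanishes. The source becomes $H^1(\Omega_{\P^N}\vert_H)=\C$ (the K\"ahler class), and identifying $H^1(S^2\Omega_{\P^N}(2))$ via the generalized Euler sequence as the cokernel $V\otimes V^*/\{X_aY_b+X_bY_a\}\cong \wedge^2 V^*$, the image of the K\"ahler class under $\alpha(P)_*$ is the antisymmetrization of the Hessian of $P$; since the Hessian of a quadric is symmetric, this antisymmetrization is zero, so $H^0(H,S^2\Omega_H(2))=\C\neq 0$.

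To extend to codimension $c\geq 2$: when $m\geq c$, the full length-$c$ Koszul resolution of Theorem \ref{thm: complex 1 intro} applies in the relevant bi-degree, and the same cohomological analysis as for hypersurfaces carries through via the associated hypercohomology spectral sequence. When $m<c$, relabel so $d_1=d$, pick a nonzero $\sigma\in H^0(H_1,S^{d-1}\Omega_{H_1}(2d-3))$ produced above, and push it through the composition
\begin{equation*}
H^0(H_1,S^{d-1}\Omega_{H_1}(2d-3)) \to H^0(X,S^{d-1}\Omega_{H_1}(2d-3)\vert_X) \to H^0(X,S^{d-1}\Omega_X(2d-3)),
\end{equation*}
where the second arrow is induced by the surjection of the conormal sequence of $X\subset H_1$. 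For a generic choice of $P_2,\dotsc,P_c$ preserving smoothness, this composite is nonzero, so $h^0(X,S^{d-1}\Omega_X(2d-3))\geq 1$ on a dense open subset of the moduli; upper semi-continuity of $h^0$ then propagates the non-vanishing to every smooth $X$ in the hypotheses. The codimension bound $c<(N+1)/2$ (equivalently $\dim X\geq c$) is essential here: it ensures both that the generic transversality above can be arranged and that the irreducible family of smooth complete intersections of multi-degree $\mathbi{d}$ is non-empty. The main obstacle is precisely this last step — certifying non-triviality of the composition for generic $X$ before appealing to semi-continuity.
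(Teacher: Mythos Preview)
Your hypersurface argument is essentially correct and pleasantly direct: working with the length-one resolution from Theorem~\ref{thm: complex 1 intro} and the Euler sequence, you get for \(d\geq 3\) that the target \(H^{1}(H,S^{d-1}\Omega_{\P^{N}}(2d-3)|_{H})\) vanishes (twists \(-1,-2\) in the Euler pieces of the restriction kernel), so the non-vanishing reduces to \(H^{1}(H,S^{d-2}\Omega_{\P^{N}}(d-3)|_{H})\neq 0\), which indeed follows from the Euler sequence on \(H\). The \(d=2\) observation that the image of the K\"ahler class is the antisymmetrisation of a symmetric Hessian, hence zero in \(\wedge^{2}\C^{N+1}\), is also correct and nice.

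However, the extension to \(c\geq 2\) has a genuine gap --- the one you flag yourself. In the sub-case \(m<c\), nothing you have written shows that the composite \(H^{0}(H_{1},S^{m}\Omega_{H_{1}}(m+n))\to H^{0}(X,S^{m}\Omega_{X}(m+n))\) is nonzero for generic \(X\). Non-vanishing of \(\sigma|_{X}\) as a section of \(S^{m}\Omega_{H_{1}}|_{X}\) is clear by moving \(X\), but the passage through the conormal quotient \(S^{m}\Omega_{H_{1}}|_{X}\twoheadrightarrow S^{m}\Omega_{X}\) could kill \(\sigma|_{X}\) for \emph{every} \(X\): nothing rules out \(\sigma\) lying in the image of \(N^{*}_{X/H_{1}}\cdot S^{m-1}\Omega_{H_{1}}|_{X}\) identically. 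Your invocation of the codimension bound does not address this. Semi-continuity is fine once genericity is established, but genericity is the whole content. Likewise, the sentence ``the same cohomological analysis carries through via the hypercohomology spectral sequence'' in the sub-case \(m\geq c\) is not a proof: the terms of the first Koszul resolution are the sheaves \((\Ker\delta)|_{X}\) on \(X\), whose cohomology itself requires a further resolution, and one must actually chase the relevant differentials.

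The paper's proof avoids both difficulties by using the \emph{second} resolution (Section~\ref{sect: Koszul complex 2: ci}) on \(\Flag_{(1,2)}\C^{N+1}\), which does not require \(m\geq c\) and lives on a fixed ambient space whose cohomology is given by Bott's formulas. Diagram-chasing through the cut-out sequences \eqref{eq: cut exact} reduces the non-vanishing, uniformly for every smooth \(X\) with \(c<\frac{N+1}{2}\), to showing that the map
\[
\tilde\psi\colon \C[Y]_{m-2}^{\oplus k}\longrightarrow \Coker\bigl(\delta\colon \C[Y,X]_{m,n}\to\C[Y,X]_{m-1,n+1}\bigr),\qquad (A_{i})\mapsto \Bigl[\textstyle\sum_{i}(\diff P_{i})_{X}(Y)\,A_{i}\Bigr]
\]
has nontrivial kernel (here \(k\) is the number of \(d_{i}\) equal to \(d\)). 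The key explicit input is the identity \(\delta^{\,d-1}(P(Y))=(d-1)!\,(\diff P)_{X}(Y)\), from which an antiderivative \(B\) with \(\delta(B)=A\cdot(\diff P_{1})_{X}(Y)\) is written down for any \(A\in\C[Y]_{d-3}\); this produces elements of \(\Ker\tilde\psi\) directly, with no genericity or semi-continuity needed. Your hypersurface computation is in fact a special case of this same kernel statement, so the natural fix is not to push sections from \(H_{1}\) down to \(X\), but to run the flag-variety resolution from the start.
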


The combination of some of these vanishing and non-vanishing theorems allows us to prove the following theorem (see also Theorem \ref{cor: generalization}):
\begin{theorem}
\label{cor: generalization intro}
Let \(d \in \N_{\geq 2}\) be a natural number, and let \(X \subset \P^{N}\) be a non-degenerate\footnote{This means that the complete intersection is not included in an hyperplane.} smooth complete intersection of codimension \(c < \frac{N}{2}\). The complete intersection \(X\) satisfies
\[
\bigoplus_{m \geq \max(d-2,1)} 
H^{0}\big(
X, S^{m}\Omega_{X}(m+\max(d-3,0))
\big)
=
(0)
\]
if and only if \(X\) is not included in an hypersurface of degree \( 2 \leq i \leq d\).
\end{theorem}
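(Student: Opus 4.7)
The plan is to prove both implications separately, by combining the vanishing and non-vanishing results developed in Section \ref{sect: vanish and non-vanish}. The first observation is to translate the containment hypothesis into a degree condition on the defining equations: since $X$ is a smooth complete intersection with defining ideal $(P_{1}, \dotsc, P_{c})$, every polynomial vanishing on $X$ lies in this ideal and therefore has degree at least $\min_{j}\deg P_{j}$; consequently, ``$X$ is not contained in any hypersurface of degree $i \in [2,d]$'' is equivalent to the purely numerical condition $\min_{j} \deg P_{j} > d$.

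For the vanishing direction, assuming $\min_{j} \deg P_{j} > d$, I would invoke the BR-type vanishing theorems re-proved in Section \ref{sect: vanish and non-vanish}, combined with the explicit Koszul description of cohomology from Theorem \ref{thm: coho ci 1 intro}. The relevant bi-graded component of the Koszul complex on the quotient $\C[Y,X] / (P_{1}, \dotsc, P_{c}, q)$ vanishes by a direct degree count, exploiting the fact that every defining polynomial has degree strictly greater than $d$. For the boundary values $d = 2, 3$, where the target algebra coincides with $\bigoplus_{m} H^{0}(X, S^{m}\Omega_{X}(m))$, an alternative (and more transparent) argument via the symmetric algebra isomorphism of Theorem \ref{thm: sym alg intro} is also available.

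For the non-vanishing direction, assume $X$ is contained in a hypersurface $H = \{P = 0\}$ of degree $i \in [2, d]$. I would form an auxiliary smooth complete intersection $Y \supset X$ of codimension $c$ by replacing one generator of $(P_{1}, \dotsc, P_{c})$ by a generic combination involving $P$, ensuring that the multi-degree of $Y$ has minimum at most $i$. Applying the non-vanishing Theorem \ref{thm: application 1.1 non-vanish 1 intro} to $Y$ then produces a nonzero section of $S^{e-1}\Omega_{Y}(2e-3)$ (or $S^{2}\Omega_{Y}(2)$ if $e=2$), where $e$ is the minimum of the multi-degree of $Y$. Restricting along the surjection $\Omega_{Y}\vert_{X} \twoheadrightarrow \Omega_{X}$ and multiplying by suitable sections of $\O_{X}(r)$ yields an element in the prescribed bi-graded component of the target algebra. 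The main obstacle will be verifying that $Y$ can be chosen smooth and, crucially, that the restricted section on $X$ remains nonzero; this requires a careful analysis of the restriction map via the conormal sequence of $X \subset Y$, using the explicit resolutions of Sections \ref{sect: resolutions} and \ref{sect: coho sym power ci}.
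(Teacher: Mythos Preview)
Your first-paragraph translation of the hypothesis into $\min_j \deg P_j > d$ is correct and is exactly how the paper proceeds. For the vanishing direction, the paper simply invokes Theorem~\ref{thm: application 1.1 vanish} in one line (with $n = \max(d-3,0) < \min_j \deg P_j - 2$ and $m > n$ throughout the range); your additional appeal to Theorem~\ref{thm: coho ci 1 intro} is both unnecessary and off-target, since that result only treats \emph{negative} twists $S^m\Omega_X(m-n)$ with $n>1$, whereas here the twist $m+\max(d-3,0)$ is non-negative.

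For the non-vanishing direction there is a real gap. The auxiliary $Y$ is a red herring: a smooth complete intersection $Y\supset X$ of the \emph{same} codimension $c$ must equal $X$ (both are irreducible of the same dimension), so there is nothing to restrict and your worry about the section surviving restriction evaporates. Your own first paragraph already gives $\min_j \deg P_j \leq d$, and the paper simply applies Theorem~\ref{thm: application 1.1 non-vanish 1} directly to $X$ with $i = \min_j \deg P_j$. The substantive missing idea is the bidegree adjustment: multiplying by sections of $\O_X(r)$ only raises the twist, never the symmetric power, so from the non-vanishing at $(m,n)=(i-1,i-2)$ (or $(2,0)$ when $i=2$) you cannot reach any $(m',\max(d-3,0))$ with $m'\geq \max(d-2,1)$ once $i$ is small compared to $d$. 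The paper's key observation is that $\Omega_X(2)$ is globally generated (being a quotient of $\Omega_{\P^N}(2)|_X$), so one may multiply by a general section of $S^{k}\Omega_X(2k)$ to move from $(m,n)$ to $(m+k,n+k)$; taking $k=d-1-i$ lands precisely at $(d-2,d-3)$.
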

This result can be seen as a generalization of results obtained in \cite{Bog}[Theorems B and D]. In view of Hartshorne's conjecture on smooth projective varieties that are complete intersections (see \cite{Laz1}[Conjecture 3.2.8], and Section \ref{sect: vanish and non vanish 1}), a natural problem would be to give a proof of Theorem \ref{cor: generalization intro} that works for any smooth subvariety of codimension \(c<\frac{N}{3}\).

In Section \ref{sect: algebra}, we study the algebra 
\[
\bigoplus_{m \in \N} H^{0}(X, S^{m}\Omega_{X}(m))
\]
of a smooth complete intersection \(X \subset \P^{N}\) of codimension \(c < \frac{N}{2}\), using the explicit resolution of \(\O_{\P(TX)}\) detailed in Section \ref{sect: Koszul complex 2: ci}. We proved the following (see also Theorem \ref{thm: sym alg}):
\begin{theorem}
\label{thm: sym alg intro}
Let
\[
X\bydef \Set{q_{1}=0} \cap \dotsb \cap \Set{q_{k}=0} \cap \Set{P_{k=1}=0} \cap \dotsb \cap \Set{P_{c}=0}
\subset \P^{N}
\]
be a smooth complete intersection of codimension \(c < \frac{N}{2}\), where \(q_{1}, \dotsc, q_{k}\) are homogeneous quadratic polynomials, and where \(P_{k+1}, \dotsc, P_{c}\) are homogeneous of degree \(> 2\).

There is a natural graded isomorphism of \(\C\)-algebras
\[
\bigoplus_{m \in \N} H^{0}(X, S^{m}\Omega_{X}(m))
\simeq
\C[q_{1}, \dotsc, q_{k}] 
\subset \C[X].
\]
\end{theorem}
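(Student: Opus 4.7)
The plan is to apply the Koszul resolution of Theorem~\ref{thm: complex 1 intro} at bi-degree $(m,0)$, which resolves $S^{m}\Omega_{X}(m)$ by the complex $K^{\bullet}$ with $K^{-i} = \bigoplus_{\abs{I}=i} S^{m-i}\Omega_{\P^{N}}(m - \abs{\mathbi{d}_{I}})|_{X}$, and then run the hypercohomology spectral sequence $E_{1}^{p,q} = H^{q}(X, K^{p}) \Rightarrow H^{p+q}(X, S^{m}\Omega_{X}(m))$. Using the restriction to $X$ of the generalized Euler sequence (Theorem~\ref{thm: gen Euler intro}) together with the vanishings $H^{j}(X, \O_{X}(k)) = 0$ for $0 < j < \dim X$ (ensured by $\dim X = N-c > c$ under $c < N/2$), one checks that on the diagonal $p+q = 0$ only $E_{1}^{-1,1}$ can be non-zero: $E_{1}^{0,0} = H^{0}(X, S^{m}\Omega_{\P^{N}}(m)|_{X}) = 0$ since $\delta$ is injective on $\C[Y]_{m}$, $E_{1}^{-q,q} = 0$ for $q \geq 2$ (either vacuously when $q > c$, or via the Euler sequence and the Kodaira-type vanishings for $2 \leq q \leq c$), and $E_{1}^{-2,1} = 0$ similarly. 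Hence $H^{0}(X, S^{m}\Omega_{X}(m)) = \Ker d_{1}$.

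The same Euler sequence yields $H^{1}(X, S^{m-1}\Omega_{\P^{N}}(m-d_{i})|_{X}) \simeq \C[Y]_{m-2}$ when $d_{i} = 2$ and $0$ when $d_{i} \geq 3$, so $E_{1}^{-1,1} = \bigoplus_{i: d_{i} = 2} \C[Y]_{m-2}$; the differential $d_{1}$, induced by the Koszul maps $\alpha(q_{i}) = Q_{i}(X,Y)$ (where $Q_{i}$ is the polarization of $q_{i}$), is the map $(f_{i})_{i} \mapsto \sum_{i} Q_{i}(X,Y) f_{i}(Y)$ modulo $\delta(\C[Y]_{m}) \subset \C[Y,X]_{m-1,1}$. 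Given $\tilde{R} \in \C[Z_{1}, \ldots, Z_{k}]_{m/2}$ (so $R = \tilde{R}(q_{1}, \ldots, q_{k}) \in \C[q_{1}, \ldots, q_{k}]_{m}$), setting $g(Y) = \tilde{R}(q(Y))$ and $f_{i}(Y) = 2\partial_{Z_{i}}\tilde{R}(q(Y))$ satisfies $\delta(g) = \sum_{i} Q_{i}(X,Y) f_{i}(Y)$ by the chain rule, giving an injective map $\C[q_{1}, \ldots, q_{k}]_{m} \hookrightarrow \Ker d_{1}$. Conversely, given $(f_{i}) \in \Ker d_{1}$ with $\sum Q_{i} f_{i} = \delta(g)$ for the unique $g \in \C[Y]_{m}$ (uniqueness by injectivity of $\delta$), comparing coefficients of $X_{j}$ yields $dg = \tfrac{1}{2}\sum_{i} f_{i}\, dq_{i}$ in Kähler differentials. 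Since $q_{1}, \ldots, q_{k}$ are algebraically independent in $\C[Y]$ (being part of a regular sequence cutting out smooth $X$), differentiating gives $\sum_{i} df_{i} \wedge dq_{i} = 0$, from which a local Poincaré-lemma argument on the dense open where $dq_{1} \wedge \cdots \wedge dq_{k} \neq 0$ produces $\tilde{R} \in \C[Z_{1}, \ldots, Z_{k}]_{m/2}$ with $f_{i} = 2\partial_{Z_{i}}\tilde{R}(q(Y))$ and $g = \tilde{R}(q(Y))$, so $R = g \in \C[q_{1}, \ldots, q_{k}]_{m}$. Multiplicativity of the resulting isomorphism is checked on the $g$-side via $g_{R} g_{R'} = g_{RR'}$.

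The main technical obstacle is the integrability step: deducing from the closure of the 1-form $\sum f_{i}\, dq_{i}$ and the algebraic independence of the $q_{i}$'s that the coefficients $f_{i}$ themselves lie in the subring $\C[q_{1}(Y), \ldots, q_{k}(Y)]$, and that a primitive $\tilde{R}$ exists globally. This is where the codimension hypothesis $c < N/2$ plays its essential role, via generic smoothness (rank-$k$ Jacobian) of the morphism $q \colon \C^{N+1} \to \C^{k}$ and connectedness of its generic fibers (of dimension $N+1-k \geq 2$). Minor issues requiring care include the edge cases $m \leq 1$ (where both sides vanish) and the identification of the cup product structure on $\bigoplus_{m} H^{0}(X, S^{m}\Omega_{X}(m))$ with ordinary multiplication of polynomials at the spectral-sequence level.
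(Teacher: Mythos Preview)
Your argument is correct and lands on exactly the same computation as the paper, but you reach it by a different route. The paper uses the \emph{second} Koszul resolution (the one on $\Flag_{(1,2)}\C^{N+1}$, Section~\ref{sect: Koszul complex 2: ci}) and chases through the cut-out short exact sequences \eqref{eq: cut exact} as in the proof of Theorem~\ref{thm: application 1.1 non-vanish 1}; this is where $c<\tfrac{N}{2}$ enters, to turn the injection \eqref{eq: isom} into an isomorphism. You instead use the \emph{first} Koszul resolution (Theorem~\ref{thm: complex 1 intro}) and a hypercohomology spectral sequence, with the vanishing of $H^j(X,\O_X(\cdot))$ for $0<j<N-c$ (guaranteed precisely by $c<\tfrac{N}{2}$) killing $E_1^{-q,q}$ for $q\geq 2$ and $E_1^{-2,1}$. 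Both routes identify $H^0(X,S^m\Omega_X(m))$ with the kernel of
\[
\phi_m\colon \bigoplus_{i:\,d_i=2}\C[Y]_{m-2}\longrightarrow \Coker\bigl(\delta\colon\C[Y]_m\to\C[Y,X]_{m-1,1}\bigr),\qquad (A_i)\longmapsto\Bigl[\textstyle\sum_i(\diff q_i)_X(Y)\,A_i\Bigr],
\]
and from there the two arguments are essentially identical: compare $X_j$-coefficients (equivalently, the paper applies $\delta^{\mathrm{rev}}$) to get $dg=\tfrac12\sum_i f_i\,dq_i$, pass to local coordinates $z_i=q_i$ on the open set where the Jacobian has rank $k$, apply the Poincar\'e lemma, and use homogeneity to force the primitive to be a polynomial of degree $m/2$ in the $z_i$. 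Your route has the advantage of avoiding the rank-two bundle $\E$ and staying entirely on $X$; the paper's route makes the connection to the non-vanishing Theorem~\ref{thm: application 1.1 non-vanish 1} transparent.

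One small correction: you say the hypothesis $c<\tfrac{N}{2}$ ``plays its essential role'' in the integrability step via connectedness of fibers. In fact it is already fully consumed in your spectral-sequence vanishings (to ensure $q\leq c<N-c=\dim X$, so that $H^{q-1}$ and $H^q$ of twists of $\O_X$ vanish). The integrability step only needs the $q_i$ to have generically independent differentials, which follows from smoothness of $X$ alone. Also, you should make explicit the step that turns the local Poincar\'e primitive into a \emph{polynomial}: apply the Euler field to $dg=\tfrac12\sum f_i\,dq_i$ to get $mg=\sum f_i q_i$, which in the $z$-coordinates reads $mg=2\sum z_i\,\partial g/\partial z_i$, forcing $g$ to be homogeneous of degree $m/2$ in $z_1,\dots,z_k$ (and identically zero when $m$ is odd). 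This is the key point the paper spells out and your sketch leaves implicit.
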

This theorem generalizes a result in \cite{DeOl}: in \textsl{loc.cit}, under the more restrictive hypothesis on the codimension \(c < \frac{N+2}{3}\), the authors provided a completely different proof of the same statement. Let us note that in the case where \(X=\Set{q_{1}=0}\cap \Set{q_{2}=0} \subset \P^{4}\)(which is not covered by the above theorem), we showed that the conclusion of Theorem \ref{thm: sym alg intro} still holds: see Remark \ref{rem: sym alg intro}. This provides an alternative proof of the fact that the tangent bundle of \(X\) is not big, which was proved independently in \cite{Hor} and \cite{Mall}.

In Section \ref{sect: intermediate ampleness}, we use the complex given in Theorem \ref{thm: coho ci 1 intro} (or the one given in Theorem \ref{thm: coho hyp 1}) in order to prove a vanishing theorem for cohomology of negatively twisted symmetric powers of cotangent bundles of special kinds of hypersurfaces (see also Theorem \ref{thm: vanishing hyp}):
\begin{theorem}
\label{thm: vanishing hyp intro}
Let \(H\) be a smooth hypersurface of degree \(d\), whose defining equation \(P\) is of the following form
\[
P \bydef X_{N}^{d}-F(X_{0}, \dotsc, X_{N-1}),
\]
where \(F\) is a smooth\footnote{By definition, this means that the associated hypersurface (in \(\P^{N-1}\) here) is smooth.} polynomial, homogeneous of degree \(d\). 
Let \(m \in \N\), \(n \in \Z\), and suppose that \(d \geq 3\).
As soon as \(m(d-3) > n\), one has the vanishing:
\[
H^{N-1}(H, S^{m}\Omega_{H}(d-n-(N+1)))
=
(0).
\]
In particular, as soon as \(d \geq 4\), this shows that \(\Omega_{H}\) is uniformly \(q\)-ample (with uniform bound \(\lambda=\frac{1}{d-3}\) if \(d \geq N+1\)), and hence so is a general hypersurface of degree \(d \geq 4\).
\end{theorem}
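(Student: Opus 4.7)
The plan is to invoke the hypersurface case of Theorem \ref{thm: coho ci 1 intro} (i.e.\ Theorem \ref{thm: coho hyp 1}) and then to exploit the very special form $P = X_N^d - F$ in order to reduce the resulting kernel computation to an application of Theorem \ref{thm: partial intro} to the polynomial $F$. More precisely, by Theorem \ref{thm: coho hyp 1} the group $H^{N-1}(H, S^m\Omega_H(d-n-(N+1)))$ is identified (after the bi-graded shift) with the kernel of
\[
\alpha^*(P)\colon \bigl(S/(P,q)\bigr)_{(m, n')} \longrightarrow \bigl(S/(P,q)\bigr)_{(m-1, n'+d-1)}
\]
in an explicit bidegree $(m, n')$ affinely related to $(m, n)$. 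Thus the theorem reduces to showing that this kernel vanishes as soon as $m(d-3) > n$.

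\medskip

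The special form of $P$ gives the direct-sum decomposition
\[
S/(P) \;\simeq\; \bigoplus_{k=0}^{d-1} X_N^k \cdot \widetilde S, \qquad \widetilde S \bydef \C[X_0, \ldots, X_{N-1}, Y_0, \ldots, Y_N],
\]
so every class modulo $P$ has a unique lift of $X_N$-degree strictly less than $d$; in these coordinates $\alpha^*(P) = -\tfrac{1}{d}\sum_{i=0}^{N-1} (\partial F/\partial X_i)\, \partial_{Y_i} + X_N^{d-1}\,\partial_{Y_N}$. Now lift a class of $\Ker\, \alpha^*(P) \subset S/(P,q)$ to $A \in S$ and write $\alpha^*(P)(A) = PB + qC$ in $S$. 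The key observation is that Euler's identity $\sum_i X_i\, \partial_{X_i}P = d\,P$ is precisely the statement $\alpha^*(P)(q) = P$; hence the $PB$-summand on the right can be absorbed by replacing $A$ with $A - qB$ (at the cost of modifying $C$). Iterating this trick together with the canonical $X_N$-reduction, the problem becomes that of finding nontrivial bihomogeneous solutions on $\widetilde S$ of the PDE
\[
\sum_{i=0}^{N-1} \frac{\partial F}{\partial X_i}\frac{\partial}{\partial Y_i}.
\]
The polynomials $(\partial F/\partial X_i)_{0 \leq i \leq N-1}$ are homogeneous of degree $d-1$ and, by smoothness of $F$, share only the origin as common zero, so Theorem \ref{thm: partial intro} applies: a nontrivial bihomogeneous solution in bidegree $(\mu, \nu)$ exists only if $(d-1)\mu \leq \nu$. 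Chasing the dictionary of bidegrees, this existence threshold translates into $n \geq m(d-3)$, so the hypothesis $m(d-3) > n$ forces the kernel to vanish.

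\medskip

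The uniform $q$-ampleness claim follows by the standard dictionary: writing the twist as $\O_H(-k)$, the vanishing reads $H^{N-1}(H, S^m\Omega_H(-k)) = 0$ whenever $k$ lies in the linear half-plane $k < m(d-3) + (N+1-d)$, which is Totaro's linear-slope criterion for uniform $(N-2)$-ampleness of $\Omega_H$ with bound $\lambda = 1/(d-3)$ (the regime $d \geq N+1$ is where the additive constant becomes negligible). Semi-continuity of cohomology in the flat family of smooth hypersurfaces of degree $d$ then transfers the conclusion from our specific $H$ to a general hypersurface of degree $d$.

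\medskip

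The subtle step is the management of the double quotient by $(P, q)$: the Koszul complex of Theorem \ref{thm: coho hyp 1} lives in $S/(P, q)$, whereas Theorem \ref{thm: partial intro} applies only in the honest polynomial ring $\widetilde S$. The two ingredients that make the reduction possible are (i) the direct-sum decomposition afforded by the monomial $X_N^d$ in $P$, which provides canonical representatives modulo $P$, and (ii) the identity $\alpha^*(P)(q) = P$, which lets us trade $(P)$-parts of $\alpha^*(P)(A)$ against modifications of $A$ in $(q)$. Without the specific shape $X_N^d - F$ neither ingredient is available, which is why the statement must be restricted to this class of hypersurfaces rather than stated for an arbitrary one, and where the main technical work of the argument lies.
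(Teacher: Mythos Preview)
Your overall strategy matches the paper's: reduce via Theorem \ref{thm: coho hyp 1} to the injectivity of \(\alpha^{*}(P)\) on \(S/(P,q)\), use the identity \(\alpha^{*}(P)(q)=P\) to absorb the \(P\)-part, and then bring Theorem \ref{thm: partial intro} to bear on \(F\). However, the central reduction step---``iterating this trick together with the canonical \(X_{N}\)-reduction, the problem becomes that of finding nontrivial bihomogeneous solutions on \(\widetilde S\) of the PDE \(\sum_{i<N}(\partial F/\partial X_{i})\partial_{Y_{i}}\)''---is where the argument breaks down.

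Concretely, your \(\widetilde S=\C[X_{0},\dotsc,X_{N-1},Y_{0},\dotsc,Y_{N}]\) still contains \(Y_{N}\), and the operator \(\sum_{i<N}(\partial F/\partial X_{i})\partial_{Y_{i}}\) annihilates anything built from \(Y_{N}\) alone (and much more). So this PDE has nontrivial bihomogeneous solutions in \emph{every} bidegree \((m,n')\) with \(m\geq 1\), and Theorem \ref{thm: partial intro} (which requires as many \(P_{i}\)'s as \(Y\)-variables, with only the origin as common zero) simply does not apply to this setup. Your \(X_{N}\)-reduction via \(S/(P)\simeq\bigoplus_{k<d}X_{N}^{k}\widetilde S\) controls the \(X_{N}\)-degree but does nothing for \(Y_{N}\); the operator \(\alpha^{*}(P)\) itself carries the term \(X_{N}^{d-1}\partial_{Y_{N}}\), and the relation \(q=X_{N}Y_{N}+\sum_{i<N}X_{i}Y_{i}\) mixes the \(X_{N}\)-grading, so the ``iteration'' you invoke does not obviously terminate in a problem on \(N\) \(Y\)-variables.

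The paper closes this gap differently: it \emph{dehomogenizes} in \(X_{N}\), so that \(q^{\mathrm{in}}=Y_{N}+\sum_{i<N}x_{i}Y_{i}\) becomes monic in \(Y_{N}\), and then performs Euclidean division by \(q^{\mathrm{in}}\) in the variable \(Y_{N}\). The constant term \(a_{0}\) of this expansion is independent of \(Y_{N}\), and on it the operator \(\alpha^{*}(P)^{\mathrm{in}}\) genuinely reduces to \(\alpha^{*}(F)\) acting on \(\C[x_{0},\dotsc,x_{N-1},Y_{0},\dotsc,Y_{N-1}]\), where Theorem \ref{thm: partial intro} is applicable. Two further points you elide are also doing real work: (i) the degree count---Euclidean division raises the \(x\)-degree to at most \(n+2m\), and it is the inequality \((d-1)m>n+2m\), i.e.\ \((d-3)m>n\), that triggers injectivity; and (ii) the passage back to the homogeneous ring uses that \((P,q)\) is prime (so \(X_{N}^{M}A\in(P,q)\Rightarrow A\in(P,q)\)), together with an intermediate lemma showing \(a_{1}\in\operatorname{Im}\alpha^{*}(F)\) before one can write \(a_{0}+P^{\mathrm{in}}b\in\Ker\alpha^{*}(F)\). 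None of these steps is visible in your sketch.
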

For the notion of (uniform) intermediate ampleness, we refer to Section  \ref{sect: intermediate ampleness}. A stronger version of this vanishing theorem was actually proved recently in \cite{Hor}, in a different context and via a different method. In \textsl{loc.cit}, the authors prove that for \textsl{any} smooth hypersurface \(H \subset \P^{N}, N \geq 3\), of degree \(d \geq 3\), and for any \(m \in \N, n \in \Z\) satisfying the \textsl{broad} inequality
\[
m(d-3) \geq n,
\]
one has the vanishing
\[
H^{0}(H,S^{m}TX(n))
=
(0).
\]
(One recovers our statement via Serre duality).

In Section \ref{sect: effective}, we use the explicit description of cohomology detailed in Section \ref{sect: coho sym power ci}. Once implemented on computers (see Appendix \ref{appendix: sage}), it allows to compute completely the cohomology for small twists and small symmetric powers. In particular, we showed from a purely computational point of view the existence of a (unique) global section of \(S^{6}\Omega_{H}(8)\) on a quartic Fermat hypersurface in \(\P^{3}\). From a theoretical point of view, this fact is well-known as such a section defines the exceptional divisor of bi-tangent lines in \(\P(TH)\): see Section \ref{subs: bi-tangent} for more details.

We also provide a simple example of a family of surfaces in \(\P^{4}\) along which the canonically twisted symmetric pluri-genera do not remain constant (see also Theorem \ref{thm: non-invariance})
\begin{theorem}
The \(1\)-parameter family
\[
X_{t}
\bydef
\Set{X_{0}^{4}+X_{1}^{4}+\dotsb+X_{4}^{4}-tX_{0}^{2}X_{4}^{2}=0} \cap \Set{-2X_{0}^{4}-X_{1}^{4}+X_{3}^{4}+2X_{4}^{4}=0} \subset \P^{4}
\]
satisfies \(h^{0}(S^{6}\Omega_{X_{0}}(K_{X_{0}}))=1\), whereas \(h^{0}(S^{6}\Omega_{X_{0}}(K_{X_{t}}))=0\) for a general \(t \in \C\).
\end{theorem}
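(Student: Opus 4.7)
The plan is to reduce the statement to a direct computation using the explicit complex provided by Theorem~\ref{thm: coho ci 2 intro}. Both $X_0$ and $X_t$ (for general $t$) are smooth complete intersections of codimension $c = 2$ in $\P^{N}$ with $N = 4$ and multi-degree $\mathbf{d} = (4, 4)$, so adjunction gives $K_{X_t} = \O_{X_t}(d_1 + d_2 - (N+1)) = \O_{X_t}(3)$, and the pluri-genus in question is $h^0(X_t, S^6 \Omega_{X_t}(3))$. Setting $m = 6$ and $n = 3$ (so $n > 1$), Theorem~\ref{thm: coho ci 2 intro} identifies this number with the dimension of the fourth (i.e.\ $N$-th) cohomology group of the bi-graded piece of weight $(m, n) = (6, 3)$ of the explicit complex from that theorem; its terms are finite-dimensional $\C$-vector spaces cut out as graded pieces of quotients $\C[Y, X]/(q^i)$, and its differentials are built explicitly from the linear maps $\alpha^{*}(P_i)$ and $\beta^{*}(P_i)$.

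First I would verify, via the Jacobian criterion, that $X_0$ and some specific $X_{t_0}$ with $t_0 \neq 0$ (e.g.\ $t_0 = 1$) are indeed smooth complete intersections; since smoothness is open, this automatically propagates to a Zariski neighbourhood of $t_0$. Next I would feed both pairs of defining polynomials into the SageMath implementation outlined in Appendix~\ref{appendix: sage}, which assembles the $(6, 3)$-graded piece of the complex and returns the ranks of the two differentials around position $4$: the expected outputs are $\dim H^4 = 1$ at $t = 0$ and $\dim H^4 = 0$ at $t = t_0$.

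To conclude the statement for a general $t \in \C$, I would apply upper semicontinuity of cohomology to the flat family $\{X_t\}$ over the smooth locus of the parameter space: the function $t \mapsto h^0(X_t, S^6 \Omega_{X_t}(3))$ is upper semicontinuous, so the subset where it vanishes is Zariski open; having exhibited one $t_0$ inside it, this open set is non-empty and hence cofinite.

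The only genuine obstacle is computational: the graded pieces of $\C[Y,X]/(q^i)$ appearing in weight $(6, 3)$ are large enough that the linear algebra must be performed by machine, and the ranks of the maps $\alpha^{*}(P_i)$ and $\beta^{*}(P_i)$ have to be computed exactly (e.g.\ over $\Q$) to distinguish a one-dimensional kernel at $t = 0$ from no kernel at all for general $t$. Once the complex has been coded, as in Appendix~\ref{appendix: sage}, the remaining task is a bookkeeping exercise. Heuristically, the jump at $t = 0$ comes from the vanishing of the monomial $t X_0^2 X_4^2$, which introduces an additional relation among the polynomials $(\diff P_i)(Y)$ and produces exactly one extra class in the relevant cohomology group.
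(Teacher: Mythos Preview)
Your proposal is correct and follows the same overall strategy as the paper: reduce $K_{X_t}$ to $\O_{X_t}(3)$ by adjunction, compute $h^{0}(S^{6}\Omega_{X_t}(3))$ by explicit finite-dimensional linear algebra at $t=0$ and at one specific $t\neq 0$ via the Sage implementation, and conclude for general $t$ by upper semicontinuity. The only difference is that the paper's implementation (Appendix~\ref{appendix: sage}) actually uses the \emph{first} complex of Theorem~\ref{thm: coho ci 1}, not the second one from Theorem~\ref{thm: coho ci 2 intro} that you propose; the first complex is computationally lighter (a short Koszul complex on $\C[Y,X]/(P_{1},P_{2},q)$ with maps $\alpha^{*}(P_{i})$ only), whereas the second involves more terms, quotients by powers of $q$, and the additional maps $\beta^{*}(P_{i})$, but either one yields the same numbers.
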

It was asked by Paun if such a phenomenon occurs, and it was first answered negatively in \cite{Bog}, via a counter-example more sophisticated than the one provided here.

Finally, in Section \ref{subs: criteria ampleness}, we discuss possible strategies to provide explicit examples of complete intersection surfaces in \(\P^{4}\) whose cotangent bundle is ample. There is nothing particularly new in this section, but we felt that it was worth mentioning, particularly because this is one of the questions that originally motivated this paper.

\begin{con}
The terminology \textsl{Bott's formulas} is written at many places in the text. It encodes a general statement, that is often too general for the purposes at hand. Therefore, from time to time, it happens that what we are referring to as \textsl{Bott's formulas} is not originally due to Bott.
However, we found it convenient to englobe in one expression a general statement that can be declined in various ways. For that reason, we wrote an appendix on these formulas (see Appendix \ref{appendix: Bott}). This appendix is also the occasion to recall some notations regarding Schur functors: we therefore invite the unfamiliar reader to briefly read it.

Throughout the paper, the projective space \(\P^{N}\) is fixed, and we suppose that \(N \geq 2\). Recall as well that, throughout the paper, the projectivization \(\P(E)\) of a vector bundle \(E\) on a variety means the projectivization of lines.
\end{con}

\ack{I would like to particularly thank Andreas Höring, as the starting point of this work was the following question he asked me: on a surface \(S \subset \P^{3}\) (say, a Fermat surface for simplicity), can we construct (via "Wronskian methods") global sections of \(S^{m}\Omega_{S}(n)\) with ratio \( \frac{n}{m} < \frac{1}{2}\) ? As I found it hopeless to try constructing them blind, I sought for a way to detect where one could hope to find such global sections. This is what eventually lead to this paper, as along the way many other questions popped off. The original question is still to be answered, but at least we have a better idea where to find these sections!

I also would like to thank Lionel Darondeau and Damian Brotbek for initiating me to the general subject of cohomology of symmetric powers of cotangent bundles of complete intersections. During my thesis, they offered me to work with them in order to construct explicitly a surface in \(\P^{4}\) with ample cotangent bundle, following the description and strategy detailed in their work. I implemented on computer the strategy, but it quickly became evident that it would require too much computations to be able to conclude. This is this very problem that gave me the motivation to write explicitly the (complicated) complex detailed in Theorem \ref{thm: complex 2 intro cod 2} (even if I was not able to get anything out of it!).

Finally, I would like to thank Stéphane Druel, who indicated me the following result: for a finite map \(f: \P^{N} \to \P^{N}\), the push-forward vector bundle \(f_{*}\O_{\P^{N}}\) splits.
}

\section{Cohomology of twisted symmetric powers of cotangent bundles of projective spaces.}
\label{sect: coho proj}
\subsection{Generalized Euler exact sequence.}
\label{subs: gen Euler seq}
Recall that on the projective space \(\P^{N}\), there exists canonical (negatively twisted) vector fields, usually called \textsl{Euler vector fields}, defined as follows. Denote \(p\colon \C^{N+1} \setminus \Set{0} \to \P^{N}\) the canonical projection onto the projective space of dimension \(N\), and denote \((e_{i})_{0 \leq i \leq N}\) the canonical basis of \(\C^{N+1}\). Define then for \(0\leq i \leq N\)
\[
\gamma_{i} \colon
\left(
\begin{array}{ccc}
 \P^{N} & \to  &  T\P^{N}(-1) \\
  \lbrack x \rbrack & \mapsto  & (\lbrack x \rbrack, (\diff p)_{x}(e_{i}))
\end{array}
\right).
\]
The negative twist (necessary to make the maps \(\gamma_{i}\)'s well-defined) comes from the fact that for any \(\lambda \in \C^{*}\), one has the equality
\[
(\diff p)_{\lambda x}=\frac{1}{\lambda}(\diff p)_{x}.
\]
Using the Euler vector fields, one constructs the usual \textsl{Euler exact sequence}
\begin{equation}
\label{Euler exact seq}
\xymatrix{
0 \ar[r] & \Omega_{\P^{N}}(1) \ar[r] & \overset{N+1}{\underset{i=0}{\bigoplus}}\O_{\P^{N}} \ar[r] & \O_{\P^{N}}(1) \ar[r] & 0
},
\end{equation}
where the first arrow is given by
\[
\big(\lbrack x \rbrack, \omega\big)
\mapsto
\big(\lbrack x \rbrack, (\omega(\gamma_{0}), \dotsc, \omega(\gamma_{N}))\big)
\]
and the second arrow by
\[
\big(\lbrack x \rbrack, (v_{0}, \dotsc, v_{N})\big)
\mapsto
\big(\lbrack x \rbrack, \sum\limits_{i=0}^{N} X_{i}(x)v_{i} \big),
\]
where \(X_{0}, \dotsc, X_{N} \in H^{0}(\P^{N}, \O_{\P^{N}}(1))\) are the usual homogeneous coordinates on the projective space \(\P^{N}\).

In order to generalize the previous exact sequence to higher symmetric powers of the cotangent bundle of \(\P^{N}\), note that the exact sequence \eqref{Euler exact seq} can be rewritten as follows:
\begin{equation*}
\xymatrix{
0 \ar[r] & \Omega_{\P^{N}}(1) \ar[r] & \C[Y]_{1}\otimes\O_{\P^{N}}  \ar[r]^-{\delta} & \O_{\P^{N}}(1) \ar[r] & 0
},
\end{equation*}
where \(\delta\) is the (differential) operator
\[
\delta \bydef \sum\limits_{i=0}^{N} X_{i} \frac{\partial}{\partial Y_{i}}.
\]
Here, the sections \(X_{0}, \dotsc, X_{N}\) are seen as global sections of the line bundle \(\O_{\P^{N}}(1)\) on the base space, and \(Y\bydef (Y_{0}, \dotsc, Y_{N})\) are seen as the variables of the graded vector space of polynomials in \(N+1\) variables 
\[
\C[Y]
=
\bigoplus_{i=0}^{\infty}
\C[Y]_{i},
\]
where \(\C[Y]_{i}\) is the set of homogeneous polynomials of degree \(i\in \N\).

We are then naturally lead to consider, for any \(m \in \N_{\geq 1}\), the exact sequence of vector bundles
\begin{equation}
\label{Gen Euler exact seq}
\xymatrix{
0 \ar[r] & \Ker(\delta) \ar[r] & \C[Y]_{m}\otimes\O_{\P^{N}}  \ar[r]^-{\delta} & \C[Y]_{m-1}\otimes\O_{\P^{N}}(1)}.
\end{equation}
The complex \eqref{Gen Euler exact seq} is actually exact on the right, i.e. the map \(\delta\) is surjective. We record it in a lemma:
\begin{lemma}
\label{lemma: Euler 1}
In the above complex \eqref{Gen Euler exact seq}, the map \(\delta\) is surjective.
\end{lemma}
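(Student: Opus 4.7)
The plan is to check the surjectivity fiberwise. Both source and target are locally free sheaves of finite rank on $\P^N$, so by Nakayama's lemma (equivalently, by the characterization of surjectivity for morphisms between vector bundles) it suffices to verify that $\delta$ induces a surjective $\C$-linear map on every fiber. At a point $[x] \in \P^N$, the fiber of the right-hand bundle is $\C[Y]_{m-1} \otimes \O_{\P^N}(1)_{[x]}$. Picking an index $i$ with $x_i \neq 0$ trivializes $\O_{\P^N}(1)$ near $[x]$ via the section $X_i$, and under this trivialization the fiber map reads
\[
\bar\delta_{[x]}\colon \C[Y]_m \longrightarrow \C[Y]_{m-1}, \qquad P \longmapsto \sum_{j=0}^N \frac{x_j}{x_i}\,\frac{\partial P}{\partial Y_j}.
\]

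Thus it remains to prove that for every nonzero $x\in\C^{N+1}$, the operator $L_x\bydef \sum_j x_j \partial/\partial Y_j$ is surjective as a map $\C[Y]_m \to \C[Y]_{m-1}$. The cleanest way I would handle this is to notice that $L_x$ is $\GL_{N+1}(\C)$-equivariant (with $\GL_{N+1}$ acting linearly on $x$ and on the variables $Y_0,\dotsc,Y_N$), and that the $\GL_{N+1}$-action on $\C^{N+1}\setminus\{0\}$ is transitive; so one may reduce to the case $x = (1,0,\dotsc,0)$, where $L_x = \partial/\partial Y_0$ is visibly surjective onto $\C[Y]_{m-1}$ (an explicit section is given by $Q \mapsto \int_0^{Y_0} Q\,dY_0$, i.e.\ on monomials $Y_0^{\alpha_0}\cdots Y_N^{\alpha_N} \mapsto \tfrac{1}{\alpha_0+1}Y_0^{\alpha_0+1}\cdots Y_N^{\alpha_N}$). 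Alternatively, and without invoking equivariance, one can simply pick a basis of $\C^{N+1}$ whose first vector is $x$, perform the induced linear change of variables in $\C[Y]$, and observe that $L_x$ becomes $\partial/\partial Y_0$ in these new coordinates.

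There is no real obstacle here: the only things to be careful with are that the trivialization of $\O_{\P^N}(1)$ at the point $[x]$ is properly accounted for, and that the change of basis used to reduce to $\partial/\partial Y_0$ is applied compatibly to both the $X$-variables (coordinates on $\P^N$) and the $Y$-variables (the coefficients of $\C[Y]_m$) — but this compatibility is built into the very definition of $\delta$, which is $\GL_{N+1}$-equivariant by inspection of the expression $\sum X_i\,\partial/\partial Y_i$.
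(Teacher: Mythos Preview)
Your proof is correct and follows essentially the same approach as the paper's: both reduce $\delta$ to a single partial derivative $\partial/\partial Y_0$ via a linear change of the $Y$-variables, then use the obvious antiderivative. The only cosmetic difference is that the paper works directly with local sections on the chart $\{X_0\neq 0\}$ (defining $L_0=Y_0/X_0$ and $L_i=Y_i-(X_i/X_0)Y_0$, so that $\delta(L_0)=1$ and $\delta(L_i)=0$), whereas you reduce to fibers via Nakayama and then change basis; the underlying computation is identical.
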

\begin{proof}
Let \( x = \lbrack x_{0}: \dotsb : x_{N} \rbrack \in \P^{N}\), and suppose without loss of generality that \(x_{0} \neq 0\). Consider the local sections around \(x\)
\[
\left\{ 
\begin{array}{ll}
L_{0}\bydef \frac{Y_{0}}{X_{0}} \\
L_{i} \bydef Y_{i} - \frac{X_{i}}{X_{0}}Y_{0} & \text{for \(1 \leq i \leq N\)}
\end{array}
\right..
\]
Note that \(\delta(L_{0})=1\), whereas \(\delta(L_{i})=0\) for \(1 \leq i \leq N\). By the Leibnitz rule, one has therefore the following equality for any \(\mathbf{\alpha}=(\alpha_{0}, \dotsc, \alpha_{N}) \in \N^{N+1}\):
\[
\delta(L_{0}^{\alpha_{0}+1} L_{1}^{\alpha_{1}} \dotsb L_{N}^{\alpha_{m}})
=
(\alpha_{0}+1)L_{0}^{\alpha_{0}}\dotsb L_{N}^{\alpha_{N}}.
\]
The surjectivity of \(\delta\) now follows from the equality
\[
\O_{\P^{N}, x}[Y_{0}, \dotsc, Y_{N}]=\O_{\P^{N}, x}[L_{0}, \dotsc, L_{N}].
\]
\end{proof}
The locally free sheaf \(\Ker(\delta)\) in the the complex \eqref{Gen Euler exact seq} can naturally be identified with the \(m\)th symmetric power of \(\Omega_{\P^{N}}(1)\):
\begin{lemma}
\label{lemma: Euler 2}
There is a natural isomorphism 
\[
\Ker(\delta)
\simeq
S^{m}\Omega_{\P^{N}}(m).
\]
\end{lemma}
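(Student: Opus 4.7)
The plan is to identify $S^m\Omega_{\P^N}(m)$ with $\Ker(\delta)$ via the natural $m$th symmetric power of the classical Euler sequence (the case $m=1$ of the generalized one), and then verify the identification locally. From the classical Euler sequence, $\Omega_{\P^N}(1)$ is a vector subbundle of $\C[Y]_1 \otimes \O_{\P^N}$. Taking the $m$th symmetric power of this subbundle inclusion yields a canonical injection of vector bundles
$$\iota \colon S^m\Omega_{\P^N}(m) = S^m\bigl(\Omega_{\P^N}(1)\bigr) \hookrightarrow S^m\bigl(\C[Y]_1 \otimes \O_{\P^N}\bigr) = \C[Y]_m \otimes \O_{\P^N}.$$

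Next, I would observe that the operator $\delta$ is a derivation with respect to the polynomial multiplication in $Y$, and that its restriction to $\C[Y]_1 \otimes \O_{\P^N}$ coincides with the right map of the Euler sequence (via $Y_i \mapsto X_i$). In particular, $\delta$ vanishes on the subbundle $\Omega_{\P^N}(1)$. Leibniz then gives, for local sections $\omega_1, \dotsc, \omega_m$ of $\Omega_{\P^N}(1)$,
$$\delta(\omega_1 \cdots \omega_m) = \sum_{j=1}^{m} \omega_1 \cdots \delta(\omega_j) \cdots \omega_m = 0.$$
Thus $\iota$ factors through an injection $\iota' \colon S^m\Omega_{\P^N}(m) \hookrightarrow \Ker(\delta)$.

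To conclude, I would combine a rank count with a local check. By Lemma \ref{lemma: Euler 1}, $\Ker(\delta)$ is a vector subbundle of rank $\binom{N+m}{m} - \binom{N+m-1}{m-1} = \binom{N+m-1}{m}$, matching the rank of $S^m\Omega_{\P^N}$. Since an injection between locally free sheaves of the same rank need not be an isomorphism a priori, the main (and essentially only) step requiring some care is a local verification. Using the notation of Lemma \ref{lemma: Euler 1}, fix a point where $x_0 \neq 0$: then $L_0, L_1, \dotsc, L_N$ form an $\O_{\P^N}$-frame of $\C[Y]_1 \otimes \O_{\P^N}$ in which $\delta$ acts as $\partial/\partial L_0$. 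Since $L_1, \dotsc, L_N$ are annihilated by $\delta$, they constitute a local frame for $\Omega_{\P^N}(1)$, and the monomials $L_1^{\alpha_1} \cdots L_N^{\alpha_N}$ with $\alpha_1 + \dotsb + \alpha_N = m$ form a local frame for $S^m \Omega_{\P^N}(m)$. The kernel of $\partial/\partial L_0$ in degree $m$ is precisely the local $\O_{\P^N}$-module spanned by these same monomials, so $\iota'$ is a local isomorphism, hence an isomorphism of vector bundles. No step presents a genuine obstacle: the argument is a formal consequence of Leibniz plus a routine trivialization.
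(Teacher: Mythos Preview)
Your proof is correct. The natural map you define agrees with the paper's (both are the $m$th symmetric power of the degree-$1$ Euler inclusion, which the paper writes explicitly via Euler vector fields as $\omega \mapsto \sum \omega(\gamma_{i_1}\cdots\gamma_{i_m})Y_{i_1}\cdots Y_{i_m}$), and both arguments show the image lands in $\Ker(\delta)$ by a Leibniz-type computation.

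The difference lies in how the isomorphism is verified. The paper constructs an explicit inverse: given $P=\sum P_{\mathbi{\alpha}}Y^{\mathbi{\alpha}}\in\Ker(\delta)$, it sets $\omega(\gamma^{\mathbi{\alpha}})\bydef \mathbi{\alpha}!P_{\mathbi{\alpha}}$ and checks this descends to $S^{m}\Omega_{\P^{N}}(m)$ using the quotient description of the fiber $(S^{m}T\P^{N}(-m))_{x}$. You instead reuse the local frame $L_{0},\dotsc,L_{N}$ from Lemma~\ref{lemma: Euler 1}, in which $\delta$ becomes $\partial/\partial L_{0}$, and read off directly that the image of $\iota'$ is all of $\Ker(\delta)$ locally. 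Your route is slightly more economical and ties the two lemmas together nicely; the paper's route has the advantage of producing an explicit formula for the inverse, which is occasionally useful later (e.g.\ in motivating the renormalization map $Y^{\mathbi{\alpha}}\mapsto Y^{\mathbi{\alpha}}/\mathbi{\alpha}!$). Incidentally, since $\iota$ is the symmetric power of a subbundle inclusion it is already fiberwise injective, so your rank count together with fiberwise injectivity would already force $\iota'$ to be an isomorphism; the local check you give is a clean confirmation but not strictly needed.
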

\begin{proof}
Consider the natural injective map
\[
i\colon
\left(
\begin{array}{ccc}
  S^{m} \Omega_{\P^{N}}(m) 
  & 
  \longrightarrow 
  & 
  \C[Y]_{m} \otimes \O_{\P^{N}}   
  \\
  \big(x, \omega\big)
  & 
  \longmapsto  
  & 
  (x , \sum\limits_{i_{1}, \dotsc, i_{m}} \omega(\gamma_{i_{1}}\dotsb\gamma_{i_{m}}) Y_{i_{1}} \dotsb Y_{i_{m}}
\end{array}
\right),
\]
where one recalls that  \(\gamma_{i}\) denotes the \(i\)th Euler vector field, with \(0 \leq i \leq N\). It is a straightforward computation to see that the image actually lies in \(\Ker(\delta)\):
\begin{eqnarray*}
&
&
\delta\big(\sum\limits_{i_{1}, \dotsc, i_{m}} \omega(\gamma_{i_{1}} \dotsb \gamma_{i_{m}}) Y_{i_{1}} \dotsb Y_{i_{m}}\big)
\\
&
=
&
\sum\limits_{k=1}^{m} \sum\limits_{\dotsc, i_{k-1}, i_{k+1},\dotsc} 
\Big(
 \sum\limits_{i=0}^{N}X_{i} \omega(\gamma_{i_{1}}\dotsb\gamma_{i_{k-1}} \gamma_{i_{k+1}} \dotsb \gamma_{i_{m-1}}\cdot \gamma_{i}
\Big)
Y_{i_{1}}\dotsb Y_{i_{k-1}} Y_{i_{k+1}} \dotsb Y_{i_{m-1}}
\\
&
=
&
0.
\end{eqnarray*}
The fact that it is identically zero follows from the observation that, for \(i_{1} \dotsb i_{m-1}\) fixed, \(\omega(\gamma_{i_{1}} \dotsb\gamma_{i_{m-1}} \cdot)\) is a (local) section of \(\Omega_{\P^{N}}(1)\).

Reciprocally, let\footnote{We adopt the usual multi-indexes notation: \(Y^{\mathbi{\alpha}} \bydef Y_{0}^{\alpha_{0}} \dotsb Y_{N}^{\alpha_{N}}\).} 
\[
P=\sum\limits_{\abs{\mathbi{\alpha}}=m} P_{\mathbi{\alpha}} Y^{\mathbi{\alpha}} 
\] 
be a local section of \(\Ker(\delta)\), where the \(P_{\mathbi{\alpha}}\)'s are local functions on \(\P^{N}\). At every point \(x=[x_{0}:\dotsb:x_{N}]\), the fiber \(\big(S^{m}T\P^{N}(-m)\big)_{x}\) has the following description
\[
 \big(S^{m}T\P^{N}(-m)\big)_{x}
 =
 \frac{\underset{\abs{\mathbi{\alpha}}=m}{\bigoplus} \C \cdot \gamma^{\mathbi{\alpha}}}
 {\underset{\abs{\mathbi{\alpha}}=m-1}{\bigoplus} \C \cdot \gamma^{\mathbi{\alpha}} \big(\sum\limits_{i=0}^{N} x_{i}\gamma_{i}\big)},
 \]
 where by definition 
 \(\gamma^{\mathbi{\alpha}} 
 \bydef 
 \gamma_{1}^{\alpha_{1}} \dotsb \gamma_{N}^{\alpha_{N}}
 \in H^{0}(\P^{N}, S^{m}T\P^{N}(-m))\).
Now, for any \(x\) where \(P\) is defined and any \(\mathbi{\alpha}\) of length \(\abs{\mathbi{\alpha}}=m\), define
 \[
 \omega_{x}(\gamma^{\mathbi{\alpha}})
 \bydef
\mathbi{\alpha}!P_{\mathbi{\alpha}}(x).
 \]
As \(P\) is in \(\Ker(\delta)\), this descends to the quotient defining \(\big(S^{m}T\P^{N}(-m)\big)_{x}\), so that \(\omega\) defines   a local section of \(S^{m}\Omega_{\P^{N}}(m)\).

One immediately checks that two maps described above are inverse to each other, so that the lemma is proved.
\end{proof}
Putting together Lemma \ref{lemma: Euler 1} and Lemma \ref{lemma: Euler 2}, we have thus proved:
\begin{theorem}[Generalized Euler exact sequence]
\label{thm: gen Euler}
For any \(m\in \N_{\geq 1}\), the \(m\)th symmetric power of \(\Omega_{\P^{N}}(1)\) fits into the short exact sequence:
\begin{equation}
\label{eq: gen Euler}
\xymatrix{
0 \ar[r] 
& 
S^{m}\Omega_{\P^{N}}(m) 
\ar[r] 
& 
\C[Y]_{m}\otimes\O_{\P^{N}}  
\ar[r]^-{\delta} 
&
 \C[Y]_{m-1}\otimes\O_{\P^{N}}(1)  
 \ar[r] 
&
0,
}
\end{equation}
where \(\delta=\sum\limits_{i=0}^{N} X_{i} \frac{\partial}{\partial Y_{i}}\).
\end{theorem}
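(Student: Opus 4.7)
The plan is to break the theorem into two independent statements: surjectivity of $\delta$ on the right, and identification of the kernel with $S^{m}\Omega_{\P^{N}}(m)$. Exactness in the middle is then automatic from the definition of the kernel. Since both statements are local on $\P^{N}$, I would argue in a fixed affine chart and then check that the resulting global map of sheaves is canonical (and hence patches).

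For surjectivity, I would work in the chart $\{X_{0}\neq 0\}$ and produce a local right inverse by a change of variables. The natural candidates are $L_{0}\bydef Y_{0}/X_{0}$ and $L_{i}\bydef Y_{i}-(X_{i}/X_{0})Y_{0}$ for $1\leq i\leq N$; a direct computation gives $\delta(L_{0})=1$ and $\delta(L_{i})=0$. Since $\O_{\P^{N},x}[Y_{0},\dotsc,Y_{N}]=\O_{\P^{N},x}[L_{0},\dotsc,L_{N}]$, the Leibniz rule identifies $\delta$ with the partial derivative $\partial/\partial L_{0}$ in the new variables, which is visibly surjective on degree $m$ polynomials onto degree $m-1$ polynomials. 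This is the easy half.

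The main step is constructing the identification $S^{m}\Omega_{\P^{N}}(m)\simeq \Ker\delta$. I would use the Euler vector fields $\gamma_{0},\dotsc,\gamma_{N}\in H^{0}(\P^{N},T\P^{N}(-1))$ already introduced in the Euler sequence \eqref{Euler exact seq}. Define
\[
i\colon S^{m}\Omega_{\P^{N}}(m)\longrightarrow \C[Y]_{m}\otimes\O_{\P^{N}},\qquad \omega\longmapsto \sum_{i_{1},\dotsc,i_{m}}\omega(\gamma_{i_{1}}\dotsb\gamma_{i_{m}})\,Y_{i_{1}}\dotsb Y_{i_{m}}.
\]
That this lands in $\Ker\delta$ follows from the Euler relation $\sum_{i}X_{i}\gamma_{i}=0$ (i.e.\ the content of the original sequence \eqref{Euler exact seq}) applied in one tensor slot of $\omega$, which forces the degree-$(m-1)$ coefficients of $\delta\circ i(\omega)$ to vanish term by term. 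For injectivity/surjectivity onto $\Ker\delta$, I would build the inverse by hand: given $P=\sum_{\abs{\mathbi{\alpha}}=m}P_{\mathbi{\alpha}}Y^{\mathbi{\alpha}}\in\Ker\delta$, set $\omega_{x}(\gamma^{\mathbi{\alpha}})\bydef \mathbi{\alpha}!\,P_{\mathbi{\alpha}}(x)$ and observe that the hypothesis $\delta(P)=0$ is exactly the compatibility needed for $\omega_{x}$ to descend to the quotient description of $(S^{m}T\P^{N}(-m))_{x}$ by the relation $\sum_{i}x_{i}\gamma_{i}=0$.

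The only genuinely tricky point is the second paragraph above, namely the book-keeping: writing $i$ intrinsically and checking that, under the natural pairing of symmetric powers, the kernel condition on $P$ corresponds exactly to well-definedness of $\omega$ modulo the Euler relation. The combinatorial factor $\mathbi{\alpha}!$ in the inverse is a mild normalization that must be tracked carefully (this is exactly the normalization that later becomes the map $u$ used to dualize the sequence). Once these two maps are checked to be mutually inverse at each point, the sequence \eqref{eq: gen Euler} is exact on the left and in the middle by construction, exact on the right by the first paragraph, and we are done.
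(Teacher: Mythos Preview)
Your proposal is correct and follows essentially the same route as the paper: the paper proves the theorem by combining two lemmas, one establishing surjectivity of $\delta$ via the same change of variables $L_{0}=Y_{0}/X_{0}$, $L_{i}=Y_{i}-(X_{i}/X_{0})Y_{0}$ on the chart $\{X_{0}\neq 0\}$, and the other identifying $\Ker\delta$ with $S^{m}\Omega_{\P^{N}}(m)$ via exactly the map $i(\omega)=\sum\omega(\gamma_{i_{1}}\dotsb\gamma_{i_{m}})Y_{i_{1}}\dotsb Y_{i_{m}}$ and its inverse $\omega_{x}(\gamma^{\mathbi{\alpha}})=\mathbi{\alpha}!\,P_{\mathbi{\alpha}}(x)$. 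Your observation about the factor $\mathbi{\alpha}!$ foreshadowing the renormalization map $u$ is also made explicit later in the paper.
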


\subsection{Geometric interpretation.}
\label{subs: geom inter}

The generalized Euler exact sequence has a simple geometric interpretation. The projectivization \(\P(T\P^{N})\) of the tangent space \(T\P^{N}\) is naturally isomorphic to the flag variety \(\Flag_{(1,2)} \C^{N+1}\) via the following map:
\[
j \colon \left(
\begin{array}{ccc}
  \P(T\P^{N}) & \longrightarrow   &  \Flag_{(1,2)}\C^{N+1}  \\
  \big([x], v) & \longmapsto  &  \big(\C\cdot x \subsetneq \C\cdot x \oplus \C \cdot v\big)
\end{array}
\right).
\]
This is indeed well-defined, as the tangent space at \([x]\) may be described as follows
\[
T_{[x]} \P^{N}
\simeq
\frac{\C^{N+1}}{\C \cdot x}
\]
(indeed, \((\diff p)_{x} : \C^{N+1} \to T_{[x]}\P^{N}\) is surjective, with kernel \(\C \cdot x\)),
and it is clear that \(j\) is an isomorphism.

Recall that there exists two tautological vector bundles on \(\Flag_{(1,2)} \C^{N+1}\), defined as sub-vector bundles of the trivial bundle \(\Flag_{(1,2)}\C^{N+1} \times \C^{N+1}\) as follows:
\begin{itemize}
\item{} \(U_{1}\) such that the fiber above \(\xi \bydef (\C\cdot x \subsetneq C \cdot x \oplus \C \cdot v)\) is the line spanned by \(x\);
\item{} \(U_{2}\)  such that the fiber above \(\xi \bydef (\C\cdot x \subsetneq C \cdot x \oplus \C \cdot v)\) is the plane spanned by \(x\) and \(v\).
\end{itemize}
It gives rise to the short exact sequence of vector bundles
\[
\xymatrix{
0 \ar[r] & U_{1} \ar[r] & U_{2} \ar[r] & \frac{U_{2}}{U_{1}} \ar[r] & 0
},
\]
where the two extremities are line bundles on \(\Flag_{(1,2)} \C^{N+1}\). Denote, for any \(n, m \in \Z\), 
\[
\L_{m,n}
\bydef
(\frac{U_{2}^{\vee}}{U_{1}^{\vee}})^{m}
\otimes 
(U_{1}^{\vee})^{n}.
\]
We have the following proposition:
\begin{proposition}
\label{prop: geom inter}
The pull-back line bundle \(j^{*}\L_{m,n}\) on \(\P(T\P^{N})\) is equal to
\[
\O_{\P(T\P^{N})}(m) \otimes \pi^{*}\O_{\P^{N}}(m+n),
\]
where \(\pi: \Flag_{(1,2)}\C^{N+1} \to \P^{N}\) is the natural projection onto the space of lines in \(\C^{N+1}\).
\end{proposition}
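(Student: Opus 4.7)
The plan is to identify the two line-bundle factors of \(j^{*}\L_{m,n}\) separately: compute \(j^{*}U_{1}^{\vee}\) (easy), then \(j^{*}(U_{2}^{\vee}/U_{1}^{\vee})=\bigl(j^{*}(U_{2}/U_{1})\bigr)^{\vee}\) (the subtle part), and finally combine them.

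For \(j^{*}U_{1}\), the computation is essentially tautological: at a point \(\xi \bydef ([x],L) \in \P(T\P^{N})\), by construction the fiber of \(j^{*}U_{1}\) is the line \(\C\cdot x \subset \C^{N+1}\), which is the fiber at \([x]\) of the tautological sub-line bundle \(\O_{\P^{N}}(-1)\subset \O_{\P^{N}}^{\oplus N+1}\). This immediately gives \(j^{*}U_{1}\simeq \pi^{*}\O_{\P^{N}}(-1)\), hence \(j^{*}(U_{1}^{\vee})\simeq \pi^{*}\O_{\P^{N}}(1)\), which accounts for the factor \(\pi^{*}\O_{\P^{N}}(n)\) in the target. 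For \(j^{*}(U_{2}/U_{1})\), fiberwise at \(\xi=([x],L)\) the quotient is \((\diff p)_{x}^{-1}(L)/\C\cdot x\), and since \(\ker (\diff p)_{x}=\C\cdot x\), the differential induces an isomorphism onto \(L\subset T_{[x]}\P^{N}\). The subtlety is that this isomorphism is \emph{not} scale-invariant in \(x\): replacing \(x\) by \(\lambda x\) rescales \((\diff p)_{x}\) by \(\lambda^{-1}\), which is exactly the mechanism that forces the Euler vector fields \(\gamma_{i}\) of Section~\ref{subs: gen Euler seq} to live in \(T\P^{N}(-1)\) rather than in \(T\P^{N}\). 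The correct global statement uses the Euler exact sequence, which identifies \(T\P^{N}(-1)\) with the cokernel of the tautological inclusion \(\O_{\P^{N}}(-1)\hookrightarrow \O_{\P^{N}}^{\oplus N+1}\). Under this identification, the line \((\diff p)_{x}^{-1}(L)/\C\cdot x\subset \C^{N+1}/\C\cdot x\) becomes the fiber at \(\xi\) of the tautological sub-line bundle of \(\pi^{*}(T\P^{N}(-1))\) on \(\P(T\P^{N}(-1))=\P(T\P^{N})\). Applying the standard twist identity \(\O_{\P(E\otimes M)}(-1)\simeq \O_{\P(E)}(-1)\otimes \pi^{*}M\) with \(E=T\P^{N}\) and \(M=\O_{\P^{N}}(-1)\), this tautological sub-line bundle is precisely \(\O_{\P(T\P^{N})}(-1)\otimes \pi^{*}\O_{\P^{N}}(-1)\).

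Putting the two identifications together, one obtains
\[
j^{*}(U_{2}^{\vee}/U_{1}^{\vee})\simeq \O_{\P(T\P^{N})}(1)\otimes \pi^{*}\O_{\P^{N}}(1),
\]
whence
\[
j^{*}\L_{m,n}\simeq \bigl(\O_{\P(T\P^{N})}(1)\otimes \pi^{*}\O_{\P^{N}}(1)\bigr)^{m}\otimes \bigl(\pi^{*}\O_{\P^{N}}(1)\bigr)^{n}\simeq \O_{\P(T\P^{N})}(m)\otimes \pi^{*}\O_{\P^{N}}(m+n),
\]
as claimed. The main obstacle in the argument is precisely the extra twist by \(\pi^{*}\O_{\P^{N}}(-1)\) in the identification of \(j^{*}(U_{2}/U_{1})\): the entire \(\pi^{*}\O_{\P^{N}}(m)\) contribution in the final formula originates from this single twist, so careful bookkeeping of it is the crux.
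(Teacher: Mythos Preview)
Your proof is correct and follows essentially the same approach as the paper: identify \(j^{*}U_{1}\simeq\pi^{*}\O_{\P^{N}}(-1)\) tautologically, then show \(j^{*}(U_{2}/U_{1})\simeq\O_{\P(T\P^{N})}(-1)\otimes\pi^{*}\O_{\P^{N}}(-1)\), and combine. The only cosmetic difference is that the paper writes the second isomorphism explicitly as \(\bigl(([x],[v]),\overline{v}\bigr)\mapsto\bigl(([x],[v]),\sum_{i}v_{i}\gamma_{i}([x])\bigr)\) using the Euler vector fields, whereas you package the same map via the Euler sequence and the twist identity \(\O_{\P(E\otimes M)}(-1)\simeq\O_{\P(E)}(-1)\otimes\pi^{*}M\); these are the same argument in different clothing.
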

\begin{proof}
First, observe that one has tautologically the following equality of line bundles:
\[
j^{*}U_{1}
=
\pi^{*}\O_{\P^{N}}(-1).
\]
Then, observe that there is a natural isomorphism
\[
j^{*}(\frac{U_{2}}{U_{1}}) 
\simeq 
\O_{\P(T\P^{N})}(-1)\otimes\pi^{*}\O_{\P^{N}}(-1)
\]
defined as follows:
\[
\Big(\big([x], [v]\big), \overline{v}\Big)
\mapsto
\Big(\big([x], [v]\big), \sum\limits_{i=0}^{N} v_{i} \gamma_{i}([x])\Big).
\]
(Note that it is indeed well defined since \(\sum\limits_{i=0}^{N}x_{i}\gamma_{i}([x])=0\)). The lemma follows immediately from the previous two observations.
\end{proof}

Recall Bott's formulas (in the absolute setting, see Appendix \ref{appendix: Bott}), that says in particular that for any \(n \in \Z\) and any \(m \in \N\), one has the following isomorphism:
\[
H^{0}(\Flag_{(1,2)}\C^{N+1}, \L_{m,n})
\simeq 
S^{(n,m)}\C^{N+1}.
\]
Here, \(S^{(n,m)}\) denotes the Schur functor associated to the partition \((n,m)\) if \(n \geq m\), and it denotes the zero functor otherwise, by convention. On the other hand, Bott's formulas (in the relative setting, in a particular case) implies also that for any \(n \in \Z\) and any \(m \in \N\) one has the isomorphism
\[
H^{0}(\P(T\P^{N}), \O_{\P(T\P^{N})}(m) \otimes \pi^{*} \O_{\P^{N}}(n+m))
\simeq 
H^{0}(\P^{N}, S^{m}\Omega_{\P^{N}}(m+n)).
\]
Therefore, one has the isomorphism for any \(n \in \Z\) and any \(m \in \N\)
\[
H^{0}(\P^{N}, S^{m}\Omega_{\P^{N}}(m+n))
\simeq 
S^{(n,m)} \C^{N+1}.
\]
 
 In order to relate this isomorphism to the generalized Euler exact sequence, recall that the vector space \(S^{(n,m)}\C^{N+1}\) has the following interpretation as a sub-vector space of the set of bi-homogeneous polynomials of bi-degrees \((m,n)\) relatively to the variables \(Y=(Y_{0}, \dotsc, Y_{N})\) and \(X=(X_{0}, \dotsc, X_{N})\)(see \cite{DemVanish}[Sect.2] for more details):
 \begin{equation*}
 S^{(n,m)} \C^{N+1}
 \simeq 
 \Set{P \in \C[Y,X]_{m,n} \ | \ \forall t \in \C, P(Y+tX, X)=P(Y,X) }.
 \end{equation*}

 One then easily sees that the set of \((m,n)\) bi-homogeneous polynomials sastifying the above functional equation coincides with set of \((m,n)\) bi-homogeneous polynomials \(P\) satisfying the partial differential equation
 \[
 (\sum\limits_{i=0}^{N} X_{i} \frac{\partial}{\partial Y_{i}})(P)
 \equiv 
 0.
 \]
 We recover therefore the description given by taking the long exact sequence in cohomology associated to the generalized Euler exact sequence.

 \subsection{Cohomology of twisted symmetric powers of \(\Omega_{\P^{N}}\).}
 \label{subs: coho proj space}
Via the geometric interpretation given in the previous paragraph, and more precisely using Proposition \ref{prop: geom inter}, the computation of cohomology of twisted symmetric powers of cotangent bundles of projective spaces is a straightforward application of Bott's formulas (see Appendix \ref{appendix: Bott}). 

A different approach would be to rather use the generalized Euler exact sequence \eqref{eq: gen Euler}. We will describe the two approaches.
\begin{theorem}[Cohomology of twisted symmetric powers of \(\Omega_{\P^{N}}\)]
\label{thm: coho sym proj}
Let \(n \in \Z\) be a natural number, and \(m \in \N_{\geq 0}\) be an integer.
\begin{itemize}
\item{} 
If \(n \geq m \geq 0\), then 
\[
\left\{ 
\begin{array}{ll}
H^{0}(\P^{N}, S^{m}\Omega_{\P^{N}}(m+n)) \simeq S^{(n,m)}\ \C^{N+1} \\
H^{i}(\P^{N}, S^{m}\Omega_{\P^{N}}(m+n))=(0) \ \text{for \(i>0\)}
\end{array}
\right..
\]
\item{}
If \(-(N+1) < n < m\), then
\[
\left\{ 
\begin{array}{ll}
H^{1}(\P^{N}, S^{m}\Omega_{\P^{N}}(m+n)) \simeq S^{(m-1,n+1)}\ \C^{N+1} \\
H^{i}(\P^{N}, S^{m}\Omega_{\P^{N}}(m+n))=(0) \ \text{for \(i\neq 1\)}
\end{array}
\right..
\]

\item{}
If \(n \leq -(N+1)\), then 
\[
\left\{ 
\begin{array}{ll}
H^{N}(\P^{N}, S^{m}\Omega_{\P^{N}}(m+n))
\simeq
S^{\lambda(n,m)}\C^{N+1}
\\
H^{i}(\P^{N}, S^{m}\Omega_{\P^{N}}(m+n))=(0) \ \text{for \(i\neq N\)}
\end{array}
\right.,
\]
where \(\lambda(n,m)\) is the partition 
\[
\big(m-(n+N+1), \underbrace{-(n+N+1), \dotsc, -(n+N+1)}_{\times (N-1)}\big).
\]
\end{itemize}
For the last item, the dimension of \(H^{N}(\P^{N}, S^{m}\Omega_{\P^{N}}(m+n))\) is equal to
\[
\dim(S^{m}\C^{N+1}\otimes S^{-(N+1)-n}\C^{N+1}) 
-
\dim(S^{m-1}\C^{N+1}\otimes S^{-(N+1)-n-1}\C^{N+1}).
\]
\end{theorem}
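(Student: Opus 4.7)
The plan is to combine the geometric interpretation \(\P(T\P^{N}) \simeq \Flag_{(1,2)}\C^{N+1}\) from Section~\ref{subs: geom inter} with Bott's formulas on flag varieties (see Appendix~\ref{appendix: Bott}) to identify the cohomology groups, and to deduce the explicit dimension in the third case directly from the generalised Euler exact sequence.

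First, using Proposition~\ref{prop: geom inter} together with relative Bott vanishing along the \(\P^{N-1}\)-fibers of \(\pi \colon \P(T\P^{N}) \to \P^{N}\) (which yields \(R^{i}\pi_{*}\O_{\P(T\P^{N})}(m)=0\) for \(i>0\), \(m \geq 0\), and \(\pi_{*}\O_{\P(T\P^{N})}(m)=S^{m}\Omega_{\P^{N}}\) in the \emph{projectivisation of lines} convention), and the projection formula, I would reduce the statement to
\[
H^{i}(\P^{N}, S^{m}\Omega_{\P^{N}}(m+n)) \simeq H^{i}(\Flag_{(1,2)}\C^{N+1}, \L_{m,n}).
\]

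I would then apply Bott's formulas to the \(\GL_{N+1}\)-equivariant line bundle \(\L_{m,n}\), which corresponds to the weight \(\lambda=(n,m,0,\dotsc,0)\). With \(\rho=(N,N-1,\dotsc,1,0)\), one analyses the sequence \(\lambda+\rho=(n+N,\, m+N-1,\, N-2,\, \dotsc,\, 0)\) via the usual Bott algorithm. For \(n \geq m\) this sequence is already strictly decreasing, so all higher cohomology vanishes and one gets \(H^{0} \simeq S^{(n,m)}\C^{N+1}\). For \(-(N+1) < n < m\), either \(\lambda+\rho\) is singular (precisely when \(n=m-1\) or \(n \in \Set{-N,\dotsc,-2}\), in which case all cohomology vanishes and this agrees with \(S^{(m-1,n+1)}=0\) under the convention on Schur functors), or a single transposition of the first two entries makes \(\lambda+\rho\) strictly decreasing, placing the cohomology in degree \(1\) with \(H^{1} \simeq S^{(m-1,n+1)}\C^{N+1}\). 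For \(n \leq -(N+1)\), the entry \(n+N \leq -1\) must be moved past each of \(N-2, N-3, \dotsc, 0\), which requires \(N\) transpositions; after subtracting \(\rho\) one obtains the generalised weight \((m-1,-1,\dotsc,-1,n+N)\) (with \(N-1\) copies of \(-1\)), which upon translating by \(-(n+N) \cdot (1,\dotsc,1)\) (equivalently, tensoring by an appropriate power of \(\det \C^{N+1}\)) becomes the partition \(\lambda(n,m)\).

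Finally, for the dimension formula in the third case, it is cleaner to argue directly from the generalised Euler exact sequence (Theorem~\ref{thm: gen Euler}). Its long exact sequence in cohomology, combined with the intermediate vanishing \(H^{i}(\P^{N},\O_{\P^{N}}(k))=0\) for \(0<i<N\) (recall that \(N \geq 2\)), collapses to the short exact sequence
\[
0 \longrightarrow H^{N}(S^{m}\Omega_{\P^{N}}(m+n)) \longrightarrow \C[Y]_{m}\otimes H^{N}(\O_{\P^{N}}(n)) \longrightarrow \C[Y]_{m-1}\otimes H^{N}(\O_{\P^{N}}(n+1)) \longrightarrow 0,
\]
from which the stated dimension formula follows via Serre duality \(H^{N}(\P^{N},\O(k))^{*}\simeq S^{-k-(N+1)}\C^{N+1}\). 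The main delicate point I expect is keeping track of the determinantal twist arising in case~3, so that the representation comes out exactly as \(S^{\lambda(n,m)}\C^{N+1}\) with the partition \(\lambda(n,m)\) of the stated shape rather than as a \(\det\)-translate of it.
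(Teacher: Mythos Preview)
Your proof is correct and follows essentially the same approach as the paper: the paper's first proof uses exactly the identification \(H^{i}(\P^{N}, S^{m}\Omega_{\P^{N}}(m+n))\simeq H^{i}(\Flag_{(1,2)}\C^{N+1},\L_{m,n})\) via Proposition~\ref{prop: geom inter} and then runs the Bott algorithm on \(\alpha=(n,m,0,\dotsc,0)\), and it likewise remarks that the dimension formula in the third case follows from the generalised Euler exact sequence. One tiny wording slip: in the case \(n\le -(N+1)\), the entry \(n+N\) must be moved past \(m+N-1\) as well as \(N-2,\dotsc,0\), but your count of \(N\) transpositions (hence cohomology in degree \(N\)) is correct.
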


\begin{proof}[Proof using Bott's formulas]
Use the isomorphism \(\P(T \P^{N}) \simeq \Flag_{(1,2)}\C^{N+1}\) under which \(\O_{\P(T\P^{N})}(1)\) corresponds to \(\L_{1,-1}\) by Proposition \ref{prop: geom inter}. A first application of Bott's formulas (in the relative setting) implies that for any \(m \geq 0\) and any \(n \in \Z\), one has the following equality:
\[
H^{*}(\P^{N}, S^{m}\Omega_{\P^{N}}(m+n))
\simeq
H^{*}(\Flag_{(1,2)}\C^{N+1}, \L_{m,n}).
\]

Apply now a second time Bott's formulas, this time in the absolute setting: see Corollary \ref{cor: Bott}, and notations therein. In the case at hand, consider 
\[
\mathbi{\alpha}=(n,m,\underbrace{0, \dotsc,0}_{\times (N-1)}),
\]
and distinguish several cases:
\begin{enumerate}[(i)]
\item{}
If \(n \geq m\), then \(\mathbi{\alpha}\) is already a partition and one is in situation (2) of Corollary \ref{cor: Bott}.
\item{}
If \(-(N+1)<n<m\) and \(m-1 \geq n+1 \geq 0\), then for the permutation
\(\sigma=(12)\) one has:
\[
\tilde{\sigma}(\mathbi{\alpha})
=
(m-1,n+1,0, \dotsc, 0),
\]
so that one is in situation (2) of Corollary \ref{cor: Bott}
\item{}
If \(-(N+1)<n<m\) and \(m-1 < n+1\), then one necessarily has \(n=m-1\). Therefore, for the permutation \(\sigma=(1,2)\), one has
\[
\tilde{\sigma}(\mathbi{\alpha})
=
\mathbi{\alpha},
\]
so that one is in situation (1) of Corollary \ref{cor: Bott}.
\item{}
 If \(-(N+1)<n<m\) and \(n <-1\), then for the permutation \(\sigma=(1,n)\) one has 
 \[
 \tilde{\sigma}(\mathbi{\alpha})
 =
 \mathbi{\alpha}
 \]
 so that one is in situation (1) of Corollary \ref{cor: Bott}.

\item{}
If \(n \leq -(N+1)\), then for the cyclic permutation \(\sigma=(1,N+1,N, \dotsc, 2)\), one obtains:
\[
\tilde{\sigma}(\mathbi{\alpha})
=
(m-1,-1,-1, \dotsc, -1, n+N),
\]
which is non-increasing, so that one is in the situation (2) of Corollary \ref{cor: Bott}.
\end{enumerate}
Putting all these cases together, one obtains the description given in the statement. It remains to prove the last equality regarding the dimension of \(H^{N}\): this is a simple application of general formulas to compute the dimension of Schur bundles: see e.g. \cite{Fulton}. (This will also follow from the proof using the generalized Euler exact sequence).
\end{proof}

\begin{proof}[Proof using the generalized Euler exact sequence]
Let \(m \in \N\), and \(n \in \Z\). If \(n <-1\), then the results follow immediately from the (twisted) generalized Euler exact sequence combined with the well-known cohomology of the Serre line bundles \(\O_{\P^{N}}(i), i \in \Z\). 

Suppose now that \(n \geq -1\). Using the (twisted) generalized Euler exact sequence, one sees that, in order to compute the \(0\)th and \(1\)st cohomology groups of  \(S^{m}\Omega_{\P^{N}}(m+n)\), one has to understand the kernel and cokernel of the following map:
\[
\delta_{m,n}\bydef \sum\limits_{i=0}^{N} X_{i} \frac{\partial}{\partial Y_{i}}
\colon
\C[Y,X]_{m,n}
\longrightarrow 
\C[Y,X]_{m-1,n+1}.
\]
Note that if \(n=-1\), the result is obvious. Suppose accordingly that \(n \geq 0\).
The crucial observation is that the map \(\delta\) is equivariant with respect to the following natural action of the general linear group \(\GL_{N+1}(\C)\) on \(\C[Y,X]\):
\[
A \in \GL_{N+1}(\C), \ A \cdot (Y, X) \bydef (AY, AX).
\]
Therefore, \(\delta_{m,n}\) is a morphism of \(\GL_{N+1}(\C)\)-representations. 

Suppose first that \(n \geq m\). By Pieri's formula (see e.g \cite{Fulton}), one has the following decomposition of \(\C[Y,X]_{m,n}\) into irreducible components:
\[
\C[Y,X]_{m,n}
\simeq 
\bigoplus_{i=0}^{m}
S^{(n+i, m-i)}\C^{N+1}.
\]
On the other hand, for any \(0 \leq i \leq m\), the vector
\[
X_{0}^{n-m+i}Y_{0}^{i}(X_{1}Y_{0}-X_{0}Y_{1})^{m-i}
\]
is a highest weight vector, which spans accordingly the irreducible representation isomorphic to \(S^{(n+i, m-i)}\C^{N+1}\). One now easily sees that \(\delta_{m,n}\) sends \(S^{(n,m)}\C^{N+1}\) to zero, and realizes an isomorphism between \(S^{(n+i,m-i)}\C^{N+1}\) and its image for any \(1 \leq i \leq m\). Therefore, the kernel of \(\delta_{m,n}\) is isomorphic to \(S^{(m,n)}\C^{N+1}\), and using once again Pieri's formula for \(\C[Y,X]_{m-1,n+1}\), one sees that \(\delta_{m,n}\) is surjective.

Suppose now that \(0 \leq n < m\). Similarly, by Pieri's formula, one has the decomposition:
\[
\C[Y,X]_{m,n}
\simeq 
\bigoplus_{i=0}^{n}
S^{(m+i, n-i)}\C^{N+1}.
\]
The highest weight vectors spanning the irreducible components in the above decomposition are this time the following:
\[
\Big(X_{0}^{i}Y_{0}^{m-n+i}(X_{1}Y_{0}-X_{0}Y_{1})^{n-i}\Big)_{0 \leq i \leq n}.
\]
These vectors are never sent to zero by \(\delta_{m,n}\), so that the map \(\delta_{m,n}\) is injective. Using again Pieri's formula for \(\C[Y,X]_{m-1,n+1}\), one easily sees that the cokernel of \(\delta_{m,n}\) is isomorphic to \(S^{(m-1,n+1)}\C^{N+1}\): this finishes the proof.
\end{proof}

\subsection{Dualized generalized Euler sequence.}
\label{subs: dual Euler}
By Serre duality, one immediately deduces from Theorem \ref{thm: coho sym proj} the cohomology of twisted symmetric powers of tangent bundles of projective spaces. However, it is always useful to have an explicit description of these cohomology groups. By dualizing (and twisting) the generalized Euler exact sequence \eqref{eq: gen Euler}, i.e. by applying to it the functor \(\mathcal{H}om(\cdot, \O_{\P^{N}})\), one deduces the following exact sequence for any \(m \in \N\) and \(n \in \Z\)
\[
 \resizebox{\displaywidth}{!}{
\xymatrix{
0 \ar[r] 
& 
\C[Y]_{m-1}\otimes\O_{\P^{N}}(n-1)
\ar[r]^-{\delta^{*}} 
&
 \C[Y]_{m}\otimes\O_{\P^{N}}(n)
 \ar[r] 
&
S^{m}T\P^{N}(-m+n) 
\ar[r] 
& 
0.
}
}
\]
(Recall that \(\mathcal{E}xt^{1}(\O_{\P^{N}}(i), \O_{\P^{N}})=0\) for any \(i \in \Z\)). The dual map \(\delta^{*}\) can be computed explicitly, and it takes the following form:
\[
\delta^{*}
=
(\sum\limits_{i=0}^{N} X_{i}Y_{i})\times \Id 
+
\sum\limits_{i=0}^{N} X_{i}Y_{i}^{2} \frac{\partial}{\partial Y_{i}}.
\]
It has quite an unaesthetic form, and indeed this is not the "right" description of \(S^{m}T\P^{N}(-m+n)\). Consider rather the renormalization map
\[
u \colon\left(
\begin{array}{ccc}
  \C[Y] & \longrightarrow   &  \C[Y]
  \\
  Y^{\mathbi{\alpha}} & \longmapsto  &  \frac{Y^{\alpha}}{\mathbi{\alpha}!}
  \end{array}
\right),
\]
where by definition \(\mathbi{\alpha}!\bydef \alpha_{0}! \dotsb \alpha_{N}!\). For any \(m \in \N\) and any \(n \in \Z\), the renormalization map induces an isomorphism of sheaves
\[
u_{m,n} \colon\left(
\begin{array}{ccc}
  \C[Y]_{m} \otimes \O_{\P^{N}}(n) & \longrightarrow   &  \C[Y]_{m} \otimes \O_{\P^{N}}(n) 
  \\
  Y^{\mathbi{\alpha}} \otimes s & \longmapsto  &  \frac{Y^{\alpha}}{\mathbi{\alpha}!}\otimes s
\end{array}
\right).
\]
Form then the following diagram:
\begin{equation}
\label{eq: dual}
 \resizebox{\displaywidth}{!}{
\xymatrix{
& 0 \ar[d] & 0 \ar[d] & &
\\
0 \ar[r] 
& 
\C[Y]_{m-1}\otimes\O_{\P^{N}}(n-1)
\ar[r]^-{\delta^{*}} 
\ar[d]^{u_{m-1,n-1}}
&
 \C[Y]_{m}\otimes\O_{\P^{N}}(n)
 \ar[r] 
 \ar[d]^{u_{m,n}}
&
S^{m}T\P^{N}(-m+n) 
\ar[r] 
& 
0
\\
0 \ar[r] 
& 
\C[Y]_{m-1}\otimes\O_{\P^{N}}(n-1)
\ar[r]^-{\delta_{*}} 
\ar[d]
&
 \C[Y]_{m}\otimes\O_{\P^{N}}(n)
 \ar[r] 
 \ar[d]
&
Q
\ar[r] 
& 
0
\\
& 0 & 0 & &,
}
}
\end{equation}
where the map \(\delta_{*}\) is  the multiplication by \(q\bydef\sum\limits_{i=0}^{N} X_{i}Y_{i}\). The diagram is commutative: one indeed computes that for any \(\mathbi{\alpha}\), \(\abs{\mathbi{\alpha}}=m-1\), and any \(s \in \O_{\P^{N}}(n-1)\) local section:
\begin{eqnarray*}
(u_{m,n}\circ\delta^{*})(Y^{\mathbi{\alpha}}\otimes s)
&
=
&
\sum\limits_{i=0}^{N}sX_{i}\otimes\big(u(Y^{\mathbi{\alpha}+\mathbi{e_{i}}}) + \alpha_{i}u(Y^{\mathbi{\alpha}+\mathbi{e_{i}}})\big)
\\
&
=
&
\sum\limits_{i=0}^{N}sX_{i}\otimes \frac{Y^{\mathbi{\alpha}}}{\mathbi{\alpha}!}Y_{i}
\\
&
=
&
(\delta_{*} \circ u_{m-1,n-1})(Y^{\mathbi{\alpha}}\otimes s).
\end{eqnarray*}
Therefore, the above diagram \eqref{eq: dual} naturally induces an isomorphism between \(Q\) and \(S^{m}T\P^{N}(-m+n)\), and this is this description that we will favor. We record this in the following theorem:
\begin{theorem}[Dualized generalized Euler exact sequence]
\label{thm: gen Euler dual}
For any \(m\in \N_{\geq 1}\), the \(m\)th symmetric power of \(T\P^{N}(-1)\) fits into the short exact sequence:
\begin{equation}
\label{eq: gen Euler dual}
\xymatrix{
0 \ar[r] 
& 
\C[Y]_{m-1}\otimes\O_{\P^{N}}(-1)
\ar[r]^-{\delta_{*}} 
&
 \C[Y]_{m}\otimes\O_{\P^{N}}
 \ar[r] 
&
S^{m}T\P^{N}(-m) 
\ar[r] 
& 
0,
}
\end{equation}
where \(\delta_{*}=\cdot \times (\sum\limits_{i=0}^{N} X_{i} Y_{i})\).
\end{theorem}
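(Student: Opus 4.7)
The plan is to derive this short exact sequence from the generalized Euler exact sequence of Theorem \ref{thm: gen Euler} by dualizing, and then to apply a suitable renormalization to recognize the transpose of $\delta$ as multiplication by the quadratic form $q = \sum_{i=0}^{N} X_{i}Y_{i}$.

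First I would apply the functor $\mathcal{H}om(\cdot,\O_{\P^{N}})$ to the short exact sequence of Theorem \ref{thm: gen Euler} (if desired, first twisted by $\O_{\P^{N}}(n)$). Since each of the middle two terms is a direct sum of twists of $\O_{\P^{N}}$ and $\mathcal{E}xt^{1}(\O_{\P^{N}}(i),\O_{\P^{N}}) = 0$ for every $i \in \Z$, the resulting complex remains short exact, and its rightmost term is canonically identified with $S^{m}T\P^{N}(-m+n)$ through the usual duality between $S^{m}\Omega_{\P^{N}}(m)$ and $S^{m}T\P^{N}(-m)$. A direct computation using the dual pairing that sends $Y^{\alpha}$ to $\tfrac{1}{\alpha!}\partial^{\alpha}$ then yields
\[
\delta^{*} = q \cdot \Id + \sum_{i=0}^{N} X_{i}Y_{i}^{2}\frac{\partial}{\partial Y_{i}}.
\]

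Next I would introduce the renormalization sheaf isomorphism $u_{m,n}$ rescaling $Y^{\alpha}$ by $1/\alpha!$, and verify the commutativity of the diagram whose bottom row replaces $\delta^{*}$ by $\delta_{*} = q \times (\cdot)$. The check reduces to the elementary identity $(\alpha_{i}+1)/(\alpha+e_{i})! = 1/\alpha!$, which collapses the two terms of $\delta^{*}$ into a single multiplication by $q$ after passing to the rescaled basis. Because $u_{m-1,n-1}$ and $u_{m,n}$ are sheaf isomorphisms, passing to cokernels would identify $S^{m}T\P^{N}(-m+n)$ with the cokernel of $\delta_{*}$; specializing to $n = 0$ gives exactly the statement.

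The main (and fairly minor) obstacle is the explicit computation of $\delta^{*}$: all the renormalization does after that is absorb the extra derivation term into the multiplicative term. One could also reformulate the argument conceptually by noting that the rescaling $u$ is precisely the change of basis that turns the symbol of $\delta$ into the dual operator, so that $\delta_{*}$ is nothing more than multiplication by the principal symbol $q$ of $\delta$; this perspective makes it essentially automatic that the diagram \eqref{eq: dual} commutes.
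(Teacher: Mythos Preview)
Your proposal is correct and follows essentially the same route as the paper: dualize the generalized Euler sequence using the vanishing of $\mathcal{E}xt^{1}(\O_{\P^{N}}(i),\O_{\P^{N}})$, compute the transpose $\delta^{*}$ explicitly, then conjugate by the renormalization $u_{m,n}\colon Y^{\alpha}\mapsto Y^{\alpha}/\alpha!$ to turn $\delta^{*}$ into multiplication by $q$, and identify the cokernels. Your remark about $\delta_{*}$ being the principal symbol of $\delta$ is a pleasant conceptual gloss not present in the paper, but the underlying argument is the same.
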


\subsection{Some notations.}
\label{subs: notations}
In view of the previous sections, it is natural to consider the partial differential operator \(\delta\) as acting on the following (infinite dimensional) bi-graded vector bundle:
\[
\mathcal{S}
\bydef 
\bigoplus_{m \in \N, n \in \Z}
\C_{m}[Y]\otimes \O_{\P^{N}}(n).
\]
The bi-graduation is the natural one, i.e. \(\mathcal{S}_{m,n} \bydef \C_{m}[Y]\otimes \O_{\P^{N}}(n)\), and \(\delta\) shifts the graduation by \((-1,1)\), i.e.
\[
\delta(\mathcal{S}_{m,n})
\subset
\mathcal{S}_{m-1,n+1}.
\]
Note that this is actually an equality.
If we wish to restrict the operator \(\delta\) to a given graded part (as we implicitly did in the previous sections), we will denote:
\[
\delta_{m,n}
\bydef 
\delta_{\vert \mathcal{S}_{m,n}}.
\]
With these notations, the generalized Euler exact sequence Theorem \ref{thm: gen Euler} tells us exactly that:
\[
\Ker \delta
=
\bigoplus_{m \in \N, n \in \Z} \Ker \delta_{m,n}
\simeq
\bigoplus_{m \in \N, n \in \Z} S^{m}\Omega_{\P^{N}}(m+n).
\]
Another equivalent but aesthetic way of writing the above equality is the following:
\begin{equation}
\Ker \delta
\simeq
\bigoplus_{m \in \N, n \in \Z}
S^{m}(\Ker(\delta_{1,0}))(n).
\end{equation}

The same considerations hold for the operator \(\delta_{*}\), which this time shifts the graduation by \((1,1)\). The dualized generalized Euler exact sequence provides this time the following equality:
\[
\Coker(\delta_{*})
=
\bigoplus_{m \in \N, n \in \Z} \Coker(\delta_{*})_{m,n}
\simeq
\bigoplus_{m \in \N, n \in \Z} S^{m}T\P^{N}(m-n).
\]

In the following sections, we will deal with other partial differential operators acting on \(\mathcal{S}\), and shifting the bi-graduation. We therefore introduce the following terminology:
\begin{definition}
Let \(i, j \in \Z\) be natural numbers. A \((i,j)\)-bigraded operator on \(\mathcal{S}\) is an endomorphism of the sheaf \(\mathcal{S}\) that shifts the graduation by \((i,j)\).
\end{definition}
\begin{example}
For any \(k \in \N\), \(\delta^{k}\) is a \((-k,k)\) operator on \(\mathcal{S}\).
\end{example}

\subsection{Study of a natural class of endomorphisms of \(\mathcal{S}\).}
\label{subs: a particular class}
To any finite endomorphism \(f: \P^{N} \to \P^{N}\) is naturally associated an endomorphism of \(\mathcal{S}\) as follows. Denote \(d \bydef \deg(f)\), and recall that there exists (unique up to multiplication by a non-zero scalar) homogeneous polynomials \(P_{0}, \dotsc, P_{N} \in \C[X]_{d}\) of degree \(d\) such that \(f\) writes on \(\P^{N}\)
\[
f([x])
=
[P_{0}(x): \dotsb : P_{N}(x)].
\]
Consider then the endomorphism of \(\mathcal{S}\) defined as follows
\[
\delta(f)
\bydef 
\sum\limits_{i=0}^{N} P_{i} \frac{\partial}{\partial Y_{i}},
\]
and note that it is a \((-1,d)\)-bigraded operator on \(\mathcal{S}\). Note also that for \(f=\Id\), one recovers the endomorphism \(\delta\). The goal of this section is to see how cohomology provides a natural way to understand the global morphism
\[
\delta(f)(\P^{N}): S \longrightarrow S, \ \ A(Y,X) \longmapsto \sum\limits_{i=0}^{N} P_{i}(X) \frac{\partial A}{\partial Y_{i}}(Y,X),
\]
where by definition \(S \bydef H^{0}(\mathcal{S}) =  \C[Y,X]\).

Observe that, by definition, the vector bundle \(\Ker \delta(f)_{1,0}\) is the rank \(N\) vector bundle on \(\P^{N}\) whose fiber over \([x] \in \P^{N}\) is the hyperplane defined by the equation:
\[
 P_{0}(x)Y_{0} + \dotsb + P_{N}(x)Y_{N}=0.
 \]
Furthermore, the simple but key observation is that \(\Ker\delta(f)_{1,0}\) is nothing but the pull-back by \(f\) of \(\Ker \delta_{1,0}\), i.e.
\[
\Ker \delta(f)_{1,0}
=
f^{*} \Ker \delta_{1,0}.
\]
In the spirit of what we did in the previous sections, we prove the following:
\begin{proposition}
For any \(m \in \N\), the \(m\)th symmetric power of \(\Ker \delta(f)_{1,0}\) fits into the exact sequence
\begin{equation*}
\label{eq: inj map eul}
\xymatrix{
0 \ar[r] 
& 
S^{m}\Ker \delta(f)_{1,0}
\ar[r] 
& 
\C[Y]_{m}\otimes\O_{\P^{N}}  
\ar[r]^-{\delta(f)} 
&
 \C[Y]_{m-1}\otimes\O_{\P^{N}}(d)  
 \ar[r] 
&
0.
}
\end{equation*}
\end{proposition}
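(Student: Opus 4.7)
The plan is to deduce the exact sequence by pulling back the generalized Euler exact sequence (Theorem \ref{thm: gen Euler}) along the finite morphism \(f\colon \P^{N} \to \P^{N}\). Since all three terms of that sequence are locally free sheaves, the functor \(f^{*}\) preserves exactness, and we will obtain the desired sequence after identifying each pulled-back term.

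First, I would carry out the three identifications separately. The trivial bundle \(\C[Y]_{m}\otimes \O_{\P^{N}}\) pulls back to itself. Since \(f\) is of degree \(d\) we have \(f^{*}\O_{\P^{N}}(1) \simeq \O_{\P^{N}}(d)\), so that \(f^{*}(\C[Y]_{m-1}\otimes \O_{\P^{N}}(1)) \simeq \C[Y]_{m-1}\otimes \O_{\P^{N}}(d)\). For the leftmost term, the case \(m=1\) of Theorem \ref{thm: gen Euler} identifies \(\Omega_{\P^{N}}(1) \simeq \Ker \delta_{1,0}\); using the observation already made in the excerpt that \(\Ker \delta(f)_{1,0} = f^{*}\Ker \delta_{1,0}\), together with the fact that \(f^{*}\) commutes with symmetric powers of vector bundles, one gets
\[
f^{*}\big(S^{m}\Omega_{\P^{N}}(m)\big) \simeq S^{m} f^{*}\Omega_{\P^{N}}(1) \simeq S^{m}\Ker \delta(f)_{1,0}.
\]

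Next, I would check that the pullback of the middle arrow is exactly \(\delta(f)\). This is the conceptual heart of the argument: under the pullback \(f^{*}\colon H^{0}(\P^{N}, \O_{\P^{N}}(1)) \to H^{0}(\P^{N}, \O_{\P^{N}}(d))\) the homogeneous coordinates \(X_{i}\) correspond tautologically to the defining polynomials \(P_{i}\) of \(f\). Consequently the operator \(\delta = \sum_{i=0}^{N} X_{i}\,\partial/\partial Y_{i}\) pulls back to \(\sum_{i=0}^{N} P_{i}\,\partial/\partial Y_{i} = \delta(f)\), which is precisely the middle arrow of the sequence we wish to establish. Combining the three identifications with the exactness of \(f^{*}\) on the generalized Euler sequence then yields the claim.

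The main obstacle, though minor, is verifying that the middle arrow is genuinely \(\delta(f)\) rather than some twist of it, and this comes down to the compatibility of \(f^{*}\) with the expression of \(\delta\) via sections of \(\O_{\P^{N}}(1)\). If one preferred a proof that avoids quoting pullback exactness, one could instead mimic directly the proofs of Lemma \ref{lemma: Euler 1} and Lemma \ref{lemma: Euler 2}: because \(f\) is a morphism of \(\P^{N}\), the polynomials \(P_{0}, \dotsc, P_{N}\) share no common zero on \(\P^{N}\), so around any point some \(P_{j}\) is invertible, and the local sections \(L_{0}\bydef Y_{j}/P_{j}\), \(L_{i}\bydef Y_{i} - (P_{i}/P_{j}) Y_{j}\) for \(i \neq j\), satisfy \(\delta(f)(L_{0})=1\) and \(\delta(f)(L_{i})=0\). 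The Leibniz computation of Lemma \ref{lemma: Euler 1} then yields surjectivity, while the same identification procedure as in Lemma \ref{lemma: Euler 2}, applied with \(X_{i}\) replaced by \(P_{i}\) throughout, recovers the kernel as \(S^{m}\Ker \delta(f)_{1,0}\).
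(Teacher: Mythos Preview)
Your proposal is correct, and your primary approach via pullback is genuinely different from the paper's own proof. The paper argues entirely by hand: it writes down the explicit injection
\[
i\colon (v^{1}\dotsb v^{m}) \longmapsto \sum_{i_{1},\dotsc,i_{m}} v^{1}_{i_{1}}\dotsb v^{m}_{i_{m}}\, Y_{i_{1}}\dotsb Y_{i_{m}},
\]
checks by direct computation that the image lands in \(\Ker\delta(f)\), proves surjectivity of \(\delta(f)\) exactly as you describe in your alternative paragraph (via the open cover \(\{P_{i}\neq 0\}\) and the local sections \(L_{j}\)), and finally obtains exactness in the middle from the rank count \(\dim\C[Y]_{m}=\dim\C[Y]_{m-1}+\rank(S^{m}\Ker\delta(f)_{1,0})\). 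In other words, the paper's proof is precisely your fallback direct argument, not your pullback argument.

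Your pullback route is cleaner and more conceptual: it leverages the identification \(\Ker\delta(f)_{1,0}=f^{*}\Ker\delta_{1,0}\) (which the paper records just before the proposition) and reduces everything to Theorem~\ref{thm: gen Euler} plus the exactness of \(f^{*}\) on locally free sheaves. The only point requiring care, as you note, is that \(f^{*}\delta=\delta(f)\), which follows from \(f^{*}X_{i}=P_{i}\) as sections of \(\O_{\P^{N}}(d)\). What the paper's hands-on approach buys is that it is entirely self-contained and makes the maps completely explicit; what your approach buys is that it explains \emph{why} the sequence exists (it is literally the Euler sequence seen through \(f\)), and it avoids repeating the computations of Lemmas~\ref{lemma: Euler 1} and~\ref{lemma: Euler 2}.
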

\begin{proof}
The injection of locally free sheaves 
\(
S^{m}\Ker \delta(f)_{1,0}
\to
\C[Y]_{m}\otimes\O_{\P^{N}}  
\)
is given by the map
\[
\big([x], v^{1}\dotsb v^{m}\big)
\overset{i}{\longmapsto}
\big(
[x], \sum\limits_{0 \leq i_{1}, \dotsc, i_{m} \leq N} v^{1}_{i_{1}} \dotsb v^{m}_{i_{m}} Y_{i_{1}} \dotsb Y_{i_{m}}
\big).
\]
Its image does indeed lie in the kernel of \(\delta(f)\):
\begin{eqnarray*}
(\delta(f) \circ i)(v^{1}\dotsb v^{m})
&
=
&
\sum\limits_{ i_{1}, \dotsc, i_{m}} \sum\limits_{k=1}^{m} v_{i_{1}}^{1}\dotsb v_{i_{m}}^{m} P_{i_{k}}\overset{\wedge_{i_{k}}}{Y_{i_{1}} \dotsb Y_{i_{m}}}
\\
&
=
&
\sum\limits_{k=1}^{m}
\sum\limits_{ \dotsc, i_{k-1}, i_{k+1}, \dotsc}
v_{i_{1}}^{1}Y_{i_{1}}\dotsb v_{i_{k-1}}^{k-1}Y_{i_{k-1}}
(\underbrace{
\sum\limits_{i=0}^{N} 
P_{i}(x)v_{i}^{k}}_{=0})
v_{i_{k+1}}^{k+1}Y_{i_{k+1}} \dotsb v_{i_{m}}^{m}Y_{i_{m}}
\\
&
=
&
0,
\end{eqnarray*}
since all the vectors \(v^{1}, \dotsc, v^{m}\) lie in \(\Ker \delta(f)_{1,0}(x)\).

The surjectivity of the map \(\delta(f)\) is proved as in the proof of Lemma \ref{lemma: Euler 1} (almost verbatim). The only difference is that, in order to construct an antecedent of a local section, one rather considers the covering of affine open sets
\(
\big(\Set{P_{i} \neq 0}\big)_{0 \leq i \leq N},
\)
and the following local sections of \(\C[Y]_{1}\otimes\O_{\P^{N}}\)
\[
\left\{ 
\begin{array}{ll}
L_{i}\bydef \frac{Y_{i}}{P_{i}} \\
L_{j} \bydef Y_{j} - \frac{P_{j}}{P_{i}}Y_{i} & \text{for \(j \neq i \)}
\end{array}
\right..
\]

As for the exactness in the middle, it follows from the equality of dimension:
\[
\dim \C[Y]_{m}
=
\dim \C[Y]_{m-1}
+
\rank (S^{m}\Ker \delta(f)_{1,0}).
\]
\end{proof}
\begin{remark}
Note that for \(f=\Id\), one recovers the generalized Euler exact sequence.
\end{remark}
In plain words, the previous statement says that for any \(x \in \P^{N}\), the map 
\[
\delta(f)_{x}
\bydef 
 \sum\limits_{i=0}^{N} P_{i}(x) \frac{\partial}{\partial Y_{i}}
\]
is the defining equation of \((S^{m}\Ker \delta(f)_{1,0})_{x}\) in \(\S^{m} \C^{N+1}\simeq \C[Y]_{m}\). 

By taking the long exact sequence in cohomology associated to the short exact sequence \eqref{eq: inj map eul} twisted by \(\O_{\P^{N}}(n), n \in \N\), one finds that 
\[
\left\{ 
\begin{array}{ll}
 \Ker\big(\delta(f)_{m,n}(\P^{N})\big)
 \simeq
  H^{0}\big(\P^{N}, (S^{m} \Ker \delta(f)_{1,0})\otimes \O_{\P^{N}}(n)\big)
 \\
  \Coker\big(\delta(f)_{m,n}(\P^{N})\big) 
  \simeq
  H^{1}\big(\P^{N}, (S^{m} \Ker \delta(f)_{1,0})\otimes \O_{\P^{N}}(n)\big)
\end{array}
\right..
\]
Since one still has the following equality for any \(m \in \N_{\geq 1}\)
\[
f^{*}S^{m}\Ker \delta_{1,0}
=
S^{m}\Ker \delta(f)_{1,0},
\]
the projection formula implies the following:
\[
H^{*}\big(\P^{N},S^{m} \Ker \delta(f)_{1,0}\otimes \O_{\P^{N}}(n)\big)
\simeq
H^{*}\big(\P^{N},
\underbrace{S^{m}\Ker \delta_{1,0}}_{\simeq S^{m}\Omega_{\P^{N}}(m)}
\otimes f_{*}\O_{P^{N}}(n)
\big).
\]
Accordingly, in order to understand the kernel and cokernel of the global map 
\[
\delta(f)(\P^{N})\colon
\C[Y,X] \longrightarrow \C[Y,X],
\]
it is enough to understand the direct images \(f_{*}(\O_{\P^{N}}(n))\) for any \(n \in \Z\). 
It turns out that such direct images are particularly well-understood, and one has:
\begin{lemma}
\label{lemma: direct image}
Let \(0 \leq r < d\) be an integer. The vector bundle \(f_{*}(\O_{\P^{N}}(r))\) splits as a direct sum of line bundles, all of whom have degree less or equal than zero. 
\end{lemma}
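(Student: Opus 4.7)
The plan is to combine Horrocks' splitting criterion with the projection formula applied to the finite morphism \(f\).

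First I would establish the splitting itself. Since \(f\) is a finite morphism, \(R^{i}f_{*}\O_{\P^{N}}(r)=0\) for \(i \geq 1\), and the projection formula together with \(f^{*}\O_{\P^{N}}(k)=\O_{\P^{N}}(dk)\) gives, for every \(k \in \Z\) and every \(i\):
\[
H^{i}\big(\P^{N}, f_{*}\O_{\P^{N}}(r)\otimes \O_{\P^{N}}(k)\big)
=
H^{i}\big(\P^{N}, \O_{\P^{N}}(r+dk)\big).
\]
The right-hand side vanishes for \(0<i<N\) (middle cohomology of Serre line bundles on \(\P^{N}\) is always zero). Horrocks' splitting criterion, recalled for instance in \cite{Beauville}[Section 1], then applies and yields a decomposition
\[
f_{*}\O_{\P^{N}}(r)
\simeq
\bigoplus_{j} \O_{\P^{N}}(a_{j})
\]
into a direct sum of line bundles.

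Next I would bound the integers \(a_{j}\) by specializing the projection formula at \(k=-1\). On the one hand,
\[
H^{0}\big(\P^{N}, f_{*}\O_{\P^{N}}(r)\otimes \O_{\P^{N}}(-1)\big)
=
H^{0}\big(\P^{N}, \O_{\P^{N}}(r-d)\big)
=
0,
\]
since by hypothesis \(r-d<0\). On the other hand, using the splitting obtained above,
\[
H^{0}\big(\P^{N}, f_{*}\O_{\P^{N}}(r)\otimes \O_{\P^{N}}(-1)\big)
\simeq
\bigoplus_{j} H^{0}\big(\P^{N}, \O_{\P^{N}}(a_{j}-1)\big).
\]
Comparing the two expressions forces \(H^{0}(\P^{N}, \O_{\P^{N}}(a_{j}-1))=0\) for every \(j\), i.e. \(a_{j}-1<0\), which gives the desired inequality \(a_{j}\leq 0\).

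The only non-formal input is Horrocks' theorem itself, so there is no real obstacle here; the argument is a textbook combination of the projection formula with the vanishing of middle cohomology on \(\P^{N}\), and the hypothesis \(r<d\) is used exactly once, precisely to rule out sections of \(f_{*}\O_{\P^{N}}(r)(-1)\).
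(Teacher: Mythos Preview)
Your argument is correct and is essentially identical to the paper's own proof: both use the projection formula to show the intermediate cohomology of all twists of \(f_{*}\O_{\P^{N}}(r)\) vanishes, invoke Horrocks' splitting criterion, and then use the vanishing of \(H^{0}\) of the \((-1)\)-twist (from \(r-d<0\)) to bound the summands. The only cosmetic difference is that you make the vanishing of higher direct images explicit before applying the projection formula, whereas the paper leaves this implicit.
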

\begin{proof}
Since \(f\) is a finite map, the push-forward sheaf \(f_{*}(\O_{\P^{N}}(r))\) is locally free. Furthermore, the projection formula implies that for any \(j \in \Z\), one has:
\[
H^{*}(\P^{N}, f^{*}(\O_{\P^{N}}(j)) \otimes \O_{\P^{N}}(r))
\simeq
H^{*}(\P^{N}, \O_{\P^{N}}(j) \otimes f_{*}\O_{\P^{N}}(r))
\]
In particular, this implies that
\[
H^{i}(\P^{N}, f_{*}(\O_{\P^{N}}(r)) \otimes \O_{\P^{N}}(j))=(0)
\]
for any \(1 \leq i \leq N-1\) and any \(j\in \Z\). By a result of Horrocks (see \cite{Beauville}[Corollary 1.3]), this implies that \(f_{*}(\O_{\P^{N}}(r))\) splits as a direct sum of line bundles.

On the other hand, as \(f^{*}\O_{\P^{N}}(1)=\O_{\P^{N}}(d)\), and since by hypothesis one has the inequality \(r-d<0\), one deduces the vanishing
\[
H^{0}(\P^{N}, f_{*}(\O_{\P^{N}}(r)) \otimes \O_{\P^{N}}(-1))
\simeq
H^{0}(P^{N}, \O_{\P^{N}}(r-d))
=
(0).
\]
This immediately implies that all the line bundles appearing in the splitting of \(f_{*}(\O_{\P^{N}}(r))\) have degree less or equal than zero.
\end{proof}
Knowing what degrees appear in the splitting of the line bundles \(f_{*}\O_{\P^{N}}(r)\), \(0 \leq r < d\), seems however to be complicated in full generality. 

For later use, we record the following result, which is interesting in itself:
\begin{theorem}
\label{prop: bound inj}
Let \(m \in \N\), and \(n \in \N\). As soon as \(dm > n\), the linear map
\[
\delta(f)_{m,n}(\P^{N})\colon \
S_{m,n} \to S_{m-1,n+d}
\]
is injective, and the bound is sharp.
\end{theorem}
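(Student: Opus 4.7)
The plan is to rephrase the injectivity of \(\delta(f)_{m,n}(\P^N)\) as a cohomological vanishing on \(\P^N\), and then conclude using the explicit cohomology computation of Theorem \ref{thm: coho sym proj} together with Lemma \ref{lemma: direct image}.

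First I would take global sections of the twisted exact sequence \eqref{eq: inj map eul}. Since \(H^0\) is left exact, the kernel of \(\delta(f)_{m,n}(\P^N)\) is identified with \(H^0\bigl(\P^N, (S^m\Ker\delta(f)_{1,0})\otimes\O_{\P^N}(n)\bigr)\). Using the identification \(S^m\Ker\delta(f)_{1,0} \simeq f^*(S^m\Omega_{\P^N}(m))\) noted just before the statement, together with the projection formula, this rewrites as
\[
H^0\bigl(\P^N,\ S^m\Omega_{\P^N}(m) \otimes f_*\O_{\P^N}(n)\bigr).
\]

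Next I would perform the Euclidean division \(n = dq + r\) with \(0 \leq r < d\). Since \(f^*\O_{\P^N}(1) = \O_{\P^N}(d)\), a second application of the projection formula yields \(f_*\O_{\P^N}(n) \simeq f_*\O_{\P^N}(r) \otimes \O_{\P^N}(q)\). By Lemma \ref{lemma: direct image}, \(f_*\O_{\P^N}(r) \simeq \bigoplus_j \O_{\P^N}(a_j)\) with all \(a_j \leq 0\), so the kernel in question is the direct sum over \(j\) of the groups \(H^0\bigl(\P^N,\ S^m\Omega_{\P^N}(m+q+a_j)\bigr)\). By Theorem \ref{thm: coho sym proj} (first two items), each such group vanishes as soon as \(q+a_j < m\); since \(dm > n\) forces \(q \leq n/d < m\) (hence \(q \leq m-1\)) and \(a_j \leq 0\), the desired vanishing, and therefore the injectivity, follows.

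For sharpness I would exhibit non-injectivity at the critical index \(n = dm\). There \(q=m\), \(r=0\), so \(f_*\O_{\P^N}(dm) \simeq f_*\O_{\P^N} \otimes \O_{\P^N}(m)\). Since \(f\) is finite flat and we work in characteristic zero, the trace map \(f_*\O_{\P^N} \to \O_{\P^N}\) provides a retraction to the adjunction inclusion \(\O_{\P^N}\hookrightarrow f_*\O_{\P^N}\), so \(\O_{\P^N}\) splits off as a direct summand of \(f_*\O_{\P^N}\). The kernel therefore contains \(H^0(\P^N,\ S^m\Omega_{\P^N}(2m)) \simeq S^{(m,m)}\C^{N+1}\), which is non-zero. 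The only step that really requires care is the Euclidean division: one should avoid trying to control the summands of \(f_*\O_{\P^N}(n)\) directly—their degrees depend delicately on \(f\)—and instead apply Lemma \ref{lemma: direct image} only in the range \(0 \leq r < d\), where knowing the sign of the degrees is exactly what is needed.
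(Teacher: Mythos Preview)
Your proof is correct and follows essentially the same route as the paper: both identify the kernel with \(H^{0}\bigl(\P^{N}, S^{m}\Omega_{\P^{N}}(m)\otimes f_{*}\O_{\P^{N}}(n)\bigr)\) via the projection formula, perform the Euclidean division \(n=dq+r\), invoke Lemma~\ref{lemma: direct image} on \(f_{*}\O_{\P^{N}}(r)\), and conclude by Theorem~\ref{thm: coho sym proj}. Your sharpness argument is slightly more explicit than the paper's (you invoke the trace splitting to exhibit the \(\O_{\P^{N}}\) summand, whereas the paper leaves this implicit in the ``if and only if''), but the content is the same.
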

\begin{proof}
By the above (and the projection formula), the kernel of \(\delta(f)_{m,n}(\P^{N})\) is isomorphic to 
\[
H^{0}(\P^{N}, S^{m}\Omega_{\P^{N}}(m+q) \otimes \gamma_{*}(\O_{\P^{N}}(r))),
\]
where \(n=qd+r\) is the euclidean division of \(n\) by \(d\). By Lemma \ref{lemma: direct image}, the above vector space writes as a direct sum of pieces of the following form
\[
H^{0}(\P^{N}, S^{m}\Omega_{\P^{N}}(m+k)),
\]
where \(k \leq q=\lfloor \frac{n}{d} \rfloor\). By Theorem \ref{thm: coho sym proj}, these cohomology groups are zero if and only if \(k < m\): this finishes the proof.
\end{proof}

\section{Koszul complexes resolving structural sheaves of projectivized tangent bundles of smooth complete intersections.}
\label{sect: resolutions}

\subsection{A first Koszul complex: the case of hypersurfaces.}
\label{section: Koszul complex 1: hyp}
Let us fix \(H\bydef \Set{P=0} \subset \P^{N}\) a smooth hypersurface of degree \(d \geq 1\), and let us denote by 
\[
\pi_{H}\colon \P(T\P^{N}_{\vert H}) \to H
\]
the canonical projection from the restricted projectivized tangent bundle of the projective space to the base space \(H\).

\subsubsection{Construction of the Koszul complex}

A simple but crucial observation is that the differential \(\diff P\) allows to define a global section of \(\O_{\P(T\P^{N}_{\vert H})}(1) \otimes \pi_{H}^{*}\O_{H}(d)\):
\begin{lemma}
\label{lemma: basic differential map}
The map defined pointwise on \(\xi = ([x], [\overline{v}]) \in \P(T\P^{N}_{\vert H}(-1))\) by\footnote{The multiplicative factor \(\frac{1}{d}\) is motivated by the identity:
\[
\delta\big(\frac{1}{d} (\diff P)_{X}(Y)\big)
=
P.
\]
}
\[
s(P)(\xi)
\bydef
(\xi, v \mapsto \frac{1}{d}(\diff P)_{x}(v))
\]
glues to a global section of \(\O_{\P(T\P^{N}_{\vert H})}(1) \otimes \pi_{H}^{*}\O_{H}(d)\).
\end{lemma}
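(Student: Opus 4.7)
My plan is to exhibit $s(P)$ as the image of (a rescaling of) $dP\vert_H$ under a natural construction, which will automatically take care of the gluing.

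First, I would note that the differential of $P$ yields a global section
\[
dP \in H^{0}(\P^{N}, \Omega_{\P^{N}}(d)),
\]
since $P \in H^{0}(\P^{N}, \O_{\P^{N}}(d))$. Restricting to $H$ produces $\frac{1}{d} dP\vert_{H} \in H^{0}(H, \Omega_{\P^{N}\vert H}(d))$.

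Next, I would use the fact that, with our convention $\P(\cdot)=\P_{\mathrm{lines}}(\cdot)$, there is a tautological surjection
\[
\pi_{H}^{*}\Omega_{\P^{N}\vert H}\twoheadrightarrow \O_{\P(T\P^{N}_{\vert H})}(1),
\]
obtained as the dual of the tautological inclusion $\O_{\P(T\P^{N}_{\vert H})}(-1)\hookrightarrow \pi_{H}^{*}(T\P^{N}_{\vert H})$. Tensoring by $\pi_{H}^{*}\O_{H}(d)$ and applying this surjection to $\pi_{H}^{*}(\tfrac{1}{d} dP\vert_{H})$ produces a global section
\[
\tilde{s}\in H^{0}\bigl(\P(T\P^{N}_{\vert H}), \O_{\P(T\P^{N}_{\vert H})}(1)\otimes \pi_{H}^{*}\O_{H}(d)\bigr).
\]

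To conclude, I would check at the level of fibers that $\tilde s = s(P)$. At a point $\xi=([x],[\overline v])$, the tautological surjection sends the fiber $\pi_{H}^{*}\Omega_{\P^{N}\vert H}\vert_{\xi}\simeq (T_{[x]}\P^{N})^{*}$ to $(\C\overline v)^{*}$ by restriction to the line $\C\overline v$; hence $\tilde s(\xi)$ is exactly the linear form $v\mapsto \tfrac{1}{d}(dP)_{x}(v)$ on $\C\overline v$, which is $s(P)(\xi)$.

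I expect the only mildly delicate point to be the bookkeeping of the various natural identifications — in particular, that the pointwise formula $v\mapsto \tfrac{1}{d}(dP)_{x}(v)$ is well defined as an element of the fiber $(\C\overline v)^{*}\otimes (\C x)^{*\otimes d}$. If one wished to argue more directly (bypassing the pushforward interpretation), the independence with respect to the lift of $\overline v$ would follow from Euler's identity $(dP)_{x}(x)=d\,P(x)=0$ on $H$, while the independence with respect to the representative $x$ of $[x]$ would follow from balancing the degree-$(d-1)$ homogeneity of $(dP)_{x}$ in $x$ against the rescaling $v\mapsto \mu v$ that accompanies $x\mapsto \mu x$ (coming from $(dp)_{\mu x}=\mu^{-1}(dp)_{x}$). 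The advantage of the functorial construction above is that it makes all of this automatic.
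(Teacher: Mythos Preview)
Your approach is sound and more conceptual than the paper's, but the very first claim needs correction: \(dP\) is \emph{not} a global section of \(\Omega_{\P^{N}}(d)\) on all of \(\P^{N}\). In the Euler-sequence description (the paper's \eqref{eq: gen Euler} with \(m=1\), twisted by \(\O(d-1)\)), the tuple \((\partial P/\partial X_{i})_{i}\in H^{0}(\O_{\P^{N}}(d-1)^{\oplus N+1})\) maps under \(\delta\) to \(\sum X_{i}\,\partial P/\partial X_{i}=dP\), which is nonzero; equivalently, the local forms \(d(P/X_{i}^{d})\) fail to glue by a term proportional to \(P\). What \emph{is} true---and all you actually use---is that this obstruction vanishes after restriction to \(H=\{P=0\}\), so \(\tfrac{1}{d}\,dP|_{H}\in H^{0}(H,\Omega_{\P^{N}|H}(d))\); intrinsically this is just the conormal map \(\mathcal{I}_{H}/\mathcal{I}_{H}^{2}\simeq \O_{H}(-d)\to \Omega_{\P^{N}|H}\) applied to the generator. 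With that one-line fix, the rest of your argument (tautological surjection \(\pi_{H}^{*}\Omega_{\P^{N}|H}\twoheadrightarrow \O_{\P(T\P^{N}_{|H})}(1)\), then the fiberwise identification) goes through cleanly.

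By contrast, the paper argues directly from the pointwise formula: it checks the two homogeneity/well-definedness identities you sketch in your final paragraph (Euler's identity on \(H\) for independence of the lift of \(\overline{v}\), and degree homogeneity in \(x\)), first obtaining a section of \(\O_{\P(T\P^{N}_{|H}(-1))}(1)\otimes \tilde{\pi}_{H}^{*}\O_{H}(d-1)\) and then invoking the natural identification with \(\O_{\P(T\P^{N}_{|H})}(1)\otimes \pi_{H}^{*}\O_{H}(d)\). Your functorial construction has the advantage that the target line bundle appears automatically, with no twist bookkeeping, and it generalizes verbatim to the ambient-variety setting of Proposition~\ref{prop: general}; the paper's hands-on route instead keeps the explicit formula \(\tfrac{1}{d}(\diff P)_{X}(Y)\) in view from the start, which is what feeds directly into the later manipulations with \(\alpha(P)\).
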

\begin{proof}
Note first that, in the statement, one implicitly identifies \((T\P^{N}(-1))_{[x]}\) with its natural description via the Euler vector fields, i.e.
\[
(T\P^{N}(-1))_{[x]}
\simeq 
\frac{\C \cdot \gamma_{0} \oplus \dotsb \oplus \C \cdot \gamma_{N}}
{\C \cdot \big(x_{0}\gamma_{0} + \dotsb + x_{N} \gamma_{N}\big)}.
\]
Therefore, a vector \(\overline{v} \in (T\P^{N}(-1))_{[x]}\) is seen as an equivalence class of a vector \(v=(v_{0}, \dotsc, v_{N})\) in \(\C^{N+1}\).

Let then \(\lambda, \mu \in \C^{*}\). One has the following two simple identities:
\begin{enumerate}
\item{}
\( (\diff P)_{\lambda x}(\mu v) = \lambda^{d-1} \mu \cdot (\diff P)_{x}(v) \);
\item{}
\( (\diff P)_{x}(v + \lambda x) = (\diff P)_{x}(v) + \underbrace{\lambda^{d-1}P(x)}_{=0 \ \text{since \([x] \in H\)}} \).
\end{enumerate}
Therefore, the map \(s(P)\) defines a global section of
\[
\O_{\P(T\P^{N}_{\vert H}(-1))}(1) \otimes \tilde{\pi}_{H}^{*}\O_{H}(d-1),
\]
where \(\tilde{\pi}_{H} \colon \P(T\P^{N}_{\vert H}(-1)) \to H \) is the canonical projection. 

The lemma now follows from the natural identification\footnote{Keep in mind that, throughout the paper, we use projectivization of lines.}
 between
\[
\O_{\P(T\P^{N}_{\vert H}(-1))}(1) \otimes \tilde{\pi}_{H}^{*}\O_{H}(d-1)
\]
and
\[
\O_{\P(T\P^{N}_{\vert H})}(1) 
\otimes
\pi_{H}^{*} \O_{H}(d).
\]

\end{proof}
Now, observe that the zero locus of the global section \(s(P)\) defines set-theoretically the projectivized tangent bundle \(\P(TH)\) inside \(\P((T\P^{N})_{\vert H})\), by very definition of the tangent bundle. Furthermore, by smoothness of \(H\), the ideal sheaf spanned by \(s(P)\) is reduced. Therefore, we deduce the following exact sequence:
\begin{equation}
\label{eq: exact seq hypersurface}
\xymatrix{
\mathcal{K}(s(P))\colon
0 
\ar[r]
&
\O_{\P(T\P^{N}_{\vert H})}(-1) \otimes \pi_{H}^{*}\O_{H}(-d)
\ar[rr]^-{\cdot \times s(P)}
&
&
\O_{\P(T\P^{N}_{\vert H})},
}
\end{equation}
which provides a locally free resolution of \(j_{*}\O_{\P(TH)}\), where \(j\colon \P(TH) \hookrightarrow \P(T\P^{N}_{\vert H})\) is the closed immersion of \(\P(TH)\) inside \(\P(T\P^{N}_{\vert H})\).

\subsubsection{Interpretation of the (twisted) Koszul complex on the base}
\label{subs: interpretation Koszul: hyp}
By twisting the exact sequence \eqref{eq: exact seq hypersurface} by \(\O_{\P(T\P^{N}_{\vert H})}(m) \otimes \pi_{H}^{*}\O_{H}(m+n)\)\footnote{One may wonder why we twist by \(\pi_{H}^{*}\O_{H}(m+n)\) and not by \(\pi_{H}^{*}\O_{H}(n)\): we do it for the sake of coherence, as throughout the paper, it will be more natural to compute the cohomology of \(S^{m}\Omega_{X}(m+n)\), where \(X\) is a complete intersection, rather than the cohomology of \(S^{m}\Omega_{X}(n)\).}, where \(m \in \N_{\geq 1}\), and \(n \in \Z\), one obtains the exact sequence:
\begin{equation}
\label{eq: exact seq hypersurface twisted}
\resizebox{\displaywidth}{!}{
\xymatrix{
\mathcal{K}(s(P))_{m,n}\colon
0
\ar[r]
&
\O_{\P(T\P^{N}_{\vert H})}(m-1) \otimes \pi_{H}^{*}\O_{H}(m+n-d)
\ar[r]
&
\O_{\P(T\P^{N}_{\vert H})}(m) \otimes \pi_{H}^{*}\O_{H}(m+n),
}
}
\end{equation}
which provides a locally free resolution of \(j_{*}\O_{\P(TH)}(m) \otimes \pi_{H}^{*}\O_{H}(m+n)\)\footnote{Throughout the text, we will often omit to write these push-forward inclusions, as they are implicit (and often burdens the notations).}.

Recall that, as soon as \(r \geq 0\), Bott's formulas (see Appendix \ref{appendix: Bott}) imply in particular that the higher direct images
\begin{enumerate}
\item{}
\(
(R^{i} \pi_{H})_{*} \big(\O_{\P(T\P^{N}_{\vert H})}(r) \otimes \pi_{H}^{*}\O_{H}(s)\big)
\)
\item{}
\(
(R^{i} \pi_{H})_{*}\big(j_{*}(\O_{\P(TH)}(r)) \otimes \pi_{H}^{*}\O_{H}(s)\big)
\simeq 
(R^{i} (j\circ\pi_{H}))_{*}\big(\O_{\P(TH)}(r) \otimes (j\circ \pi_{H})^{*}\O_{H}(s)\big)
\)
\end{enumerate}
vanish for \(i>0\) and any \(s \in \Z\). They also imply the following two equalities for any \(r \in \N_{\geq 0}\) and any \(s \in \Z\):
\begin{enumerate}
\item{}
\(
(\pi_{H})_{*}\big(\O_{\P(T\P^{N}_{\vert H})}(r) \otimes \pi_{H}^{*}\O_{H}(s)\big)
\simeq
S^{r}\Omega_{\P^{N}}(s)_{\vert H};
\)
\item{}
\(
(\pi_{H})_{*}\big(j_{*}\O_{\P(TH)}(r) \otimes \pi_{H}^{*}\O_{H}(s)\big)
\simeq
S^{r}\Omega_{H}(s).
\)
\end{enumerate}
By pushing forward by \(\pi_{H}\) the exact sequence \eqref{eq: exact seq hypersurface twisted}, one therefore keeps an exact sequence, which takes the following form
\begin{equation}
\label{eq: exact seq hyper 2}
\xymatrix{
0
\ar[r]
&
S^{m-1}\Omega_{\P^{N}}(m+n-d)_{\vert H}
\ar[rr]^-{(\pi_{H})_{*}(s(P))}
&
&
S^{m}\Omega_{\P^{N}}(m+n)_{\vert H},
}
\end{equation}
and which provides a locally free resolution of \(S^{m}\Omega_{H}(m+n)\).
By very construction, one easily sees that the push-forward map \((\pi_{H})_{*}(s(P))\) is nothing but the map induced by the multiplication by the \((1,d-1)\) bi-homogeneous polynomial \(\frac{1}{d}(\diff P)_{X}(Y)\):
\[
\xymatrix{
0
\ar[d]
&
&
0
\ar[d]
\\
S^{m-1}\Omega_{\P^{N}}(m+n-d)_{\vert H}
\ar[d]
\ar[rr]
&
&
S^{m}\Omega_{\P^{N}}(m+n)_{\vert H}
\ar[d]
\\
C[Y]_{m-1} \otimes \O_{H}(n-d+1)
\ar[d]^-{\delta}
\ar[rr]^-{\alpha(P)}
&
&
C[Y]_{m} \otimes \O_{H}(n)
\ar[d]^{\delta}
\\
\C[Y]_{m-2} \otimes \O_{H}(n-d+2)
\ar[d]
&
&
C[Y]_{m-1} \otimes \O_{H}(n+1)
\ar[d]
\\
0
&
&
0
}
\]
Here, by definition, one has denoted
\[
\alpha(P)\bydef \cdot \times \frac{1}{d}(\diff P)_{X}(Y)
\]
the multiplication map by \(\frac{1}{d}(\diff P)_{X}(Y)\).
Note that the induced map is indeed well-defined, since one has the following identity:
\[
\delta\circ\alpha(P)
=
\cdot \times P
+
\alpha(P) \circ \delta.
\]

Using the notations of Section \ref{sect: coho proj}, we can summarize the results of this section in the following statement:
\begin{theorem}
\label{thm: Koszul cx hyp}
The bi-graded (Koszul)complex
\[
\xymatrix{
\mathcal{K}(\alpha(P))\colon
0
\ar[r]
&
(\Ker \delta)_{\vert H}[-1,-d+1]
\ar[rr]^-{\alpha(P)}
&&
(\Ker \delta)_{\vert H}.
}
\]
provides a locally free bi-graded resolution of
\[
\bigoplus_{m \geq 1, n \in \Z} S^{m}\Omega_{H}(m+n).
\]
\end{theorem}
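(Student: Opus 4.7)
The strategy is to organize what has already been established in Section \ref{section: Koszul complex 1: hyp} and repackage it as a single bi-graded statement. In particular, the hard geometric content (the existence of the regular section $s(P)$ cutting out $\P(TH)$ and hence producing the Koszul resolution \eqref{eq: exact seq hypersurface twisted}) is already in hand, so the proof reduces to a pushforward computation graded piece by graded piece.

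Fix $m \geq 1$ and $n \in \Z$. I would start from the twisted Koszul complex \eqref{eq: exact seq hypersurface twisted} and apply the functor $(\pi_H)_*$. The key input is Bott's formulas (Appendix \ref{appendix: Bott}): since $m \geq 1$, one has $m-1 \geq 0$, so the higher direct images of both $\O_{\P(T\P^N_{|H})}(m) \otimes \pi_H^* \O_H(m+n)$ and $\O_{\P(T\P^N_{|H})}(m-1) \otimes \pi_H^* \O_H(m+n-d)$ vanish, and similarly for the middle term $j_*\O_{\P(TH)}(m) \otimes \pi_H^*\O_H(m+n)$ after factoring $\pi_H \circ j$ through $H$. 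Exactness is therefore preserved by $(\pi_H)_*$, and the three terms are identified with $S^{m-1}\Omega_{\P^N}(m+n-d)_{|H}$, $S^m\Omega_{\P^N}(m+n)_{|H}$, and $S^m\Omega_H(m+n)$ respectively, yielding exactly \eqref{eq: exact seq hyper 2}.

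The next step is to identify the pushforward of multiplication by $s(P)$ with the map $\alpha(P)$. This is a local computation: on an affine chart, a local section of $S^{m-1}\Omega_{\P^N}(m+n-d)_{|H}$ corresponds via $(\Ker \delta)_{\vert H}$ to a polynomial $A(Y,X) \in \C[Y,X]_{m-1,n-d+1}$ with $\delta(A)=0$, and its image by the pushforward of $\cdot \times s(P)$ is read off, thanks to Lemma \ref{lemma: basic differential map}, as $A(Y,X) \cdot \frac{1}{d}(\diff P)_X(Y)$. The identity $\delta \circ \alpha(P) = \cdot \times P + \alpha(P) \circ \delta$ noted in the text ensures that $\alpha(P)$ sends $(\Ker \delta)_{|H}$ to $(\Ker \delta)_{|H}$ (the term $\cdot \times P$ vanishes on $H$), so that the map makes sense on the restricted kernel.

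Finally, I would assemble the bi-graded picture. Taking the direct sum over $m \geq 1$ and $n \in \Z$, the middle column becomes $(\Ker \delta)_{|H}$ and the left column becomes the shift $(\Ker \delta)_{|H}[-1,-d+1]$, since with the convention of Section \ref{subs: notations} the $(m-1,n-d+1)$ graded piece of $\Ker \delta$ is exactly $S^{m-1}\Omega_{\P^N}(m+n-d)$. This produces the complex $\mathcal{K}(\alpha(P))$ in the stated form, with total cohomology $\bigoplus_{m\geq 1, n \in \Z} S^m\Omega_H(m+n)$. The only subtlety anywhere in this plan is the bookkeeping of the grading shift and the explicit verification that the pushforward of $\cdot \times s(P)$ is indeed $\alpha(P)$ and not some renormalization of it; everything else is a direct application of Bott vanishing to the already-constructed resolution.
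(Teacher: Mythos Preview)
Your proposal is correct and follows essentially the same approach as the paper: the theorem is presented there as a summary of Section \ref{section: Koszul complex 1: hyp}, obtained by pushing forward the twisted Koszul complex \eqref{eq: exact seq hypersurface twisted} via Bott vanishing to get \eqref{eq: exact seq hyper 2}, identifying the pushforward of $s(P)$ with $\alpha(P)$, and then assembling the bi-graded pieces. Your treatment of the grading shift and of the identity $\delta \circ \alpha(P) = \cdot \times P + \alpha(P) \circ \delta$ (to ensure $\alpha(P)$ preserves $(\Ker\delta)_{\vert H}$) matches the paper's discussion exactly.
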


\subsubsection{A remark on the more general case of a smooth hypersurface of a smooth projective variety}
Consider the following more general setting: take \(X\) a smooth projective variety, fix \(L \) a line bundle on \(X\), and suppose that there exists \(s \in H^{0}(X,L)\) whose zero locus \(H \bydef \Set{s=0}\) defines a smooth hypersurface. One easily sees that Lemma \ref{lemma: basic differential map} generalizes as follows:
\begin{lemma}
There exists a global section of the line bundle
\[
\O_{\P(TX_{\vert H})}(1) \otimes \pi_{H}^{*}L
\]
which is locally induced by the differential of \(s\).
\end{lemma}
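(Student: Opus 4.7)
The plan is to construct the desired global section first downstairs on the base $H$, as an element of $H^{0}(H, \Omega_{X|H} \otimes L)$, and then transport it to $\P(TX_{\vert H})$ via the natural isomorphism
\[
(\pi_{H})_{*} \bigl(\O_{\P(TX_{\vert H})}(1) \otimes \pi_{H}^{*} L\bigr) \simeq \Omega_{X|H} \otimes L,
\]
which follows from the projection formula together with the relative Euler/Bott identity $(\pi_{H})_{*}\O_{\P(TX_{\vert H})}(1) \simeq \Omega_{X|H}$ (recall that $\P(\cdot)$ denotes projectivization of lines throughout the paper, so a global section of $\O_{\P(TX_{\vert H})}(1) \otimes \pi_{H}^{*}L$ is the same datum as a global section of $\Omega_{X|H} \otimes L$).

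To produce the section of $\Omega_{X|H} \otimes L$, I would argue in local trivializations of $L$. Fix an open cover $(U_{\alpha})_{\alpha}$ of $X$ on which $L$ trivializes, with transition cocycle $(g_{\alpha\beta})$, so that $s$ is represented by a compatible family of holomorphic functions $f_{\alpha} \in \O_{X}(U_{\alpha})$ satisfying $f_{\alpha} = g_{\alpha\beta} f_{\beta}$ on $U_{\alpha} \cap U_{\beta}$. Set $\sigma_{\alpha} \bydef d f_{\alpha}$, a local section of $\Omega_{X|U_{\alpha}}$. The Leibniz rule gives $d f_{\alpha} = g_{\alpha\beta}\, d f_{\beta} + f_{\beta}\, d g_{\alpha\beta}$; restricting to $H \cap U_{\alpha} \cap U_{\beta}$, where $f_{\beta}$ vanishes, this becomes $d f_{\alpha}|_{H} = g_{\alpha\beta}\, d f_{\beta}|_{H}$, which is exactly the cocycle condition satisfied by local sections of $\Omega_{X|H} \otimes L$. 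Hence the family $(\sigma_{\alpha}|_{H \cap U_{\alpha}})_{\alpha}$ glues to a well-defined global section $d s|_{H} \in H^{0}(H, \Omega_{X|H} \otimes L)$.

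Transporting $d s|_{H}$ through the relative Euler isomorphism above yields the required global section of $\O_{\P(TX_{\vert H})}(1) \otimes \pi_{H}^{*} L$. By construction, over a point $\xi = ([x], [\overline{v}]) \in \P(TX_{\vert H})$ lying above $U_{\alpha}$, it sends a lift $v$ of $\overline{v}$ in $T_{x}X$ to the linear form $v \mapsto (d f_{\alpha})_{x}(v)$ read in the local trivialization of $L$; this is exactly the announced pointwise description as something locally induced by the differential of $s$. In the particular case $X = \P^{N}$ and $L = \O_{\P^{N}}(d)$ with the standard trivializations, taking $f_{\alpha} = P/X_{\alpha}^{d}$ recovers (up to the normalising factor $\tfrac{1}{d}$) the section of Lemma~\ref{lemma: basic differential map}.

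The only mild obstacle is to justify that $d f_{\alpha}$, which is \emph{not} globally well-defined as a section of $\Omega_{X}$, does become globally well-defined after restriction to $H$ and twist by $L$; this is the content of the Leibniz computation, where one uses crucially that $s$ vanishes on $H$ to kill the extra term $f_{\beta}\, d g_{\alpha\beta}$. A more intrinsic reformulation would use the first jet bundle short exact sequence $0 \to \Omega_{X} \otimes L \to J^{1}(L) \to L \to 0$: the canonical jet $j^{1}(s) \in H^{0}(X, J^{1}(L))$ lifts $s$, and its restriction to $H$ factors through the subbundle $(\Omega_{X} \otimes L)|_{H}$ precisely because $s|_{H} = 0$; this canonical factorisation is exactly $d s|_{H}$, and provides the same global section of $\O_{\P(TX_{\vert H})}(1) \otimes \pi_{H}^{*}L$ in a completely coordinate-free manner.
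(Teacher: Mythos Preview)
Your proof is correct. It takes a route closely related to, but organised differently from, the paper's. The paper works directly upstairs on \(\P(TX_{\vert H})\): it defines the section pointwise as \(v \mapsto (ds_{i})_{x}(v)\) in local trivialisations of both \(TX\) and \(L\), and declares the gluing ``straightforward to check''. You instead work downstairs on \(H\), first producing \(ds|_{H} \in H^{0}(H,\Omega_{X|H}\otimes L)\) via the Leibniz computation \(df_{\alpha}|_{H}=g_{\alpha\beta}\,df_{\beta}|_{H}\) (the term \(f_{\beta}\,dg_{\alpha\beta}\) dying on \(H\)), and then invoke the relative identification \((\pi_{H})_{*}\O_{\P(TX_{|H})}(1)\simeq \Omega_{X|H}\) to transport it upstairs. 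The two are equivalent --- the ``straightforward check'' the paper omits is precisely your Leibniz step --- but your version is more transparent about where the restriction to \(H\) is used, and the jet-bundle reformulation via \(0\to\Omega_{X}\otimes L\to J^{1}(L)\to L\to 0\) gives a fully coordinate-free construction that the paper does not mention.
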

\begin{proof}
Let \((U_{i})_{i \in I}\) be an open covering of \(X\) trivializing both \(TX\) and \(L\), and denote by \((s_{i})_{i \in I}\) the local sections on \(U_{i}\) induced by \(s\). It is straightforward to check that the transition maps of the local sections defined pointwise on \(\P(TX_{\vert U_{i}})\) by
\[
 ([x], [v]) \to ([x], v \mapsto (ds_{i})_{x}(v))
\]
does glue to a global section of \(\O_{\P(TX_{\vert H})}(1) \otimes \pi_{H}^{*}L\).
\end{proof}
In this more general setting, Bott's formulas still holds, which allows to show the following (in exactly the same fashion as in the previous Section \ref{subs: interpretation Koszul: hyp}):
\begin{proposition}
\label{prop: general}
Let \(m \in \N\). The twisted symmetric power \(S^{m}\Omega_{H}\) fits into the short exact sequence
\[
\xymatrix{
0
\ar[r]
&
S^{m-1}\Omega_{X}(-L)
\ar[r]
&
S^{m}\Omega_{X}
\ar[r]
&
S^{m}\Omega_{H}
\ar[r]
&
0.
}
\]
\end{proposition}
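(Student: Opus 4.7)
The plan is to mimic essentially verbatim the construction carried out in Sections \ref{section: Koszul complex 1: hyp} and \ref{subs: interpretation Koszul: hyp} for hypersurfaces in projective space, using the preceding lemma as the input. Concretely, I want to produce a Koszul resolution of \(\O_{\P(TH)}\) inside \(\O_{\P(TX_{\vert H})}\) from the global section of \(\O_{\P(TX_{\vert H})}(1) \otimes \pi_H^{*}L\) provided by the lemma, then twist by \(\O_{\P(TX_{\vert H})}(m)\) and push down along \(\pi_H\) to land on \(H\). The trivial case \(m=0\) reduces to the obvious sequence \(0 \to \O_H \xrightarrow{\Id} \O_H \to 0\) (since by convention \(S^{-1}\Omega_X = 0\)), so I assume \(m \geq 1\) from here on.

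The first key step is to identify the zero locus of the global section \(\sigma\) produced by the lemma (locally \(\xi = ([x],[v]) \mapsto (\diff s_i)_x(v)\)) with the subvariety \(\P(TH) \subset \P(TX_{\vert H})\). Set-theoretically this is immediate: a point \(([x], [v])\) with \(x \in H\) lies in the zero locus iff \(v \in \Ker (\diff s_i)_x = T_x H\); scheme-theoretically, the smoothness of \(H\) guarantees that the ideal sheaf \((\sigma)\) is reduced along this locus, so \(\sigma\) cuts out \(\P(TH)\) as a regular section. Hence one gets the locally free resolution
\begin{equation*}
\xymatrix{
0 \ar[r] & \O_{\P(TX_{\vert H})}(-1) \otimes \pi_H^{*}L^{-1} \ar[rr]^-{\cdot \times \sigma} & & \O_{\P(TX_{\vert H})} \ar[r] & \O_{\P(TH)} \ar[r] & 0.
}
\end{equation*}

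Next, I twist this resolution by \(\O_{\P(TX_{\vert H})}(m)\), which, since \(\sigma\) is locally free (no higher Tor), preserves exactness:
\begin{equation*}
\xymatrix{
0 \ar[r] & \O_{\P(TX_{\vert H})}(m-1) \otimes \pi_H^{*}L^{-1} \ar[r] & \O_{\P(TX_{\vert H})}(m) \ar[r] & \O_{\P(TH)}(m) \ar[r] & 0.
}
\end{equation*}
Since \(m \geq 1\) and \(m-1 \geq 0\), the standard computation of higher direct images along a projective bundle (the relative form of Bott's formulas, valid for any smooth base and not just \(\P^N\)) yields \(R^{i}\pi_{H\,*} = 0\) for \(i \geq 1\) on both \(\O_{\P(TX_{\vert H})}(m)\) and \(\O_{\P(TX_{\vert H})}(m-1) \otimes \pi_H^{*}L^{-1}\), and also on the analogous terms for \(\pi_H \circ j\) using \(\O_{\P(TH)}(m)\). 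Thus pushing forward preserves exactness. The projection formula identifies the pushforwards as \(S^{m-1}\Omega_X(-L)_{\vert H}\), \(S^m\Omega_X{}_{\vert H}\), and \(S^m\Omega_H\) respectively, which gives precisely the stated short exact sequence.

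I do not expect any real obstacle: the argument is a straightforward transcription of the one given for hypersurfaces in \(\P^N\). The only point deserving attention is that the relative Bott vanishing used in the pushforward step needs to be invoked in this more general setting, but this is precisely the standard projective bundle formula (\(R^{i}\pi_{H\,*} \O_{\P(E)}(m) = 0\) for \(i > 0\), \(m \geq 0\), and equal to \(S^m E^{\vee}\) for \(i=0\)), which holds over any smooth base. Once this is in hand, the diagram chase reproducing Section \ref{subs: interpretation Koszul: hyp} goes through without modification.
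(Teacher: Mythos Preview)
Your proposal is correct and follows essentially the same approach as the paper: the paper simply remarks that the argument is ``in exactly the same fashion as in the previous Section~\ref{subs: interpretation Koszul: hyp}'', and you have spelled out precisely that reduction---the Koszul resolution coming from the section \(\sigma\), the twist by \(\O_{\P(TX_{\vert H})}(m)\), and the pushforward via the relative Bott vanishing and projection formula.
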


\subsection{A first Koszul complex: the case of smooth complete intersections.}
\label{sect: Koszul complex 1: ci}
Let us now fix \(c\) hypersurfaces \((H_{i}=\Set{P_{i}=0})_{1 \leq i \leq c}\) of degrees \(d_{i} \geq 1\) in such a way that their intersection
\[
X \bydef H_{1} \cap \dotsb \cap H_{c} \subset \P^{N}
\]
defines a smooth complete intersection. Let us also denote by 
\[
\pi_{X}: \P(T\P^{N}_{\vert X}) \to X
\]
the canonical projection from the restricted projectivized tangent bundle of the projective space to the base space \(X\). 

\subsubsection{Construction of the Koszul complex}
As in the previous section, for every \(1 \leq i \leq c\), the differential \(\diff P_{i}\) induces a global section
\[
s(P_{i}) 
\in 
H^{0}(\P(T\P^{N}_{\vert H_{i}}), \O_{\P(T\P^{N}_{\vert H_{i}})}(1) \otimes \pi_{H_{i}}^{*}\O_{H_{i}}(d_{i})).
\]
After restriction to \(X\), such a global section \(s(P_{i})\) induces as well a global section of
\[
\O_{\P(T\P^{N}_{\vert X})}(1) \otimes \pi_{X}^{*}\O_{X}(d_{i}).
\]
We will denote \(s_{i}\) this restricted section, and define \(\mathbi{s}\bydef (s_{1}, \dotsc, s_{c})\).

Observe that the zero locus of the sections \(s_{1}, \dotsc, s_{c}\) defines set-theoretically the projectivized tangent bundle \(\P(TX)\) inside \(\P(T\P^{N}_{\vert X})\). Furthermore, the smoothness of \(X\) implies that the ideal sheaf spanned by these sections is reduced (as one easily sees via the Jacobian criterion). By constructing the Koszul complex \(\mathcal{K}(\mathbi{s})\) associated to the sections \(s_{1}, \dotsc, s_{c}\) (see e.g. \cite{Laz1}[Appendix B.2]), one therefore obtains a complex abuting to \(\O_{\P(TX)}\). What is more, the smoothness of \(\P(T\P^{N}_{\vert X})\) implies that this complex is exact, therefore providing  a resolution of \(\O_{\P(TX)}\). 
\begin{example}
For \(c=2\), the exact complex \(\mathcal{K}(\mathbi{s})\) takes the following form:
\begin{equation*}
\resizebox{\displaywidth}{!}{
\xymatrix{
\mathcal{K}(\mathbi{s})\colon
0
\ar[r]
&
\O_{\P(T\P^{N}_{\vert X})}(-2) \otimes \pi_{X}^{*}\O_{X}(-d_{1}-d_{2})
\ar@{-}[rr]^-{(s_{1}, s_{2})}
&
&
\
\\
\
\ar[r]
&
\O_{\P(T\P^{N}_{\vert X})}(-1) \otimes \pi_{X}^{*}\O_{X}(-d_{2})
\bigoplus 
\O_{\P(T\P^{N}_{\vert X})}(-1) \otimes \pi_{X}^{*}\O_{X}(-d_{1})
\ar[rr]^-{s_{2}(\cdot) - s_{1}(\cdot)}
&
&
\O_{\P(T\P^{N}_{\vert X})}.
}
}
\end{equation*}
\end{example}

\subsubsection{Interpretation of the (twisted) Koszul complex on the base}
We can follow the same line of reasoning as in the case of hypersurfaces. Twisting by
 \[
 \O_{\P(T\P^{N}_{\vert X})}(m) \otimes \pi_{X}^{*}\O_{X}(m+n)
 \] 
the Koszul complex \(\mathcal{K}(\mathbi{s})\), where \(m \in \N_{\geq c}\) and \(n \in \Z\), we obtain a resolution of \(\O_{\P(T\P^{N}_{\vert X})}(m) \otimes \pi_{X}^{*}\O_{X}(m+n)\). As in the case of hypersurfaces, we denote this resolution by \(\mathcal{K}(\mathbi{s})_{m,n}\). 

Using Bott's formulas, this resolution can be pushed-forward by \(\pi_{X}\) into a resolution of \(S^{m}\Omega_{X}(m+n)\). The pushed-forward maps \((\pi_{X})_{*}(s_{i})\) are again induced by the multiplication by the \((1, d_{i}-1)\)-bihomogeneous polynomial \(\frac{1}{d_{i}}(\diff P_{i})_{X}(Y)\). Namely, this is nothing but the map of graded vector bundles:
\[
\alpha(P_{i}) \colon
(\Ker \delta)_{\vert X}[-1,-d_{i}+1]
\longrightarrow
(\Ker \delta)_{\vert X}.
\]
The push-foward Koszul complex \((\pi_{X})_{*}\big(\mathcal{K}(\mathbi{s})_{m,n}\big)\) is then nothing but the suitable graded pieces of the Koszul complex
\[
\mathcal{K}\big(\alpha(P_{1}), \dotsc, \alpha(P_{c})\big)
\]
on \(\Ker \delta_{\vert X}\).
\begin{example}
For \(c=2\), the (push-forward) Koszul complex \((\pi_{X})_{*}\big(\mathcal{K}(\mathbi{s})_{m,n}\big)\) takes the following form:
\begin{equation*}
\resizebox{\displaywidth}{!}{
\xymatrix{
0
\ar[r]
&
(\Ker\delta_{m-2,n-d_{1}-d_{2}+2})_{\vert X}
\ar[rr]^-{(\alpha(P_{1}), \alpha(P_{2}))}
&
&
(\Ker\delta_{m-1,n-d_{2}+1})_{\vert X} 
\bigoplus 
(\Ker\delta_{m-1,n-d_{1}+1})_{\vert X} 
\ar@{-}[r]
&
\
\\
\ 
\ar[rr]^-{\alpha(P_{2})(\cdot) - \alpha(P_{1})(\cdot)}
&
&
(\Ker\delta_{m,n})_{\vert X},
}
}
\end{equation*}
and it provides a locally free resolution of
\(S^{m}\Omega_{X}(m+n)\).
\end{example}

The main result of this section can thus be summarized in the following statement:
\begin{theorem}
\label{thm: resolution base 1}
The Koszul complex
\[
\mathcal{K}\big(\alpha(P_{1}), \dotsc, \alpha(P_{c})\big)
\] 
induced by the multiplication maps \(\big(\alpha(P_{i})\big)_{1 \leq i \leq c}\) on \((\Ker \delta)_{\vert X}\)
provides a locally free bi-graded resolution of
\[
\bigoplus_{m \geq c, n \in \Z} S^{m}\Omega_{X}(m+n).
\]
\end{theorem}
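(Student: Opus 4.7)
The plan is to exploit the Koszul complex $\mathcal{K}(\mathbi{s})$ constructed upstairs on $\P(T\P^{N}_{\vert X})$ and push it forward to the base via $\pi_X$, using Bott's formulas to preserve exactness. Most of the ingredients are already in place from the preceding subsection, so the proof is mostly a matter of organizing them and verifying that the pushed-forward differentials identify with the multiplication maps $\alpha(P_i)$.

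First, I would recall that $\mathcal{K}(\mathbi{s})$ provides a locally free resolution of $j_* \O_{\P(TX)}$, where $j \colon \P(TX) \hookrightarrow \P(T\P^{N}_{\vert X})$ is the inclusion: the $s_i$ cut out $\P(TX)$ as a regular embedding of codimension $c$, and smoothness of $X$ together with the Jacobian criterion ensures that the ideal sheaf is reduced. Twisting by $\O_{\P(T\P^{N}_{\vert X})}(m) \otimes \pi_X^* \O_X(m+n)$ for $m \geq c$ and $n \in \Z$, a general term of $\mathcal{K}(\mathbi{s})_{m,n}$ is of the form $\O_{\P(T\P^{N}_{\vert X})}(m-k) \otimes \pi_X^* \O_X(m+n - \sum_{j \in J} d_j)$ for some $J \subset \{1, \ldots, c\}$ of cardinality $k \leq c \leq m$. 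Since $m-k \geq 0$, Bott's formulas guarantee that $(R^{i}\pi_X)_* = 0$ for $i > 0$ on each such term, and similarly on the pushforward of $j_*\O_{\P(TX)}(m) \otimes \pi_X^*\O_X(m+n)$, which identifies with $S^m \Omega_X(m+n)$. Consequently, applying $(\pi_X)_*$ to the resolution $\mathcal{K}(\mathbi{s})_{m,n}$ yields again an exact sequence, whose terms sit inside the bi-graded pieces of $(\Ker \delta)_{\vert X}$ via the generalized Euler exact sequence.

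The only piece of genuine verification is that $(\pi_X)_*(\cdot \times s_i)$ coincides with the multiplication map $\alpha(P_i)$ by the bi-homogeneous polynomial $\frac{1}{d_i}(\diff P_i)_X(Y)$. But this is already carried out in the hypersurface case in Section \ref{section: Koszul complex 1: hyp} and extends verbatim here, since by construction $s_i$ is the restriction to $X$ of the section $s(P_i)$. Compatibility with $\delta$ (so that the maps are well-defined between kernels) follows from the identity $\delta \circ \alpha(P_i) = \cdot \times P_i + \alpha(P_i) \circ \delta$, whose first term vanishes modulo the ideal of $X$. Assembling these identifications over all $m \geq c$ and $n \in \Z$ exhibits the pushed-forward complex as the bi-graded Koszul complex $\mathcal{K}(\alpha(P_1), \dotsc, \alpha(P_c))$ on $(\Ker\delta)_{\vert X}$, which therefore resolves $\bigoplus_{m \geq c,\, n \in \Z} S^m \Omega_X(m+n)$ as claimed. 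The only potential subtlety—and the step worth double-checking—is the quantitative bound $m \geq c$ needed to ensure the Bott vanishing for every term of the Koszul complex simultaneously, but this is exactly why the theorem is stated with this range.
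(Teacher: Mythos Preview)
Your proof is correct and follows essentially the same approach as the paper: twist the Koszul complex $\mathcal{K}(\mathbi{s})$ upstairs, push forward by $\pi_X$ using Bott's formulas (which requires $m\geq c$ so that every term has nonnegative tautological twist), and identify the pushed-forward differentials with the multiplication maps $\alpha(P_i)$ via the hypersurface computation. The paper's argument is slightly more terse, but the content is identical.
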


 \subsection{A second Koszul complex: the case of smooth hypersurfaces.}
 \label{section: Koszul complex 2: hyp}
 Let \(H\bydef (P=0)\) be a smooth hypersurface of degree \(d\) in the projective space \(\P^{N}\), with \(P \in \C[X]\) an homogeneous polynomial of degree \(d\) in the variables \(X\bydef (X_{0}, \dotsc, X_{N})\). 
Following the same idea as in Section \ref{subs: geom inter}, we define a closed immersion of \(\P(TH)\) inside \(\Flag_{(1,2)}\C^{N+1} \simeq \P(T\P^{N})\) as follows:
\[
j \colon
\left(
\begin{array}{ccc}
  \P(TH) & \longrightarrow  & \Flag_{(1,2)}\C^{N+1}  \\
  \big([x], v\big) & \longmapsto   &   \big( \C \cdot x \subsetneq \C\cdot x \oplus \C \cdot v)\big)
\end{array}
\right).
\]
Note that the exact same proof of Proposition \ref{prop: geom inter} gives the following:
\begin{proposition}
The pull-back line bundle \(j^{*}\L_{m,n}\) on \(\P(TH)\) is equal to
\[
\O_{\P(TH)}(m) \otimes \pi_{H}^{*}\O_{H}(m+n),
\]
where \(\pi_{H}: \P(TH) \to H\) is the canonical projection onto the hypersurface \(H\).
\end{proposition}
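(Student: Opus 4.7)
The plan is to mimic verbatim the proof of Proposition \ref{prop: geom inter}, noting that the key ingredients (the tautological quotient description and the explicit isomorphism built from the Euler vector fields) are intrinsic to $\mathbb{P}(T\mathbb{P}^N) \simeq \Flag_{(1,2)}\C^{N+1}$ and only need to be pulled back along the closed immersion $j \colon \P(TH) \hookrightarrow \Flag_{(1,2)}\C^{N+1}$. Concretely, I would establish two tautological/natural identifications and then combine them.

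First, I would observe that $j^{*}U_{1} = \pi_{H}^{*}\O_{H}(-1)$. This is immediate from the definitions: the fiber of $U_{1}$ above $j([x],v)=(\C\cdot x \subsetneq \C\cdot x\oplus \C\cdot v)$ is by construction the line $\C\cdot x$, which is precisely the fiber of $\pi_{H}^{*}\O_{H}(-1)$ over $([x],v)\in\P(TH)$ (since $\O_{H}(-1)$ is the restriction of the tautological line bundle on $\P^{N}$ to $H$).

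Second, I would construct a natural isomorphism
\[
j^{*}\!\left(\tfrac{U_{2}}{U_{1}}\right) \simeq \O_{\P(TH)}(-1)\otimes \pi_{H}^{*}\O_{H}(-1)
\]
by the exact same formula as in the proof of Proposition \ref{prop: geom inter}:
\[
\Big(([x],[v]),\overline{v}\Big)\;\longmapsto\;\Big(([x],[v]),\sum_{i=0}^{N} v_{i}\,\gamma_{i}([x])\Big).
\]
Well-definedness uses only the relation $\sum_{i=0}^{N}x_{i}\gamma_{i}([x])=0$, so it is inherited unchanged from the ambient $\P^{N}$ setting; the fact that here $v$ represents a tangent vector to $H$ (i.e.\ satisfies $(\diff P)_{x}(v)=0$) plays no role in the construction.

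Finally, combining these two identifications, taking duals (line bundles, so dualising the short exact sequence $0\to U_{1}\to U_{2}\to U_{2}/U_{1}\to 0$ behaves as expected), and forming the appropriate tensor powers gives
\[
j^{*}\L_{m,n} = j^{*}\!\left(\tfrac{U_{2}^{\vee}}{U_{1}^{\vee}}\right)^{m}\otimes (j^{*}U_{1}^{\vee})^{n} = \O_{\P(TH)}(m)\otimes \pi_{H}^{*}\O_{H}(m+n),
\]
as desired. There is no genuine obstacle: since the construction of $j$ for $H$ is literally the restriction of the analogous construction for $\P^{N}$, and since the Euler vector fields $\gamma_{i}$ are defined on all of $\P^{N}$ (and thus on $H$), the only verification is that the formula of the second step descends to the quotient $(\C\cdot x\oplus \C\cdot v)/\C\cdot x$, which is the same computation as in Proposition \ref{prop: geom inter}.
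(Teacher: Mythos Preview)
Your proposal is correct and is exactly the approach the paper takes: the paper simply remarks that ``the exact same proof of Proposition \ref{prop: geom inter}'' yields this statement, and your write-up spells out precisely that argument (the two tautological identifications via $U_1$ and $U_2/U_1$ restricted along $j$, using the Euler vector fields). There is nothing to add.
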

The goal of this section is to construct a rank \(2\) vector bundle on \(\Flag_{(1,2)} \C^{N+1}\) admitting a global section defining the ideal sheaf of (the image of) \(\P(TH)\) inside \(\Flag_{(1,2)} \C^{N+1}\). From such a data follows classically the construction of a Koszul complex on \(\Flag_{(1,2)}\C^{N+1}\) resolving the structural sheaf of \(j_{*}\O_{\P(TH)}\). We then give the interpretation of the push-forward complex on the base space \(H\).

 \subsubsection{Construction of the Koszul complex}
 \label{subs: Koszul complex 2: hyp}
 We keep the notations of Section \ref{subs: geom inter}, and in particular still denote by \(\pi: \Flag_{(1,2)}\C^{N+1} \to \P^{N}\) the canonical projection.
Consider the covering family of open sets of \(\Flag_{(1,2)}\C^{N+1}\)
\[
\big(\tilde{V_{i}}\bydef \pi^{-1}(V_{i})\big)_{0 \leq i \leq N},
\]
where by definition \(V_{i} \bydef \Set{X_{i} \neq 0}\). On each open set \(\tilde{V_{i}}\), there exists a family of local sections \((g_{ji})_{0 \leq j \leq N} \in \L_{1,0}(V_{i})\) defined as follows. Let \(\xi=\big( \C\cdot x \subsetneq \C\cdot x \oplus \C \cdot y \big)\) be a point in \(\tilde{V_{i}}\), and define
\[
g_{ji}\big((\xi, \lambda x + \mu y)\big)
=
\mu \frac{x_{i}y_{j}-x_{j}y_{i}}{x_{i}}.
\]
If we substitute \(x\) with \(\alpha x\), and \(y\) with \(\beta_{1}y+ \beta_{2}x\) (for \(\alpha, \beta_{1} \in \C^{*}\) and \(\beta_{2} \in \C\)), then the value of \(g_{ji}\) is multiplied by \(\beta_{1}\). In other words, \(g_{ji}(\xi, .)\) defines a linear functional on the fiber \(\big(\frac{U_{2}}{U_{1}}\big)_{\xi}\), so that \(g_{ji}\) is indeed a local section of \(\L_{1,0}=(\frac{U_{2}}{U_{1}})^{\vee}\) on \(V_{i}\).

We then construct a rank \(2\) vector bundle on \(\Flag_{(1,2)}\C^{N+1}\) by glueing the rank \(2\) vector bundles 
\[
\big(\L_{0,0} \oplus \L_{1,0}\big)_{\vert \tilde{V_{i}}}
\]
with the transition maps from the \(i\)th chart to the \(j\)th chart
\[
\varphi_{ij}\colon
\left(
\begin{array}{ccc}
  \big(\L_{0,0}\oplus \L_{1,0}\big)_{\vert V_{i} \cap V_{j}} 
  &  
  \longrightarrow
  &   
  \big(\L_{0,0} \oplus \L_{1,0}\big)_{\vert V_{j}\cap V_{i}}
  \\
  (s_{1}, s_{2})
  &
  \longmapsto
  &  
  (s_{1}, s_{2})
  \begin{pmatrix}
  \frac{X_{i}}{X_{j}} 
  & 
  g_{ij}
  \\
  0
  &
  1
  \end{pmatrix}  
\end{array}
\right).
\]
Note that this indeed makes sense since
\begin{eqnarray*}
\begin{pmatrix}
  \frac{X_{i}}{X_{j}} 
  & 
  g_{ij}
  \\
  0
  &
  1
  \end{pmatrix}  
  \begin{pmatrix}
  \frac{X_{j}}{X_{k}} 
  & 
  g_{jk}
  \\
  0
  &
  1
  \end{pmatrix}  
  &
  =
  &
  \begin{pmatrix}
  \frac{X_{i}}{X_{k}} 
  & 
  \frac{X_{i}}{X_{j}}g_{jk} + g_{ij}
  \\
  0
  &
  1
  \end{pmatrix}  
  \\
  &
  =
  &
    \begin{pmatrix}
  \frac{X_{i}}{X_{k}} 
  & 
  g_{ik}
  \\
  0
  &
  1
  \end{pmatrix}  .
\end{eqnarray*}
We record this construction in the following definition:
\begin{definition}
Denote by \(\E\) the rank \(2\) vector bundle on \(\Flag_{(1,2)}\C^{N+1}\) obtained by the previous construction. For any \(m, n \in \Z\), denote as well
\[
\E_{m,n}
\bydef 
\E \otimes \L_{m,n}.
\]
\end{definition}

The vector bundle \(\E\) has a simple structure, as it is a (non-trivial) extension of \(\L_{1,0}\) by \(\L_{0,1}\):
\begin{proposition}
\label{prop: extension}
The vector bundle \(\E\) fits canonically into a short exact sequence
\[
\xymatrix{
0 \ar[r] 
& 
\L_{1,0} \ar[r]
&
\E \ar[r]
&
\L_{0,1} \ar[r]
&
0
}.
\]
\end{proposition}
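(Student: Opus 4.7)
The plan is to build both the inclusion and the quotient map directly from the local description of $\E$, and then check that the upper-triangular shape of the transition matrices makes everything compatible.

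First I would define an injection $\iota \colon \L_{1,0} \hookrightarrow \E$ by the local formula
\[
\iota_{i}\colon s \mapsto (0,s) \in (\L_{0,0}\oplus\L_{1,0})(\tilde V_{i})
\]
on each chart $\tilde V_{i}$. Since the bottom-right entry of every transition matrix $\varphi_{ij}$ is $1$, one has $(0,s)\cdot \varphi_{ij} = (0,s)$, so the local maps $\iota_{i}$ are compatible with the gluing and define a global map of sheaves. This map is obviously injective on each stalk.

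Next I would define a surjection $p\colon \E \twoheadrightarrow \L_{0,1}$ by the local formula $p_{i}\colon (s_{1},s_{2})\mapsto s_{1}$. The compatibility check is the heart of the matter: on $\tilde V_{i}\cap \tilde V_{j}$ the transition carries $(s_{1},s_{2})$ to $(s_{1}\tfrac{X_{i}}{X_{j}},\, s_{1}g_{ij}+s_{2})$, so $p_{j}\circ \varphi_{ij} = \tfrac{X_{i}}{X_{j}}\cdot p_{i}$. Since $\L_{0,1}=\mathcal{U}_{1}^{\vee}=\pi^{*}\O_{\P^{N}}(1)$ has precisely the transition cocycle $\tfrac{X_{i}}{X_{j}}$ going from $V_{i}$ to $V_{j}$, this is exactly the identification rule for local sections of $\L_{0,1}$. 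Hence the $p_{i}$'s glue to a well-defined surjective morphism $p\colon \E \to \L_{0,1}$.

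The composition $p\circ\iota$ vanishes locally by construction, and the sequence
\[
\xymatrix{
0\ar[r] & \L_{1,0} \ar[r]^{\iota} & \E \ar[r]^{p} & \L_{0,1} \ar[r] & 0
}
\]
is exact because on each $\tilde V_{i}$ it reduces to the tautological exact sequence
\[
\xymatrix{
0\ar[r] & \L_{1,0}_{|\tilde V_{i}} \ar[r] & (\L_{0,0}\oplus \L_{1,0})_{|\tilde V_{i}} \ar[r] & \L_{0,0}_{|\tilde V_{i}} \ar[r] & 0,
}
\]
the rightmost term being reinterpreted globally as $\L_{0,1}$ through the transition cocycle just computed. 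The only subtle step is this last identification of the quotient, which relies on recognising $X_{i}/X_{j}$ as the $\mathcal{O}_{\P^{N}}(1)$-cocycle pulled back to $\Flag_{(1,2)}\C^{N+1}$; everything else is purely formal bookkeeping with the triangular form of $\varphi_{ij}$.
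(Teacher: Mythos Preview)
Your proof is correct and follows essentially the same approach as the paper: both build the inclusion $s\mapsto(0,s)$ and the projection onto the first factor, then use the upper-triangular form of the $\varphi_{ij}$ to check exactness. The only cosmetic difference is that the paper writes the quotient map locally as $(s_{1},s_{2})\mapsto X_{i}s_{1}$ with target $\L_{0,0}|_{\tilde V_{i}}$ (so that the local expressions literally agree on overlaps), whereas you keep $(s_{1},s_{2})\mapsto s_{1}$ and instead read off the cocycle $X_{i}/X_{j}$ to identify the global target as $\L_{0,1}$; these are the same map described in two equivalent ways.
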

\begin{proof}
The inclusion of \(\L_{1,0}\) into \(\E\) is straightforward from the very construction, and is simply given by
\[
s \mapsto (0,s).
\]
On the other hand, on each open set \(\tilde{V_{i}}\), the maps
\[
p_{i}\colon
\left(
\begin{array}{ccc}
  \E_{\vert \tilde{V_{i}}} & \longrightarrow  & (\L_{0,0})_{\vert \tilde{V_{i}}}
  \\
  (s_{1}, s_{2}) & \longmapsto   &   X_{i}s_{1}
\end{array}
\right)
\]
glue into a global map \(\E \to \L_{0,1}\), by very construction of \(\E\).

The complex obtained via these two maps is clearly exact, and provides the sought exact sequence.
\end{proof}

The fundamental property of the vector bundle \(\E\) is that, with a suitable twist, it admits a global section defining \(j(\P(TH))\) inside \(\Flag_{(1,2)}\C^{N+1}\). Indeed, for any \(0 \leq i \leq N\), define a local section \(s^{i} \in \big(\L_{0,d-1} \oplus \L_{1,d-1}\big)(\tilde{V_{i}})\) as follows. Let \(\xi=\big( \C\cdot x \subsetneq \C\cdot x \oplus \C \cdot y \big)\) be a point in \(\tilde{V_{i}}\), and define:
\[
s^{i}(\xi, (x,y))
=
\Big(\frac{P(x)}{x_{i}}, \frac{1}{d} (\diff P)_{x}(y) - \frac{y_{i}}{x_{i}} P(x)\Big).
\]
It is clear that the first coordinate of \(s^{i}\) is in \(\L_{0,d-1}(\tilde{V_{i}})\). 
Using the fact that 
\[
\frac{1}{d} (\diff P)_{x}(x)=P(x),
\]
one immediately checks that if one substitutes  \(x\) with \(\alpha x\), and \(y\) with \(\beta_{1}y+ \beta_{2}x\) (for \(\alpha, \beta_{1} \in \C^{*}\) and \(\beta_{2} \in \C\)), then the value of the second coordinate of \(s^{i}\) is multiplied by \(\beta_{1}\alpha^{d-1}\). Therefore, the second coordinate of \(s^{i}\) does indeed define a local section of \(\L_{1,d-1}(\tilde{V_{i}})\).
It is then straightforward to check  that the local sections \(s^{i}\) glue to give a global section \(s(P) \in \E_{0,d-1}\). We record it in the following lemma:
\begin{lemma}
\label{lemma: local form}
The local sections
\[
s^{i}=\Big(\frac{P(X)}{X_{i}}, \frac{1}{d} (\diff P)_{X}(Y) - \frac{Y_{i}}{X_{i}} P(X)\Big)
\in 
\big(\L_{0,d-1} \oplus \L_{1,d-1}\big)(\tilde{V_{i}})
\]
glue to a global section \(s(P)\) of \(\E_{0,d-1}\).
\end{lemma}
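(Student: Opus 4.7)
The plan is to verify that the locally defined sections $s^i$ are compatible with the transition cocycle of $\E_{0,d-1}$ on the overlaps $\tilde V_i \cap \tilde V_j$. Concretely, this splits into two tasks: (1) confirming that each $s^i$ is a well-defined local section of $\L_{0,d-1} \oplus \L_{1,d-1}$ on $\tilde V_i$, and (2) checking the cocycle identity $(s^j_1, s^j_2) = (s^i_1, s^i_2) \cdot M_{ij}$, where
\[
M_{ij} = \begin{pmatrix} X_i/X_j & g_{ij} \\ 0 & 1 \end{pmatrix}
\]
is the transition matrix of $\E$ (and hence of $\E_{0,d-1}$) from the $i$-th chart to the $j$-th chart.

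For task (1), the first coordinate $P(X)/X_i$ is manifestly a local section of $\L_{0,d-1} = \pi^*\O_{\P^N}(d-1)$ over $V_i$, since $P$ is homogeneous of degree $d$ in $X$. The second coordinate has the correct bi-degree $(1,d-1)$ in $(Y,X)$; to check that it descends to a section of $\L_{1,d-1}$ I would evaluate it at a point $(\xi, \lambda x + \mu y)$ for $\xi = (\C \cdot x \subsetneq \C \cdot x \oplus \C \cdot y) \in \tilde V_i$ and verify that the outcome is proportional to $\mu$, which is exactly the condition that the associated functional annihilates $U_1$. Using Euler's identity $(\diff P)_x(x) = d \cdot P(x)$, one computes
\[
\frac{1}{d}(\diff P)_x(\lambda x + \mu y) - \frac{\lambda x_i + \mu y_i}{x_i} P(x) = \mu \left( \frac{1}{d}(\diff P)_x(y) - \frac{y_i}{x_i} P(x) \right),
\]
so the second coordinate is a genuine local section of $\L_{1,d-1}$ on $\tilde V_i$.

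For task (2), the first component identity $s^j_1 = s^i_1 \cdot (X_i/X_j)$ is simply the rewriting $P(X)/X_j = (P(X)/X_i) \cdot (X_i/X_j)$. For the second component, observe that the defining formula for $g_{ji}$ on $\tilde V_i$, applied with the roles of $i$ and $j$ swapped, yields $g_{ij} = (X_j Y_i - X_i Y_j)/X_j$ as a local section of $\L_{1,0}$ on $\tilde V_j$. Then a direct calculation gives
\[
s^j_2 - s^i_2 = P(X)\left( \frac{Y_i}{X_i} - \frac{Y_j}{X_j} \right) = \frac{P(X)(X_j Y_i - X_i Y_j)}{X_i X_j} = \frac{P(X)}{X_i} \cdot g_{ij} = s^i_1 \cdot g_{ij},
\]
which is exactly the second cocycle identity.

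The only delicate aspect is bookkeeping: one must fix the conventions for the transition matrices of $\E_{0,d-1}$ (which coincide with those of $\E$ under the natural trivialization $\L_{0,d-1}_{\vert \tilde V_i} \cong \O_{\tilde V_i}$ given by multiplication by $X_i^{d-1}$), the direction in which $\varphi_{ij}$ acts, and the identification of local sections of $\L_{1,d-1}$ with bi-homogeneous polynomials of bi-degree $(1,d-1)$ modulo the relation that encodes $Y \sim Y + \lambda X$. Once the conventions are in place, the verification is the direct substitution above, with Euler's identity providing the only non-formal input.
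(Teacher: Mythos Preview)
Your proof is correct and follows the same approach as the paper: the paper's proof is the one-line instruction ``it suffices to check that \(s^{i}\begin{pmatrix} X_{i}/X_{j} & g_{ij} \\ 0 & 1 \end{pmatrix} = s^{j}\)'', and you carry out exactly this verification explicitly, with the Euler identity supplying the well-definedness of the second coordinate (which the paper handles in the paragraph immediately preceding the lemma). Your computation \(s^{j}_{2}-s^{i}_{2}=s^{i}_{1}\cdot g_{ij}\) is precisely the off-diagonal part of that matrix identity.
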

\begin{proof}
It suffices to check that
\[
s^{i}   \begin{pmatrix}
  \frac{X_{i}}{X_{j}} 
  & 
  g_{ij}
  \\
  0
  &
  1
  \end{pmatrix}  
=
s^{j}.
\]
\end{proof}

It is clear that the (image of the multiplication map by the) section 
\[
s\bydef s(P)
\]
 defines the ideal sheaf associated to the closed immersion \(j\colon\P(TH) \hookrightarrow \Flag_{(1,2)}\C^{N+1}\), by its very construction.  We can accordingly construct the Koszul complex associated to it
\[
\xymatrix{
\mathcal{K}(s)\colon
0 
\ar[r] 
&
\L_{-1, -2d+1}
\ar[r] 
&
\E_{-1,-d} \ar[r]
&
\L_{0,0},
}
\]
which provides a locally free resolution of \(j_{*}\O_{\P(TH)}\).
The (twisted) third term in the complex comes from the isomorphism 
\[
\bigwedge^{2} \E_{0,d-1} \simeq \L_{1,2d-1}
\]
obtained by twisting by \(\L_{0,d-1}\) the exact sequence in Proposition \ref{prop: extension}, and taking its determinant.
It is an exact complex, since \(\Flag_{(1,2)}\C^{N+1}\) is smooth, and \(\codim j(\P(TH))=\rank(\E_{0,d-1})=2\). We can always twist this exact complex by \(\L_{m,n}\) to obtain the following exact complex for any \(n, m \in \Z\):
\begin{equation}
\label{eq: resolution hyp}
\xymatrix{
\mathcal{K}(s)_{m,n}\colon
0 \ar[r] 
&
\L_{m-1,n-2d+1}
\ar[r] 
&
\E_{m-1,n-d} 
\ar[r]
&
\L_{m,n},
}
\end{equation}
which provides a locally free resolution of \(j_{*}\O_{\P(TH)}\otimes \L_{m,n}\).

 \subsubsection{Interpretation of the (twisted) Koszul complex on the base}
 \label{subs: interpretation hyp}
As soon as \(m \geq 0\), Bott's formulas (see Appendix \ref{appendix: Bott}) imply that the higher direct image sheaves
\[
R^{i}\pi_{*}\L_{m,n}
\]
vanish for \(i>0\) and any \(n \in \Z\), where we recall that \(\pi: \Flag_{(1,2)}\C^{N+1} \to \P^{N}\) is the canonical projection. Using the isomorphism of Proposition \ref{prop: geom inter}, Bott's formulas also imply that
\[
\pi_{*}\L_{m,n}
\simeq
S^{m}\Omega_{\P^{N}}(m+n)
\simeq 
\Ker(\delta_{m,n}).
\]
Using still Bott's formulas, and the projection formula, one also gets that
\[
\pi_{*}(j_{*}\O_{\P(TH)}\otimes \L_{m, n})
\simeq
i_{*}(S^{m}\Omega_{H})(m+n),
\]
where \(i\colon H \hookrightarrow \P^{N}\) is the closed immersion of the hypersurface \(H\) inside the projective space \(\P^{N}\). As soon as \(m\geq 1\), if one takes the push forward by \(\pi\) of the exact sequence \eqref{eq: resolution hyp}, one keeps an exact sequence, which takes the following form
\begin{equation}
\label{eq: resolution hyp pushed}
\xymatrix{
\pi_{*}(\mathcal{K}(s)_{m,n})\colon
0 
\ar[r]
&
\Ker \delta_{m-1,n-2d+1}
\ar[r]
&
\pi_{*}(\E_{m-1,n-d}) 
\ar[r]
&
\Ker \delta_{m,n} 
},
\end{equation}
and which resolves \(i_{*}S^{m}\Omega_{H}(m+n)\).

In order to apprehend the above complex, we must  understand the push-forward sheaf \(\pi_{*}(\E_{m-1,n-d})\). It turns out that it has a simple interpretation:
\begin{proposition}
\label{prop: push 1}
For any \(s \in \Z\) and any \(r \in \N\), the locally free sheaf \(\pi_{*}(\E_{r,s})\) is canonically isomorphic to the kernel sheaf of the map:
\[
\xymatrix{
\C[Y]_{r+1}\otimes \O_{\P^{N}}(s)
\ar[r]^-{\delta \circ \delta}
&
\C[Y]_{r-1}\otimes \O_{\P^{N}}(s+2),
}
\]
i.e.  \(\pi_{*}(\E_{r,s}) \simeq \Ker \delta^{2}_{r+1,s}\).
\end{proposition}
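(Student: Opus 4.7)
The plan is to construct a natural morphism \(\phi \colon \pi_{*}(\E_{r,s}) \to \Ker \delta^{2}_{r+1,s}\), fit it into a morphism of short exact sequences, and then conclude via the five lemma.

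I first twist the short exact sequence of Proposition \ref{prop: extension} by \(\L_{r,s}\) to get
\[
0 \to \L_{r+1,s} \to \E_{r,s} \to \L_{r,s+1} \to 0,
\]
and push it forward by \(\pi\). Bott's formulas (Appendix \ref{appendix: Bott}) provide, for \(r+1 \geq 0\), the vanishing \(R^{1}\pi_{*}\L_{r+1,s}=0\), together with the canonical identifications \(\pi_{*}\L_{m,n} \simeq S^{m}\Omega_{\P^{N}}(m+n) \simeq \Ker \delta_{m,n}\) (where the last isomorphism comes from the generalized Euler exact sequence, Theorem \ref{thm: gen Euler}). Hence
\[
0 \to \Ker\delta_{r+1,s} \to \pi_{*}\E_{r,s} \to \Ker\delta_{r,s+1} \to 0
\]
is exact. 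On the other hand, \(\delta\) restricted to \(\Ker\delta^{2}_{r+1,s}\) clearly lands in \(\Ker\delta_{r,s+1}\); the surjectivity in the resulting sequence
\[
0 \to \Ker\delta_{r+1,s} \to \Ker\delta^{2}_{r+1,s} \xrightarrow{\delta} \Ker\delta_{r,s+1} \to 0
\]
follows from Lemma \ref{lemma: Euler 1}: any \(A \in \Ker\delta_{r,s+1}\) lifts to some \(B\) with \(\delta B = A\), and then \(\delta^{2}B = \delta A = 0\).

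Next, I construct \(\phi\) locally. Over \(V_{i}\), a section of \(\pi_{*}\E_{r,s}\) is given by a pair \((s_{1},s_{2})\) of local sections of \(\L_{r,s}\) and \(\L_{r+1,s}\); the quotient map to \(\pi_{*}\L_{r,s+1}\) of Proposition \ref{prop: extension} sends it to \(A\bydef X_{i}s_{1}\). Writing \(B\bydef s_{2}\), the local sections of \(\pi_{*}\E_{r,s}\) over \(V_{i}\) are thus parametrized by pairs \((A,B)\in \Ker\delta_{r,s+1}(V_{i}) \times \Ker\delta_{r+1,s}(V_{i})\). I set
\[
\phi_{i}(A,B) \bydef B + \frac{Y_{i}}{X_{i}}\, A,
\]
which is a local section of \(\C[Y]_{r+1} \otimes \O_{\P^{N}}(s)\) on \(V_{i}\). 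Using \(\delta(Y_{i}/X_{i})=1\) and \(\delta A = 0\), one computes \(\delta \phi_{i}(A,B) = A\), and then \(\delta^{2}\phi_{i}(A,B) = 0\); so \(\phi_{i}(A,B) \in \Ker\delta^{2}_{r+1,s}(V_{i})\).

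I then verify that the \(\phi_{i}\)'s glue to a global morphism \(\phi\). The transition data of \(\E\) (constructed in Section \ref{subs: Koszul complex 2: hyp}) gives \(A_{j}=A_{i}\) on overlaps and \(B_{j}=B_{i} + (A/X_{i})\, g_{ij}\); together with the explicit form \(g_{ij} = (X_{j}Y_{i}-X_{i}Y_{j})/X_{j}\) extracted from the defining formula of \(g_{ji}\), one gets
\[
\phi_{j}(A,B_{j}) - \phi_{i}(A,B_{i}) = \frac{A}{X_{i}}\, g_{ij} + A\Bigl(\frac{Y_{j}}{X_{j}}-\frac{Y_{i}}{X_{i}}\Bigr) = 0.
\]
By construction \(\phi\) fits into a morphism between the two short exact sequences above, restricting to the identity on \(\Ker\delta_{r+1,s}\) (take \(A=0\)) and inducing the identity on \(\Ker\delta_{r,s+1}\) (since \(\delta\circ \phi\) sends \((A,B)\) to \(A\)). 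The five lemma concludes that \(\phi\) is an isomorphism. The main technical point is finding the right local formula for \(\phi_{i}\) and checking it glues: once the cocycle \(g_{ij}\) has been made explicit, the correction term \(Y_{i}A/X_{i}\) is the only natural candidate, and everything else is routine diagram chasing.
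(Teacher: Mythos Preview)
Your proof is correct and takes essentially the same approach as the paper: your local map \(\phi_{i}(A,B)=B+(Y_{i}/X_{i})A\) is, after the substitution \(A=X_{i}s_{1}\), \(B=s_{2}\), exactly the paper's map \(\Theta_{i}(s_{1},s_{2})=s_{2}+Y_{i}s_{1}\), and the gluing verification is the same computation. The only cosmetic difference is that you conclude via the five lemma on the two parallel short exact sequences, whereas the paper checks injectivity and surjectivity of \(\Theta\) directly (applying \(\delta\) for injectivity, and writing down the explicit preimage \(s_{1}=\delta(s)/X_{i}\), \(s_{2}=s-Y_{i}\delta(s)/X_{i}\) for surjectivity).
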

\begin{proof}
Recall that, by definition, the locally free sheaf \(\E_{s,r}\) is obtained by suitably glueing the free sheaves 
\[
\big(\L_{r,s} \oplus \L_{r+1,s}\big)_{\vert \tilde{V_{i}}}
\]
on the covering open sets \(\tilde{V_{i}}=\pi^{-1}(\Set{X_{i}\neq 0})\) (see Subsection \ref{subs: Koszul complex 2: hyp}). Pushing everything forward by \(\pi\), one sees that  \(\pi_{*}(\E(s,r))\) is therefore obtained by glueing
\[
\big(\Ker \delta_{r,s} \oplus \Ker \delta_{r+1,s}\big)_{\vert V_{i}}
\]
via the transition maps
\[
A_{ij}
\bydef
\begin{pmatrix}
  \frac{X_{i}}{X_{j}} 
  & 
  \frac{X_{j}Y_{i}-X_{i}Y_{j}}{X_{j}}
  \\
  0
  &
  1
  \end{pmatrix}.
\]

There is then a natural map from  \(\pi_{*}(\E(s,r))\) to \(\Ker \delta^{2}_{r+1,s}\) defined as follows. On each trivializing open set \(V_{i}\), consider the map
\[
\Theta_{i}\colon
\left(
\begin{array}{ccc}
  (\Ker \delta_{r,s})_{\vert V_{i}} \oplus (\Ker \delta_{r+1,s})_{\vert V_{i}}
  & 
  \longrightarrow
  &   
  (\Ker \delta^{2}_{r+1,s})_{\vert V_{i}}
  \\
  (s_{1}, s_{2}) 
  &
   \longmapsto
  & 
  s_{2} + Y_{i}s_{1}
\end{array}
\right).
\]
Note that the image does indeed lie in \((\Ker \delta^{2}_{r+1,s})_{\vert V_{i}}\), since (using the Leibniz rule for \(\delta\)) one has the equality
\[
(\delta \circ \delta)(s_{2} + Y_{i}s_{1})=(\delta \circ \delta)(Y_{i})\times s_{1}=0.
\]
These local maps glue to a global map 
\[
\Theta \colon \pi_{*}(\E(r,s)) \to \Ker \delta^{2}_{r+1,s},
\] 
which follows from the equality:
\[
\begin{pmatrix}
  \frac{X_{i}}{X_{j}} 
  & 
  \frac{X_{j}Y_{i}-X_{i}Y_{j}}{X_{j}}
  \\
  0
  &
  1
  \end{pmatrix}
  \begin{pmatrix}
  Y_{j}
  \\
  1
  \end{pmatrix}
  =
  \begin{pmatrix}
  Y_{i}
  \\
  1
  \end{pmatrix}.
\]

The injectivity of the map \(\Theta\) is straightforward to check: if one has locally on \(V_{i}\) the equality
\[
s_{2}+Y_{i}s_{1}=0,
\]
then by applying \(\delta\) one gets the equality \(\delta(Y_{i})s_{1}=0 \Leftrightarrow X_{i}s_{1}=0\). This implies that \(s_{1}\) is zero, and thus  so is \(s_{2}\). As for the surjectivity, observe that if \(s \in (\Ker \delta^{2}_{r+1,s})_{\vert V_{i}}\), then by setting
\[
\left\{ 
\begin{array}{ll}
s_{1}\bydef \frac{\delta(s)}{X_{i}}
\\
s_{2}\bydef s-\frac{\delta(s)}{X_{i}}Y_{i}
\end{array}
\right.
\]
one has the equality
 \[
 s=s_{2}+Y_{i}s_{1}=\Theta((s_{1}, s_{2})),
 \]
 with \(\delta(s_{1})=\delta(s_{2})=0\). This finishes the proof.
\end{proof}

Note that in view of the above Proposition \ref{prop: push 1}, the push-forward of the twisted exact sequence
\[
\xymatrix{
0 \ar[r] 
& 
\L_{s,r+1}
 \ar[r]
&
\E_{r,s} 
\ar[r]
&
\L_{s+1,r} 
\ar[r]
&
0
}
\]
of Proposition \ref{prop: extension} writes as follows: 
\begin{equation*}
\xymatrix{
0 \ar[r]
&
\Ker \delta_{r+1,s}
\ar[r]
&
\Ker \delta^{2}_{r+1,s}
\ar[r]^-{\delta}
&
\Ker \delta_{r,s+1}
\ar[r]
&
0.
}
\end{equation*}
Furthermore, the resolution of \(i_{*}(S^{m}\Omega_{H})(m+n)\), where \(m\geq 1\), can be rewritten as follows:
\begin{equation}
\label{eq: fundamental diagram}
\xymatrix{
0 
\ar[r]
&
\Ker \delta_{m-1,n-2d+1}
\ar[r]^-{\alpha(P)}
&
\Ker \delta^{2}_{m,n-d}
\ar[r]^-{\beta(P)}
&
\Ker \delta_{m,n}.
}
\end{equation}

It now remains to describe the different arrows in this complex. Let us justify that they take the following form:
\[
\left\{ 
\begin{array}{ll}
\alpha(P)\bydef \cdot \times \frac{1}{d} (\diff P)_{X}(Y)
\\
\beta(P)\bydef \cdot \times P - \alpha(P) \circ \delta
\end{array}
\right..
\]
On the trivializing open set \(\tilde{V}_{i}\), the section \(s(P)\) takes, by construction, the following form:
\[
s(P)
\overset{\text{loc}}{=}
\Big(\frac{P(X)}{X_{i}}, \frac{1}{d} (\diff P)_{X}(Y) - \frac{Y_{i}}{X_{i}} P(X)\Big).
\]
Under the isomorphism between \(\pi_{*}(\E_{0,d-1})\) and \(\Ker \delta_{1,d-1}^{2}\), the section becomes
\[
\Big(\frac{1}{d} (\diff P)_{X}(Y) - \frac{Y_{i}}{X_{i}} P(X) \Big)+ Y_{i}\times \Big(\frac{P(X)}{X_{i}} \Big)
=
\frac{1}{d} (\diff P)_{X}(Y).
\]
Hence the shape of the first map. As for the second map, consider  \(t=(t_{1}, t_{2}) \in \E_{0,d_1}(\tilde{V}_{i})\) a local section. One computes that
\begin{eqnarray*}
s(P) \wedge t
&
\overset{\text{loc}}{=}
&
\frac{P(X)}{X_{i}}t_{2} - \Big(\frac{1}{d} (\diff P)_{X}(Y) - \frac{Y_{i}}{X_{i}} P(X) \Big)t_{1}
\\
&
\overset{\text{loc}}{=}
&
\frac{1}{X_{i}}
\big(P(X)\times (t_{2}+Y_{i}t_{1})
-
\alpha(P)(X_{i}t_{1})
\big).
\end{eqnarray*}
Now, observe that under the isomorphism between \(\pi_{*}(\E_{0,d-1})\) and \(\Ker \delta_{1,d-1}^{2}\), \(t\) is identified with \(t_{2}+Y_{i}t_{1}\), where \(t_{1}, t_{2} \in \Ker \delta\), and compute accordingly that
\[
\delta( t_{2}+Y_{i}t_{1})
=
X_{i}t_{1}.
\]
Therefore, one has that, locally,
\[
s(P) \wedge t
\overset{\text{loc}}{=}
\frac{1}{X_{i}}\beta(P)(t_{2}+Y_{i}t_{1}).
\]
To conclude, observe that the factor \(\frac{1}{X_{i}}\) comes from the (implicit in the above writing) trivialization of the determinant line bundle \(\det(\E)\)\footnote{Recall that \(\E\) is obtained by suitably glueing \(\L_{1,0} \P^{N} \oplus \L_{0,0} \P^{N}\) on the open sets \(\tilde{V_{i}}\).}.
\begin{remark}
Using the equality:
\[
\delta \circ \alpha - \alpha \circ \delta 
=
\cdot \times P,
\]
one easily sees that that the chain of maps \eqref{eq: fundamental diagram} is indeed a complex (and well-defined).
\end{remark}

Note that the map \(\alpha(P)\) is a \((1, d-1)\) operator on \(\mathcal{S}\), whereas the map \(\beta(P)\) is a \((0,d)\)-operator on \(\mathcal{S}\). The fundamental result that we have shown in this section can be summed up as follows:
\begin{theorem}
\label{thm: resol hyp}
The complex 
\begin{equation}
\xymatrix{
0 
\ar[r]
&
\Ker \delta[-1,-2d+1]
\ar[r]^-{\alpha(P)}
&
\Ker \delta^{2}[0,-d]
\ar[r]^-{\beta(P)}
&
\Ker \delta
}
\end{equation}
provides a locally free bi-graded resolution of
 \[
 \bigoplus_{m \geq 1, n \in \Z} S^{m}\Omega_{H}(m+n).
 \]
\end{theorem}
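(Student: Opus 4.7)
The plan is to build the sought resolution upstairs on the flag variety $\Flag_{(1,2)}\C^{N+1} \simeq \P(T\P^N)$ via a Koszul complex, and then push it down to $\P^N$ using Bott's formulas.

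First, I would work with the rank $2$ vector bundle $\E$ on $\Flag_{(1,2)}\C^{N+1}$ sitting in the canonical extension $0 \to \L_{1,0} \to \E \to \L_{0,1} \to 0$ from Proposition \ref{prop: extension}, and exhibit a global section $s(P) \in H^0(\E_{0,d-1})$ whose scheme-theoretic zero locus is the image of the closed immersion $j\colon \P(TH)\hookrightarrow \Flag_{(1,2)}\C^{N+1}$; its local pieces are given by Lemma \ref{lemma: local form}. Smoothness of $H$ (via the Jacobian criterion) ensures that the ideal cut out by $s(P)$ is reduced, and $\codim j(\P(TH))=\rank \E_{0,d-1}=2$. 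Combined with smoothness of $\Flag_{(1,2)}\C^{N+1}$, the associated Koszul complex
\[
0 \to \L_{-1,-2d+1} \to \E_{-1,-d} \to \L_{0,0}
\]
is then a locally free resolution of $j_*\O_{\P(TH)}$; twisting by $\L_{m,n}$ for arbitrary $m\in\N_{\geq 1}$ and $n \in \Z$ preserves exactness and yields a resolution of $j_*\O_{\P(TH)} \otimes \L_{m,n}$.

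Next, I would push this twisted complex down along $\pi\colon \Flag_{(1,2)}\C^{N+1} \to \P^N$. The key input is Bott's formulas (Appendix \ref{appendix: Bott}): for $m \geq 0$ and any $n \in \Z$, the higher direct images $R^i\pi_*\L_{m,n}$ vanish, and combined with the projection formula the higher direct images of all three terms of the twisted Koszul complex vanish, so $\pi_*$ preserves exactness. One has $\pi_*\L_{m,n} \simeq \Ker \delta_{m,n}$ (a reformulation of the generalized Euler exact sequence via Proposition \ref{prop: geom inter}) and $\pi_*(j_*\O_{\P(TH)}\otimes \L_{m,n}) \simeq S^m\Omega_H(m+n)$, which handles the outer terms and the object being resolved. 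The non-trivial identification is $\pi_*\E_{r,s} \simeq \Ker \delta^2_{r+1,s}$; this is Proposition \ref{prop: push 1}, proved by glueing the local isomorphisms $(s_1,s_2) \mapsto s_2 + Y_i s_1$ on $V_i$ through the explicit transition matrices defining $\E$.

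Finally, I would identify the pushed-forward arrows. For the first map, I plug the local form of $s(P)$ from Lemma \ref{lemma: local form} into the isomorphism $\pi_*\E_{0,d-1} \simeq \Ker \delta^2_{1,d-1}$; the correction term $-\tfrac{Y_i}{X_i}P$ cancels, leaving exactly the global multiplication $\alpha(P) = \cdot \times \tfrac{1}{d}(\diff P)_X(Y)$. For the second, a direct local computation of the Koszul wedge $s(P) \wedge t$ in a trivialization of $\det\E$ gives $\tfrac{1}{X_i}\bigl(P\cdot(t_2 + Y_i t_1) - \alpha(P)(X_i t_1)\bigr)$; since $t = (t_1,t_2)$ corresponds to $t_2+Y_i t_1$ and $\delta(t_2 + Y_it_1) = X_i t_1$, this is precisely $\beta(P) = \cdot \times P - \alpha(P)\circ\delta$, with the $\tfrac{1}{X_i}$ factor absorbed into the trivialization of $\det \E$ on the chart. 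The main technical subtlety sits here, in the bookkeeping of trivializations when transporting the Koszul differentials across the isomorphism for $\pi_*\E_{r,s}$; as a sanity check, the identity $\delta\circ\alpha(P) - \alpha(P)\circ\delta = \cdot\times P$ confirms that the resulting chain of maps is indeed a complex, completing the proof.
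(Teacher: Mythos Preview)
Your proposal is correct and follows essentially the same route as the paper: construct the Koszul complex of the section $s(P)\in H^0(\E_{0,d-1})$ on $\Flag_{(1,2)}\C^{N+1}$, twist by $\L_{m,n}$, push forward by $\pi$ using Bott's formulas to preserve exactness, and identify the resulting sheaves and arrows via Proposition \ref{prop: push 1} and the local computations of Section \ref{subs: interpretation hyp}. The steps, the key lemmas invoked, and even the verification of the maps $\alpha(P)$ and $\beta(P)$ match the paper's argument.
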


\subsection{A second Koszul complex: the case of smooth complete intersections.}
\label{sect: Koszul complex 2: ci}
Let \(X\bydef (P_{1}=0) \cap \dotsc \cap (P_{c}=0)\) be a smooth complete intersection of multi-degree \(\mathbi{d}\bydef (d_{1}, \dotsc, d_{c})\) in the projective space \(\P^{N}\), with \(P_{i} \in \C[X]\) an homogeneous polynomial of degree \(d_{i}\) in the variables \(X\bydef (X_{0}, \dotsc, X_{N})\). 
As in the previous Section \ref{section: Koszul complex 2: hyp}, we define a closed immersion of \(\P(TX)\) inside \(\Flag_{(1,2)}\C^{N+1} \simeq \P(T\P^{N})\) as follows:
\[
j \colon
\left(
\begin{array}{ccc}
  \P(TX) & \longrightarrow  & \Flag_{(1,2)}\C^{N+1}  \\
  \big([x], v\big) & \longmapsto   &   \big([x], (\C\cdot x \oplus \C \cdot v)\big)
\end{array}
\right).
\]
Once again, the exact same proof of Proposition \ref{prop: geom inter} gives the following:
\begin{proposition}
The pull-back line bundle \(j^{*}\L_{m,n}\) on \(\P(TX)\) is equal to
\[
\O_{\P(TX)}(m) \otimes \pi_{X}^{*}\O_{X}(m+n),
\]
where \(\pi_{X}: \P(TX) \to X\) is the canonical projection onto the smooth complete intersection \(X\).
\end{proposition}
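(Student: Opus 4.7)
The plan is to transpose the proof of Proposition \ref{prop: geom inter} directly to the complete intersection setting, since the closed immersion \(j\) is defined by the same formula (just restricted to \(X\)) and the two tautological identifications used in that proof are essentially insensitive to replacing \(\P^{N}\) by a smooth sub-variety. Concretely, I expect to split the verification into the two ingredients \(j^{*}U_{1}\) and \(j^{*}(U_{2}/U_{1})\), each handled independently, and then combine them using the definition \(\L_{m,n}=(U_{2}^{\vee}/U_{1}^{\vee})^{m}\otimes (U_{1}^{\vee})^{n}\).

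First, I would observe the tautological identification \(j^{*}U_{1}\simeq \pi_{X}^{*}\O_{X}(-1)\). Indeed, by the very definition of \(j\), the fiber of \(j^{*}U_{1}\) above \(([x],[v])\in \P(TX)\) is the line \(\C\cdot x\subset \C^{N+1}\), which is precisely the fiber of \(\O_{\P^{N}}(-1)_{\vert X}\) at \([x]\). Second, I would construct a natural isomorphism
\[
j^{*}(U_{2}/U_{1})\simeq \O_{\P(TX)}(-1)\otimes \pi_{X}^{*}\O_{X}(-1),
\]
exactly as in Proposition \ref{prop: geom inter}, via the map
\[
\bigl(([x],[v]),\overline{v}\bigr)\longmapsto \Bigl(([x],[v]),\,\sum_{i=0}^{N}v_{i}\gamma_{i}([x])\Bigr).
\]
The only point to check is well-posedness, which comes from the Euler relation \(\sum_{i}x_{i}\gamma_{i}([x])=0\); this does not depend on whether \([x]\) lives in \(\P^{N}\) or in \(X\), since the Euler vector fields \(\gamma_{i}\) are defined globally on \(\P^{N}\) and so their restriction to \(X\) makes sense. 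The injectivity and surjectivity are then formal, exactly as before.

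With these two ingredients in hand, the conclusion is a one-line computation:
\[
j^{*}\L_{m,n}=\bigl(j^{*}(U_{2}/U_{1})\bigr)^{\vee\otimes m}\otimes \bigl(j^{*}U_{1}\bigr)^{\vee\otimes n}\simeq \O_{\P(TX)}(m)\otimes \pi_{X}^{*}\O_{X}(m)\otimes \pi_{X}^{*}\O_{X}(n),
\]
which gives the desired identification.

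The main (mild) obstacle is merely notational: one has to keep track of the fact that a tangent vector \(v\in T_{[x]}X\) is seen as an element of \(T_{[x]}\P^{N}\simeq \C^{N+1}/\C\cdot x\) via the inclusion \(TX\hookrightarrow T\P^{N}_{\vert X}\), and hence only lifts to \(\C^{N+1}\) modulo \(\C\cdot x\) — but this is precisely what is already absorbed by the Euler relation in the projective space proof, so no genuine new difficulty arises. This is why the authors can legitimately claim that \emph{the exact same proof} transfers without change.
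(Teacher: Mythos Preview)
Your proposal is correct and follows exactly the approach the paper indicates: the authors give no separate proof here but simply write that ``the exact same proof of Proposition~\ref{prop: geom inter}'' applies, and you have carried out precisely that transposition, identifying \(j^{*}U_{1}\) and \(j^{*}(U_{2}/U_{1})\) fibrewise via the Euler vector fields and then combining. There is nothing to add.
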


This section is organized exactly as in the previous one: we provide the construction of a Koszul complex on \(\Flag_{(1,2)}\C^{N+1}\) resolving the structural sheaf \(j_{*}\O_{\P(TX)}\), and give the interpretation of the push-forward complex on the base space \(\P^{N}\) in the case of codimension \(2\) complete intersections (for sake of simplicity).

\subsubsection{Construction of the Koszul complex}
Following notations of the previous Section \ref{section: Koszul complex 2: hyp}, consider, for \(1 \leq i \leq c\), the global sections \(s_{i} \bydef s(P_{i}) \in H^{0}(\Flag_{(1,2)}\C^{N+1}, \E_{d_{i}-1,0})\). They allow to define a global section 
\(
\mathbi{s}\bydef (s_{1}, \dotsc, s_{c})
\)
of 
\[
\E^{\mathbi{d}}
\bydef
\E_{0,d_{1}-1} \oplus \dotsc \oplus \E_{0,d_{c}-1}.
\]
The zero set of the section \(\mathbi{s}\) coincide with with \(j(\P(TX))\) since, by construction, it is given by the equations:
\[
\left\{ 
\begin{array}{ll}
P_{1}= \dotsb = P_{c}=0
\\
\diff P_{1} = \dotsb = \diff P_{c} = 0
\end{array}
\right..
\]
Furthermore, the smoothness of \(X\) implies that the ideal sheaf \(\mathcal{I}(\mathbi{s})\) spanned by these equations is reduced (as an immediate application of, say, the Jacobian criterion). One has therefore the short exact sequence:
\[
\xymatrix{
0 
\ar[r]
&
\mathcal{I}(\mathbi{s}) 
\ar[r]
&
\O_{\Flag_{(1,2)}\C^{N+1}} 
\ar[r]^-{\pr}
&
j_{*}\O_{\P(TX)} 
\ar[r]
&
0.
}
\]
Constructing in the usual fashion the Koszul complex associated to the section \(\mathbi{s}\) (see \cite{Laz1}[Appendix B]), one obtains accordingly the following complex:
\begin{equation}
\label{Koszul complex 2: ci}
\xymatrix{
\mathcal{K}(\mathbi{s})\colon
0
\ar[r]
&
\L_{-c, -2\abs{\mathbi{d}}+c}
\ar[r]^-{s \times \cdot }
&
(\E^{\mathbi{d}})_{-c, -2\abs{\mathbi{d}}+c}
\ar[r]^-{s \wedge \cdot}
&
\dotsb
\ar[r]^-{s \wedge \cdot}
&
\L_{0,0},
}
\end{equation}
which provides a locally free resolution of \(\O_{\P(TX)}\).

In order to understand explicitly this complex, we will push it forward by
 \[
 \pi: \Flag_{(1,2)}\C^{N+1} \to \P^{N}.
 \]
 However, whereas the push-forward sheaf \(\pi_{*} \E\) has a simple description, as indicated by Proposition \ref{prop: push 1} of the previous Section \ref{subs: interpretation hyp}, it is not as simple when it comes to computing the pushed-forward sheaves \(\pi_{*}\E^{\otimes k}\) for \(k \geq 2\) (which are the building blocks of \(\pi_{*}\bigwedge^{\bigcdot} \E^{\mathbi{d}}\)).

In the next section, we will give a full description in the case of smooth complete intersection of codimension \(2\). The path followed can of course be adapted to the general case, but it becomes much more complicated as far as notations, combinatorics, and determination of the push-forward maps are concerned. As we will not need this description in the applications (the Koszul complex upstair on \(\Flag_{(1,2)}\C^{N+1}\) will be enough), we refrain ourselves of making such a general study. 

\subsubsection{Interpretation of the (twisted) Koszul complex on the base: the case of smooth complete intersections of codimension \(2\)}
In exactly the same fashion as in the beginning of Section \ref{section: Koszul complex 2: hyp}, for any \(i \in \N_{\geq 1}\), and any \(r \in \Z, s \in \N\), we can push-forward the Koszul complex \eqref{Koszul complex 2: ci}, and keep an exact complex.

In order to understand the push-forward on the base of the Koszul complex, we first give a satisfying description of the push-forward sheaf \(\pi_{*}((\E^{\otimes 2})_{r,s})\). We start with the following lemma, which generalizes the construction in Proposition \ref{prop: push 1}:

\begin{lemma}
\label{prop: push map}
For any \(k \in \N_{\geq 1}\), for any \(s \in \Z\) and any \(r \in \N\), there is a natural map of locally free sheaves
\[
\xymatrix{
\pi_{*}((\E^{\otimes k})_{s,r})
\ar[r]^-{\Theta}
&
\Ker \delta^{\circ k+1}_{r+k,s}.
}
\]
\end{lemma}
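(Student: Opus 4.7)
The plan is to extend the construction of Proposition \ref{prop: push 1} to arbitrary $k$, defining $\Theta$ chart-by-chart over the affine cover $(V_i)_{0 \le i \le N}$ and gluing.

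\textbf{Local construction.} Fix $V_i = \{X_i \neq 0\}$ and the trivialization $\E|_{\tilde{V}_i} \simeq \L_{0,0} e_1^{(i)} \oplus \L_{1,0} e_2^{(i)}$ used in Section \ref{subs: Koszul complex 2: hyp}. Tensoring gives
\[
(\E^{\otimes k})_{r,s}\big|_{\tilde{V}_i} \simeq \bigoplus_{\epsilon \in \{1,2\}^k} \L_{r+n_2(\epsilon),\, s}\big|_{\tilde{V}_i},
\]
where $n_2(\epsilon)$ counts how many coordinates of $\epsilon$ equal $2$. Pushing down by $\pi$ and using Bott's formulas, a local section of $\pi_*((\E^{\otimes k})_{r,s})$ on $V_i$ is a tuple $(s_\epsilon)_{\epsilon}$ with $s_\epsilon \in (\Ker\delta_{r+n_2(\epsilon),\,s})|_{V_i}$. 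Define
\[
\Theta_i\big((s_\epsilon)_\epsilon\big) \;\bydef\; \sum_{\epsilon \in \{1,2\}^k} Y_i^{\,k-n_2(\epsilon)}\, s_\epsilon.
\]
This is a local section of $(\mathcal{S}_{r+k,s})|_{V_i}$.

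\textbf{Landing in $\Ker\delta^{\circ k+1}$.} Since each $s_\epsilon$ lies in $\Ker\delta$ and $\delta$ satisfies the Leibniz rule, applying $\delta$ to $Y_i^{k-n_2(\epsilon)}s_\epsilon$ only hits the $Y_i$-factor: by iteration,
\[
\delta^{\circ m}\big(Y_i^{\,k-n_2(\epsilon)} s_\epsilon\big)
\;=\; \frac{(k-n_2(\epsilon))!}{(k-n_2(\epsilon)-m)!}\, X_i^{\,m}\, Y_i^{\,k-n_2(\epsilon)-m}\, s_\epsilon
\]
for $m \le k - n_2(\epsilon)$, and it vanishes for larger $m$. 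Since $k - n_2(\epsilon) \le k < k+1$, we get $\delta^{\circ k+1}\circ\Theta_i \equiv 0$, so the image sits in $\Ker \delta^{\circ k+1}_{r+k,\,s}|_{V_i}$.

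\textbf{Gluing.} The main work is to check $\Theta_i = \Theta_j$ on $V_i \cap V_j$; this is the step I expect to be the main obstacle, since the transition matrices of $\E^{\otimes k}$ are the $k$-fold tensor powers of $A_{ij} = \left(\begin{smallmatrix} X_i/X_j & g_{ij}\\ 0 & 1\end{smallmatrix}\right)$ and the verification is combinatorial. The clean way to handle this is to factor $\Theta$ through a global map $\phi: \E \to \C[Y]_1 \otimes \O_{\Flag_{(1,2)}\C^{N+1}}$ (with appropriate $\L$-twist): on $\tilde{V}_i$, set $\phi_i(e_1^{(i)}) = Y_i$ and $\phi_i(e_2^{(i)}) = 1$. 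The gluing $\phi_i = \phi_j$ reduces to the single identity
\[
Y_i \;=\; (X_i/X_j)\, Y_j \,+\, g_{ij},
\]
which is precisely the identity already used in the proof of Proposition \ref{prop: push 1}. Then $\Theta$ is nothing but (the push-forward of) the composition
\[
\E^{\otimes k} \xrightarrow{\;\phi^{\otimes k}\;} (\C[Y]_1)^{\otimes k} \otimes \O_{\Flag} \xrightarrow{\;\mathrm{mult}\;} \C[Y]_k \otimes \O_{\Flag},
\]
twisted by $\L_{r,s}$ and pushed down via $\pi$ (using the projection formula and $\pi_*\L_{0,s} = \O_{\P^N}(s)$). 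By multiplicativity of tensor products, the $k=1$ gluing immediately implies gluing for all $k$, so the $\Theta_i$ assemble into the desired global map $\Theta\colon \pi_*((\E^{\otimes k})_{s,r}) \to \Ker\delta^{\circ k+1}_{r+k,s}$.
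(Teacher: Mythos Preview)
Your local construction of $\Theta_i$ and the verification that it lands in $\Ker\delta^{\circ k+1}$ are correct and match the paper exactly. The gluing strategy via multiplicativity is also morally the same as the paper's: the paper checks $\Theta_j\circ (A_{ij}^*)^{\otimes k}=\Theta_i$ on a basis element $\mathbi{e}_{\mathbi{\alpha}}$ by a direct binomial expansion, which is precisely the multiplicativity you invoke, spelled out by hand. In that sense your argument is a mild streamlining of the paper's.

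One point to tighten: the global map $\phi\colon \E\to \C[Y]_1\otimes\O_{\Flag_{(1,2)}\C^{N+1}}$ you posit does not exist upstairs. Indeed $\E\simeq U_2^{\vee}$ (both are the unique nontrivial extension of $\L_{0,1}$ by $\L_{1,0}$, since $\mathrm{Ext}^1(\L_{0,1},\L_{1,0})\simeq H^1(\P^N,\Omega_{\P^N})=\C$), and $\mathrm{Hom}(U_2^{\vee},\O_{\Flag}^{\,N+1})=(\C^{N+1})\otimes H^0(\Flag,U_2)=0$; no ``appropriate $\L$-twist'' fixes this in the form you need. The identity $Y_i=(X_i/X_j)Y_j+g_{ij}$ only makes sense after pushforward to $\P^N$, where $g_{ij}$ becomes the rational section $(X_jY_i-X_iY_j)/X_j$ of $\C[Y]_1\otimes\O_{\P^N}$. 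So your multiplicativity argument should be phrased entirely on $\P^N$: with $S'=\O_{\P^N}(V_i\cap V_j)[Y]$, the assignment $\theta_i\colon \mathbi{e}_0\mapsto Y_i,\ \mathbi{e}_1\mapsto 1$ is an $S'$-algebra map on the free module $S'\mathbi{e}_0\oplus S'\mathbi{e}_1$, the transition $A_{ij}^*$ is $S'$-linear, and $\theta_j\circ A_{ij}^*=\theta_i$ (the $k=1$ identity) then gives $\Theta_j\circ(A_{ij}^*)^{\otimes k}=\Theta_i$ for all $k$. With that rephrasing your proof is complete and coincides with the paper's.
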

\begin{proof}
Recall that, by definition, the locally free sheaf \(\E\) is obtained by suitably glueing the free sheaves 
\[
\big(\L_{0,0} \oplus \L_{1,0}\big)_{\vert \tilde{V_{i}}}
\]
on the covering open sets \(\tilde{V_{i}}=\pi^{-1}(\Set{X_{i}\neq 0})\) (see Section \ref{section: Koszul complex 2: hyp}).
Note that there are canonical isomorphisms for any \(0 \leq i \leq N\):
\[
\big(\L_{0,0} \oplus \L_{1,0}\big)^{\otimes k}_{\vert \tilde{V_{i}}}
\simeq 
\bigoplus_{\mathbi{\alpha} \in \Set{0,1}^{k}}
(\L_{\abs{\mathbi{\alpha}},0})_{\vert \tilde{V_{i}}},
\]
where by definition \(\abs{\mathbi{\alpha}}\) is the sum of the coordinates of \(\mathbi{\alpha}\) (i.e. the number of coordinates equal to one). Twisting by \(\L_{s,r}\) and pushing forward by \(\pi\), one gets the following canonical isomorphism on each open set \(V_{i}\), \(0 \leq i \leq N\):
\begin{equation}
\label{eq: triv}
\pi_{*}((\E^{\otimes k})_{r,s})_{\vert V_{i}}
\simeq
\big(
\bigoplus_{\mathbi{\alpha} \in \Set{0,1}^{k}}
\Ker \delta_{r+\abs{\mathbi{\alpha}}, s}
\big)_{\vert V_{i}}.
\end{equation}

For notational reasons, adopt the following formalism\footnote{It will "dualize" the notations used so far, but it is convenient for the proof here.}. Fix \(\mathbi{e}_{0}\) and \(\mathbi{e}_{1}\) two formal symbols, and denote for \(\mathbi{\alpha} \in \Set{0,1}^{k}\)
\[
\mathbi{e}_{\mathbi{\alpha}}
\bydef 
\mathbi{e}_{\alpha_{0}}
\otimes 
\dotsb
\otimes
\mathbi{e}_{\alpha_{k}}.
\]
Rewrite then the above isomorphism \eqref{eq: triv} as follows:
\begin{equation}
\label{eq: triv2}
\pi_{*}((\E^{\otimes k})_{r,s})_{\vert V_{i}}
\simeq
\bigoplus_{\mathbi{\alpha} \in \Set{0,1}^{k}}
(\Ker \delta_{r+\abs{\mathbi{\alpha}},s})_{\vert V_{i}}
\cdot \mathbi{e}_{\mathbi{\alpha}}.
\end{equation}
Denote then
\(A_{ij}^{*} \bydef 
\begin{pmatrix}
  \frac{X_{i}}{X_{j}} 
  & 
  0
  \\
  \frac{X_{j}Y_{i}-X_{i}Y_{j}}{X_{j}}
  &
  1
  \end{pmatrix},
  \)
and interpret this matrix as an isomorphism of the \(\C(X)[Y]\)-vector space of dimension \(2\) with canonical basis
\[
\C(X)[Y]\ \cdot \mathbi{e}_{0} 
\oplus
\C(X)[Y]\ \cdot \mathbi{e}_{1}.
\]
With these notations, the transition maps between the above trivializations \eqref{eq: triv2} are given by the matrix tensor product \((A_{ij}^{*})^{\otimes k}\). In other words, for any \(s \in  (\Ker \delta_{r+\abs{\mathbi{\alpha}},s})_{\vert V_{i}}\), the element 
\[
s \cdot \mathbi{e}_{\mathbi{\alpha}}
=
s \cdot (\mathbi{e}_{\alpha_{0}} \otimes \dotsb \otimes \mathbi{e}_{\alpha_{k}})
\]
becomes, after a change of trivialization from the \(i\)th chart to the \(j\)th chart,
\[
s \cdot (A_{ij}^{*}\mathbi{e}_{\alpha_{0}} \otimes \dotsb \otimes A_{ij}^{*}\mathbi{e}_{\alpha_{k}}).
\]

There is then a natural map from  \(\pi_{*}((\E^{\otimes k})_{r,s})\) to \(\Ker \delta^{\circ k+1}_{r+k,s}\) defined as follows. On each trivializing open set \(V_{i}\), consider the map
\[
\Theta_{i}\colon
\left(
\begin{array}{ccc}
  \bigoplus_{\mathbi{\alpha} \in \Set{0,1}^{k}}
(\Ker\delta_{r+\abs{\mathbi{\alpha}},s}
)_{\vert V_{i}}
\cdot \mathbi{e}_{\mathbi{\alpha}}.
  & 
  \longrightarrow
  &   
  (\Ker\delta^{\circ k+1}_{r+k,s})_{\vert V_{i}}
  \\
  (s_{\mathbi{\alpha}})_{\mathbi{\alpha} \in \Set{0,1}^{k}} 
  &
   \longmapsto
  & 
  \sum\limits_{\mathbi{\alpha} \in \Set{0,1}^{k}} Y_{i}^{k-\abs{\mathbi{\alpha}}}s_{\mathbi{\alpha}}
  \end{array}
\right).
\]
One easily sees that the image of \(\Theta_{i}\) does indeed lie in \(\Ker(\delta^{\circ k+1})_{\vert V_{i}}\). Furthermore, the maps \(\Theta_{i}\) glue to a global map. Indeed, by linearity, it suffices to show the following equality for any \(\mathbi{\alpha} \in \Set{0,1}^{k}\) and any \(s \in (\Ker \delta_{r+\abs{\mathbi{\alpha}},s})_{\vert V_{i}\cap V_{j}}\):
\[
 \Theta_{j}(s \cdot A_{ij}^{*}\mathbi{e}_{\alpha_{0}} \otimes \dotsb \otimes A_{ij}^{*}\mathbi{e}_{\alpha_{k}})
 =
 \Theta_{i}(s \cdot \mathbi{e}_{\mathbi{\alpha}}).
\]
Suppose without loss of generality that \(\mathbi{\alpha}=(\underbrace{0, \dotsc, 0}_{\ell \bydef k-\abs{\mathbi{\alpha}}}, 1, \dotsc, 1)\). One has that
\[
A_{ij}^{*}\mathbi{e}_{\alpha_{0}} \otimes \dotsb \otimes A_{ij}^{*}\mathbi{e}_{\alpha_{k}}
=
(\frac{X_{i}}{X_{j}}\mathbi{e}_{0} +  \frac{X_{j}Y_{i}-X_{i}Y_{j}}{X_{j}} \mathbi{e}_{1})^{\otimes \ell} \otimes \mathbi{e}_{1}^{\otimes k-\ell}.
\]
Then, one computes that:
\begin{eqnarray*}
 \Theta_{j}(s \cdot A_{ij}^{*}\mathbi{e}_{\alpha_{0}} \otimes \dotsb \otimes A_{ij}^{*}\mathbi{e}_{\alpha_{k}})
 &
 =
 &
 s
 \sum\limits_{p=0}^{\ell} \binom{\ell}{p} Y_{j}^{p} \big(\frac{X_{i}}{X_{j}}\big)^{p} \big(\frac{X_{j}Y_{i}-X_{i}Y_{j}}{X_{j}}\big)^{\ell-p}
 \\
 &
 =
 &
 s Y_{i}^{\ell} 
 \\
 &
 =
 &
 \Theta_{i}(s \cdot \mathbi{e}_{\mathbi{\alpha}}).
 \end{eqnarray*}
 This concludes the proof.
\end{proof}

We can now provide a convenient description of the various twists of the push-forward sheaf \(\pi_{*}(\E^{\otimes k})\) for \(k=2\):
\begin{proposition}
\label{prop: push 2}
For any \(s \in \Z\) and any \(r \in \N\), the locally free sheaf \(\pi_{*}((\E^{\otimes 2})_{s,r})\) is isomorphic to
\[
\Ker \delta^{3}_{r+2,s} \oplus \Ker \delta_{r+1,s+1}.
\]
\end{proposition}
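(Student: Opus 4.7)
The plan is to exploit the characteristic-zero decomposition $\E^{\otimes 2} = S^{2}\E \oplus \bigwedge^{2}\E$ and handle each summand separately. For the antisymmetric part, Proposition \ref{prop: extension} gives $\det \E = \L_{1,0}\otimes \L_{0,1} = \L_{1,1}$, hence $(\bigwedge^{2}\E)_{r,s} = \L_{r+1,s+1}$. Pushing forward to $\P^{N}$ via the same Bott-type argument that appears in Proposition \ref{prop: push 1} (and earlier in Section \ref{subs: interpretation hyp}), one gets $\pi_{*}\L_{r+1,s+1} = \Ker \delta_{r+1,s+1}$. This produces the second summand of the statement on the nose.

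The real work is the symmetric piece: I would show $\pi_{*}((S^{2}\E)_{r,s}) \simeq \Ker \delta^{3}_{r+2,s}$ by \emph{restricting} the map $\Theta$ of Lemma \ref{prop: push map} (for $k=2$). Recall that locally on $V_{i}$, under the trivialization \eqref{eq: triv2}, $\Theta$ sends $(s_{00},s_{01},s_{10},s_{11}) \mapsto Y_{i}^{2}s_{00} + Y_{i}(s_{01}+s_{10}) + s_{11}$. Plugging in the antisymmetric locus $(0,s,-s,0)$ gives $0$, so $\Theta$ vanishes identically on the antisymmetric summand $\bigwedge^{2}\E \otimes \L_{r,s}$ and therefore factors canonically through a global map $\Theta_{s}\colon \pi_{*}((S^{2}\E)_{r,s}) \to \Ker \delta^{3}_{r+2,s}$.

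It then remains to check that $\Theta_{s}$ is an isomorphism. Injectivity is immediate: if $Y_{i}^{2}s_{00} + 2 Y_{i}t + s_{11} = 0$ with $s_{00},t,s_{11} \in \Ker \delta$, then applying $\delta^{2}$ yields $2 X_{i}^{2}s_{00} = 0$, so $s_{00}=0$; applying $\delta$ to what remains gives $2 X_{i}t = 0$, so $t=0$; hence $s_{11}=0$. Surjectivity is verified by an explicit local splitting: given $f \in (\Ker \delta^{3}_{r+2,s})_{\vert V_{i}}$, set
\[
s_{00} \bydef \frac{\delta^{2}(f)}{2 X_{i}^{2}}, \qquad t \bydef \frac{\delta(f) - 2 Y_{i}X_{i}\,s_{00}}{2X_{i}}, \qquad s_{11} \bydef f - Y_{i}^{2}s_{00} - 2 Y_{i} t,
\]
and check directly that these all lie in $\Ker \delta$ and that $\Theta_{s}$ sends the symmetric tensor $(s_{00}, t, s_{11})$ to $f$. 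Combining the two summands gives the stated decomposition.

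The only delicate point — and the main technical subtlety — is the compatibility of the canonical $S^{2}/\bigwedge^{2}$ splitting with the chart-wise construction of $\Theta$; this is exactly what the short computation ``$\Theta$ kills $(0,s,-s,0)$'' is there to ensure. Once this is checked, everything else runs in close parallel with the rank-count-plus-local-splitting strategy already used in Proposition \ref{prop: push 1}, and in particular one could alternatively conclude surjectivity by counting ranks via the exact sequences $0 \to \Ker \delta^{k-1}_{m,n} \to \Ker \delta^{k}_{m,n} \xrightarrow{\delta} \Ker \delta^{k-1}_{m-1,n+1} \to 0$ iteratively deduced from the generalized Euler exact sequence.
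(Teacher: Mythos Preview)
Your argument is correct, and the local injectivity/surjectivity computations match the paper's almost line for line. The organizational difference is that you split \(\E^{\otimes 2} = S^{2}\E \oplus \bigwedge^{2}\E\) at the outset and treat the two summands separately, whereas the paper works directly on \(\pi_{*}((\E^{\otimes 2})_{r,s})\): alongside \(\Theta\) it introduces a second map \(\Delta\), defined locally by \((s_{0},s_{1},s_{2},s_{3}) \mapsto X_{i}(s_{2}-s_{1})\), and checks by hand that \((\Theta,\Delta)\) is an isomorphism onto \(\Ker\delta^{3}_{r+2,s}\oplus\Ker\delta_{r+1,s+1}\). Unwinding, the paper's \(\Delta\) is precisely (a scalar multiple of) the projection onto the antisymmetric factor composed with the identification \(\pi_{*}\L_{r+1,s+1}\simeq\Ker\delta_{r+1,s+1}\), and \(\Theta\) vanishes on antisymmetric tensors --- so the two approaches coincide once one recognizes \((\Theta,\Delta)\) as the \(S^{2}/\bigwedge^{2}\) decomposition in disguise. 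Your framing is arguably cleaner because it \emph{explains} why the second summand appears (as \(\det\E=\L_{1,1}\)); the paper's version has the minor advantage of not invoking any characteristic-zero splitting, though over \(\C\) this costs nothing.
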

\begin{proof}
Keep the notations introduced in the proof of Lemma \ref{prop: push map}, and consider the map 
\[
\Theta\colon
\pi_{*}((\E^{\otimes 2})_{s,r})
\to 
\Ker \delta^{3}_{r+2,s}.
\]
Recall that, locally on the open sets \(V_{i}\), it is defined as follows:
\[
\xymatrix{
(\Ker\delta_{r,s})_{\vert V_{i}}
\oplus
(\Ker\delta_{r+1,s})^{\oplus 2}_{\vert V_{i}}
\oplus
(\Ker\delta_{r+2,s})_{\vert V_{i}}
\ar[r]
&
(\Ker \delta^{3}_{r+2,s})_{\vert V_{i}}
\\
(s_{0}, s_{1}, s_{2}, s_{3})
\ar@{|->}[r]
&
s_{0}Y_{i}^{2} + (s_{1}+s_{2})Y_{i} + s_{3}.
}
\]
Consider also the following map of locally free sheaves
\[
\Delta\colon
\pi_{*}((\E^{\otimes 2})_{s,r})
\to 
\Ker \delta_{r+1,s+1}
\]
defined  on each open set \(V_{i}\) as follows:
\[
\xymatrix{
(\Ker\delta_{r,s})_{\vert V_{i}}
\oplus
(\Ker\delta_{r+1,s})^{\oplus 2}_{\vert V_{i}}
\oplus
(\Ker\delta_{r+2,s})_{\vert V_{i}}
\ar[r]
&
(\Ker \delta_{r+1,s+1})_{\vert V_{i}}
\\
(s_{0}, s_{1}, s_{2}, s_{3})
\ar@{|->}[r]
&
X_{i}(s_{2}-s_{1}).
}
\]
Note that it is well defined: on the \(j\)th chart, the element \((0, s_{1}, s_{2}, 0)\) becomes 
\[
(0, \frac{X_{i}}{X_{j}}s_{1}, \frac{X_{i}}{X_{j}}s_{2}, *),
\]
so that \(X_{i}(s_{2}-s_{1})=X_{j}(\frac{X_{i}}{X_{j}}s_{2}-  \frac{X_{i}}{X_{j}}s_{1})\).

It remains to justify that the morphism of locally free sheaves
\[
(\Theta, \Delta)\colon
\pi_{*}((\E^{\otimes 2})_{s,r})
\mapsto
\Ker \delta^{3}_{r+2,s} \oplus \Ker \delta_{r+1,s+1}
\]
is an isomorphism. For the injectivity, suppose that 
\[
\mathbi{s}=(s_{0}, s_{1}, s_{2}, s_{3}) 
\in 
(\Ker\delta_{r,s})_{\vert V_{i}}
\oplus
(\Ker\delta_{r+1,s})^{\oplus 2}_{\vert V_{i}}
\oplus
(\Ker\delta_{r+2,s})_{\vert V_{i}}
\]
satisfies \(\Theta(\mathbi{s})=\Delta(\mathbi{s})=0\), namely:
\[
\left\{
\begin{array}{ll}
s_{0}Y_{i}^{2} + (s_{1}+s_{2})Y_{i} + s_{3}=0;
\\
X_{i}(s_{2}-s_{1})=0.
\end{array}
\right.
\]
The second equation implies \(s_{2}=s_{1}\). Applying successively \(\delta^{2}\) and \(\delta\) to the first equation implies successively the equalities \(s_{0}=s_{1}=s_{2}=s_{3}=0\), which proves the injectivity. As for the surjectivity, let \((u,v) \in (\Ker \delta^{3}_{r+2,s})_{\vert V_{i}} \oplus (\Ker \delta_{r+1,s+1})_{\vert V_{i}}\). Consider
\[
\left\{
\begin{array}{ll}
s_{0}=\frac{1}{2} \frac{\delta^{2}(u)}{X_{i}^{2}}
\\
s_{1}=\frac{1}{2}(\frac{\delta(u)}{X_{i}} - \frac{\delta^{2}(u)Y_{i}}{X_{i}^{2}}-v)
\\
s_{2}=\frac{1}{2}(\frac{\delta(u)}{X_{i}} - \frac{\delta^{2}(u)Y_{i}}{X_{i}^{2}}+v)
\\
s_{3}=u-(s_{2}+s_{1})Y_{i}-s_{0}Y_{i}^{2}
\end{array}
\right.,
\]
and denote \(\mathbi{s}=(s_{0}, s_{1}, s_{2}, s_{3})\).
By construction, one has that \((\Theta, \Delta)(\mathbi{s})=(u,v)\). One then checks immediately that
\(\delta(s_{0})=\delta(s_{1})=\delta(s_{2})=0\). Furthermore, compute that
\begin{eqnarray*}
\delta(s_{3})
&
=
&
\delta(u)-(s_{1}+s_{2})X_{i}-2s_{0}Y_{i}X_{i}
\\
&
=
&
\delta(u)-\big(\delta(u)-\delta^{2}(u)\frac{Y_{i}}{X_{i}}\big)-\delta^{2}(u)\frac{Y_{i}}{X_{i}}
\\
&
=
&
0.
\end{eqnarray*}
This proves the surjectivity, and finishes the proof.
\end{proof}

Let us now go back to our Koszul complex \eqref{Koszul complex 2: ci} (where \(\mathbi{d}=(d_{1}, d_{2})\) as we work in codimension \(2\) here):
\begin{equation}
\label{Koszul complex 2: ci codim 2}
\resizebox{\displaywidth}{!}{
\xymatrix{
\L_{-2, -2\abs{\mathbi{d}}+2}
\ar@{^{(}->}[r]^-{s \times \cdot }
&
(\E^{\mathbi{d}})_{-2, -2\abs{\mathbi{d}}+2}
\ar[r]^-{s \wedge \cdot}
&
(\bigwedge^{2}\E^{\mathbi{d}})_{-2, -2\abs{\mathbi{d}}+2}
\ar[r]^-{s \wedge \cdot}
&
(\bigwedge^{3}\E^{\mathbi{d}})_{-2, -2\abs{\mathbi{d}}+2}
\ar[r]^-{s \wedge \cdot}
&
\L_{0,0}.
}
}
\end{equation}
From elementary multi-linear algebra, one has the isomorphisms
\[
\left\{
\begin{array}{ll}
\bigwedge^{2}\E^{\mathbi{d}}
\simeq
\L_{1, 2d_{1}-1}
\oplus
\L_{1, 2d_{2}-1}
\oplus
(\E^{\otimes 2})_{0, \abs{\mathbi{d}}-2}
\\
\bigwedge^{3}\E^{\mathbi{d}}
\simeq
\E_{1, d_{1}+2d_{2}-2}
\oplus
\E_{1, d_{2}+2d_{1}-2}
\end{array}
\right..
\]
Let \(m \in \N_{\geq 2}, n \in \Z\), and twist the above complex \eqref{Koszul complex 2: ci codim 2} by \(\L_{m,n}\).
Using the isomorphisms given in Propositions \ref{prop: push 1} and \ref{prop: push 2}, we see that the (twisted) push-forward Koszul complex \(\pi_{*}\big(\mathcal{K}(\mathbi{s})_{m,n}\big)\) takes the following form:
\begin{equation}
\label{pushed Koszul complex 2: ci codim 2}
\resizebox{\displaywidth}{!}{
\xymatrix{
\Ker \delta_{m-2,n-2\abs{\mathbi{d}}+2}
\ar@{^{(}->}[r]^-{(f_{11}, f_{12})}
&
\Ker \delta^{2}_{m-1, n-d_{1}-2d_{2}+1}
\oplus 
\Ker \delta^{2}_{m-1, n-d_{2}-2d_{1}+1}
\ar@{-}[r]^-{(f_{2i})_{1 \leq i \leq 4}}
&
\
\\
\
\ar[r]
&
(\bigoplus_{i=1}^{2}\Ker \delta_{m-1,n-2d_{i}+1})
 \oplus 
 \Ker \delta^{3}_{m, n-\abs{\mathbi{d}}}
 \oplus
 \Ker \delta_{m-1,n-\abs{\mathbi{d}}+1}
\ar@{-}[r]
&
\
\\
\
\ar[r]^-{(f_{31}, f_{32})}
&
\Ker \delta^{2}_{m, n-d_{1}}
\oplus 
\Ker \delta^{2}_{m, n-d_{2}}
\ar[r]^-{f_{4}}
&
\Ker \delta_{m,n},
}
}
\end{equation}
and it provides a locally free resolution of 
\(S^{m}\Omega_{X}(m+n)\).

In order to have a complete picture of the push-forward Koszul complex, it remains to describe the arrows. Recall first that for any polynomial \(P \in \C[X]\), we denoted:
\[
\left\{ 
\begin{array}{ll}
\alpha(P)= \cdot \times (\diff P)_{X}(Y)
\\
\beta(P)=\cdot \times P - \alpha(P) \circ \delta
\end{array}
\right..
\]
It is a (tedious) verification, that we leave to the reader\footnote{It is similar to the verification made in the case of hypersurfaces, but more involved.}, to check that the maps in the complex \eqref{pushed Koszul complex 2: ci codim 2} are the following:
\begin{equation}
\label{eq: arrows cod 2}
\left\{
\begin{array}{ll}
f_{1i}=\alpha(P_{i}), 1 \leq i \leq 2;
\\
f_{21}(A,B)=\beta(P_{2})(B) \ \text{and} \ f_{22}(A,B)=\beta(P_{1})(A);
\\
f_{23}(A,B)=\alpha(P_{2})(A)-\alpha(P_{1})(B);
\\
f_{24}(A,B)=\beta(P_{2})(A)+\beta(P_{1})(B);
\\
f_{31}(A,B,C,D)=\alpha(P_{1})(A) + \frac{1}{2}\big(\beta(P_{2})(C)+P_{2}C - \alpha(P_{2})(D)\big);
\\
f_{32}(A,B,C,D)=\alpha(P_{2})(B) - \frac{1}{2}\big(\beta(P_{1})(C) +P_{1}C + \alpha(P_{1})(D)\big);
\\
f_{4}(A,B)=\beta(P_{1})(A) + \beta(P_{2})(B).
\end{array}
\right.
\end{equation}

We can thus summarize the main result of this section in the following statement:
\begin{theorem}
\label{thm: resol ci}
The complex
\[
\xymatrix{
0 
\ar[r]
&
\Ker \delta[-2,-2\abs{\mathbi{d}}+2]
\ar@{-}[r]
&
\\
\ar[r]
&
\Ker \delta^{2}[-1,-2\abs{\mathbi{d}}+d_{2}+1]
\oplus 
\Ker \delta^{2}[-1,-2\abs{\mathbi{d}}+d_{1}+1]
\ar@{-}[r]
&
\\
\ar[r]
&
(\underset{i}{\oplus}\Ker \delta[-1,-2d_{i}+1])
 \oplus 
 \Ker \delta^{3}[0,-\abs{\mathbi{d}}]
 \oplus
  \Ker \delta[-1,-\abs{\mathbi{d}}+1]
  \ar@{-}[r]
  &
\\
\ar[r]
&
\Ker \delta^{2}[0, -d_{1}]
\oplus 
\Ker \delta^{2}[0, -d_{2}]
\ar[r]
&
\Ker \delta,
}
\]
whose arrows are given in \eqref{eq: arrows cod 2}, provides a locally free bi-graded resolution of 
\[
\bigoplus_{m \geq 2, n \in \Z}
S^{m}\Omega_{X}(m+n).
\]
\end{theorem}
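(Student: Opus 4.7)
The plan is to construct the resolution upstairs on $\Flag_{(1,2)}\C^{N+1}$ via the Koszul complex $\mathcal{K}(\mathbi{s})$ associated to the section $\mathbi{s} = (s_1, s_2) \in H^0(\Flag_{(1,2)}\C^{N+1}, \E^{\mathbi{d}})$, twist by $\L_{m,n}$, and then push forward by $\pi: \Flag_{(1,2)}\C^{N+1} \to \P^N$. The Koszul complex is exact because $\Flag_{(1,2)}\C^{N+1}$ is smooth, $j(\P(TX))$ has codimension $2 = \rank(\E^{\mathbi{d}})$, and the ideal sheaf is reduced by the Jacobian criterion applied to the smoothness of $X$. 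After twisting by $\L_{m,n}$ with $m \geq 2$, the resolution of $j_*\O_{\P(TX)} \otimes \L_{m,n}$ has all terms of the form $(\E^{\otimes k})_{r,s}$ (or wedge powers thereof) with $r \geq 0$, so by Bott's formulas in the relative setting (Appendix \ref{appendix: Bott}) all higher direct images vanish and pushing forward preserves exactness.

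The next step is to identify each term of the pushed-forward complex. Using elementary multi-linear algebra combined with the short exact sequence $0 \to \L_{1,0} \to \E \to \L_{0,1} \to 0$ of Proposition \ref{prop: extension}, one obtains the decompositions
\[
\bigwedge^{2}\E^{\mathbi{d}} \simeq \L_{1,2d_1-1}\oplus \L_{1,2d_2-1} \oplus (\E^{\otimes 2})_{0,\abs{\mathbi{d}}-2}, \qquad \bigwedge^{3}\E^{\mathbi{d}} \simeq \E_{1,d_1+2d_2-2}\oplus \E_{1,2d_1+d_2-2}.
\]
Applying $\pi_*$, each factor is then identified: for the $\L$-type pieces directly via $\pi_*\L_{m,n} \simeq \Ker\delta_{m,n}$, for the $\E$-type pieces via Proposition \ref{prop: push 1} (giving $\Ker\delta^2$), and for the $\E^{\otimes 2}$ piece via Proposition \ref{prop: push 2} (giving $\Ker\delta^3 \oplus \Ker\delta$). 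This reproduces exactly the list of objects appearing in the statement, with the indicated bi-grading shifts.

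The remaining task is to describe the arrows. The first and last maps are the easiest: the leftmost is simply $(\alpha(P_1), \alpha(P_2))$, reproducing the codimension-$1$ calculation from Section \ref{subs: interpretation Koszul: hyp} applied to each factor, and the rightmost $f_4(A,B) = \beta(P_1)(A) + \beta(P_2)(B)$ is the direct analogue of the hypersurface computation in Section \ref{subs: interpretation hyp}. The intermediate arrows are obtained by computing $\mathbi{s} \wedge \cdot$ locally in the trivializations $\tilde V_i$ used in Section \ref{subs: Koszul complex 2: hyp}, rewriting the result under the isomorphisms $\Theta$ and $(\Theta,\Delta)$ of Lemma \ref{prop: push map} and Proposition \ref{prop: push 2}, and tracking the implicit trivialization factors of the determinant line bundles. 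Signs are bookkeeping inherited from the Koszul differential on $\bigwedge^{\bullet}\E^{\mathbi{d}}$, and the identity $\delta\circ \alpha(P_i) - \alpha(P_i)\circ \delta = \cdot \times P_i$ ensures the arrows are well-defined at the level of the kernel subsheaves.

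The main obstacle is precisely the explicit identification of the middle arrows $(f_{2i})_{1\leq i\leq 4}$ and $(f_{3i})_{1\leq i\leq 2}$: here the $\E^{\otimes 2}$-summand contributes both a $\Ker\delta^3$ piece and a $\Ker\delta$ piece via the $(\Theta,\Delta)$-decomposition, and one must carefully resolve how the wedge product $\mathbi{s}\wedge\cdot$ distributes among these summands. The tedious verification amounts to reducing everything to a single chart, applying the local formula $s^i = \bigl(P/X_i,\, \tfrac{1}{d}(\diff P)_X(Y) - (Y_i/X_i)P\bigr)$ from Lemma \ref{lemma: local form} for both $s_1$ and $s_2$, and reading off the coefficients after the change of trivialization; the factors of $\tfrac{1}{2}$ in $f_{31}$ and $f_{32}$ come precisely from the $\Delta$-component being read out with a factor $X_i$, which must be compensated when expressing the arrow in terms of $\alpha$ and $\beta$.
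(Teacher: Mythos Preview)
Your approach is essentially the same as the paper's: build the Koszul complex $\mathcal{K}(\mathbi{s})$ on $\Flag_{(1,2)}\C^{N+1}$, twist, push forward using Bott's formulas, and identify the pieces via Propositions \ref{prop: push 1} and \ref{prop: push 2}; the paper likewise leaves the explicit determination of the arrows as a tedious verification. One slip: you write that $j(\P(TX))$ has codimension $2 = \rank(\E^{\mathbi{d}})$, but for $c=2$ the rank of $\E^{\mathbi{d}}=\E_{0,d_1-1}\oplus\E_{0,d_2-1}$ is $4$ and the codimension of $j(\P(TX))$ in $\Flag_{(1,2)}\C^{N+1}$ is $2c=4$; the equality of these two numbers is indeed what makes the Koszul complex exact, so the logic is right but the displayed number should be $4$.
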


\section{Cohomology of twisted symmetric powers of cotangent bundles of complete intersections.}
\label{sect: coho sym power ci}

\subsection{A first complex computing cohomology: the case of smooth hypersurfaces.}
\label{sect: coho hyp 1}

For this whole part, we fix \(H\bydef \Set{P=0} \subset \P^{N}\) a smooth hypersurface of degree \(d \geq 1\).

\subsubsection{A complex computing the cohomology}
In order to compute cohomology of (negatively) twisted symmetric powers of cotangent bundles of hypersurfaces, note that as soon as the following inequality is satisfied
\[
(*)
\
\
\
n < -1,
\]
the cohomology of the two terms in the Koszul complex \(\mathcal{K}(\alpha(P))\) (see Theorem \ref{thm: Koszul cx hyp}) is supported in maximal degree, i.e. in degree \(N-1\). Indeed, this follows from the short exact sequence
\[
\xymatrix{
0 
\ar[r]
&
\Ker \delta_{r,s-d}
\ar[r]^-{\cdot \times P}
&
\Ker \delta_{r,s}
\ar[r]
&
(\Ker \delta_{r,s})_{\vert H}
\ar[r]
&
0,
}
\]
combined with Theorem \ref{thm: coho sym proj}, once one has taken the long exact sequence in cohomology. Recall indeed that Theorem \ref{thm: coho sym proj} implies that the cohomology of 
\[
\Ker \delta_{r,s}\simeq S^{r}\Omega_{\P^{N}}(r+s)
\] 
is supported in degree \(N\) as soon as \(s < -1\).

For the rest of this section, we suppose that the condition \( (*) \) is satisfied, so that the cohomology of the terms in the Koszul complex \(\mathcal{K}(\alpha(P))\) is supported in maximal degree. An easy exercice in cohomological algebra allows then to deduce that the cohomology of \(S^{m}\Omega_{H}(m+n)\) is read off through the following complex:
\begin{equation}
\label{eq: coho hyp}
\xymatrix{
H^{N-1}(H, (\Ker \delta_{m-1,n-d+1})_{\vert H})
\ar[rrr]^-{\alpha(P)}
&
&
&
H^{N-1}(H, (\Ker \delta_{m,n})_{\vert H}).
}
\end{equation}
Namely, the kernel is isomorphic to \(H^{N-2}(H, S^{m}\Omega_{H}(m+n))\), and the cokernel is isomorphic to \(H^{N-1}(H, S^{m}\Omega_{H}(m+n))\).

\subsubsection{Reformulation of the complex \eqref{eq: coho hyp} using Serre duality}
\label{subs: reformulation}
Let us recall that for any \(i \geq N+1\), the \(N\)th cohomological group
\(
H^{N}(\P^{N}, \O_{\P^{N}}(-i))
\)
has for natural basis the set of Laurent polynomials
\(
\big(\frac{1}{X^{\mathbi{\gamma}}}\big)_{\mathbi{\gamma}}
\)
with \(\mathbi{\gamma} \in \N^{N+1}\) satifying \(\gamma_{j} \geq 1\) and \(\abs{\mathbi{\gamma}}=i\). 
One sees that there is therefore a natural  identification between \(H^{N}(\P^{N}, \O_{\P^{N}}(-i))\) and \(H^{0}(\P^{N}, \O_{\P^{N}}(i-(N+1)))\) (or rather its dual): this is the very first manifestation of Serre duality, and it takes the following form:
\[
\left(
\begin{array}{ccc}
  H^{N}(\P^{N}, \O_{\P^{N}}(-i))& \longrightarrow  & H^{0}(\P^{N}, \O_{\P^{N}}(i-(N+1)))^{\vee}\\
  \frac{1}{X^{\mathbi{\gamma}} }& \longmapsto  &  (X^{\mathbi{\gamma}-(1, \dotsc, 1)})^{*}
  \end{array}
  \right),
\]
where by definition, for any \(\mathbi{\alpha} \in \N^{N+1}\), the symbol \((X^{\mathbi{\alpha}})^{*}\) denotes the linear functional on \(\C[X]\) taking the value \(1\) on \(X^{\mathbi{\alpha}}\) and zero elsewhere.
We have then the following elementary lemma:
\begin{lemma}
\label{lemma: Serre duality}
Let \(i \in \N\), and let \(P\) be an homogenous polynomial of degree \(d\) in \(N+1\) variables.
Under the identification given by Serre duality, the dual of the map in \(N\)th cohomology induced by the multiplication map
\[
\O_{\P^{N}}(-i) \overset{\cdot \times P}{\longrightarrow} \O_{\P^{N}}(-i+d)
\]
is simply the multiplication by \(P\):
\[
\left(
\begin{array}{ccc}
  H^{0}(\P^{N}, \O_{\P^{N}}(i-d-(N+1)))& \longrightarrow  & H^{0}(\P^{N}, \O_{\P^{N}}(i-(N+1)))   
  \\
  Q & \longmapsto  &   P\times Q
\end{array}
\right).
\]
\end{lemma}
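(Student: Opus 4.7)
The plan is to reduce the statement to a direct computation in the explicit Čech model of $H^{N}(\P^{N},\O_{\P^{N}}(-i))$ recalled just above the lemma.

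First, I would make the Serre duality pairing
\[
\langle \cdot, \cdot\rangle\colon H^{N}(\P^{N},\O_{\P^{N}}(-i))\otimes H^{0}(\P^{N},\O_{\P^{N}}(i-(N+1)))\longrightarrow \C
\]
completely explicit in terms of the given bases. Writing $\mathbf{1}\bydef(1,\dotsc,1)$, the identification stated in the excerpt reads
\[
\langle \tfrac{1}{X^{\mathbi{\gamma}}},\,X^{\mathbi{\beta}}\rangle = \delta_{\mathbi{\gamma},\,\mathbi{\beta}+\mathbf{1}},
\]
for $\gamma_{j}\geq 1$ with $\abs{\mathbi{\gamma}}=i$ and $\abs{\mathbi{\beta}}=i-(N+1)$. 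Equivalently, $\langle f,g\rangle$ is the coefficient of the Laurent monomial $X^{-\mathbf{1}}$ in the Laurent polynomial product $fg$ (where $f$ is a finite sum of monomials $\frac{1}{X^{\mathbi{\gamma}}}$ and $g$ is an honest polynomial).

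Next, I would use this description to compute the transpose of multiplication by $P$. For $f\in H^{N}(\P^{N},\O_{\P^{N}}(-i))$ and $g\in H^{0}(\P^{N},\O_{\P^{N}}(i-d-(N+1)))$, the compatibility of the pairing with the multiplication is immediate from associativity of the Laurent product:
\[
\langle P\cdot f,\,g\rangle \;=\; [X^{-\mathbf{1}}]\bigl(P\,f\,g\bigr) \;=\; [X^{-\mathbf{1}}]\bigl(f\cdot (Pg)\bigr) \;=\; \langle f,\,P\cdot g\rangle.
\]
The only subtle point is that the multiplication map on cohomology is defined only after truncating the Laurent expansion to monomials $\frac{1}{X^{\mathbi{\gamma}'}}$ with all $\gamma'_{j}\geq 1$, whereas $Pf$ computed naively may contain monomials with some non-negative exponent. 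However, such truncated terms multiplied by the polynomial $g$ cannot contribute to the coefficient of $X^{-\mathbf{1}}$, so the truncation is invisible to the pairing. This is the only step that requires a moment of verification.

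Finally, by non-degeneracy of Serre duality, the transpose of the cohomology map $\cdot\times P$ is uniquely characterized by the above adjunction identity. Since multiplication by the polynomial $P$ clearly satisfies this identity on $H^{0}(\P^{N},\O_{\P^{N}}(i-d-(N+1)))\to H^{0}(\P^{N},\O_{\P^{N}}(i-(N+1)))$, it must be the transpose, which proves the lemma.
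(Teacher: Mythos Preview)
Your proof is correct. Both your argument and the paper's work in the same explicit \v{C}ech model for \(H^{N}(\P^{N},\O_{\P^{N}}(-i))\), but the packaging differs slightly. The paper reduces by linearity to a single monomial \(P=X^{\mathbi{\alpha}}\), computes the action \(X^{\mathbi{\alpha}}\cdot \frac{1}{X^{\mathbi{\beta}}}\) on the Laurent basis, transports this through the Serre identification \(\frac{1}{X^{\mathbi{\gamma}}}\mapsto (X^{\mathbi{\gamma}-\mathbf{1}})^{*}\), and then checks monomial-by-monomial that the transposed map sends \(X^{\mathbi{\gamma}}\) to \(X^{\mathbi{\alpha}+\mathbi{\gamma}}\). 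You instead reinterpret the pairing as the residue (coefficient of \(X^{-\mathbf{1}}\)) and use associativity of the Laurent product to get the adjunction identity \(\langle Pf,g\rangle=\langle f,Pg\rangle\) in one line, with the truncation observation handling exactly the same case distinction the paper writes out. Your route is slightly more streamlined; the paper's has the virtue of being fully unpacked in coordinates, which is convenient since the same bookkeeping is reused in the subsequent Lemma~\ref{lemma: Serre map}.
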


\begin{proof}
By linearity, it suffices to check the statement for a monomial \(X^{\mathbi{\alpha}}\), \(\abs{\mathbi{\alpha}}=d\). 
 Let \(\mathbi{\beta} \in \N^{N+1}\), with \(\beta_{j} \geq 1\) and \(\abs{\mathbi{\beta}}=i\). Then one has that, at the \(N\)th cohomological level,
\[
X^{\mathbi{\alpha}} \cdot \frac{1}{X^{\mathbi{\beta}}}
=
\left\{ 
\begin{array}{ll}
0 \ \ \text{if \(\beta_{j} \leq \alpha_{j}\) for some \(j\)} \\
\frac{1}{X^{\mathbi{\beta}-\mathbi{\alpha}}} \ \ \text{otherwise}
\end{array}
\right..
\]
Under the isomorphism given by Serre duality, this says that for \((X^{\mathbi{\beta}})^{*} \in H^{0}(\P^{N}, \O_{\P^{N}}(i-(N+1)))^{\vee}\), the action of the multiplication by the monomial \(X^{\mathbi{\alpha}}\) is the following
\[
 X^{\mathbi{\alpha}} \cdot (X^{\mathbi{\beta}})^{*}
=
\left\{ 
\begin{array}{ll}
0 \ \ \text{if \(\beta_{j} < \alpha_{j}\) for some \(j\)} \\
(X^{\mathbi{\beta}-\mathbi{\alpha}})^{*} \ \ \text{otherwise}
\end{array}
\right..
\]

Now, let \(X^{\mathbi{\gamma}} \in H^{0}(\P^{N}, \O_{\P^{N}}(i-d-(N+1))))\) and \((X^{\mathbi{\beta}})^{*} \in H^{0}(\P^{N}, \O_{\P^{N}}(i-(N+1)))^{\vee}\). The dual map of the multiplication by \(X^{\mathbi{\alpha}}\) acts as follows:
\begin{eqnarray*}
\big((\cdot \times X^{\mathbi{\alpha}})^{*}(X^{\mathbi{\gamma}})\big)((X^{\mathbi{\beta}})^{*})
&
=
&
(X^{\mathbi{\alpha}} \cdot (X^{\mathbi{\beta}})^{*})(X^{\mathbi{\gamma}})
\\
&
=
&
\left\{ 
\begin{array}{ll}
1 \ \ \text{if \(\mathbi{\beta}=\mathbi{\alpha}+\mathbi{\gamma}\)}
\\
0 \ \ \text{otherwise}
\end{array}
\right..
\end{eqnarray*}
Therefore, one indeed has that \((\cdot \times X^{\mathbi{\alpha}})^{*}(X^{\mathbi{\gamma}})=X^{\mathbi{\alpha}+\mathbi{\gamma}}\), which proves the lemma.
\end{proof}

With this small reminder on Serre duality on projective spaces, we can now give a convenient description of the spaces appearing in the complex \eqref{eq: coho hyp}:
\begin{lemma}
\label{lemma: reformulation Serre}
Let \(r \in \N\), and \(s \in \Z\). There is a natural isomorphism:
\[
H^{N-1}(H, (\Ker \delta_{r,s})_{\vert H})^{\vee}
\simeq
\frac{S_{r,-s-(N+1)+d}}{(P,q)},
\]
where one recalls that \(S\bydef \C[Y,X]\), and \(q=\sum\limits_{i=0}^{N} X_{i}Y_{i}\).
\end{lemma}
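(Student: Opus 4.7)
The plan is to convert the \(H^{N-1}\) into an \(H^{0}\) via Serre duality on \(H\), and then compute this \(H^{0}\) through the dualized generalized Euler exact sequence (Theorem \ref{thm: gen Euler dual}). The renormalization map \(u\) of Section \ref{subs: dual Euler} is what turns the resulting arrow into multiplication by \(q\), yielding the clean description.

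First, Lemma \ref{lemma: Euler 2} identifies \(\Ker \delta_{r,s} \simeq S^{r}\Omega_{\P^{N}}(r+s)\), so its dual is \(S^{r}T\P^{N}(-r-s)\). Since \(\omega_{H} \simeq \O_{H}(d-N-1)\), Serre duality on \(H\) yields
\[
H^{N-1}(H, (\Ker \delta_{r,s})_{\vert H})^{\vee}
\simeq
H^{0}(H, S^{r}T\P^{N}(-r-s+d-N-1)_{\vert H}).
\]

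Second, apply the dualized generalized Euler sequence \eqref{eq: gen Euler dual}, twist it by \(\O_{\P^{N}}(-s+d-N-1)\), and restrict to \(H\) (exactness is preserved since every term is locally free):
\[
0 \to \C[Y]_{r-1}\otimes \O_{H}(-s+d-N-2) \xrightarrow{\cdot q} \C[Y]_{r}\otimes \O_{H}(-s+d-N-1) \to S^{r}T\P^{N}(-r-s+d-N-1)_{\vert H} \to 0.
\]
Take \(H^{0}\) and use \(H^{0}(H, \O_{H}(k)) \simeq \C[X]_{k}/(P)\), which comes from the standard ideal sheaf sequence \(0 \to \O_{\P^{N}}(k-d) \to \O_{\P^{N}}(k) \to \O_{H}(k) \to 0\) together with the vanishing of \(H^{1}(\P^{N}, \O_{\P^{N}}(k-d))\). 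The cokernel of the \(\cdot q\) map at the global sections level is precisely \(S_{r,-s-(N+1)+d}/(P,q)\) in the claimed bi-degree. Provided \(H^{1}(H, \C[Y]_{r-1}\otimes \O_{H}(-s+d-N-2)) = 0\), this cokernel accounts for the full \(H^{0}\) of the third term, and the lemma follows.

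The hardest point to keep track of is the arrow: after the above identifications, it is \emph{a priori} only the dual \(\delta^{*}\) described in Section \ref{subs: dual Euler}, which has an unaesthetic form; the renormalization \(u\) of Section \ref{subs: dual Euler} and the commutative diagram \eqref{eq: dual} are what is needed to turn this arrow into multiplication by the quadratic \(q\). The requisite vanishing of \(H^{1}(H, \C[Y]_{r-1}\otimes \O_{H}(\cdot))\) is automatic for \(N \geq 3\), since \(\O_{H}(k)\) then has cohomology only in degrees \(0\) and \(N-1\); the case \(N=2\) requires a direct analysis of the long exact sequence attached to the restricted (non-dual) Euler sequence \eqref{eq: gen Euler}, to which one applies Serre duality termwise on the curve, producing the same answer after the same renormalization identification.
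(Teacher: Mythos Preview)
Your argument is correct and takes a genuinely different route from the paper's. The paper first relates \(H^{N-1}(H,(\Ker\delta_{r,s})_{\vert H})\) to \(H^{N}\) on \(\P^{N}\) via the ideal-sheaf sequence \(0\to\Ker\delta_{r,s-d}\to\Ker\delta_{r,s}\to(\Ker\delta_{r,s})_{\vert H}\to 0\), describes each \(H^{N}(\P^{N},\Ker\delta)\) through the (non-dual) Euler sequence, and only then dualizes term by term via Serre duality on \(\P^{N}\) together with the renormalization map. You instead apply Serre duality on \(H\) at the outset, landing directly on an \(H^{0}\) of a twist of \(S^{r}T\P^{N}\) restricted to \(H\), and read off the answer from the already-renormalized dual Euler sequence of Theorem~\ref{thm: gen Euler dual}. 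Your path is shorter and avoids juggling \(H^{N}\)-classes. The paper's path, on the other hand, pays dividends immediately afterwards: it generalizes verbatim to complete intersections (Lemma~\ref{lemma: reformulation Serre 2}) by replacing multiplication by \(P\) with the Koszul complex on \(P_{1},\dotsc,P_{c}\), and it makes the computation of the dual map \(\alpha^{*}(P)\) in Lemma~\ref{lemma: Serre map} transparent, since everything is expressed in the explicit monomial basis of \(H^{N}(\P^{N},\O_{\P^{N}}(\cdot))\).

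One small clean-up: your separate treatment of \(N=2\) is unnecessary. The section carries the standing hypothesis \((*)\), so the second index satisfies \(s<-1\) wherever the lemma is invoked. Under that hypothesis the vanishing \(H^{1}(H,\O_{H}(-s+d-N-2))=0\) holds for \(N=2\) as well: by Serre duality on the curve \(H\) (with \(\omega_{H}=\O_{H}(d-3)\)) one has \(H^{1}(H,\O_{H}(-s+d-4))^{\vee}\simeq H^{0}(H,\O_{H}(s+1))\), which is zero since \(s+1<0\). Hence your main argument already covers all \(N\geq 2\), and the detour through the non-dual Euler sequence can be dropped. (Without the hypothesis \(s<-1\), the paper's own argument has the same issue at \(N=2\), so this is not a defect specific to your approach.)
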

\begin{proof}
Via the short exact sequence
\[
\xymatrix{
0 
\ar[r]
&
\Ker \delta_{r,s-d}
\ar[r]^-{\cdot \times P}
&
\Ker \delta_{r,s}
\ar[r]
&
(\Ker \delta_{r,s})_{\vert H}
\ar[r]
&
0,
}
\]
and Theorem \ref{thm: coho sym proj}, one deduces that \(H^{N-1}(H, (\Ker \delta_{r,s})_{\vert H})\) is isomorphic to 
\[
\Ker \Big( H^{N}(\P^{N},\Ker \delta_{r,s-d}) \overset{\cdot \times P}{\longrightarrow} H^{N}(\P^{N},\Ker \delta_{r,s}) \Big).
\]
On the other hand, for any \(r \geq 0\) and any \(s \in \Z\),  the vector space \(H^{N}(\P^{N},\Ker \delta_{r,s})\) is isomorphic to
\[
\Ker \Big( 
\C[Y]_{r} \otimes H^{N}(\P^{N}, \O_{\P^{N}}(s))
\overset{\delta}{\longrightarrow}
\C[Y]_{r-1} \otimes H^{N}(\P^{N}, \O_{\P^{N}}(s+1)) 
\Big).
\]
One can thus form the following commutative diagram
\begin{equation}
\label{eq: comm diagram}
\resizebox{\displaywidth}{!}{
\xymatrix{
&
&
0
&
0
\\
&
&
\C[Y]_{r-1} \otimes H^{N}(\P^{N}, \O_{\P^{N}}(s-d+1)) 
\ar[u]
&
\C[Y]_{r-1} \otimes H^{N}(\P^{N}, \O_{\P^{N}}(s+1)) 
\ar[u]
\\
&
&
\C[Y]_{r} \otimes H^{N}(\P^{N}, \O_{\P^{N}}(s-d)) 
\ar[r]^-{\cdot \times P}
\ar[u]^-{\delta}
&
\C[Y]_{r} \otimes H^{N}(\P^{N}, \O_{\P^{N}}(s)) 
\ar[u]^-{\delta}
\\
0 
\ar[r]
&
H^{N-1}(H, (\Ker\delta_{r,s})_{\vert H})
\ar[r]
&
H^{N}(\P^{N}, \Ker \delta_{r,s-d})
\ar[u]
\ar[r]
&
H^{N}(\P^{N},\Ker \delta_{r,s})
\ar[u]
\\
&
&
0
\ar[u]
&
0
\ar[u]
}
}
\end{equation}
where the columns and lines are exact. 

Now, by dualizing everything, using Serre duality and Lemma \ref{lemma: Serre duality}, the above diagram becomes:
\[
\resizebox{\displaywidth}{!}{
\xymatrix{
&
&
0
\ar[d]
&
0
\ar[d]
\\
&
&
\C[Y]_{r-1} \otimes H^{0}(\P^{N}, \O_{\P^{N}}(-s-(N+1)+d-1)) 
\ar[d]^-{\delta^{*}}
&
\C[Y]_{r-1} \otimes H^{0}(\P^{N}, \O_{\P^{N}}(-s-1-(N+1))) 
\ar[d]^-{\delta^{*}}
\\
&
&
\C[Y]_{r} \otimes H^{0}(\P^{N}, \O_{\P^{N}}(-s-(N+1)+d)) 
\ar[d]
&
\C[Y]_{r} \otimes H^{0}(\P^{N}, \O_{\P^{N}}(-s-(N+1))) 
\ar[d]
\ar[l]^-{\cdot \times P}
\\
0 
&
H^{N-1}(H,(\Ker \delta_{r,s})_{\vert H})^{\vee}
\ar[l]
&
H^{N}(\P^{N},\Ker \delta_{r,s-d})^{\vee}
\ar[d]
\ar[l]
&
H^{N}(\P^{N},\Ker \delta_{r,s})^{\vee}
\ar[l]
\ar[d]
\\
&
&
0
&
0
}
}
\]
As in Section \ref{subs: dual Euler}, use the renormalization map
\[
u \colon
\left(
\begin{array}{ccc}
  \C[Y] & \longrightarrow  & \C[Y]   \\
   Y^{\mathbi{\beta}} & \longmapsto  &  \frac{Y^{\mathbi{\beta}}}{\mathbi{\beta!}}
\end{array}
\right)
\]
to change the two columns in the above diagram, so that it becomes:
\[
\xymatrix{
&
&
0
\ar[d]
&
0
\ar[d]
\\
&
&
S_{r-1,-s-1-(N+1)+d}
\ar[d]^-{\cdot \times q}
&
S_{r-1,-s-1-(N+1)}
\ar[d]^-{\cdot \times q}
\\
&
&
S_{r, -s-(N+1)+d}
\ar[d]
&
S_{r,-s-(N+1)}
\ar[d]
\ar[l]^-{\cdot \times P}
\\
0 
&
H^{N-1}(H, (\Ker \delta_{r,s})_{\vert H})^{\vee}
\ar[l]
&
H^{N}(\P^{N},\Ker \delta_{r,s-d})^{\vee}
\ar[d]
\ar[l]
&
H^{N}(\P^{N},\Ker \delta_{r,s})^{\vee}
\ar[l]
\ar[d]
\\
&
&
0
&
0
}
\]
The lemma now follows immediately from the above diagram.
\end{proof}
Let us now use dualize the complex \eqref{eq: coho hyp}. Using the above Lemma \ref{lemma: reformulation Serre}, it becomes:
\begin{equation}
\label{eq: coho hyp 2}
\xymatrix{
\frac{S_{m, -n-(N+1)+d}}{(P,q)}
\ar[rr]^-{\alpha^{*}(P)}
&
&
\frac{S_{m-1, -n-(N+1)+2d-1}}{(P,q)}.
}
\end{equation}
It remains to understand the induced map, that we denoted by \(\alpha^{*}(P)\):
\begin{lemma}
\label{lemma: Serre map}
The map \(\alpha^{*}(P)\) is induced by the map (still denoted \(\alpha^{*}(P)\) by a slight abuse of notations)
\[
\alpha^{*}(P) \colon\left(
\begin{array}{ccc}
 S & \longrightarrow & S
  \\
 A & \longmapsto  &  \frac{1}{d}(\sum\limits_{i=0}^{N} \frac{\partial P}{\partial X_{i}} \frac{\partial}{\partial Y_{i}})(A)
\end{array}
\right),
\]
where one recalls that \(S\bydef \C[Y,X]\).
\end{lemma}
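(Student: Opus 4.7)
The strategy is to extend the commutative diagram \eqref{eq: comm diagram} used in the proof of Lemma \ref{lemma: reformulation Serre} by adjoining the arrow \(\alpha(P)\), and then to dualize everything via Serre duality, renormalizing the \(\C[Y]\)-factor with the map \(u\). Concretely, the multiplication map \(\alpha(P)=\cdot\times \tfrac{1}{d}(\diff P)_X(Y)\) decomposes as
\[
\alpha(P)\;=\;\tfrac{1}{d}\sum_{i=0}^{N}\Big(\cdot\times \tfrac{\partial P}{\partial X_i}(X)\Big)\circ\big(\cdot\times Y_i\big),
\]
so it is enough to identify the Serre-dual of each factor \((\cdot\times Y_i)\) on \(\C[Y]\) and of \((\cdot\times \tfrac{\partial P}{\partial X_i})\) on \(\O_{\P^N}(\ast)\), and then to combine.

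The key point is the following elementary observation about the renormalization \(u\). With respect to the natural pairing \(\langle Y^{\mathbi{\alpha}},Y^{\mathbi{\beta}}\rangle=\delta_{\mathbi{\alpha},\mathbi{\beta}}\) on \(\C[Y]\), the dual of multiplication by \(Y_i\) is the ``lowering operator'' \(D_i\colon Y^{\mathbi{\beta}}\mapsto Y^{\mathbi{\beta}-\mathbi{e_i}}\) (with the convention that it vanishes when \(\beta_i=0\)). On the other hand, a direct computation gives
\[
\frac{\partial}{\partial Y_i}\Big(\frac{Y^{\mathbi{\beta}}}{\mathbi{\beta}!}\Big)
=\frac{\beta_i\,Y^{\mathbi{\beta}-\mathbi{e_i}}}{\mathbi{\beta}!}
=\frac{Y^{\mathbi{\beta}-\mathbi{e_i}}}{(\mathbi{\beta}-\mathbi{e_i})!},
\]
so that \(u\) intertwines \(D_i\) with the partial derivative \(\partial/\partial Y_i\). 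Combining this with Lemma \ref{lemma: Serre duality}, which says that the Serre-dual of multiplication by \(\tfrac{\partial P}{\partial X_i}(X)\) on sections of line bundles on \(\P^N\) is again multiplication by \(\tfrac{\partial P}{\partial X_i}(X)\), one deduces that after Serre duality and renormalization the dual of \(\alpha(P)\) is precisely
\[
\alpha^{*}(P)\;=\;\frac{1}{d}\sum_{i=0}^{N}\frac{\partial P}{\partial X_i}\,\frac{\partial}{\partial Y_i}
\]
acting on \(S=\C[Y,X]\).

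It remains to check that this global operator on \(S\) descends to \(S/(P,q)\), which is immediate: since \(P\in\C[X]\) contains no \(Y\)-variable, \(\alpha^{*}(P)\) preserves the ideal \((P)\); and Euler's relation \(\sum_i X_i \tfrac{\partial P}{\partial X_i}=dP\) yields
\[
\alpha^{*}(P)(qA)
\;=\;\frac{1}{d}\sum_{i}\frac{\partial P}{\partial X_i}\Big(X_iA+q\,\tfrac{\partial A}{\partial Y_i}\Big)
\;=\;P\cdot A+q\cdot \alpha^{*}(P)(A)\;\in\;(P,q).
\]
The only bookkeeping subtlety — and the one part of the argument that deserves care — is to run the extended diagram of Lemma \ref{lemma: reformulation Serre} with the extra vertical column corresponding to \(\alpha(P)\) and track the shift of bidegrees through the dualization and renormalization steps. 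Once this is done the identification is forced.
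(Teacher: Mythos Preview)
Your proof is correct and follows essentially the same route as the paper's: both arguments lift \(\alpha(P)\) to the level of \(\C[Y]_{\ast}\otimes H^{N}(\P^{N},\O_{\P^{N}}(\ast))\), dualize via Serre duality on the \(X\)-factor (Lemma~\ref{lemma: Serre duality}) and via the monomial pairing on the \(Y\)-factor, and then conjugate by the renormalization \(u\) to turn the lowering operator into \(\partial/\partial Y_{i}\). Your organization---decomposing \(\alpha(P)=\tfrac{1}{d}\sum_{i}(\cdot\times\tfrac{\partial P}{\partial X_{i}})\otimes(\cdot\times Y_{i})\) and dualizing each tensor factor separately---is a clean way to present the computation the paper summarizes by ``the same computation as in Section~\ref{subs: dual Euler}''; the explicit check that \(\alpha^{*}(P)\) preserves \((P,q)\) is a pleasant sanity check but already forced by the diagram chase.
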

\begin{remark}
In other words, up to a non-zero multiplicative factor, this is the map induced by \(\delta(\gamma(P))(\P^{N})\), where \(\gamma(P)\) is the Gauss map associated to the hypersurface \(H= \Set{P=0}\) (see Section \ref{subs: a particular class} for notations).
\end{remark}
\begin{proof}
Consider the commutative diagram \eqref{eq: comm diagram} introduced in the previous Lemma \ref{lemma: reformulation Serre}, which provides a convenient description of \(H^{N-1}(H, (\Ker \delta_{r,s})_{\vert H})\). 
The map induced (at the cohomological level) by the multiplication by \(\frac{1}{d}(\diff P)_{X}(Y)\)
\[
\xymatrix{
H^{N-1}(H, (\Ker\delta_{r,s})_{\vert H})
\ar[rrr]^-{\alpha(P)}
&
&
&
H^{N-1}(H,  (\Ker\delta_{r+1,s+d-1})_{\vert H})
}
\]
is induced by the map
\[
\xymatrix{
\C[Y]_{r}\otimes H^{N}(\P^{N}, \O_{\P^{N}}(s-d))
\ar[rr]^-{\alpha(P)}
&
&
\C[Y]_{r+1}\otimes H^{N}(\P^{N}, \O_{\P^{N}}(s-1)).
}
\]
This map fits into the following commutative diagram:
\[
\resizebox{\displaywidth}{!}{
\xymatrix{
\C[Y]_{r}\otimes H^{N}(\P^{N}, \O_{\P^{N}}(s-d))
\ar[rrr]^-{\alpha(P)}
\ar[d]^-{\text{Serre duality}}
&
&
&
\C[Y]_{r+1}\otimes H^{N}(\P^{N}, \O_{\P^{N}}(s-1))
\ar[d]^-{\text{Serre duality}}
\\
\C[Y]_{r}\otimes H^{0}(\P^{N}, \O_{\P^{N}}(-s-(N+1)+d))^{\vee}
\ar[d]^-{u \ (\text{renormalization map})}
\ar[rrr]
&
&
&
\C[Y]_{r+1}\otimes H^{0}(\P^{N}, \O_{\P^{N}}(-s-N))^{\vee}
\ar[d]^-{u \ (\text{renormalization map})}
\\
\C[Y]_{r}\otimes H^{0}(\P^{N}, \O_{\P^{N}}(-s-(N+1)+d))^{\vee}
\ar[rrr]^-{m(P)}
&
&
&
\C[Y]_{r+1}\otimes H^{0}(\P^{N}, \O_{\P^{N}}(-s-N))^{\vee}.
}
}
\]
It is thus enough to understand the dual of the map \(m(P)\), and the goal is to show the following equality:
\[
m(P)^{*}
=
\alpha^{*}(P).
\]

Note that the correspondances \(P \mapsto m(P)\) and \(P \mapsto \alpha^{*}(P)\) are linear in \(P\), so that it suffices to prove this equality for monomials in \(\C[X]_{d}\). The proof now follows from Lemma \ref{lemma: Serre duality}, and the same computation as the one carried over in Section \ref{subs: dual Euler} on the dualized generalized Euler exact sequence.

\end{proof}

\begin{remark}
\label{remark: recipee}
More generally, under the Serre duality isomorphism and the renormalization map, the action of the multiplication by a polynomial \(Q(Y,X)\) on the top cohomology group corresponds to the partial differential equation \(Q(\frac{\partial}{\partial Y}, X)\) on the \(0\)th cohomology group. This is the content of the proof of the previous Lemma \ref{lemma: Serre map}
\end{remark}

We have therefore shown the following theorem:
\begin{theorem}
\label{thm: coho hyp 1}
The cohomology of 
\(
\bigoplus_{m \geq 1, n > 1 }
S^{m}\Omega_{H}(m-n)
\)
can be computed via the graded (Koszul) complex \(K(\alpha^{*}(P))[-1, -(N+1)+2d-1]\):
\[
\xymatrix{
\frac{S}{(P,q)}[0, -(N+1)+d]
\ar[rr]^-{\alpha^{*}(P)}
&
&
\frac{S}{(P,q)}[-1, -(N+1)+2d-1].
}
\]
Namely, the \(i\)th cohomology group of one graded component of 
\(
\bigoplus_{m \geq 1, n > 1 }
S^{m}\Omega_{H}(m-n)
\) 
is isomorphic to the \((N-1-i)\)th cohomology group of the corresponding graded part of the graded complex.
\end{theorem}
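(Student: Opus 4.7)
The plan is to assemble the three ingredients developed earlier in this subsection: the Koszul resolution of Theorem \ref{thm: Koszul cx hyp}, the concentration of cohomology in top degree for sufficiently negative twists, and the Serre-duality reformulation carried out in Lemmas \ref{lemma: reformulation Serre} and \ref{lemma: Serre map}.

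First I would fix $m \geq 1$ and $n > 1$, and extract the $(m, -n)$-bi-graded component of Theorem \ref{thm: Koszul cx hyp}, namely the short exact sequence of locally free sheaves on $H$
\[
0 \longrightarrow (\Ker \delta_{m-1, -n-d+1})_{\vert H} \xrightarrow{\alpha(P)} (\Ker \delta_{m, -n})_{\vert H} \longrightarrow S^{m}\Omega_{H}(m-n) \longrightarrow 0.
\]
Since $n > 1$ forces $-n-d+1 \leq -n < -1$, the auxiliary sequence $0 \to \Ker \delta_{r, s-d} \to \Ker \delta_{r,s} \to (\Ker \delta_{r,s})_{\vert H} \to 0$, combined with Theorem \ref{thm: coho sym proj}, shows that the cohomology of both outer locally free sheaves is concentrated in degree $N-1$; this is exactly the opening observation of the present subsection. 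Taking the long exact sequence in cohomology then forces $H^{i}(H, S^{m}\Omega_{H}(m-n))$ to vanish for $i \notin \{N-2, N-1\}$, and identifies the two remaining groups with, respectively, the kernel and cokernel of the induced map
\[
H^{N-1}(H, (\Ker \delta_{m-1, -n-d+1})_{\vert H}) \xrightarrow{\alpha(P)} H^{N-1}(H, (\Ker \delta_{m, -n})_{\vert H}),
\]
i.e., with the cohomology of complex \eqref{eq: coho hyp}.

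Next I would dualize this two-term complex. By Lemma \ref{lemma: reformulation Serre}, each top cohomology group has Serre dual $S_{r, -s-(N+1)+d}/(P, q)$; specialized to the two pairs $(r,s) \in \{(m, -n), (m-1, -n-d+1)\}$, this yields exactly the bi-graded quotients $S_{m, n-(N+1)+d}/(P, q)$ and $S_{m-1, n-(N+1)+2d-1}/(P, q)$ displayed in the statement, i.e., the $(m,n)$-pieces of $\frac{S}{(P,q)}[0, -(N+1)+d]$ and $\frac{S}{(P,q)}[-1, -(N+1)+2d-1]$. By Lemma \ref{lemma: Serre map}, the dual of the map induced by $\alpha(P)$ is the differential operator $\alpha^{*}(P)$. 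Dualization swaps kernels and cokernels of a two-term complex, which accounts for the asserted index shift $H^{i} \leftrightarrow H^{N-1-i}$.

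I do not expect a conceptual obstacle, since all the non-trivial work has been done in Lemmas \ref{lemma: reformulation Serre} and \ref{lemma: Serre map}. The only point requiring care is the bookkeeping of bi-degree shifts when passing to the Serre dual, so that the two terms of the dualized complex line up exactly with the shifts $[0, -(N+1)+d]$ and $[-1, -(N+1)+2d-1]$ prescribed in the statement.
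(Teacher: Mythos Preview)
Your proposal is correct and follows essentially the same approach as the paper: the theorem is stated in the paper immediately after the sentence ``We have therefore shown the following theorem'', and its proof consists precisely of assembling the concentration-in-top-degree observation, the two-term complex \eqref{eq: coho hyp}, and the Serre-duality reformulation of Lemmas \ref{lemma: reformulation Serre} and \ref{lemma: Serre map}, exactly as you outline. Your bi-degree bookkeeping is also correct.
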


\subsection{A first complex computing cohomology: the case of smooth complete intersections.}
\label{sect: coho ci 1}
The same line of reasoning works for smooth complete intersections. Keeping the notations of Section \ref{sect: Koszul complex 1: ci}, consider the push-forward twisted Koszul complex
\[
(\pi_{X})_{*}\big(\mathcal{K}(\mathbi{s})_{m,n}\big),
\]
which provides a locally free resolution of \(S^{m}\Omega_{X}(m+n)\) for \(m \geq c \) and any \(n \in \Z\). 
Observe that the pieces in the locally free resolution \((\pi_{X})_{*}\big(\mathcal{K}(\mathbi{s})_{m,n}\big)\) all write as a direct sum of terms of the following form:
\[
(\Ker \delta_{r,s})_{\vert X},
\]
where \(r \leq m\) and \(s \leq n\). 

Suppose that the condition \((*)\) holds, i.e. that the following inequality holds:
\[
n  < -1.
\]
By twisting by \(\Ker \delta_{r,s}\) the Koszul complex 
\[
\mathcal{K}(P_{1}, \dotsc, P_{c})
\] 
resolving \(\O_{X}\) (recall that \(P_{1}, \dotsc, P_{c}\) are the homogeneous polynomials defining the complete intersection \(X\)), one obtains a locally free resolution of \((\Ker \delta_{r,s})_{\vert X}\). Under the condition \((*)\), for any \(r \leq m\) and any \(s \leq n\), the cohomology of the pieces in the locally free complex resolving \((\Ker \delta_{r,s})_{\vert X}\)
\[
\mathcal{K}(P_{1},\dotsc, P_{c}) \otimes \Ker \delta_{r,s}
\]
is supported in degree \(N\). This allows to show that the cohomology of \((\Ker \delta_{r,s})_{\vert X}\) is supported in degree \( \geq N-c \). On the other hand, since \(\dim(X)=N-c\), the cohomology of \((\Ker \delta_{r,s})_{\vert X}\) is supported in degree less or equal than \(N-c\). 

Therefore, the cohomology of the pieces appearing in the resolution \((\pi_{X})_{*}\big(\mathcal{K}(\mathbi{s})_{m,n}\big)\) is supported in maximal degree, i.e. in degree \(N-c=\dim(X)\).
Accordingly, we deduce that the cohomology of \(S^{m}\Omega_{X}(m+n)\) is read off through the complex obtained by applying the functor \(H^{N-c}(X, \cdot)\) to the locally free resolution \((\pi_{X})_{*}\big(\mathcal{K}(\mathbi{s})_{m,n}\big)\).

Using Serre duality as in the case of hypersurfaces, we can reformulate this complex as follows. The analogous of Lemma \ref{lemma: reformulation Serre} takes the following form:
\begin{lemma}
\label{lemma: reformulation Serre 2}
Let \(r \in \N\), and \(s \in \Z\). There is a natural isomorphism:
\[
H^{N-c}(X, (\Ker \delta_{r,s})_{\vert X})^{\vee}
\simeq
\frac{S_{r,-s-(N+1)+\abs{\mathbi{d}}}}{(P_{1}, \dotsc, P_{c},q)}.
\]
\end{lemma}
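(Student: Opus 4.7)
The plan is to generalize the proof of Lemma \ref{lemma: reformulation Serre} by replacing the simple short exact sequence used in the hypersurface case (which is itself the Koszul complex of a single equation) with the full Koszul resolution of $\mathcal{O}_X$. Concretely, tensoring the Koszul complex $\mathcal{K}(P_1,\dotsc,P_c)$ of $\mathcal{O}_X$ by $\Ker\delta_{r,s}$ yields a locally free resolution of $(\Ker\delta_{r,s})_{\vert X}$ whose terms are direct sums of sheaves of the form $\Ker\delta_{r,\,s-\sum_{i\in I}d_i}$ for $I\subseteq\{1,\dotsc,c\}$. Under the appropriate negativity hypothesis (the analogue of condition $(*)$ already used just before the lemma), each such term has cohomology supported only in degree $N$. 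The hypercohomology spectral sequence $E_1^{p,q}=H^q(\mathbb{P}^N,K^p)$ degenerates on the line $q=N$, so $H^{N-c}(X,(\Ker\delta_{r,s})_{\vert X})$ is identified with the leftmost cohomology of the complex $H^N(\mathbb{P}^N,\mathcal{K}(P_1,\dotsc,P_c)\otimes \Ker\delta_{r,s})$, namely the kernel of its first differential.

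To make this kernel explicit, I would follow the same two-step dualization that was used in the hypersurface case. First, each $H^N(\mathbb{P}^N,\Ker\delta_{r,t})$ is identified, via the long exact sequence attached to the (twisted) generalized Euler exact sequence, with the kernel of the induced map $\delta^N\colon \C[Y]_r\otimes H^N(\mathbb{P}^N,\mathcal{O}(t))\to \C[Y]_{r-1}\otimes H^N(\mathbb{P}^N,\mathcal{O}(t+1))$. One thus obtains a bicomplex: Koszul directions given by multiplication by the $P_i$'s, and a single vertical direction given by $\delta^N$. Then, apply Serre duality termwise; by Lemma \ref{lemma: Serre duality}, the horizontal differentials (multiplication by the $P_i$'s) dualize to multiplication by the very same $P_i$'s, while the vertical $\delta^N$ dualizes (after composing with the renormalization map $u\colon Y^{\mathbf{\alpha}}\mapsto Y^{\mathbf{\alpha}}/\mathbf{\alpha}!$ introduced in Section \ref{subs: dual Euler}) to multiplication by the quadric $q=\sum X_iY_i$, exactly as in the hypersurface case.

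After this dualization, the kernel we want becomes the cokernel of the dualized bicomplex: one obtains $H^N(\mathbb{P}^N,\Ker\delta_{r,t})^\vee\simeq S_{r,-t-(N+1)}/(q)$ in bi-graded form, and then the Koszul piece dualizes to the cokernel of $\bigoplus_i S/(q)\xrightarrow{\cdot\times P_i} S/(q)$ at the appropriate shift, which is nothing but $S/(P_1,\dotsc,P_c,q)$. Tracking the bi-degrees, the leftmost term $\Ker\delta_{r,s-\vert\mathbf{d}\vert}$ contributes the shift $-s-(N+1)+\vert\mathbf{d}\vert$ in the $X$-variable, giving exactly $S_{r,-s-(N+1)+\vert\mathbf{d}\vert}/(P_1,\dotsc,P_c,q)$. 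The main obstacle is the bookkeeping for this bicomplex: one must verify that all squares commute after dualization and renormalization, and that the spectral sequence indeed collapses as claimed; these are straightforward but notationally heavy extensions of the corresponding verifications from the hypersurface case.
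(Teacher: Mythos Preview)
Your proposal is correct and follows essentially the same approach as the paper: the paper's proof is a two-line sketch saying to replace the short exact sequence of the hypersurface case by the Koszul resolution $\mathcal{K}(P_1,\dotsc,P_c)\otimes\Ker\delta_{r,s}$ of $(\Ker\delta_{r,s})_{\vert X}$, apply $H^N$ termwise (the cohomology being supported in top degree under $(*)$), and then dualize via Serre duality, Lemma~\ref{lemma: Serre duality}, and the renormalization map. Your write-up supplies exactly these details; the only cosmetic remark is that your notation $\delta^N$ for the map induced by $\delta$ on $H^N$ is potentially confusing with the $N$-fold composite, but the intended meaning is clear from context.
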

\begin{proof}
The proof is completely similar the one of Lemma \ref{lemma: reformulation Serre}. Namely, consider the resolution \(\mathcal{K}(P_{1}, \dotsc, P_{c})\otimes \Ker \delta_{r,s}\) of \((\Ker \delta_{r,s})_{\vert X}\), which allows to give a convenient description of \(H^{N-c}(X,(\Ker \delta_{r,s})_{\vert X})\). By dualizing this description, using Serre duality and Lemma \ref{lemma: Serre duality}, the result follows.
\end{proof}

As in the case of hypersurfaces, the maps in the dualized complex 
\[
H^{N-c}
\Big(
X, (\pi_{X})_{*}\big(\mathcal{K}(s_{1}, \dotsc, s_{c})(m,n)\big)
\Big)^{\vee}
\]
 are induced (under the isomorphism of Lemma \ref{lemma: reformulation Serre 2}) by the maps
 \(
 \alpha^{*}(P_{1}), \dotsc, \alpha^{*}(P_{c}).
 \)
\begin{example}
For \(c=2\), the complex 
\(H^{N-c}
\Big(
X, (\pi_{X})_{*}\big(\mathcal{K}(\mathbi{s})_{m,n}\big)
\Big)^{\vee}
\)
 takes the following form:
\begin{equation*}
\resizebox{\displaywidth}{!}{
\xymatrix{
\frac{S_{m, -n-(N+1)+\abs{\mathbi{d}}}}{(P_{1}, P_{2}, q)}
\ar[rr]^-{(\alpha^{*}(P_{1}), \alpha^{*}(P_{2}))}
&
&
\frac{S_{m-1,-n-(N+1)+2d_{1}+d_{2}-1}}{(P_{1}, P_{2}, q)}
\oplus
\frac{S_{m-1,-n-(N+1)+d_{1}+2d_{2}-1}}{(P_{1}, P_{2}, q)}
\ar@{-}[r]
&
\\
\ar[rr]^-{\alpha^{*}(P_{2})(\cdot)-\alpha^{*}(P_{1})(\cdot)}
&
&
\frac{S_{m-2,-n-(N+1)+2\abs{\mathbi{d}}-2}}{(P_{1}, P_{2},q)}.
}
}
\end{equation*}
Recall that the cohomology of this complex computes the cohomology of \(S^{m}\Omega_{X}(m+n)\) as soon as \(n < -1\) and \(m \geq c=2\). Namely:
\begin{itemize}
\item{}
the first cohomology group in the above complex is isomorphic to \(H^{N-2}(X, S^{m}\Omega_{X}(m+n))\);
\item{}
the middle cohomology group in the above complex is isomorphic to \(H^{N-3}(X, S^{m}\Omega_{X}(m+n))\);
\item{}
the last cohomology group in the above complex is isomorphic to \(H^{N-4}(X, S^{m}\Omega_{X}(m+n))\),
\end{itemize}
and all the other cohomology groups of the vector bundle \(S^{m}\Omega_{X}(m+n)\) are zero.
\end{example}

More generally, note that for any \(1 \leq i \leq c\), the map \(\alpha^{*}(P_{i})\) induces a map of bigraded algebra:
\[
\alpha^{*}(P_{i}) \colon
\frac{S}{(P_{1}, \dotsc, P_{c},q)}[-1,-d_{i}+1]
\longrightarrow \frac{S}{(P_{1}, \dotsc, P_{c}, q)}.
\]
We can therefore consider the Koszul complex
\[
K(\alpha^{*}(P_{1}), \dotsc, \alpha^{*}(P_{c})),
\]
which, by what we just saw, computes the cohomology of \(S^{m}\Omega_{X}(m+n)\) for \(m \geq c\), and \(n < -1\):
\begin{theorem}
\label{thm: coho ci 1}
The cohomology of 
\(
\bigoplus_{m \geq c,  n > 1} S^{m}\Omega_{X}(m-n)
\)
can be computed via the Koszul complex on \(\frac{\C[Y,X]}{(P_{1}, \dotsc, P_{c}, q)}\):
\[
K(\alpha^{*}(P_{1}), \dotsc, \alpha^{*}(P_{c}))[-c, -(N+1)+2\abs{\mathbi{d}}-c].
\]
Namely, the \(i\)th cohomology group of one graded component of 
\(
\bigoplus_{m \geq 1, n > 1 }
S^{m}\Omega_{X}(m-n)
\) 
is isomorphic to the \((N-c-i)\)th cohomology group of the corresponding graded part of the Koszul complex.
\end{theorem}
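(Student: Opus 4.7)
The plan is to mimic verbatim the hypersurface argument of Theorem~\ref{thm: coho hyp 1}, with the only genuine new input being a concentration statement for the restricted kernel sheaves $(\Ker \delta_{r,s})_{\vert X}$ under the hypothesis $n<-1$. Start from the locally free resolution
\[
(\pi_X)_*\bigl(\mathcal{K}(\mathbf{s})_{m,n}\bigr) \longrightarrow S^m\Omega_X(m+n),
\]
coming from Theorem~\ref{thm: resolution base 1}; each term is a direct sum of sheaves of the form $(\Ker\delta_{r,s})_{\vert X}$ with $r\leq m$ and $s\leq n$. The goal is to show, by an easy spectral-sequence / iterated long-exact-sequence argument, that applying $H^{N-c}(X,\cdot)$ termwise computes the cohomology of $S^m\Omega_X(m+n)$, and then to rewrite everything via Serre duality.

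The key concentration step is the following: as soon as $n<-1$, for every $r\leq m$ and $s\leq n$, the sheaf $(\Ker\delta_{r,s})_{\vert X}$ has cohomology concentrated in degree $N-c=\dim X$. To prove it, one resolves $\O_X$ on $\P^N$ by the Koszul complex $\mathcal{K}(P_1,\dots,P_c)$, tensors with $\Ker\delta_{r,s}$, and uses Theorem~\ref{thm: coho sym proj}: under $n<-1$ every twisted term $\Ker\delta_{r,s-\sum_{j\in J}d_j}\simeq S^{r}\Omega_{\P^N}(r+s-\sum_{j\in J}d_j)$ has cohomology concentrated in degree $N$, which pushes concentration of $(\Ker\delta_{r,s})_{\vert X}$ up to degree $\geq N-c$; together with $\dim X=N-c$, one gets equality. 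Once this is in hand, a standard double-complex argument shows that the cohomology of $S^m\Omega_X(m+n)$ is obtained by applying $H^{N-c}(X,\cdot)$ to the resolution $(\pi_X)_*\bigl(\mathcal{K}(\mathbf{s})_{m,n}\bigr)$; the $i$-th cohomology of $S^m\Omega_X(m+n)$ corresponds to the $(N-c-i)$-th cohomology of the resulting finite-dimensional complex.

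Next, dualize this complex term by term. Lemma~\ref{lemma: reformulation Serre 2} gives the identification
\[
H^{N-c}\bigl(X,(\Ker\delta_{r,s})_{\vert X}\bigr)^{\vee}
\simeq \frac{S_{r,\,-s-(N+1)+\lvert\mathbf{d}\rvert}}{(P_1,\dots,P_c,q)},
\]
and the proof goes exactly as for Lemma~\ref{lemma: reformulation Serre}: double complex from $\mathcal{K}(P_1,\dots,P_c)\otimes\Ker\delta_{r,s}$, then Serre duality on $\P^N$ (Lemma~\ref{lemma: Serre duality}) followed by the renormalization $u$, turning multiplication by $P_i$ into multiplication by $P_i$ and the operator $\delta^\vee$ into multiplication by $q$ (as in Section~\ref{subs: dual Euler}). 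Under this identification, the pushed-forward map $\alpha(P_i)$ (multiplication by $\tfrac{1}{d_i}(\diff P_i)_X(Y)$) dualizes---via the recipe spelled out in Remark~\ref{remark: recipee}---to the partial differential operator $\alpha^*(P_i)=\tfrac{1}{d_i}\sum_j \tfrac{\partial P_i}{\partial X_j}\tfrac{\partial}{\partial Y_j}$, exactly as in Lemma~\ref{lemma: Serre map}.

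Finally, one reads off the shape of the resulting complex. Since the original pushed-forward Koszul complex $(\pi_X)_*\bigl(\mathcal{K}(\mathbf{s})_{m,n}\bigr)$ is the degree-$(m,n)$ piece of the Koszul complex $\mathcal{K}(\alpha(P_1),\dots,\alpha(P_c))$ on $\Ker\delta_{\vert X}$ (Theorem~\ref{thm: resolution base 1}), and since Serre duality reverses the arrows while the operators $\alpha(P_i)$ become $\alpha^*(P_i)$, the dualized complex is precisely the Koszul complex $K(\alpha^*(P_1),\dots,\alpha^*(P_c))$ on $S/(P_1,\dots,P_c,q)$, sitting in the bi-graded shift dictated by the lemma: one $[-1,-d_i+1]$ per arrow contributes $[-c,-\lvert\mathbf{d}\rvert+c]$, combined with the global $[0,-(N+1)+\lvert\mathbf{d}\rvert]$ coming from Serre duality gives the total shift $[-c,-(N+1)+2\lvert\mathbf{d}\rvert-c]$ stated in the theorem. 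The only step requiring real care is bookkeeping of the bi-graduation shifts and signs in the Koszul differentials; the concentration argument and the Serre duality reformulation are by now completely parallel to the hypersurface case.
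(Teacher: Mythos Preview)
Your proposal is correct and follows essentially the same approach as the paper's own argument: concentration of the cohomology of $(\Ker\delta_{r,s})_{\vert X}$ in top degree via the Koszul resolution $\mathcal{K}(P_1,\dots,P_c)\otimes\Ker\delta_{r,s}$ combined with Theorem~\ref{thm: coho sym proj}, then termwise application of $H^{N-c}(X,\cdot)$, then dualization via Lemma~\ref{lemma: reformulation Serre 2} and identification of the arrows through Lemma~\ref{lemma: Serre map}/Remark~\ref{remark: recipee}. Your explicit bookkeeping of the bi-graded shift is a bit more detailed than what the paper spells out, but the substance is identical.
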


\subsection{A second complex computing cohomology: the case of smooth hypersurfaces.}
\label{sect: coho hyp 2}
We consider the same setting as in Section \ref{sect: coho hyp 1} : we let \(H=\Set{P=0} \subset \P^{N}\) be a smooth hypersurface, and fix \(m \in \N_{\geq 1}\) and \(n \in \Z\)  two natural numbers satisfying the following condition:
\[
(*)
\
\
\
n < -1.
\]
This time, instead of considering the first resolution given in Section \ref{section: Koszul complex 1: hyp}, we use the second resolution given in Section \ref{section: Koszul complex 2: hyp}. 
The reasoning is then completely similar to the one carried over in Section \ref{sect: coho hyp 1}. Namely, we first apply the functor \(H^{N}(\P^{N}, \cdot)\) to the resolution \eqref{eq: resolution hyp pushed} of \(S^{m}\Omega_{X}(m+n)\), which we recall here:
\[
\xymatrix{
\pi_{*}(\mathcal{K}(s)_{m,n})\colon
0 \ar[r] 
&
\Ker \delta_{m-1,n-2d+1}
\ar[r] 
&
\pi_{*}(\E_{m-1,n-d} )
\ar[r]
&
\Ker\delta_{m,n}.
}
\] 
Let us justify that, under the condition \((*)\), the complex \(H^{N}(\P^{N},\pi_{*}(\mathcal{K}(s)_{m,n}))\) does indeed compute the cohomology. This will follow from the following elementary lemma:
\begin{lemma}
\label{lemma: support}
Let \(m \in \Z, n \in \Z\) and \(k \in \N\). The cohomology of \((\E^{\otimes k})_{m,n}\) is supported in degree \(0, 1\) and \(N\). Furthermore, if one supposes that \(n < -1 -k\), then the cohomology is supported in degree \(N\).
\end{lemma}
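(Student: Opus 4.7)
The plan is to reduce to the case of line bundles via the short exact sequence
\[
0 \to \L_{1,0} \to \E \to \L_{0,1} \to 0
\]
of Proposition \ref{prop: extension}. Tensoring it with $\E^{\otimes(k-1)}$ and twisting by $\L_{m,n}$ yields
\[
0 \to (\E^{\otimes(k-1)})_{m+1,n} \to (\E^{\otimes k})_{m,n} \to (\E^{\otimes(k-1)})_{m,n+1} \to 0.
\]
Iterating this process endows $(\E^{\otimes k})_{m,n}$ with a filtration whose graded pieces are direct sums (with binomial multiplicities) of the line bundles $\L_{m+a,\,n+b}$, $a+b = k$, $a,b \geq 0$. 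The long exact sequences in cohomology then allow me to argue by induction on $k$, reducing both assertions of the lemma to the corresponding statements about each such line bundle.

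For each $\L_{\alpha,\beta}$, I would compute the cohomology on $\Flag_{(1,2)}\C^{N+1}$ using Bott's formula (Appendix \ref{appendix: Bott}); equivalently, using the projection $\pi:\Flag_{(1,2)}\C^{N+1} \to \P^N$ together with Leray, as soon as the first index $\alpha = m+a$ is nonnegative one has $R^i\pi_*\L_{\alpha,\beta} = 0$ for $i>0$ and $\pi_*\L_{\alpha,\beta} \simeq S^\alpha\Omega_{\P^N}(\alpha+\beta)$. Theorem \ref{thm: coho sym proj} then places the cohomology in degrees $\{0,1,N\}$, and concentrates it in degree $N$ precisely when $\beta = n+b$ satisfies $\beta \leq -(N+1)$. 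For the second assertion, the hypothesis $n < -1-k$ forces every shifted index $n+b$ with $b \in [0,k]$ to remain strictly below $-1$; by Bott, the graded pieces with $n+b \leq -(N+1)$ contribute only in $H^N$ while those with $n+b \in [-N,-2]$ have vanishing cohomology, and the long exact sequences then concentrate $H^*((\E^{\otimes k})_{m,n})$ in degree $N$.

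The main obstacle I foresee is the combinatorial bookkeeping as the induction proceeds: one must check that, at each step, the possible cohomological degrees arising from Bott's formula (which in principle could also include $N-1$ or $2N-1$ for very negative first index $\alpha$) remain in the prescribed set $\{0,1,N\}$ for every graded piece appearing in the filtration, and that the connecting homomorphisms never produce cohomology in unwanted degrees. The specific form of the bound $n < -1-k$ is tailored exactly to ensure that each shifted second index stays inside the vanishing/top-degree region of Bott's formula throughout the whole filtration.
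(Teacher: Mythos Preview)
Your approach is essentially the same as the paper's: both proceed by induction on \(k\) using the short exact sequence
\[
0 \to (\E^{\otimes(k-1)})_{m+1,n} \to (\E^{\otimes k})_{m,n} \to (\E^{\otimes(k-1)})_{m,n+1} \to 0
\]
derived from Proposition~\ref{prop: extension}, with the base case handled by Bott's formula. Your unrolling of the induction into a filtration with graded pieces \(\L_{m+a,n+b}\) is just a reformulation of the same argument.

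One remark: the ``obstacle'' you flag at the end---that for very negative first index \(\alpha\) Bott's formula could place cohomology in degrees such as \(N-1\) or \(2N-2\)---is a genuine observation about the lemma as literally stated for arbitrary \(m\in\Z\). The paper's proof does not address this either; it simply asserts that the base case follows from Bott's formula. In every application of the lemma in the paper, however, the first indices that actually arise are large enough (nonnegative, or at worst in the range where the relevant line bundles are acyclic) that the issue never materialises. So your caution is well placed, but it does not indicate a gap in your argument relative to the paper's; if anything, you have been more scrupulous than the original.
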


\begin{proof}
Proceed by induction on \(k\). For \(k=0\), all statements follow immediately from Bott's formula (see Appendix \ref{appendix: Bott}). Suppose that the results holds for \(k-1 \geq 0\). By Proposition \ref{prop: extension}, one deduces that the vector bundle \((\E^{\otimes k})_{m,n}\) fits into the short exact sequence:
\begin{equation}
\label{eq: fit short exact}
\xymatrix{
0
\ar[r]
&
(\E^{\otimes(k-1)})_{m+1,n}
\ar[r]
&
(\E^{\otimes k})_{m,n}
\ar[r]
&
(\E^{\otimes(k-1)})_{m,n+1}
\ar[r]
&
0.
}
\end{equation}
This immediately implies that the cohomology of \((\E^{\otimes k})_{m,n}\) remains supported in degree \(0\), \(1\) and \(N\).

Suppose now that \(n < -1-k\). By induction hypothesis, the cohomology of the two extremities of the short exact sequence \eqref{eq: fit short exact} is supported in degree \(N\). Therefore, so does the cohomology of the middle term of the short exact sequence: this finishes the proof.
\end{proof}
For instance, if we consider the term \(\pi_{*}(\E_{m-1,n-d} )\) in the complex \(\pi_{*}(\mathcal{K}(s)_{m,n})\), the above Lemma \ref{lemma: support} (combined with Bott's formula) implies that its cohomology is indeed supported in maximal degree, since by hypothesis one has \(n-d \leq n-1 < -2\). The two other terms in the complex are treated similarly.

Now, the analogue of Lemma \ref{lemma: reformulation Serre} is the following:
\begin{lemma}
\label{lemma: reformulation ci}
Let \(r \in \N\), \(s \in \Z\). Then one has the following isomorphism\footnote{Recall that throughout the paper (see Conventions), \(N\) is supposed to be greater or equal than \(2\).}
\[
H^{N}(\P^{N}, \Ker \delta^{k}_{r,s})^{\vee}
\simeq 
\Big(
\frac{S}{(q^{k})}
\Big)_{r,-s-(N+1)}
\]
\end{lemma}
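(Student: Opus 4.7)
The plan is to give a direct cohomological computation for $\Ker\delta^{k}_{r,s}$, bypassing any induction on $k$. The key observation is that the iterated operator $\delta^{k}$ is surjective, so $\Ker\delta^{k}_{r,s}$ admits a two-term locally free resolution whose terms have extremely simple cohomology.

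First, I would verify surjectivity of
\[
\delta^{k}\colon \C[Y]_{r}\otimes \O_{\P^{N}}(s)\longrightarrow \C[Y]_{r-k}\otimes\O_{\P^{N}}(s+k),
\]
imitating the proof of Lemma \ref{lemma: Euler 1}. On the chart $V_{i}=\Set{X_{i}\neq 0}$, the local sections $L_{0} \bydef Y_{i}/X_{i}$ and $L_{j} \bydef Y_{j}-(X_{j}/X_{i})Y_{i}$ for $j\neq i$ satisfy $\delta(L_{0})=1$ and $\delta(L_{j})=0$, so the Leibniz rule gives $\delta^{k}(L_{0}^{a}L_{1}^{\alpha_{1}}\dotsb L_{N}^{\alpha_{N}}) = \tfrac{a!}{(a-k)!}L_{0}^{a-k}L_{1}^{\alpha_{1}}\dotsb L_{N}^{\alpha_{N}}$ whenever $a\geq k$. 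This yields the short exact sequence
\[
0\longrightarrow \Ker\delta^{k}_{r,s}\longrightarrow \C[Y]_{r}\otimes\O_{\P^{N}}(s)\xrightarrow{\delta^{k}}\C[Y]_{r-k}\otimes\O_{\P^{N}}(s+k)\longrightarrow 0.
\]

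Next, I would take the associated long exact sequence in cohomology. Since $N\geq 2$, $H^{N-1}(\P^{N},\O_{\P^{N}}(t))=0$ for every $t\in\Z$, and $H^{N+1}$ vanishes trivially on $\P^{N}$; consequently
\[
0\longrightarrow H^{N}(\P^{N},\Ker\delta^{k}_{r,s})\longrightarrow \C[Y]_{r}\otimes H^{N}(\O_{\P^{N}}(s))\xrightarrow{\delta^{k}}\C[Y]_{r-k}\otimes H^{N}(\O_{\P^{N}}(s+k))\longrightarrow 0.
\]

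The last step is to dualize and apply Serre duality combined with the renormalization map $u$ of Section \ref{subs: dual Euler}. For $k=1$, the commutative diagram \eqref{eq: dual} says precisely that $u\circ\delta^{*}=\delta_{*}\circ u$ with $\delta_{*}=\cdot \times q$; iterating and using $(\delta^{k})^{*}=(\delta^{*})^{k}$, the dual of $\delta^{k}$ becomes, after renormalization, multiplication by $q^{k}$. The dualized sequence then reads
\[
0\longrightarrow S_{r-k,\,-s-k-(N+1)}\xrightarrow{\cdot\times q^{k}} S_{r,\,-s-(N+1)} \longrightarrow H^{N}(\P^{N},\Ker\delta^{k}_{r,s})^{\vee}\longrightarrow 0,
\]
yielding the desired isomorphism $H^{N}(\P^{N},\Ker\delta^{k}_{r,s})^{\vee}\simeq(S/(q^{k}))_{r,-s-(N+1)}$ (injectivity of multiplication by $q^{k}$ in $S$ is automatic since $q$ is irreducible in $\C[Y,X]$). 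I expect the main technical point to be the identification of $(\delta^{k})^{*}$ as $\cdot\times q^{k}$ after renormalization: this is where the renormalization $u$ truly pays off, and it is a direct iteration of the $k=1$ case already treated in Section \ref{subs: dual Euler}.
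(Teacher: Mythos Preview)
Your proof is correct and follows essentially the same approach as the paper: start from the short exact sequence defining $\Ker\delta^{k}_{r,s}$, pass to the long exact sequence in cohomology (using $N\geq 2$ to kill $H^{N-1}$), then dualize via Serre duality and the renormalization map so that $(\delta^{k})^{*}$ becomes multiplication by $q^{k}$. You have simply filled in details the paper leaves implicit, in particular the surjectivity of $\delta^{k}$ and the iteration of the identity $u\circ\delta^{*}=\delta_{*}\circ u$.
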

\begin{proof}
Consider the short exact sequence
\[
\xymatrix{
0 
\ar[r]
&
\Ker \delta^{k}_{r,s}
\ar[r]
&
\C[Y]_{r}\otimes \O_{\P^{N}}(s)
\ar[r]^-{\delta^{k}}
&
C[Y]_{r-k}\otimes \O_{\P^{N}}(s+k)
\ar[r]
&
0.
}
\]
Taking the long exact sequence in cohomology, one sees that
\[
H^{N}(\P^{N}, \Ker \delta^{k}_{r,s})
\simeq
\Ker
\Big(
\C[Y]_{r}\otimes H^{N}(\P^{N}, \O_{\P^{N}}(s))
\overset{\delta^{k}}{\longrightarrow}
\C[Y]_{r-k}\otimes H^{N}(\P^{N}, \O_{\P^{N}}(s+k))
\Big).
\]
Using Serre duality, and the renormalization map \(u\) (see Section \ref{subs: dual Euler}), one deduces the lemma.\end{proof}
With the previous lemma, the complex \(H^{N}(\P^{N}, \mathcal{K}(s)(m,n))^{\vee}\) writes as follows\footnote{The injectivity of the map \(\beta^{*}(P)\) can be checked directly, but it follows from the general fact that the cohomology of \(S^{m}\Omega_{H}(m+n)\) is supported in degree less or equal than \(N-1=\dim(H)\).}
\[
\xymatrix{
0
\ar[r]
&
\frac{S_{m,-n-(N+1)}}{(q)}
\ar[rr]^-{\beta^{*}(P)}
&
&
\frac{S_{m,-n-(N+1)+d}}{(q^{2})}
\ar[rr]^-{\alpha^{*}(P)}
&
&
\frac{S_{m-1,-n-(N+1)+2d-1}}{(q)}.
}
\]
Following the same reasoning as in Lemma \ref{lemma: Serre map}, one easily shows that the maps \(\beta^{*}(P)\) and \(\alpha^{*}(P)\) are the following:
\[
\left\{
\begin{array}{ll}
\alpha^{*}(P)(\cdot)= \frac{1}{d} \sum\limits \frac{\partial P}{\partial X_{i}} \frac{\partial}{\partial Y_{i}}(\cdot);
\\
\beta^{*}(P)(\cdot)= \cdot \times P - q \times \alpha^{*}(P)(\cdot).
\end{array}
\right.
\]
The main result of this section can then be stated as follows:
\begin{theorem}
\label{thm: coho hyp 2}
The cohomology of 
\(
\bigoplus_{m \geq 1, n > 1 }
S^{m}\Omega_{H}(m-n)
\)
can be computed via the graded complex
\[
\xymatrix{
\frac{S}{(q)}[0, -(N+1)]
\ar[rr]^-{\beta^{*}(P)}
&
&
\frac{S}{(q^{2})}[0,-(N+1)+d]
\ar[rr]^-{\alpha^{*}(P)}
&
&
\frac{S}{q}[-1, -(N+1)+2d-1]
}.
\]
Namely, the \(i\)th cohomology group of one graded component of 
\(
\bigoplus_{m \geq 1, n > 1 }
S^{m}\Omega_{H}(m-n)
\) 
is isomorphic to the \((N-i)\)th cohomology group of the corresponding graded part of the graded complex.

\end{theorem}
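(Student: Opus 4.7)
The plan is to follow the template already established for the first complex in Section \ref{sect: coho hyp 1}, replacing the Koszul resolution $\mathcal{K}(\alpha(P))$ by the second resolution constructed in Section \ref{section: Koszul complex 2: hyp}. Concretely, after the substitution $n \leftrightarrow -n$ so that the condition $(*)$ reads $n<-1$, I start from the pushed-forward resolution
\[
\xymatrix{
0 \ar[r] & \Ker\delta_{m-1,n-2d+1} \ar[r]^-{\alpha(P)} & \pi_{*}\E_{m-1,n-d} \ar[r]^-{\beta(P)} & \Ker\delta_{m,n}
}
\]
of $S^{m}\Omega_{H}(m+n)$ furnished by Theorem \ref{thm: resol hyp}, and I apply the functor $H^{*}(\P^{N},\cdot)$.

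The first step is to check that, under the hypothesis $n<-1$, the cohomology of each of the three terms is concentrated in degree $N$. For the two $\Ker\delta$ terms this follows from Theorem \ref{thm: coho sym proj}. For the middle term $\pi_{*}\E_{m-1,n-d}$, I invoke Lemma \ref{lemma: support} (with $k=1$), whose hypothesis $n-d < -2$ is automatic since $n<-1$ and $d\geq 1$; alternatively, I can use the short exact sequence of Proposition \ref{prop: extension} together with Theorem \ref{thm: coho sym proj}. Once all three terms have cohomology supported in top degree, a standard spectral sequence (or direct) argument in cohomological algebra shows that the complex obtained by applying $H^{N}(\P^{N},\cdot)$ termwise computes $H^{*}(H,S^{m}\Omega_{H}(m+n))$, with the cohomological degree shifted by $N$ in a way which gives the correspondence $H^{i} \leftrightarrow H^{N-i}$ claimed in the theorem.

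Next I dualize the resulting complex. Lemma \ref{lemma: reformulation ci} identifies, via Serre duality composed with the renormalization map $u$ of Section \ref{subs: dual Euler}, the dual of each cohomology group with the corresponding graded piece of $S/(q^{k})$, producing the three terms of the complex in the statement with the correct bigraded shifts. The final step is to identify the dualized arrows, which is the heart of the computation. Here I use Remark \ref{remark: recipee}: after Serre duality and the renormalization $u$, multiplication by a bi-homogeneous polynomial $Q(Y,X)$ on top cohomology corresponds to the partial differential operator $Q\bigl(\tfrac{\partial}{\partial Y},X\bigr)$ on the $0$th cohomology side. Applying this recipe to $\alpha(P) = \cdot \times \tfrac{1}{d}(\diff P)_{X}(Y)$ yields $\alpha^{*}(P) = \tfrac{1}{d}\sum_{i}\tfrac{\partial P}{\partial X_{i}}\tfrac{\partial}{\partial Y_{i}}$, and applied to $\beta(P)=\cdot \times P - \alpha(P)\circ\delta$ it yields $\beta^{*}(P)=\cdot \times P - q\times \alpha^{*}(P)$, using that the renormalized dual of $\delta$ is multiplication by $q$ (see Theorem \ref{thm: gen Euler dual}) and that the order of composition reverses under dualization.

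The main obstacle is the bookkeeping of the last step: one has to track carefully how Serre duality, the renormalization $u$, and the connecting homomorphisms of the various short exact sequences interact, and to verify in particular that the composition of the dualized versions of $\delta$ and $\alpha(P)$ indeed reproduces $q\times \alpha^{*}(P)$ with the correct sign and normalization. Once this verification is carried out (by the same monomial-by-monomial computation as in Lemma \ref{lemma: Serre map}), the theorem follows.
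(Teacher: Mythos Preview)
Your proposal is correct and follows essentially the same approach as the paper: apply $H^{N}(\P^{N},\cdot)$ to the pushed-forward resolution of Theorem \ref{thm: resol hyp}, use Lemma \ref{lemma: support} (together with Theorem \ref{thm: coho sym proj}) to see that all terms have cohomology concentrated in top degree, then dualize via Serre duality and the renormalization map to obtain the complex in $S/(q^{k})$ with arrows determined by the recipe of Remark \ref{remark: recipee}. The only step you leave slightly implicit is the identification $\pi_{*}\E_{m-1,n-d}\simeq\Ker\delta^{2}_{m,n-d}$ from Proposition \ref{prop: push 1}, which is what allows Lemma \ref{lemma: reformulation ci} (stated for $\Ker\delta^{k}$) to apply to the middle term and produce $S/(q^{2})$.
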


\subsection{A second complex computing cohomology: the case of smooth complete intersections of codimension \(2\).}
\label{sect: coho ci 2}
We keep the notations of Section \ref{sect: Koszul complex 2: ci}.
At this point, it must be clear to the reader what complex we are going to obtain from the resolution \eqref{pushed Koszul complex 2: ci codim 2}:
\begin{itemize}
\item{} apply the functor \(H^{N}(\P^{N}, \cdot)\) to \eqref{pushed Koszul complex 2: ci codim 2}; Lemma \ref{lemma: support} allows to show that the complex formed in this way does compute the sought cohomology;

\item{} use Serre duality and the renormalization map, combined with Lemma \ref{lemma: reformulation ci}, in order to provide a reformulation of the complex;

\item{} consider the "recipee" given in Remark \ref{remark: recipee}, namely, after dualization and renormalization
\begin{itemize}

\item{} any multiplication map \(\cdot \times A(Y,X)\) becomes a partial differential equation \(A(\frac{\partial}{\partial Y}, X)\) (with reversed arrow);

\item{} any partial differential equation \(A(\frac{\partial}{\partial Y}, X)\) becomes a multiplication map \(\cdot \times A(Y,X)\) (with reversed arrow, as we dualize);
\end{itemize}
\item{}
apply this recipee to the various maps appearing in the complex \(\eqref{pushed Koszul complex 2: ci codim 2}\).
\end{itemize}
By applying all these steps, we obtain a graded complex computing the cohomology of
\[
\bigoplus_{m \geq 2, n > 1 }
S^{m}\Omega_{X}(m-n),
\]
yielding the following statement:
\begin{theorem}
\label{thm: coho ci 2}
The cohomology of 
\(\bigoplus_{m \geq 2, n > 1 }
S^{m}\Omega_{X}(m-n)
\)
can be computed via the following complex
\begin{equation}
\label{eq: Koszul complex cod 2}
\xymatrix{
\Big(\frac{S}{(q)}
\ar[r]^-{(g_{1i})_{1 \leq i \leq 2}}
&
\frac{S}{(q^{2})}[0, d_{1}]\oplus \frac{S}{(q^{2})}[0, d_{2}]
\ar@{-}[r]^-{(g_{2i})_{1 \leq i \leq 4}}
&
\\
\ar[r]
&
(\oplus_{i}
\frac{S}{(q)}[-1, 2d_{i}-1])
\oplus 
 \frac{S}{(q^{3})}[0, -\abs{\mathbi{d}}]
\oplus \frac{S}{(q)}[-1, \abs{\mathbi{d}}-1]
\ar@{-}[r]
&
\\
\ar[r]^-{(g_{3i})_{1 \leq i \leq 2}}
&
\frac{S}{(q^{2})}[-1, +2\abs{\mathbi{d}}-d_{2}-1]
\oplus \frac{S}{(q^{2})}[-1, 2\abs{\mathbi{d}}+d_{1}-1]
\ar@{-}[r]
&
\\
\ar[r]^-{g_{4}}
&
\frac{S}{(q)}[-2, +2\abs{\mathbi{d}}-2]\Big)[0,-(N+1)],
}
\end{equation}
where one has:
\begin{equation*}
\left\{
\begin{array}{ll}
g_{11}=\beta^{*}(P_{1}) \ \text{and} \ g_{12}=\beta^{*}(P_{2});
\\
g_{21}(A,B)=\alpha^{*}(P_{1})(A) \ \text{and} \ g_{22}(A,B)=\alpha^{*}(P_{2})(B);
\\
g_{23}(A,B)=\frac{1}{2}\big(\beta^{*}(P_{2})(A)+P_{2}A-\beta^{*}(P_{1})(B)-P_{1}B\big);
\\
g_{24}=-\frac{1}{2}\big(\alpha^{*}(P_{2})(A)+\alpha^{*}(P_{1})(B)\big);
\\
g_{31}(A,B,C,D)=\beta^{*}(P_{1})(B)+\alpha^{*}(P_{2})(C)+\beta^{*}(P_{2})(D);
\\
g_{32}(A,B,C,D)=\beta^{*}(P_{2})(A)-\alpha^{*}(P_{1})(C)-\beta^{*}(P_{1})(D);
\\
g_{4}(A,B)=\alpha^{*}(P_{1})(A)+\alpha^{*}(P_{2})(B).
\end{array}
\right.
\end{equation*}
Namely, the \(i\)th cohomology group of one graded component of 
\(
\bigoplus_{m \geq 2, n > 1 }
S^{m}\Omega_{X}(m-n)
\) 
is isomorphic to the \((N-i)\)th cohomology group of the corresponding graded part of the complex \eqref{eq: Koszul complex cod 2}.
\end{theorem}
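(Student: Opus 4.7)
The plan is to mimic exactly the two-step procedure carried out for hypersurfaces in Section \ref{sect: coho hyp 2}, but starting from the codimension-two resolution \eqref{pushed Koszul complex 2: ci codim 2} instead of the hypersurface resolution \eqref{eq: resolution hyp pushed}. First I would restrict to the range $m\geq 2$, $n<-1$ and apply the functor $H^{N}(\P^{N},\cdot)$ to \eqref{pushed Koszul complex 2: ci codim 2}. To legitimately read the cohomology of $S^{m}\Omega_{X}(m+n)$ off the resulting complex, one has to check that each term appearing in \eqref{pushed Koszul complex 2: ci codim 2}, i.e.\ the pieces of the form $\Ker\delta^{k}_{r,s}$ for $k\in\{1,2,3\}$, has its cohomology concentrated in degree $N$ for the indices $(r,s)$ that actually occur. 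This is exactly Lemma \ref{lemma: support} combined with Bott's formulas: it suffices to verify the numerical inequalities $s<-1-k$, which follow from $n<-1$ and from the explicit shifts $-d_{1},-d_{2},-\abs{\mathbi{d}}$ present in \eqref{pushed Koszul complex 2: ci codim 2}. Standard homological algebra then shows that the $i$th cohomology of a graded piece of $S^{m}\Omega_{X}(m+n)$ is identified with the $(N-i)$th cohomology of the corresponding graded piece of the $H^{N}$-complex.

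Next, I would translate every term $H^{N}(\P^{N},\Ker\delta^{k}_{r,s})^{\vee}$ into $\bigl(S/(q^{k})\bigr)_{r,-s-(N+1)}$ via Serre duality and the renormalization map $u$ of Section \ref{subs: dual Euler}, using Lemma \ref{lemma: reformulation ci}. Applied term-by-term to \eqref{pushed Koszul complex 2: ci codim 2}, this produces precisely the bi-graded shape announced in \eqref{eq: Koszul complex cod 2}; the various shifts $[0,d_{i}]$, $[-1,2d_{i}-1]$, $[0,-\abs{\mathbi{d}}]$, $[-1,\abs{\mathbi{d}}-1]$, $[-1,2\abs{\mathbi{d}}-d_{i}-1]$, $[-2,2\abs{\mathbi{d}}-2]$ and the global shift $[0,-(N+1)]$ are a purely bookkeeping check obtained by substituting $(r,s)\mapsto (r,-s-(N+1))$ in each summand of \eqref{pushed Koszul complex 2: ci codim 2}.

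The substantive content is the identification of the arrows $g_{\ell i}$, which I would carry out using the recipe formulated in Remark \ref{remark: recipee}: under Serre duality composed with $u$, dualizing reverses arrows and swaps multiplication by $A(Y,X)$ with the partial differential operator $A\bigl(\tfrac{\partial}{\partial Y},X\bigr)$. Applying this to each of the seven maps $f_{1i},f_{2i},f_{3i},f_{4}$ of \eqref{eq: arrows cod 2}, one has: $\alpha(P_{i})=\cdot\times\tfrac{1}{d_{i}}(\diff P_{i})_{X}(Y)$ dualizes to $\alpha^{*}(P_{i})$, while $\beta(P_{i})(\cdot)=\cdot\times P_{i}-\alpha(P_{i})\circ\delta(\cdot)$ dualizes to $\cdot\times P_{i}-q\times\alpha^{*}(P_{i})(\cdot)=\beta^{*}(P_{i})$, since $\delta$ dualizes to multiplication by $q$ thanks to Theorem \ref{thm: gen Euler dual}. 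Reversing the direction of each $f_{\ell i}$ and transcribing it component-by-component then gives the stated $g_{\ell i}$; the stray $+P_{i}A$ summands in $g_{23},g_{24},g_{31},g_{32}$ and the factors $\tfrac12$ come from the splitting $\pi_{*}(\E^{\otimes 2})_{r,s}\simeq \Ker\delta^{3}_{r+2,s}\oplus \Ker\delta_{r+1,s+1}$ of Proposition \ref{prop: push 2}, whose second summand $\Delta$ mixes $\alpha(P_{i})$ with $\cdot\times P_{i}$.

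The main obstacle is precisely this last bookkeeping step: patiently carrying out the explicit dualization of each of the seven arrows $f_{\ell i}$ and tracking the signs, the factors $\tfrac12$, and in particular the mixing of summands induced by the two projections $\Theta,\Delta$ of Proposition \ref{prop: push 2}. Everything else (the vanishing of lower cohomology of the building blocks, the identification of $H^{N}$-duals via Serre/renormalization, and the degeneration of the hypercohomology spectral sequence of \eqref{pushed Koszul complex 2: ci codim 2}) is automatic from the tools already developed in Sections \ref{sect: coho proj} and \ref{sect: Koszul complex 2: ci}.
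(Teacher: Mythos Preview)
Your proposal is correct and follows exactly the same route as the paper's own argument in Section \ref{sect: coho ci 2}: apply $H^{N}(\P^{N},\cdot)$ to the resolution \eqref{pushed Koszul complex 2: ci codim 2}, invoke Lemma \ref{lemma: support} together with Bott's formulas to justify that all terms have cohomology concentrated in top degree, then dualize via Serre duality and the renormalization map (Lemma \ref{lemma: reformulation ci}) and transcribe each arrow using the recipe of Remark \ref{remark: recipee}. The paper itself presents this as a routine adaptation of the hypersurface case and does not spell out the arrow-by-arrow verification either, so your identification of the explicit dualization of the seven maps $f_{\ell i}$ (with the sign and $\tfrac12$ bookkeeping coming from Proposition \ref{prop: push 2}) as the only laborious step is accurate.
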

Let us mention that, as we know that the cohomology of 
\(\bigoplus_{m \geq 1, n > 1 }
S^{m}\Omega_{X}(m-n)\)
is supported in degrees ranging from \(N-4\) to \(N-2\), we are only interested in a truncated part of the complex given in the above theorem (in either way, it is easy to check the exactness of the complex where it is supposed to be).

\section{Applications.}
\label{sect: applications}

\subsection{First application: vanishing  and non-vanishing theorems.}
\label{sect: vanish and non-vanish}
In this section, we use the resolution given in Section \ref{sect: Koszul complex 2: ci} in order to study cohomology of symmetric powers of cotangent bundles of complete intersections. We first recover the known vanishing results of Bruckmann-Rackvitz \cite{BR}. Then, we prove a few non-vanishing results, which in particular show that the previous vanishing results are optimal (which, to our knowledge, is new). It also allows us to generalize results of Bogomolov--DeOliveira \cite{Bog}: see Theorem \ref{cor: generalization} below.

 Keep accordingly the notations of Section \ref{sect: Koszul complex 2: ci}, and recall that the line bundle \(\O_{\P(TX)}(m)\otimes\pi_{X}^{*}\O_{X}(m+n)\) admits the following resolution:
\begin{equation}
\label{eq: resolution}
\resizebox{\displaywidth}{!}{
\xymatrix{
0
\ar[r]
&
\L_{m-c,n+c-2\abs{\mathbi{d}}}
\ar[r]
&
\E^{\mathbi{d}} \otimes \L_{m-c,n+c-2\abs{\mathbi{d}}}
\ar[r]
&
\dotsb
\ar[r]
&
\big(\bigwedge^{2c-1}\E^{\mathbi{d}}\big)\otimes \L_{m-c,n+c-2\abs{\mathbi{d}}}
\ar[r]
&
\L_{m,n}.
}
}
\end{equation}
Cutting this resolution into \(2c-2\) short exact sequences, one gets:
\begin{equation}
\label{eq: cut exact}
\resizebox{\displaywidth}{!}{
\xymatrix{
0
\ar[r]
&
I_{0}
\ar[r]
&
\L_{m,n}
\ar[r]
&
\O_{\P(TX)}(m)\otimes\pi_{X}^{*}\O_{X}(m+n)
\ar[r]
&
0
\\
0
\ar[r]
&
I_{1}
\ar[r]
&
\big(\bigwedge^{2c-1}\E^{\mathbi{d}}\big)_{m-c,n+c-2\abs{\mathbi{d}}}
\ar[r]
&
I_{0}
\ar[r]
&
0
\\
0
\ar[r]
&
I_{2} 
\ar[r]
&
\big(\bigwedge^{2c-2}\E^{\mathbi{d}}\big)_{m-c,n+c-2\abs{\mathbi{d}}}
\ar[r]
&
I_{1}
\ar[r]
&
0
\\
.
\\
.
\\
0
\ar[r]
&
\L_{m-c,n+c-2\abs{\mathbi{d}}}
\ar[r]
&
\big(\E^{\mathbi{d}}\big)_{m-c,n+c-2\abs{\mathbi{d}}}
\ar[r]
&
I_{2c-2}
\ar[r]
&
0.
\\
}
}
\end{equation}
All the results in this Section \ref{sect: vanish and non-vanish} and the next one Section \ref{sect: algebra} will be obtained via diagram chasing, and the use of Lemma \ref{lemma: support}. Note that we have not indicated any index relatively to the dependance on \(m\) and \(n\) for the intermediate coherent sheaves \(I_{0}, \dotsc, I_{2c-2}\) in the cut-out resolution: within the context they are used, they will always be implicit, leading, we believe, to no confusion.

\subsubsection{Vanishing and non-vanishing theorems for \(H^{0}\), in codimension \(c < \frac{N}{2}\).}
\label{sect: vanish and non vanish 1}
In this subsection, let us first give an alternative proof of the known vanishing theorem of Bruckmann-Rackvitz \cite{BR}[Theorem 4 (iii)], in the case of symmetric powers:

\begin{theorem}
\label{thm: application 1.1 vanish}
Let \(X \subset \P^{N}\) be a smooth complete intersection of codimension \(c < \frac{N}{2}\), and multi-degree \(\mathbi{d}=(d_{1}, \dotsc, d_{c})\).  Denote \(d \bydef \min\Set{d_{i} \ | \ 1 \leq i \leq c}\).

For any \(m \in \N, n \in \Z\) satisfying the inequalities \(m>n\) and \(n<d-2\), one has the following vanishing result:
 \[
 H^{0}(X, S^{m}\Omega_{X}(m+n))
 =
 (0).
 \]
\end{theorem}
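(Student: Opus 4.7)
The plan is to push the cohomology computation up to $\Flag_{(1,2)}\C^{N+1}$ and chase cohomology through the Koszul-type resolution \eqref{eq: resolution}. First, using relative Bott for the projection $\pi_X\colon \P(TX)\to X$ together with the identification $\O_{\P(TX)}(m)\otimes\pi_X^*\O_X(m+n) \simeq j^*\L_{m,n}$ coming from the closed immersion $j\colon \P(TX) \hookrightarrow \Flag_{(1,2)}\C^{N+1}$, I would reduce the desired statement to
\[
H^0\bigl(\Flag_{(1,2)}\C^{N+1}, \, j_*\O_{\P(TX)} \otimes \L_{m,n}\bigr) = 0.
\]
I would then feed in the resolution \eqref{eq: resolution} (equivalently, chase cohomology through the cut-out short exact sequences \eqref{eq: cut exact}); the associated hypercohomology spectral sequence shows it suffices to establish
\[
H^0(\L_{m,n}) = 0 \quad\text{and}\quad H^j\bigl(\textstyle\bigwedge^{2c-j}\E^{\mathbi{d}} \otimes \L_{m-c,\,n+c-2\abs{\mathbi{d}}}\bigr) = 0 \text{ for } 1 \leq j \leq 2c.
\]

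The first vanishing is immediate from Bott's formulas in the absolute setting, since $H^0(\L_{m,n}) \simeq S^{(n,m)}\C^{N+1}$ vanishes precisely when $n < m$.

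For the second family of vanishings, the approach is to decompose each $\bigwedge^k \E^{\mathbi{d}}$ as a direct sum of pieces of the form $\E^{\otimes \abs{A}} \otimes \L_{\abs{B}, D_{A,B}}$, indexed by disjoint subsets $A, B \subseteq \{1, \ldots, c\}$ with $\abs{A} + 2\abs{B} = k$, and then to appeal to Lemma \ref{lemma: support}. That lemma confines the cohomology of each such piece to the degrees $\{0, 1, N\}$, and since the codimension bound $c < N/2$ gives $2c \leq N - 1$, the range $j \in \{2, \ldots, 2c\}$ is handled automatically.

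The only delicate case — and what I expect to be the main obstacle — is $j = 1$. Here the combinatorial constraints $\abs{A} + \abs{B} \leq c$ and $\abs{A} + 2\abs{B} = 2c - 1$ admit a unique solution $\abs{A} = 1$, $\abs{B} = c - 1$, yielding a direct sum of pieces of the form $\E_{m-1,\,n-d_{i_0}}$ as $i_0 \in \{1, \ldots, c\}$ varies. Lemma \ref{lemma: support} then grants $H^1 = 0$ as soon as $n - d_{i_0} < -2$, which is precisely what the hypothesis $n < d - 2 \leq d_{i_0} - 2$ provides. The fact that this unique combinatorial type of summand is the one forcing the bound $n < d - 2$ explains why the hypothesis is sharp, foreshadowing the non-vanishing theorems stated in the rest of the section.
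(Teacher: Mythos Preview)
Your proposal is correct and follows essentially the same approach as the paper. Both arguments run on $\Flag_{(1,2)}\C^{N+1}$ via the resolution \eqref{eq: resolution}, use $H^0(\L_{m,n})=0$ from $n<m$, invoke Lemma \ref{lemma: support} on the exterior-power pieces, and close with the codimension bound $2c<N$; the only difference is that you package the diagram chase as a hypercohomology spectral sequence while the paper walks through the cut-out short exact sequences \eqref{eq: cut exact} explicitly, and your identification of the unique $j=1$ summand as $\E_{m-1,n-d_{i_0}}$ is exactly the paper's identity \eqref{eq: identity 1}.
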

\begin{proof}
Use the cut-out resolution \eqref{eq: cut exact} of \(\O_{\P(TX)}(m)\otimes\pi_{X}^{*}\O_{X}(m+n)\) given above.
Observe that, since \(n<m\), the line bundle line bundle \(\L_{m,n}\) has no global sections. Therefore, one deduces from the first short exact sequence an injection
\begin{equation}
\label{eq: proof inter}
\xymatrix{
0
\ar[r]
&
H^{0}(X, S^{m}\Omega_{X}(m+n))
\ar[r]
&
H^{1}(\Flag_{(1,2)}\C^{N+1}, I_{0}).
}
\end{equation}
To prove the result, it is thus enough to show that \(H^{1}(\Flag_{(1,2)}\C^{N+1}, I_{0})=(0)\).

Note that the locally free sheaf \(\big(\bigwedge^{2c-1}\E^{\mathbi{d}}\big)_{m-c,n+c-2\abs{\mathbi{d}}}\) writes as a direct sum of the locally free sheaves 
\begin{equation}
\label{eq: identity 1}
\big(\E_{0,d_{i}-1} \otimes(\bigotimes_{j \neq i} \L_{1, 2d_{j}-1})\big) \otimes \L_{m-c,n+c-2\abs{\mathbi{d}}}
\simeq
\E_{m-1,n-d_{i}}
\end{equation}
for \(1 \leq i \leq c\).
By Lemma \ref{lemma: support}, the locally free sheaf \eqref{eq: identity 1} has its cohomology supported in maximal degree as soon as
\[
n-d_{i} < -2.
\]
This holds by hypothesis, and one deduces in particular that
\[
H^{1}\big(\Flag_{(1,2)}\C^{N+1},\big(\bigwedge^{2c-1}\E^{\mathbi{d}}\big)_{m-c,n+c-2\abs{\mathbi{d}}}\big)
=
(0).
\]

Now, one deduces successively from the short exact sequences of \eqref{eq: cut exact} and Lemma \ref{lemma: support} the following chain of isomorphism:
\[
H^{1}(\Flag_{(1,2)}\C^{N+1}, I_{0})
\simeq
H^{2}(\Flag_{(1,2)}\C^{N+1}, I_{1})
\simeq
\dotsb 
\simeq 
H^{2c-1}(\Flag_{(1,2)}\C^{N+1},I_{2c-2}),
\]
as well as an injection
\[
H^{2c-1}(\Flag_{(1,2)}\C^{N+1},I_{2c-2})
\hookrightarrow
H^{2c}(\Flag_{(1,2)}\C^{N+1},\L_{m-c,n+c-2\abs{\mathbi{d}}}).
\]
Since \(2c<N\), one has the vanishing
\[
H^{2c}(\Flag_{(1,2)}\C^{N+1}, \L_{m-c,n+c-2\abs{\mathbi{d}}})
=
(0),
\]
and with \eqref{eq: proof inter}, this indeed shows that \(S^{m}\Omega_{X}(m+n)\) has no global sections.
\end{proof}

Let us now show that Theorem \ref{thm: application 1.1 vanish} is optimal in the extremal case, namely if we pick \(n=d-2\), and \(m=d-1\) (for \(d \geq 3\))\footnote{Observe that the hypothesis on the codimension has been somewhat relaxed in the statement, as the upper bound has become \(\frac{N+1}{2}\) instead of \(\frac{N}{2}\).}
\begin{theorem}
\label{thm: application 1.1 non-vanish 1}
Let \(n \in \N\), \(\mathbi{d}=(d_{1}, \dotsc, d_{c})\), \(d \bydef \min\Set{d_{i} \ | \ 1 \leq i \leq c}\), and suppose furthermore that \(d \geq 2\). 
A smooth complete intersection \(X \subset \P^{N}\) of codimension \(c < \frac{N+1}{2}\) and multi-degree \(\mathbi{d}\) satisfies the following non-vanishing statements:
\begin{itemize}
\item{} for \(d \geq 3\), \(H^{0}(X,S^{d-1}\Omega_{X}(2d-3)) \neq (0)\);
\item{} for \(d=2\), \(H^{0}(X,S^{2}\Omega_{X}(2)) \neq (0)\).
\end{itemize}
\end{theorem}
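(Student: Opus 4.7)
The plan is to mirror the diagram chase used in the proof of Theorem \ref{thm: application 1.1 vanish}, applied to the Koszul resolution \eqref{eq: resolution} cut into the short exact sequences \eqref{eq: cut exact}, but this time tracking a non-trivial surviving cohomology class at the boundary of the vanishing range. At both extremal pairs $(m,n) = (d-1, d-2)$ for $d \geq 3$ and $(m,n) = (2,0)$ for $d = 2$, Theorem \ref{thm: coho sym proj} gives $H^0(\Flag_{(1,2)}\C^{N+1}, \L_{m,n}) = H^0(\P^N, S^m\Omega_{\P^N}(m+n)) = 0$, so the first short exact sequence of \eqref{eq: cut exact} yields
\[
H^0(X, S^m\Omega_X(m+n)) \simeq \ker\bigl(H^1(\Flag_{(1,2)}\C^{N+1}, I_0) \to H^1(\Flag_{(1,2)}\C^{N+1}, \L_{m,n})\bigr).
\]

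As a sanity check in the hypersurface case $c=1$ with $d \geq 3$, the resolution reduces to the three-term complex $0 \to \L_{d-2,-d-1} \to \E_{d-2,-2} \to \L_{d-1,d-2}$ abutting to $\L_{d-1,d-2}|_{\P(TH)}$. For $d \leq N-1$, Theorem \ref{thm: coho sym proj} gives $H^\ast(\Flag_{(1,2)}\C^{N+1}, \L_{d-2,-d-1}) = 0$ (the associated partition $(d-3,-d)$ is invalid), while the extension of Proposition \ref{prop: extension} combined with Theorem \ref{thm: coho sym proj} produces $H^1(\Flag_{(1,2)}\C^{N+1}, \E_{d-2,-2}) \simeq S^{d-3}\C^{N+1}$. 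Chasing the long exact sequences then gives $H^0(H, S^{d-1}\Omega_H(2d-3)) \simeq S^{d-3}\C^{N+1}$, which is non-zero for $d \geq 3$; the sub-case $d=2$ recovers the classical second fundamental form of a smooth quadric as a non-zero section of $S^2\Omega_H(2)$.

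In the general codimension $c$ case, the proof proceeds via the same diagram chase through the $2c$-term resolution: one climbs the successive short exact sequences of \eqref{eq: cut exact}, applying Lemma \ref{lemma: support} together with Bott's formulas at each level, until reaching the wedge term $\bigwedge^{2c-1}\E^{\mathbf{d}} \otimes \L_{m-c,n+c-2|\mathbf{d}|}$. Iterated applications of Proposition \ref{prop: extension} decompose this term into pieces of the form $\E_{m-1, n-d_i}$ (up to the bi-shifts coming from the wedges of the remaining rank-$2$ factors); picking $i$ with $d_i = d$ produces the same $S^{d-3}\C^{N+1}$ (respectively $\C$) contribution observed in the hypersurface case. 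The hypothesis $c < (N+1)/2$ provides the codimensional slack in $\Flag_{(1,2)}\C^{N+1}$ (of dimension $2N-1$) needed for the intermediate Bott-type vanishings to line up, so that this class survives to give the desired non-zero section. The main obstacle in the general case is precisely the bookkeeping of the $2c$ connecting homomorphisms in \eqref{eq: cut exact} and the verification that no parasitic cancellation kills the surviving class; the sub-case $d = 2$ additionally requires checking that the surviving class is not absorbed by $H^1(\Flag_{(1,2)}\C^{N+1}, \L_{2,0}) = \Lambda^2 \C^{N+1}$.
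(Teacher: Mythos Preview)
Your approach is the same as the paper's: both chase through the cut resolution \eqref{eq: cut exact}, use Lemma~\ref{lemma: support} and the codimension bound to obtain the injection
\[
H^{1}\Bigl(\Flag_{(1,2)}\C^{N+1},\bigl(\textstyle\bigwedge^{2c-1}\E^{\mathbi{d}}\bigr)_{m-c,n+c-2\abs{\mathbi{d}}}\Bigr)\hookrightarrow H^{1}(\Flag_{(1,2)}\C^{N+1},I_{0}),
\]
and then reduce to showing that the composite with \(H^{1}(I_{0})\to H^{1}(\L_{m,n})\) has non-trivial kernel. Your hypersurface sanity check is correct and in fact already contains the key observation that for \(d\geq 3\) one has \(H^{1}(\L_{d-1,d-2})=0\) (the associated Schur weight \((d-2,d-1)\) is not a partition). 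This makes the ``no parasitic cancellation'' concern vacuous: the target of \(\psi\) is zero, so the entire source \(\bigoplus_{d_{i}=d}H^{1}(\E_{d-2,-2})\simeq (S^{d-3}\C^{N+1})^{\oplus k}\) survives. The paper, curiously, does not exploit this and instead constructs an explicit preimage under \(\delta\) via the telescoping sum \(B=\sum_{i}(-1)^{i}\delta^{d-2-i}(A)\delta^{i}(P_{1}(Y))\); this is correct but redundant in view of the vanishing you already have. So for \(d\geq 3\) your outline is complete once you make the vanishing of the target explicit.

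For \(d=2\) you correctly flag that \(H^{1}(\L_{2,0})\simeq\Lambda^{2}\C^{N+1}\neq 0\), so the kernel argument is no longer automatic, and you leave this unresolved. The paper closes this gap in one line: for a quadratic \(q_{i}\) one has \(\delta(q_{i}(Y))=(\diff q_{i})_{X}(Y)\), so the image of the generator under \(\tilde\psi\colon\C\to\Coker\bigl(\C[Y,X]_{2,0}\xrightarrow{\delta}\C[Y,X]_{1,1}\bigr)\) is the class of \((\diff q_{i})_{X}(Y)\), which is zero in the cokernel. This is exactly the missing verification you anticipated; it requires no further diagram chasing, only this identity.
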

\begin{proof}
Suppose first that \(d \geq 3\), and fix \(n=d-2\), as well as  \(m=d-1>n\).
One deduces from the reasoning carried over during the proof of Theorem \ref{thm: application 1.1 vanish} the following two exact sequences:
\begin{equation}
\label{eq: exact seq 1}
\resizebox{\displaywidth}{!}{
\xymatrix{
0
\ar[r]
&
H^{0}(X, S^{m}\Omega_{X}(m+n))
\ar[r]
&
H^{1}(\Flag_{(1,2)}\C^{N+1}, I_{0})
\ar[r]
&
H^{1}(\Flag_{(1,2)}\C^{N+1}, \L_{m,n})
}
}
\end{equation}
and
\begin{equation}
\label{eq: exact seq 2}
\resizebox{\displaywidth}{!}{
\xymatrix{
H^{1}(\Flag_{(1,2)}\C^{N+1}, I_{1})
\ar[r]
&
H^{1}(\Flag_{(1,2)}\C^{N+1}, \big(\bigwedge^{2c-1}\E^{\mathbi{d}}\big)_{m-c,n+c-2\abs{\mathbi{d}}})
\ar[r]
&
H^{1}(\Flag_{(1,2)}\C^{N+1}, I_{0}).
}
}
\end{equation}
Observe that the first cohomology group of \(I_{1}\) vanishes if and only if
\[
H^{1}(\Flag_{(1,2)}\C^{N+1},\big(\bigwedge^{2c-2}\E^{\mathbi{d}}\big)_{m-c,n+c-2\abs{\mathbi{d}}})
=
(0).
\]
(This follows from Lemma \ref{lemma: support}, the successive short exact sequences of \eqref{eq: cut exact} and the hypothesis on the codimension). Note that the locally free sheaf \(\big(\bigwedge^{2c-2}\E^{\mathbi{d}}\big)_{m-c,n+c-2\abs{\mathbi{d}}}\) writes as a direct sum of terms of the following form (where \(i \neq j\)):
\begin{equation}
\label{eq: identity 2}
\big((\E_{0,d_{i}-1}\otimes \E_{0,d_{j}-1}) \otimes (\bigotimes_{k \neq i,j} \L_{1, 2d_{k}-1})\big) \otimes \L_{m-c,n+c-2\abs{\mathbi{d}}}
\simeq
(\E^{\otimes 2})_{m-2,n-(d_{i}+d_{j})}.
\end{equation}
Therefore, by Lemma \ref{lemma: support}, the first cohomology group of  \(\big(\bigwedge^{2c-2}\E^{\mathbi{d}}\big)_{m-c,n+c-2\abs{\mathbi{d}}}\) vanishes as soon as
\[
n<2d-3,
\]
which holds since \(n=d-2\) and \(d \geq 2\). Accordingly, the exact sequence \eqref{eq: exact seq 2} becomes:
\begin{equation}
\label{eq: exact seq 3}
\xymatrix{
0
\ar[r]
&
H^{1}(\Flag_{(1,2)}\C^{N+1}, \big(\bigwedge^{2c-1}\E^{\mathbi{d}}\big)_{m-c,n+c-2\abs{\mathbi{d}}})
\ar[r]
&
H^{1}(\Flag_{(1,2)}\C^{N+1}, I_{0})
}.
\end{equation}

Putting together \eqref{eq: exact seq 1} and \eqref{eq: exact seq 3}, one therefore deduces that the space of global section \(H^{0}(X, S^{m}\Omega_{X}(m+n))\) is non-zero as soon as the kernel 
\[
\Ker \Big( 
\bigoplus_{i=1}^{c}
H^{1}(\Flag_{(1,2)}\C^{N+1},\E_{m-1,n-d_{i}})
\overset{\psi}{\longrightarrow}
H^{1}(\Flag_{(1,2)}\C^{N+1},\L_{m,n})
\Big)
\]
is non-trivial (here, one has used the isomorphism \eqref{eq: identity 1}). Note that in the direct sum
\[
\bigoplus_{i=1}^{c}
H^{1}(\Flag_{(1,2)}\C^{N+1},\E_{m-1,n-d_{i}}),
\]
the only non-zero spaces are the one corresponding to the indexes \(i\) such that \(d_{i}=d\). Without loss of generality, suppose that \(d_{1}=\dotsb=d_{k}=d\), and that \(d_{i} > 2 \) for \(i>k\). 

Recall that \(\E_{m-1,n-d}=\E_{m-1,-2}\) fits into the short exact sequence
\[
\xymatrix{
0
\ar[r]
&
\L_{m,-2}
\ar[r]
&
\E_{m-1,-2}
\ar[r]
&
\L_{m-1,-1}
\ar[r]
&
0.
}
\]
Therefore, using Theorem \ref{thm: coho sym proj}, one obtains that 
\begin{equation}
\label{eq: iso nec}
H^{1}(\Flag_{(1,2)}\C^{N+1},\E_{m-1,-2})
\simeq 
H^{1}(\Flag_{(1,2)}\C^{N+1},\L_{m-1,-1}).
\end{equation}
Using the above isomorphism \eqref{eq: iso nec}, one sees that the map \(\psi\) becomes
\[
\tilde{\psi}\colon
\bigoplus_{i=1}^{k}
H^{1}(\Flag_{(1,2)}\C^{N+1},\L_{m-1,-1})
\longrightarrow 
H^{1}(\Flag_{(1,2)}\C^{N+1},\L_{m,d-2}).
\]
Now, recall that the explicit description of \(H^{1}(\Flag_{(1,2)}\C^{N+1},\L_{m-1,-1})\) is given by
\begin{eqnarray*}
H^{1}(\Flag_{(1,2)}\C^{N+1},\L_{m-1,-1})
&
=
&
\Coker \big( \C[Y,X]_{m-1,-1} \overset{\delta}{\longrightarrow} \C[Y,X]_{m-2,0}\big)
\\
&
=
&
\C[Y]_{m-2}.
\end{eqnarray*}
Similarly, one has
\[
H^{1}(\Flag_{(1,2)}\C^{N+1},\L_{m,d-2})
\simeq
\Coker \big( \C[Y,X]_{m,d-2} \overset{\delta}{\longrightarrow} \C[Y,X]_{m-1,d-1}\big).
\]

Coming back to the explicit description of the resolution \eqref{eq: resolution}, and following all the isomorphisms described above, one sees that the map the map \(\tilde{\psi}\) is given explicitly as follows:
\[
\tilde{\psi} \colon\left(
\begin{array}{ccc}
 \C[Y]_{m-2}^{\oplus k} 
 & 
 \longrightarrow 
 & 
 \Coker \Big( \C[Y,X]_{m,d-2} \overset{\delta}{\longrightarrow} \C[Y,X]_{m-1,d-1}\Big)
  \\
 (A_{1}, \dotsc, A_{k})
  & 
  \longmapsto  
  &  
  (\diff P_{1})_{X}(Y)A_{1} + \dotsb + (\diff P_{k})_{X}(Y)A_{k}
\end{array}
\right).
\]
One has to show that kernel of \(\tilde{\psi}\) is not trivial, and to this end, it is enough to find a non-zero element \(A \in \C[Y]_{m-2}=\C[Y]_{d-3}\) such that
\[
A \times (\diff P_{1})_{X}(Y)
\in 
\Image(\delta).
\]
As a matter of fact, one shows that for \textsl{any} \(A \in \C[Y]_{d-3}\) one has 
\[
A \times (\diff P_{1})_{X}(Y)
\in 
\Image(\delta).
\]
Consider the following polynomial:
\begin{eqnarray*}
B 
\bydef
\sum\limits_{i=1}^{d-2} (-1)^{i}\delta^{d-2-i}(A)\delta^{i}(P_{1}(Y)).
\end{eqnarray*}
Using the Leibnitz rule, compute that
\begin{eqnarray*}
\delta(B)
&
=
&
-\Big(\delta^{d-3}(A) \delta^{2}(P_{1}(Y))\Big) + \Big(\delta^{d-3}(A)\delta^{2}(P_{1}(Y))+\delta^{d-4}(A) \delta^{3}(P_{1}(Y))\Big))
\\
&
&
+ \dotsb + (-1)^{d-2}\Big(\delta(A)\delta^{d-2}(A)+A\delta^{d-1}(P_{1}(Y))\Big)
\\
&
=
&
(-1)^{d-2}A\times \delta^{d-1}(P_{1}(Y)).
\end{eqnarray*}
In order to  conclude, one shows by induction on \(d \geq 1\) the following identity for any \(R \in \C[X]_{d}\):
\[
\delta^{d-1}(R(Y))
=
(d-1)! \times (\diff R)_{X}(Y).
\]
For \(d=1\), the result is straightforward. Let therefore \(d \in \N_{\geq 2}\). By linearity, it suffices to prove the identity for any \(R \in \C[X]_{d}\) polynomial of the form 
\[
R
=
X_{i}R_{1},
\]
where \(0 \leq i \leq N\). By the Leibnitz rule, one has the equality:
\[
\delta^{d-1}(R(Y))
=
\binom{d-1}{1}X_{i} \delta^{d-2}(R_{1}(Y)) + \binom{d-1}{0}Y_{i} \delta\big(\delta^{d-2}(R_{1}(Y))\big).
\]
Then, by induction hypothesis, compute that:
\begin{eqnarray*}
\delta^{d-1}(R(Y))
&
=
&
(d-1)!\times X_{i}(\diff R_{1})_{X}(Y) + (d-2)!\times Y_{i} \delta\big((\diff R_{1})_{X}(Y)\big)
\\
&
=
&
(d-1)! \big(X_{i} (\diff R_{1})_{X}(Y) + Y_{i} R_{1}(X)\big)
\\
&
=
&
(d-1)! (\diff R)_{X}(Y),
\end{eqnarray*}
hence the identity. This proves the theorem for \(d \geq 3\). 

As for the case where \(d=2\), fix \(m=d=2\) (instead of \(m=d-1\)), and keep \(n=d-2=0\). The same line of reasoning works, and one has to show that kernel of the map (keeping the same notations as above):
\[
\tilde{\psi} \colon\left(
\begin{array}{ccc}
 \C[Y]_{0}^{\oplus k} 
 & 
 \longrightarrow 
 & 
 \Coker \Big( \C[Y,X]_{2,0} \overset{\delta}{\longrightarrow} \C[Y,X]_{1,1}\Big)
  \\
 (A_{1}, \dotsc, A_{k})
  & 
  \longmapsto  
  &  
  \Big[(\diff P_{1})_{X}(Y)A_{1} + \dotsb + (\diff P_{k})_{X}(Y)A_{k}\Big]
\end{array}
\right)
\]
is not trivial. This fact follows immediately from the identity:
\[
\delta(P_{i}(Y))
=
(\diff P_{i})_{X}(Y).
\]
This concludes the proof of the theorem.
\end{proof}

Using the previous vanishing and non-vanishing statements, we can now prove the following theorem, which can be seen as a generalization of results in \cite{Bog}[Theorems B and D]:

\begin{theorem}
\label{cor: generalization}
Let \(d \in \N_{\geq 2}\) be a natural number, and let \(X \subset \P^{N}\) be a non-degenerate\footnote{Recall that it means that the complete intersection is not included in an hyperplane.} smooth complete intersection of codimension \(c < \frac{N}{2}\). The complete intersection \(X\) satisfies
\[
\bigoplus_{m \geq \max(d-2,1)} H^{0}\big(X, S^{m}\Omega_{X}(m+\max(d-3,0))\big)
=
(0)
\]
if and only if \(X\) is not included in an hypersurface of degree \( 2 \leq i \leq d\).
\end{theorem}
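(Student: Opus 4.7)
The proof consists of showing two implications, both relying on the vanishing and non-vanishing statements established in Section \ref{sect: vanish and non-vanish}. A preliminary observation fixes the dictionary between the geometric and numerical sides: for a smooth complete intersection $X$ of multi-degree $(d_{1}, \dotsc, d_{c})$, the defining ideal is generated in degrees $d_{1}, \dotsc, d_{c}$, so $X$ is contained in a hypersurface of degree $i$ if and only if $i \geq d_{\min} \bydef \min_{j} d_{j}$. Hence the right-hand side ``$X$ is not included in any hypersurface of degree $2 \leq i \leq d$'' translates into the single numerical condition $d_{\min} > d$.

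For the \emph{forward direction} (vanishing), assume $d_{\min} > d$. Setting $n \bydef \max(d-3, 0)$ and taking $m \geq \max(d-2, 1)$, one checks the hypotheses of Theorem \ref{thm: application 1.1 vanish}: the inequality $m > n$ is immediate, and $n < d_{\min} - 2$ follows from $d_{\min} \geq d + 1$ (so that $d_{\min} - 2 \geq d - 1 > d - 3 \geq n$ for $d \geq 3$, and likewise for $d = 2$). Theorem \ref{thm: application 1.1 vanish} then yields the required vanishing, term by term in the direct sum.

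For the \emph{reverse direction} (non-vanishing), assume $d_{\min} \leq d$. Theorem \ref{thm: application 1.1 non-vanish 1} produces a non-zero section $\omega \in H^{0}(X, S^{d_{\min}-1}\Omega_{X}(2d_{\min}-3))$ when $d_{\min} \geq 3$ (and $\omega \in H^{0}(X, S^{2}\Omega_{X}(2))$ when $d_{\min}=2$). The strategy is to propagate $\omega$ into the target family $S^{m}\Omega_{X}(m + \max(d-3, 0))$ with $m \geq \max(d-2,1)$ through two elementary operations: (i)~multiplication by a global section $P \in H^{0}(X, \O_{X}(k))$, which exists for every $k \geq 1$ by non-degeneracy of $X$, and (ii)~taking symmetric powers $\omega^{r}$. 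Writing out the bi-degree constraints $m = r(d_{\min}-1)$ and $m + \max(d-3,0) = r(2d_{\min}-3) + k$, one solves for admissible integers $(r,k)$ with $k \geq 0$; non-vanishing of $\omega^{r} \cdot P$ is ensured since multiplication by a non-vanishing global polynomial is injective on sections of locally free sheaves. The \textbf{main obstacle} is the extremal boundary case $d_{\min} = d$, where the section supplied by Theorem \ref{thm: application 1.1 non-vanish 1} sits at bidegree $(d-1, d-2)$, one unit \emph{above} the desired twist $d-3$: since one cannot divide out by a polynomial, the only way to reach the target is either to adapt the explicit construction of $\omega$ to produce directly a section of $S^{m}\Omega_{X}(m + d - 3)$ using the degree-$d$ defining equation, or to construct an auxiliary section in some $H^{0}(X, S^{k}\Omega_{X}(k-1))$ to tensor against $\omega$ and shift the bidegree. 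Resolving this extremal case is the heart of the converse implication.
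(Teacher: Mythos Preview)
Your forward direction is exactly the paper's. For the reverse direction, however, the paper takes a shorter route than you propose: instead of juggling symmetric powers \(\omega^{r}\) and \(\O_{X}(k)\)-twists, it uses the single observation that \(\Omega_{X}(2)\) is globally generated (as a quotient of \(\Omega_{\P^{N}}(2)|_{X}\)). Starting from the non-vanishing section \(\omega \in H^{0}\bigl(X, S^{i-1}\Omega_{X}(2i-3)\bigr)\) supplied by Theorem~\ref{thm: application 1.1 non-vanish 1} with \(i=d_{\min}\), one multiplies by a general section of \(S^{\,d-1-i}(\Omega_{X}(2))\) and lands directly in
\[
H^{0}\bigl(X, S^{d-2}\Omega_{X}(2d-5)\bigr)=H^{0}\bigl(X, S^{d-2}\Omega_{X}((d-2)+(d-3))\bigr),
\]
the first slot of the target family. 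This one move replaces your two operations and avoids solving the diophantine system in \((r,k)\).

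Your ``main obstacle'' at \(d_{\min}=d\) is genuine, and the paper's argument has the very same gap: when \(i=d\) one would need sections of \(S^{-1}(\Omega_{X}(2))\). In fact this boundary case cannot be repaired, because the statement itself is off by one there. If \(d_{\min}=d\) (and \(d\ge 3\)) then \(n=d-3\) satisfies \(n<d_{\min}-2\), so Theorem~\ref{thm: application 1.1 vanish} already forces every term of the direct sum to vanish; yet \(X\) \emph{is} contained in a hypersurface of degree \(d\), so the right-hand side of the equivalence is false. The biconditional therefore fails at \(d_{\min}=d\). The intended (and correct) version should read ``degree \(2\le i\le d-1\)'' on the right, equivalently \(d_{\min}\ge d\); with that amendment both the paper's argument and your scheme go through, and your obstacle disappears.
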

\begin{proof}
One direction of the equivalence is clear by the vanishing result of Theorem \ref{thm: application 1.1 vanish}.
For the other direction, suppose that \(X\) is included in an hypersurface of minimal possible degree \(2\leq i \leq d\).
If \(i=2\), it follows from the non-vanishing result of Theorem \ref{thm: application 1.1 vanish} that 
\[
H^{0}(X, S^{2}\Omega_{X}(2)) \neq (0).
\]
This proves the wanted result in the case \(i=2\). Suppose now that \(3 \leq i \leq d\). This time, it follows from the non-vanishing result of Theorem \ref{thm: application 1.1 vanish} that 
\[
H^{0}(X, S^{i-1}\Omega_{X}(2i-3)) \neq (0).
\]
Since \(\Omega_{X}(2)\) is globally generated, this implies that
\[
H^{0}\big(X, S^{d-2}\Omega_{X}(\underbrace{2i-3+2(d-i-1)}_{d-2+(d-3)})\big) \neq (0).
\]
This finishes the proof.
\end{proof}
In \cite{Bog}[Theorems B and D], the authors proved that a smooth subvariety \(X \subset \P^{N}\) (not necessarily a complete intersection) of codimension \(c \in \Set{1,2}\), with \(c < \frac{N}{2}\), satisfies the following:
\[
\bigoplus_{m \geq 1} H^{0}(X, S^{m}\Omega_{X}(m))=(0)
\Longleftrightarrow
\ \text{\(X\) is not included in a quadric}.
\]
In view of Hartshorne's conjecture (see \cite{Laz1}[Conjecture 3.2.8]), such a smooth subvariety of codimension \(2\) should be a complete intersection as soon as \(N \geq 7\). 

As an evidence towards Hartshorne's conjecture, a natural problem would be to generalize Theorem \ref{cor: generalization} to the case of an arbitrary smooth subvariety \(X \subset \P^{N}\) of codimension \(c \leq \frac{N}{3}\), the case \(c=2, d=2\) being 
 settled by \cite{Bog}[Theorem D].

\subsubsection{Vanishing and non-vanishing theorems for \(H^{1}\), in codimension \(c < \frac{N}{2}-1\).}
In this subsection, let us first give an alternative proof of the known vanishing theorem of Bruckmann-Rackvitz \cite{BR}[Theorem 4 (ii)], in the case of symmetric powers:
\begin{theorem}
\label{thm: application 1.2 vanish}
Let \(X \subset \P^{N}\) be a smooth complete intersection of codimension \(c < \frac{N}{2}-1\), and multi-degree \(\mathbi{d}=(d_{1}, \dotsc, d_{c})\).  Denote \(d \bydef \min\Set{d_{i} \ | \ 1 \leq i \leq c}\).

For any \(m \in \N\) and any \(n \in \Z\) satisfying the inequality \(n<-1\), one has the following vanishing result:
 \[
 H^{1}(X, S^{m}\Omega_{X}(m+n))
 =
 (0).
 \]
\end{theorem}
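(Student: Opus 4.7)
The proof will follow the blueprint of Theorem \ref{thm: application 1.1 vanish}, with every cohomological degree shifted up by one and the codimension hypothesis tightened accordingly. As before, the plan is to consider the cut-out short exact sequences \eqref{eq: cut exact} coming from the resolution \eqref{eq: resolution} on $\Flag_{(1,2)}\C^{N+1}$, and to do a dimension-shifting chase.

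First, using Proposition \ref{prop: geom inter} combined with the relative form of Bott's formula, I identify $H^{\bullet}(\Flag_{(1,2)}\C^{N+1}, \L_{m,n})$ with $H^{\bullet}(\P^{N}, S^{m}\Omega_{\P^{N}}(m+n))$, and then Theorem \ref{thm: coho sym proj} gives $H^{1}(\L_{m,n}) = H^{2}(\L_{m,n}) = 0$ for $n < -1$ (the putative $H^{1} \simeq S^{(m-1, n+1)}\C^{N+1}$ vanishes since $n+1 < 0$ invalidates the partition, and $H^{2}$ sits outside the allowed support $\{0,1,N\}$ as $N \geq 2c+3 \geq 5$). The first short exact sequence of \eqref{eq: cut exact} then yields an isomorphism $H^{1}(X, S^{m}\Omega_{X}(m+n)) \simeq H^{2}(\Flag_{(1,2)}\C^{N+1}, I_{0})$.

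Next, I climb the remaining sequences of \eqref{eq: cut exact} to obtain a chain of isomorphisms
\[
H^{2}(I_{0}) \simeq H^{3}(I_{1}) \simeq \dotsb \simeq H^{2c}(I_{2c-2})
\]
together with an injection $H^{2c}(I_{2c-2}) \hookrightarrow H^{2c+1}(\Flag_{(1,2)}\C^{N+1}, \L_{m-c, n+c-2\abs{\mathbi{d}}})$. For this to work, each intermediate bundle $\bigwedge^{j}\E^{\mathbi{d}} \otimes \L_{m-c, n+c-2\abs{\mathbi{d}}}$ (for $1 \leq j \leq 2c-1$) must have cohomology vanishing in degrees $\leq 2c$. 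I decompose each such wedge power into its $\bigotimes_{i}\bigwedge^{a_{i}}\E_{i}$ summands with $a_{i} \in \{0,1,2\}$ and $\sum a_{i} = j$; using $\bigwedge^{2}\E_{i} = \L_{1, 2d_{i}-1}$ from Proposition \ref{prop: extension}, each summand rewrites as $(\E^{\otimes t})_{\star, \star}$ twisted by a line bundle, where $t$ counts the factors with $a_{i} = 1$. An elementary computation of the overall twist, combined with $n < -1$ and $d_{i} \geq 1$, yields the key inequality needed to apply the sharper half of Lemma \ref{lemma: support}, placing the cohomology in top degree $N$ only. The strict inequality $c < N/2 - 1$ then gives $2c + 1 < N$, so these middle terms contribute nothing in degrees $\leq 2c$.

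Finally, I must prove the vanishing $H^{2c+1}(\Flag_{(1,2)}\C^{N+1}, \L_{m-c, n+c-2\abs{\mathbi{d}}}) = 0$. When $m \geq c$, relative Bott identifies this cohomology with $H^{2c+1}(\P^{N}, S^{m-c}\Omega_{\P^{N}}(m+n-2\abs{\mathbi{d}}))$, which by Theorem \ref{thm: coho sym proj} is supported in $\{0, 1, N\}$; since $1 < 2c+1 < N$, the vanishing follows. When $m < c$, the weight $(n+c-2\abs{\mathbi{d}},\, m-c,\, 0, \dotsc, 0)$ after $\rho$-shift has its second entry $m-c+N-1 \in \{N-1-c, \dotsc, N-2\}$, which necessarily coincides with one of the fixed lower entries $\{0, 1, \dotsc, N-2\}$ (because $c \leq N-2$ under our hypothesis), so Bott--Borel--Weil forces $H^{\bullet}$ to vanish altogether. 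The main obstacle is Step 2: the bookkeeping of the decomposition of $\bigwedge^{j}\E^{\mathbi{d}}$ into pieces of the form $(\E^{\otimes t})_{\star, \star}$ and the uniform verification that every such piece satisfies the twist inequality needed by Lemma \ref{lemma: support}. Once that combinatorial check is carried out, the remainder of the argument is a formal cohomological chase.
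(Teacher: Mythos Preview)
Your proof is correct and follows exactly the dimension-shifting strategy the paper uses (the paper's own proof is a two-line reference back to the argument of Theorem~\ref{thm: application 1.1 vanish}). One small simplification: for the chain $H^{2}(I_{0})\simeq\dotsb\simeq H^{2c}(I_{2c-2})\hookrightarrow H^{2c+1}(\L_{m-c,\,n+c-2\lvert\mathbi{d}\rvert})$ you only need the intermediate bundles to have vanishing cohomology in degrees $2,\dotsc,2c+1$, and the \emph{weak} half of Lemma~\ref{lemma: support} (support in $\{0,1,N\}$) together with $2c+1<N$ already gives this---so neither the twist bookkeeping for the sharp inequality nor the case split on $m\gtrless c$ at the end is actually needed.
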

\begin{proof}
Considering the first short exact sequence of \eqref{eq: cut exact}, and the vanishing \(H^{1}(\Flag_{(1,2)}\C^{N+1}, \L_{m,n})=(0)\) (which holds under the assumption \(n < -1\), see Theorem \ref{thm: coho sym proj}), one sees that it is enough to show that
\[
H^{2}(\Flag_{(1,2)}\C^{N+1}, I_{0})
=
(0).
\]
This follows from Lemma \ref{lemma: support}, the successive short exact sequences of \eqref{eq: cut exact}, and the fact that \(c < \frac{N}{2}-1\) (in a classical manner, that was already encountered in Theorem \ref{thm: application 1.1 vanish}).
\end{proof}

Let us now show that this vanishing theorem is optimal:

\begin{theorem}
\label{thm: application 1.2 non vanish}
Let \(X \subset \P^{N}\) be a smooth complete intersection of codimension \(c < \frac{N}{2}-1\), and multi-degree \(\mathbi{d}=(d_{1}, \dotsc, d_{c})\).  Denote \(d \bydef \min\Set{d_{i} \ | \ 1 \leq i \leq c}\).

For any \(m \in \N\), and any \(n \in \Z\) satisfying the inequality \(-1<n<\min(d-1,m-2)\), one has the following non-vanishing result:
\[
H^{1}(X, S^{m}\Omega_{X}(m+n))
 \neq
 (0).
\]
\end{theorem}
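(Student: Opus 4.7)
The plan is to adapt the strategy used in the proof of Theorem~\ref{thm: application 1.1 non-vanish 1}: apply the Koszul resolution \eqref{eq: resolution} of $\O_{\P(TX)}(m) \otimes \pi_X^*\O_X(m+n)$ together with the hypercohomology spectral sequence to express $H^1(X, S^m\Omega_X(m+n))$ as the cokernel of an explicit linear map, and then show that this map is not surjective.

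First I would consider the spectral sequence $E_1^{p,q} = H^q(\Flag_{(1,2)}\C^{N+1}, K^p) \Rightarrow H^{p+q}(X, S^m\Omega_X(m+n))$, where $K^p$ denotes the $p$-th term of \eqref{eq: resolution}. Since each $K^p$ decomposes as a direct sum of bundles of the form $\E^{\otimes j}\otimes \L_{a,b}$, Lemma~\ref{lemma: support} ensures their cohomology is supported in degrees $0$, $1$, and $N$. Combined with the hypothesis $c < \frac{N}{2} - 1$ (so that $2c < N-2$), the same kind of diagram chase as in the proof of Theorem~\ref{thm: application 1.1 non-vanish 1} forces all entries on the anti-diagonal $p+q=1$ to vanish except at $p=0$, reducing the computation to
\[
H^1(X, S^m\Omega_X(m+n)) \simeq \Coker\bigl(d_1 \colon H^1(K^{-1}) \longrightarrow H^1(\L_{m,n})\bigr).
\]
By Theorem~\ref{thm: coho sym proj}, $H^1(\L_{m,n}) = S^{(m-1,n+1)}\C^{N+1}$ is nonzero under the hypothesis $-1 < n < m$. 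Decomposing $K^{-1} = \bigoplus_i \E_{m-1, n-d_i}$ and using the short exact sequence $0 \to \L_{m, n-d_i} \to \E_{m-1, n-d_i} \to \L_{m-1, n-d_i+1} \to 0$ together with Theorem~\ref{thm: coho sym proj}, one finds $H^1(\E_{m-1, n-d_i}) = S^{(m-2, n-d_i+2)}\C^{N+1}$; under the hypothesis $n < d-1$, this Schur functor is nonzero only in the extremal case $n = d-2$ with $d_i = d$.

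This splits the argument into two cases. If $n < d-2$, then $H^1(K^{-1}) = 0$ and already $H^1(X, S^m\Omega_X(m+n)) = S^{(m-1, n+1)}\C^{N+1} \neq 0$. In the boundary case $n = d-2$ (which forces $m \geq d+1$), letting $k$ denote the number of indices with $d_i = d$, the map $d_1$ takes the form
\[
d_1 \colon \bigoplus_{i=1}^k S^{m-2}\C^{N+1} \longrightarrow S^{(m-1, d-1)}\C^{N+1},
\]
and tracking it through the identifications, exactly as in the proof of Theorem~\ref{thm: application 1.1 non-vanish 1}, it is realized as $(A_1, \dotsc, A_k) \mapsto \bigl[\sum_i (\diff P_i)_X(Y) A_i\bigr]$ in $\Coker(\delta \colon \C[Y,X]_{m, d-2} \to \C[Y,X]_{m-1, d-1})$. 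To conclude, I would verify that $d_1$ is not surjective by a dimension count: via the formula
\[
\dim S^{(a,b)}\C^{N+1} = \binom{N+a}{N}\binom{N+b}{N} - \binom{N+a+1}{N}\binom{N+b-1}{N},
\]
the dominant term of the target is $\binom{N+d-2}{N-1}\cdot\binom{N+m-2}{N}$, whereas the source has dimension at most $c\cdot\binom{N+m-2}{N}$. The required strict inequality then reduces to $\binom{N+d-2}{N-1} \geq N > c$, which holds because $d \geq 2$ and $c < \frac{N}{2} - 1 < N$.

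The main obstacle is the boundary case $n = d-2$. The spectral-sequence reduction is mechanical once Lemma~\ref{lemma: support} and Theorem~\ref{thm: coho sym proj} are in hand, but correctly identifying $d_1$ with the explicit multiplication map and then establishing the dimension inequality uniformly throughout the range $m \geq d+1$ (which requires some elementary but careful handling of binomial coefficients, particularly at the smallest admissible values of $m$) is where the technical heart of the argument lies.
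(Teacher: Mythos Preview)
Your reduction of $H^{1}(X,S^{m}\Omega_{X}(m+n))$ to $\Coker\bigl(d_{1}\colon H^{1}(K^{-1})\to H^{1}(\L_{m,n})\bigr)$ via the spectral sequence is correct and is equivalent to the paper's short-exact-sequence chase through \eqref{eq: cut exact}. The paper's argument is much shorter: it simply asserts $h^{1}(K^{-1})=0$ (so the cokernel is all of $H^{1}(\L_{m,n})\neq 0$) by appealing to Lemma~\ref{lemma: support}. But that lemma only yields $H^{1}(\E_{m-1,n-d_{i}})=0$ when $n-d_{i}<-2$, i.e.\ $n<d_{i}-2$; so the paper's argument as written covers only $n\le d-3$. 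You are right to single out the boundary $n=d-2$, where $H^{1}(K^{-1})\cong (S^{m-2}\C^{N+1})^{\oplus k}\neq 0$ and the paper's proof does not literally apply.

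Your handling of that boundary case, however, has a genuine gap. From the Weyl dimension formula one gets
\[
\dim S^{(m-1,d-1)}\C^{N+1}=\frac{(m-d+1)(m+N-1)}{m(m-1)}\,\binom{N+d-2}{N-1}\binom{N+m-2}{N},
\]
so the ratio $\dim(\text{target})/\dim(\text{source})$ equals $\dfrac{(m-d+1)(m+N-1)}{m(m-1)}\cdot\dfrac{1}{k}\binom{N+d-2}{N-1}$. The scalar prefactor is \emph{not} always $\ge 1$: at the smallest admissible value $m=d+1$ it equals $\tfrac{2(d+N)}{d(d+1)}$, which is $<1$ whenever $d(d-1)>2N$. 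Your one-line reduction to ``$\binom{N+d-2}{N-1}\ge N>c$'' is therefore unjustified. The inequality $\dim S^{(m-1,d-1)}\C^{N+1}>c\,\binom{N+m-2}{N}$ does appear to hold across the whole range $m\ge d+1$, $N\ge 2c+3$, but proving it requires an honest uniform estimate---for instance, first bounding the prefactor below by $2/d$ (which follows from $m-1\ge d$) and then checking $\tfrac{2}{d}\binom{N+d-2}{N-1}>c$, which is itself not a one-liner for all $d$. As written, this step is incomplete.
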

\begin{proof}
Consider the exact sequence coming from the first short exact sequence of \eqref{eq: cut exact}:
\[
\xymatrix{
H^{1}(\Flag_{(1,2)}\C^{N+1}, I_{0})
\ar[r]
&
H^{1}(\Flag_{(1,2)}\C^{N+1}, \L_{m,n})
\ar[r]
&
H^{1}(X, S^{m}\Omega_{X}(m+n)).
}
\]
Observe that the hypothesis on \(n\) implies the non-vanishing \(h^{1}(\L_{m,n}) \neq 0\), by Theorem \ref{thm: coho sym proj}.
Therefore, in order to prove the non-vanishing of \(H^{1}(X, S^{m}\Omega_{X}(m+n))\), it is enough to show that
\[
h^{1}(I_{0})=0.
\]
Using the second short exact sequence of \eqref{eq: cut exact}, one sees that it is actually enough to show that
\[
h^{1}
\Big(
\big(\bigwedge^{2c-1}\E^{\mathbi{d}}\big)_{m-c,n+c-2\abs{\mathbi{d}}}
\Big)
=
0.
\]
Using the identity \eqref{eq: identity 1} and Lemma \ref{lemma: support}, one deduces the sought result.
\end{proof}

\subsubsection{Vanishing theorems for \(H^{i}\), in codimension \(c<\frac{N}{2}-i\).}
Using the same reasoning as in the proof of Theorem \ref{thm: application 1.1 vanish} or \ref{thm: application 1.2 vanish}, we can again give an alternative proof of the known vanishing theorem of Bruckmann-Rackvitz \cite{BR}[Theorem 4 (i)], in the case of symmetric powers:
\begin{theorem}
\label{thm: application 1.3 vanish}
Let \(i \in \N\) be an integer. Let \(X \subset \P^{N}\) be a smooth complete intersection of codimension \(c < \frac{N}{2}-i\), and multi-degree \(\mathbi{d}=(d_{1}, \dotsc, d_{c})\).  
For any \(m \in \N\) and any \(n \in \Z\), one has the following vanishing result:
 \[
 H^{i}(X, S^{m}\Omega_{X}(m+n))
 =
 (0).
 \]
\end{theorem}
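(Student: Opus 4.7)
The plan is to adapt the diagram-chasing method used in the proofs of Theorems \ref{thm: application 1.1 vanish} and \ref{thm: application 1.2 vanish} to intermediate cohomology. The key observation is that, for $i \ge 2$ (the cases $i = 0, 1$ being already treated under their own sharper hypotheses), the chase becomes uniform in $m$ and $n$: no extra condition is needed because Theorem \ref{thm: coho sym proj} already forces $H^j(\Flag_{(1,2)}\C^{N+1}, \L_{m,n}) = 0$ for all $j \notin \{0, 1, N\}$, and the analogous vanishing statement for every intermediate piece $\bigwedge^k \E^{\mathbi{d}} \otimes \L_{m-c, n+c-2\abs{\mathbi{d}}}$ of the resolution \eqref{eq: resolution} is delivered by Lemma \ref{lemma: support}. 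Here one decomposes $\bigwedge^k \E^{\mathbi{d}}$ as a direct sum of tensor products of $\E$'s and line bundles, each rank-$2$ summand $\E_{0, d_j-1}$ contributing to the exterior power either trivially, linearly, or through its determinant $\L_{1, 2d_j-1}$.

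Setting $\mathcal{F} := j_* \mathcal{O}_{\P(TX)}(m) \otimes \pi^* \mathcal{O}_X(m+n)$, so that Bott's formulas in the relative setting give $H^i(X, S^m \Omega_X(m+n)) \simeq H^i(\Flag_{(1,2)}\C^{N+1}, \mathcal{F})$, I would chase the long exact sequences coming from the $2c$ successive short exact sequences of \eqref{eq: cut exact}. Taking the first one yields $H^i(\Flag_{(1,2)}\C^{N+1}, \mathcal{F}) \simeq H^{i+1}(\Flag_{(1,2)}\C^{N+1}, I_0)$ as soon as $H^i$ and $H^{i+1}$ of $\L_{m,n}$ vanish, and inductively each next short exact sequence bumps the degree by one to produce the string of isomorphisms
\[
H^i(\Flag_{(1,2)}\C^{N+1}, \mathcal{F}) \simeq H^{i+1}(\Flag_{(1,2)}\C^{N+1}, I_0) \simeq \dotsb \simeq H^{i + 2c - 1}(\Flag_{(1,2)}\C^{N+1}, I_{2c-2}).
\]
Under the hypotheses $i \ge 2$ and $c < N/2 - i$, every cohomological index arising in the chase lies in $[i, i + 2c] \subseteq [2, N - 3]$ (from $i \ge 2$ and $i + 2c < N - i \le N - 2$), hence well outside $\{0, 1, N\}$, so every obstruction group in the long exact sequences vanishes by the support statement above.

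The chain is closed off with the last short exact sequence $0 \to \L_{m-c, n+c-2\abs{\mathbi{d}}} \to \E^{\mathbi{d}} \otimes \L_{m-c, n+c-2\abs{\mathbi{d}}} \to I_{2c-2} \to 0$, whose long exact sequence yields (using once more the support statement) an injection
\[
H^{i + 2c - 1}(\Flag_{(1,2)}\C^{N+1}, I_{2c-2}) \hookrightarrow H^{i + 2c}(\Flag_{(1,2)}\C^{N+1}, \L_{m-c, n+c-2\abs{\mathbi{d}}}).
\]
The target vanishes by Bott's formulas: $i + 2c < N$ by the codimension hypothesis and $i + 2c \ge 4$ by $i \ge 2, c \ge 1$, so $i + 2c \notin \{0, 1, N\}$. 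This forces $H^i(X, S^m \Omega_X(m+n)) = 0$ and finishes the argument. The proof is essentially mechanical bookkeeping; the only potential subtlety is tracking the degree shifts at each step of the chase to verify that every index remains in the safe window, which the codimension bound $c < N/2 - i$ (itself slightly stronger than the apparently tight condition $c < (N-i)/2$) comfortably ensures.
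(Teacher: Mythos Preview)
Your proof is correct and is exactly the argument the paper has in mind: the paper itself only writes ``same reasoning as in the proof of Theorem \ref{thm: application 1.1 vanish} or \ref{thm: application 1.2 vanish}'' and leaves the details to the reader, and your diagram chase through the cut-out resolution \eqref{eq: cut exact}, using Lemma \ref{lemma: support} (and the decomposition of $\bigwedge^{k}\E^{\mathbi{d}}$ into summands of type $(\E^{\otimes \ell})_{r,s}$) to kill every obstruction group in degrees outside $\{0,1,N\}$, is precisely that reasoning. One small wording caveat: the cases $i\in\{0,1\}$ are not merely ``already treated'' --- as literally stated (for all $m,n$) the theorem is in fact false for those $i$ by the non-vanishing Theorems \ref{thm: application 1.1 non-vanish 1} and \ref{thm: application 1.2 non vanish}, so the statement should be read as intended for $i\ge 2$ only, which is what your argument establishes.
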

We leave the (easy) details to the reader.

\subsection{Second application: the algebra \(\bigoplus_{m \in \N} H^{0}(X, S^{m}\Omega_{X}(m))\) for smooth complete intersections of codimension \(c < \frac{N}{2}\).}
\label{sect: algebra}
In this section, we fix a smooth complete intersection of codimension \(c\)
 \[
 X
 =
 \Set{q_{1}=0} \cap \dotsb \Set{q_{k}=0} \cap \Set{P_{k+1}=0} \cap \dotsb \cap \Set{P_{c}=0}
  \subset \P^{N},
\]
where \(q_{1}, \dotsc, q_{c}\) are quadratic polynomials, and where \(\deg(P_{i}) > 2\) for \(i>k\). Denote \(\mathbi{d}=(\underbrace{2,\dotsc, 2}_{\times k}, d_{k+1}, \dotsc, d_{c})\) the multi-degree of \(X\).

Following the proof of the non-vanishing Theorem \ref{thm: application 1.1 non-vanish 1}, we can prove the following result:
\begin{theorem}
\label{thm: sym alg}
Suppose that \(X\) is of codimension \(c < \frac{N}{2}\). Then there is a natural graded isomorphism of \(\C\)-algebras
\[
\bigoplus_{m \in \N} H^{0}(X, S^{m}\Omega_{X}(m))
\simeq
\C[q_{1}, \dotsc, q_{k}] 
\subset \C[X].
\]
\end{theorem}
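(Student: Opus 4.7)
The plan is to construct an explicit graded algebra homomorphism
\[
\Phi\colon \C[q_1,\dotsc,q_k] \longrightarrow \bigoplus_{m\in\N} H^0(X, S^m\Omega_X(m)),
\]
grading the source by polynomial degree (so each quadric \(q_i\) sits in degree \(2\)), and then to show that \(\Phi\) is bijective. On generators, \(\Phi\) is defined using the non-vanishing construction of Theorem~\ref{thm: application 1.1 non-vanish 1} in the extremal case \(d=2\), \(m=2\), \(n=0\): each defining quadric \(q_i\) (\(1\le i\le k\)) yields, through the kernel of the map \(\tilde\psi\) appearing in that proof and the connecting homomorphisms of the cut-out resolution \eqref{eq: cut exact}, a distinguished non-zero class \(\sigma(q_i)\in H^0(X, S^2\Omega_X(2))\). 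Extend \(\Phi\) multiplicatively using the commutative algebra structure on \(\bigoplus_m H^0(X, S^m\Omega_X(m))\).

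For surjectivity, one refines the diagram chase of Theorem~\ref{thm: application 1.1 non-vanish 1} into an exact dimension count. Apply the cut-out resolution \eqref{eq: cut exact} with \(n=0\) and track cohomology stage by stage through each short exact sequence; using the wedge decomposition
\[
\textstyle\bigwedge^j \E^{\mathbi{d}}\simeq\bigoplus_{\substack{S\sqcup T\subset\{1,\dotsc,c\}\\ |S|+|T|=j}}\Bigl(\bigotimes_{i\in S}\E_{0,d_i-1}\Bigr)\otimes\Bigl(\bigotimes_{i\in T}\L_{1,2d_i-1}\Bigr),
\]
Lemma~\ref{lemma: support} together with the codimension hypothesis \(c<\tfrac{N}{2}\) shows that any summand involving an \(\E_{0,d_i-1}\) factor with \(d_i>2\) has cohomology supported in degree \(N\) only, and hence does not feed into \(H^0(X, S^m\Omega_X(m))\) through the diagram chase. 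The only surviving contributions come from the quadric-indexed factors \(\E_{0,1}\) (\(i\le k\)); via the identifications \(H^1(\Flag_{(1,2)}\C^{N+1},\E_{r,-2})\simeq H^1(\Flag_{(1,2)}\C^{N+1},\L_{r,-1})\) from that proof and the explicit Koszul maps of Section~\ref{sect: Koszul complex 2: ci}, the surviving subcomplex may be identified with the Koszul complex on the regular sequence \(q_1(Y),\dotsc,q_k(Y)\) in \(\C[Y]\). Its \(H^0\)-cohomology in bi-degree \((m,0)\) then matches precisely the degree-\(m\) piece of \(\C[q_1,\dotsc,q_k]\): it is \(S^{m/2}(\langle q_1,\dotsc,q_k\rangle)\) for \(m\) even and \(0\) for \(m\) odd.

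Injectivity of \(\Phi\) follows from the fact that \(q_1,\dotsc,q_k\) form part of the regular sequence defining \(X\), so that \(q_1(Y),\dotsc,q_k(Y)\) form a regular sequence in \(\C[Y]\) and are in particular algebraically independent; a vanishing relation \(R(\sigma(q_1),\dotsc,\sigma(q_k))=0\), lifted through the above identifications, would force \(R(q_1(Y),\dotsc,q_k(Y))=0\) in \(\C[Y]\), and hence \(R=0\). The main obstacle lies in the surjectivity step: one must verify that the connecting maps arising from the successive short exact sequences of \eqref{eq: cut exact} assemble \emph{exactly} into the Koszul differential on \(q_1(Y),\dotsc,q_k(Y)\), with no spurious leakage from higher wedge summands mixing quadric and non-quadric factors. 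Doing this cleanly requires repeated application of Lemma~\ref{lemma: support} at each stage, together with the explicit form of the transition matrices defining \(\E\) from Section~\ref{subs: Koszul complex 2: hyp} and of the push-forward differentials \eqref{eq: arrows cod 2} (together with their higher-codimension analogues).
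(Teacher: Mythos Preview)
Your setup agrees with the paper up to the point where the diagram chase, together with the hypothesis \(c<\tfrac{N}{2}\), identifies
\[
H^{0}(X,S^{m}\Omega_{X}(m))\;\simeq\;\Ker\bigl(\phi_{m}\colon \C[Y]_{m-2}^{\oplus k}\longrightarrow \Coker(\delta\colon \C[Y,X]_{m,0}\to \C[Y,X]_{m-1,1})\bigr),
\]
with \(\phi_{m}(A_{1},\dotsc,A_{k})=[\sum_{i}(\diff q_{i})_{X}(Y)A_{i}]\). But the subsequent step contains a genuine gap. The assertion that the surviving subcomplex ``may be identified with the Koszul complex on the regular sequence \(q_{1}(Y),\dotsc,q_{k}(Y)\) in \(\C[Y]\)'' is not justified, and the stated output is incompatible with what a Koszul complex actually computes. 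For a regular sequence \(q_{1}(Y),\dotsc,q_{k}(Y)\), the Koszul complex is exact except at one end, where its cohomology is \(\C[Y]/(q_{1},\dotsc,q_{k})\); in no degree does it yield \(S^{m/2}(\langle q_{1},\dotsc,q_{k}\rangle)\). Already for \(k=1\) the discrepancy is visible: the Koszul complex \(\C[Y]\xrightarrow{\cdot q_{1}(Y)}\C[Y]\) has cohomology \(0\) and \(\C[Y]/(q_{1})\), whereas you need \(\Ker\phi_{m}\) to be one-dimensional for \(m\) even and zero for \(m\) odd. The condition defining \(\Ker\phi_{m}\) is that \(\sum_{i}(\diff q_{i})_{X}(Y)A_{i}\in\Image(\delta)\); since \(\delta(q_{i}(Y)A_{i})=(\diff q_{i})_{X}(Y)A_{i}+q_{i}(Y)\delta(A_{i})\), this is \emph{not} a multiplication-by-\(q_{i}\) condition on the \(A_{i}\), and there is no evident Koszul structure to exploit.

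The paper proceeds quite differently at this point. Instead of any Koszul identification, it analyzes \(\Ker\phi_{m}\) directly: starting from \(\delta(B)=\sum_{i}(\diff q_{i})_{X}(Y)A_{i}\), it applies the ``reversed'' operator \(\delta^{\mathrm{rev}}=\sum_{j}Y_{j}\partial/\partial X_{j}\) to obtain \(mB=2\sum_{i}q_{i}A_{i}\), and then rewrites the system as the single differential-form identity
\[
\diff\Bigl(\sum_{i}q_{i}A_{i}\Bigr)=\tfrac{m}{2}\sum_{i}A_{i}\,\diff q_{i}.
\]
Passing to holomorphic coordinates \((z_{1},\dotsc,z_{k},z_{k+1},\dotsc,z_{N+1})\) with \(z_{i}=q_{i}\) (available because \(q_{1},\dotsc,q_{k}\) are part of the defining regular sequence), the Poincar\'e lemma produces a potential \(A\) with \(A_{i}=\partial A/\partial z_{i}\), and a homogeneity comparison forces \(A\) to be homogeneous of degree \(m/2\) in \((q_{1},\dotsc,q_{k})\) (hence zero for \(m\) odd). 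This is the analytic step that replaces your Koszul claim, and it is what actually produces the identification with \(\C[q_{1},\dotsc,q_{k}]\).
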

\begin{proof}
Follow the same reasoning as in the proof of \ref{thm: application 1.1 non-vanish 1}, but set instead \(m \in \N_{\geq 1}\) arbitrary, and fix \(n=0\) (so that \(m>n\), which was required during the reasoning). For the purposes of the proof of Theorem \ref{thm: application 1.1 non-vanish 1}, it was enough to know that there is an injection
\begin{equation}
\label{eq: isom}
\xymatrix{
0
\ar[r]
&
H^{1}(\Flag_{(1,2)}\C^{N+1}, \big(\bigwedge^{2c-1}\E^{\mathbi{d}}\big)_{m-c,c-2\abs{\mathbi{d}}})
\ar[r]
&
H^{1}(\Flag_{(1,2)}\C^{N+1}, I_{0}).
}
\end{equation}
(For which the hypothesis \(c < \frac{N+1}{2}\) was enough).
Here, one needs the above map to be an isomorphism.This follows from Lemma \ref{lemma: support}, the successive short exact sequences of \eqref{eq: cut exact}, and the hypothesis on the codimension \(c < \frac{N}{2}\).

That being said, the reasoning of the proof of Theorem \ref{thm: application 1.1 non-vanish 1} shows that for any \(m \in \N_{\geq 1}\), the space of global sections \(H^{0}(X, S^{m}\Omega_{X}(m))\) is isomorphic to the kernel of the map
\[
\phi_{m}\colon\left(
\begin{array}{ccc}
 \C[Y]_{m-2}^{\oplus k} 
 & 
 \longrightarrow 
 & 
 \Coker \Big( \C[Y,X]_{m,0} \overset{\delta}{\longrightarrow} \C[Y,X]_{m-1,1}\Big)
  \\
 (A_{1}, \dotsc, A_{k})
  & 
  \longmapsto  
  &  
  \Big[(\diff q_{1})_{X}(Y)A_{1} + \dotsb + (\diff q_{k})_{X}(Y)A_{k}\Big]
\end{array}
\right).
\]

Start by roughly analyzing the constraints on the elements in the kernel. Let \((A_{1}, \dotsc, A_{k})\) be in the kernel of \(\phi_{m}\). By definition, there exists a polynomial \(B \in \C[Y]_{m}\) such that
\begin{equation}
\label{eq: ident kernel}
\delta(B)=\delta(q_{1}(Y))A_{1} + \dotsb + \delta(q_{k}(Y))A_{k}.
\end{equation}
Consider the operator \(\delta^{rev}\bydef \sum\limits_{i=0}^{N} Y_{i} \frac{\partial}{\partial X_{i}}\), and compute that
\[
\delta^{rev} \circ \delta
=
\sum\limits_{i=0}^{N} Y_{i} \frac{\partial}{\partial Y_{i}}
+
\sum\limits_{0 \leq i,j \leq N} Y_{i}X_{j} \frac{\partial^{2}}{\partial X_{j} \partial Y_{j}}.
\]
Applying \(\delta^{rev}\) to both sides of the equality \eqref{eq: ident kernel} (and using the Leibnitz rule), one finds the equality:
\[
mB
=
2\sum\limits_{i=1}^{k} q_{i}A_{i}.
\]
(A word of caution: from now on, when writing the symbol \(q_{i}\), one means the quadratic polynomial \(q_{i}\) \textsl{in the variables \(Y\)}).
Reinjecting the above equality into the equality \eqref{eq: ident kernel}, one finds that the following equalities are satisfied for any \(0 \leq j \leq N\):
\[
\frac{\partial}{\partial Y_{j}}
\big(\sum\limits_{i=1}^{k} q_{i}A_{i}\big)
=
\frac{m}{2}
\sum\limits_{i=1}^{k} \frac{\partial q_{i}}{\partial Y_{j}}A_{i}.
\]
Now, observe that this set of equalities is equivalent to the following equality
\begin{equation}
\label{eq: coord free}
\diff \big(\sum\limits_{i=1}^{k} q_{i} A_{i}\big)
=
\frac{m}{2} \sum\limits_{i=1}^{k} A_{i}\diff q_{i} ,
\end{equation}
where \(\diff\) is the exterior differential. Reciprocally, one immediately checks that any \(k\)-uple of polynomials \((A_{1}, \dotsc, A_{k}) \in \C[Y]^{\oplus k}\) satisfying the equality \eqref{eq: coord free} defines an element in the kernel of \(\phi_{m}\).

Now that one has given a characterization of the kernel of \(\phi_{m}\) in complex analytic terms, one can use tools from complex analysis to study it. Since \(X\) is a complete intersection, the Jacobian matrix formed with the partial derivatives of \(q_{1}, \dotsc, q_{k}\) is of maximal rank on a non-empty open subset \(U\) of \(\Set{q_{1}=0}\cap \dotsc \cap \Set{q_{k}=0}\). Fix \(u \in U\), and consider accordingly system of holomorphic coordinates around \(u\) of the following form:
\[
Z\bydef (z_{1}\bydef q_{1}, \dotsc, z_{k}\bydef q_{k}, z_{k+1}, \dotsc, z_{N+1}).
\]
One is interested in the convergent power series \((A_{1}, \dotsc, A_{k})\) in the variables \(Z\) satisfying the equality:
\begin{equation}
\label{eq: holo}
\diff \big(\sum\limits_{i=1}^{k} z_{i} A_{i}\big)
=
\frac{m}{2} \sum\limits_{i=1}^{k} A_{i}\diff z_{i}.
\end{equation}
Note first that the equality \eqref{eq: holo} implies that the differential form \(\sum\limits_{i=1}^{k} A_{i}\diff z_{i}\) is closed. Hence, by the Poincaré lemma, there exists a convergent power serie \(A\), depending only the variables \((z_{1}, \dotsc, z_{k})\) and which can be supposed to vanish at \(u\), such that for any \(1 \leq i \leq k\), the following holds
\[
\frac{\partial A}{\partial z_{i}}
=
A_{i}.
\]
Decompose the power serie \(A\) into its homogeneous parts
\[
A=A^{1} + A^{2} \dotsb,
\]
where \(A^{i}\) is a homogeneous polynomial of degree \(i\) in the variables \((z_{1}, \dotsc, z_{k})\). The equality \eqref{eq: holo} can then be rewritten as follows:
\[
\diff
\big(
A^{1} + 2A^{2} + \dotsb
\big)
=
\frac{m}{2}\diff
\big(
A^{1} + A^{2} + \dotsb
\big).
\]
This implies that \(A\) must be zero if \(m\) is odd, and that \(A\) must be a homogeneous polynomial of degree \(\frac{m}{2}\) is \(m\) is even.

In conclusion, the reasoning in the previous paragraphs shows the following:
\begin{itemize}
\item{} if \(m\) is odd, then the kernel of \(\phi_{m}\) is trivial;
\item{} if \(m\) is even, then the kernel of \(\phi_{m}\) is equal to
\[
\Big\{
\big(\frac{\partial P}{\partial q_{1}}, \dotsc, \frac{\partial P}{\partial q_{k}}\big)
\ | \
P(q_{1}, \dotsc, q_{k}) \ \text{homogeneous of degree \(\frac{m}{2}\) (in the variables \((q_{1}, \dotsc, q_{k})\))}
\Big\}.
\]
\end{itemize}
In the second case, the elements in the kernel are thus in natural bijection with \(\C[q_{1}, \dotsc, q_{k}]_{\frac{m}{2}}\). All of this allows to define a natural bijective map
\[
\bigoplus_{m \in \N} H^{0}(X, S^{m}\Omega_{X}(m))
\longrightarrow
\C[q_{1}, \dotsc, q_{k}],
\]
and it is a simple verification to show that it respects the structure of algebra. This finishes the proof of the theorem.
\end{proof}
\begin{remark}
\label{rem: sym alg intro}
Let us note that there are still interesting features that can be said in the case where the codimension is equal to \(c=\frac{N}{2}\). In order to apply the previous proof, one has to show that the map \eqref{eq: isom} is an isomorphism for any \(m \in \N\). By the successive short exact sequences \eqref{eq: cut exact}, one has the isomorphisms:
\[
H^{2}(\Flag_{(1,2)}\C^{N+1}, I_{1}) 
\simeq 
\dotsb 
\simeq 
H^{2c-1}(\Flag_{(1,2)}\C^{N+1}, I_{2c-2}).
\]
Furthermore, with the last short exact sequence of \eqref{eq: cut exact}, one has that the last space \(H^{2c-1}(\Flag_{(1,2)}\C^{N+1}, I_{2c-2})\) is equal to:
\[
\Ker \Big(
H^{N}(\Flag_{(1,2)}\C^{N+1}, \L_{m-c,c-2\abs{\mathbi{d}}})
\overset{\psi}{\longrightarrow}
\bigoplus_{i=1}^{c} H^{N}(\Flag_{(1,2)}\C^{N+1}, \E_{m-c, c-2\abs{\mathbi{d}}+d_{i}-1})
\Big).
\]
Using the ideas and reformulation techniques described throughout Section \ref{sect: coho sym power ci}, the map \(\psi\) can be described as follows:
\[
\xymatrix{
\bigoplus_{i=1}^{c} \frac{\C[Y,X]_{m-c+1, 2\abs{\mathbi{d}}+1-d_{i}-c-(N+1)}}{(q^{2})}
\ar[rrrr]^-{\alpha^{*}(P_{1})(\cdot) + \dotsb + \alpha^{*}(P_{c})(\cdot)}
&&&&
\frac{\C[Y,X]_{m-c, 2\abs{\mathbi{d}}-c-(N+1)}}{(q)}
},
\]
where we recall that \(\alpha^{*}(P_{j})(\cdot)\bydef \sum\limits_{i=0}^{N} \frac{\partial P_{j}}{\partial X_{i}}\frac{\partial}{\partial Y_{i}}(\cdot)\). Proving that \eqref{eq: isom} is an isomorphism is then equivalent to proving that the above map is surjective.

For example, in the case where \(X=\Set{q_{1}=0}\cap\Set{q_{2}=0} \subset \P^{4}\) is a smooth complete intersection of two quadrics in the \(4\)-dimensional projective space, the surjectivity is easily checked. Indeed, for any \(m \in \N\), the map
\[
\alpha^{*}(q_{1})\colon
\C[Y,X]_{m,0} 
\longrightarrow
\C[Y,X]_{m-1,1}
\]
is already surjective: it can be checked directly, or one can notice that the cokernel of this map is actually isomorphic to \(H^{1}(\Flag_{(1,2)}\C^{N+1}, \L_{m,0})\), which is zero. This allows to recover the result that the tangent bundle \(TX\) is not big, which was recently proved in \cite{Mall} and independently in \cite{Hor}. Indeed, one has the isomorphism \(TX \simeq \Omega_{X}(1)\), and the description of
\[
\bigoplus_{m \in \N} H^{0}(X, S^{m}\Omega_{X}(m)) \simeq \C[q_{1}, q_{2}]
\]
readily implies that \(TX\) is not big (using the description of bigness via the asymptotic growth of the dimension of the space of global section of larger and larger symmetric powers of the tangent bundle).
\end{remark}

\subsection{Third application: intermediate ampleness of cotangent bundles of (general) hypersurfaces.}
\label{sect: intermediate ampleness}
Recall that, on a projective variety \((X, \O_{X}(1))\) polarized by a very ample line bundle, a vector bundle \(\E\) is said (uniformly) \(q\)-ample if there exists a constant \(\lambda>0\) such that for any \(m, n \in \N_{\geq 1}\) satifsying the inequality \(m \geq \lambda n\), and any \(i > q \), the following holds:
\[
H^{i}(X, S^{m}\E(-n))
=
(0).
\]
For \(q=0\), one recovers the usual  ampleness for vector bundles, and note that these notions form a string of implication, namely 
\begin{center}
\(0\)-ampleness \(\Rightarrow\) \(1\)-ampleness \(\Rightarrow \dotsb \Rightarrow\) \(\dim(X)\)-ampleness.
\end{center}
Also, note that any vector bundle on \(X\) is automatically \(\dim(X)\)-ample (since any coherent sheaf on a projective variety of dimension \(n\) has its cohomology supported in degree less or equal than \(n\) by Grothendieck's vanishing theorem (see \cite{Harts}[III. Theorem 2.7])). We refer to \cite{Tot} or the survey \cite{Greb} for more details on partial ampleness.

If we consider the projective variety \((\P^{N}, \O_{\P^{N}}(1))\), one easily sees by Theorem \ref{thm: coho sym proj} that the cotangent bundle \(\Omega_{\P^{N}}\) has the least possible positivity property, namely it is merely \(N\)-ample. Recall Xie-Brotbek--Darondeau theorem (see \cite{Xie} and \cite{BD}), which asserts that a sufficiently general complete intersection \(X\), of codimension \(c \geq \frac{N}{2}\) and of multi-degree large enough, has ample cotangent bundle.
 In view of this result, it is natural to address the following question. Consider a smooth projective variety \(X\), and suppose that \(\Omega_{X}\) is \(q\)-ample. Does it hold that, if one cuts \(X\) by a sufficiently ample and sufficiently general (smooth) hypersurface \(H\), the cotangent bundle \(\Omega_{H}\) is \((q-2)\)-ample?\footnote{By convention,  one sets the notion of \(q\)-ampleness to be equivalent to the notion of \(0\)-ampleness for any \(q \leq 0\).}

This question has a positive answer in the case of hypersurfaces in projective spaces, and it turns out that, in this case:
\begin{itemize}
\item{} it is enough to suppose \(\deg(H) \geq 4\);
\item{} the generality assumption on the hypersurface can be dropped.
\end{itemize}
One then may wonder if the generality assumption could be dropped in the above question: we do not know, but it would definitely be striking if we could!

The result in the case of hypersurfaces is quite recent, as it was proved in \cite{Hor}. In \textsl{loc.cit}, the authors rather look at the space of twisted global symmetric vector fields on an hypersurface \(X \subset \P^{N}\), and prove a vanishing result implying the \((N-2)\)-ampleness of the cotangent bundle \(\Omega_{X}\) (via Serre duality). Their proof relies on the isomorphism
\[
TX \simeq \bigwedge^{N-2}\Omega_{X} \otimes K_{X}^{\vee}
\]
and the vanishing theorem of Bruckmann-Rackvitz \cite{BR}[Theorem 4 (iii)] for the composition of Schur functors \(S^{m}  \circ \bigwedge^{N-2}\) (which can be shown, via some Plethysm, to be the Schur functor associated to the partition \((\underbrace{m, \dotsc,m}_{\times (N-2)})\), so that the vanishing theorem applies, see \cite{Hor}). 

In this section, we use the description of cohomology given in Section \ref{sect: coho hyp 1} to prove the following vanishing theorem, slightly weaker than the one in \cite{Hor}:
\begin{theorem}
\label{thm: vanishing hyp}
Let \(H\) be a smooth hypersurface of degree \(d\), whose defining equation \(P\) is of the following form
\[
P \bydef X_{N}^{d}-F(X_{0}, \dotsc, X_{N-1}),
\]
where \(F\) is a smooth polynomial, homogeneous of degree \(d\). 
Let \(m \in \N\), \(n \in \Z\), and suppose that \(d=\deg(H) \geq 3\).
As soon as \(m(d-3) > n\), one has the vanishing:
\[
H^{N-1}(H, S^{m}\Omega_{H}(d-n-(N+1)))
=
(0).
\]
In particular, as soon as \(d \geq 4\), this shows that \(\Omega_{H}\) is uniformly \(q\)-ample (with uniform bound \(\lambda=\frac{1}{d-3}\) if \(\deg(H) \geq N+1\)), and hence so is a general hypersurface of degree \(d \geq 4\).
\end{theorem}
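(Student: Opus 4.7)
First I would apply Theorem \ref{thm: coho hyp 1}, which (combined with the Serre-duality reformulation developed in Section \ref{sect: coho hyp 1}) identifies \(H^{N-1}(H, S^{m}\Omega_{H}(d-n-(N+1)))\) with the kernel of the map
\[
\alpha^{*}(P)\colon \frac{S_{m, m+n}}{(P, q)} \longrightarrow \frac{S_{m-1, m+n+d-1}}{(P, q)},
\]
where \(S \bydef \C[Y,X]\), \(q = \sum_{i} X_{i}Y_{i}\), and \(\alpha^{*}(P)(\cdot) = \frac{1}{d}\sum_{i} \frac{\partial P}{\partial X_{i}}\frac{\partial}{\partial Y_{i}}(\cdot)\). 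The bookkeeping of bi-gradings is dictated by Lemma \ref{lemma: reformulation Serre}: the twist \(\O_{H}(d-n-(N+1))\) corresponds, after Serre duality and the renormalization map \(u\), to the graded piece of \(S/(P,q)\) of \(X\)-bi-degree \(m+n\). Thus the theorem reduces to showing: whenever \(n < m(d-3)\), any \(A \in S_{m, m+n}\) satisfying \(\alpha^{*}(P)(A) \in (P,q)\) itself lies in \((P,q)\).

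Next, exploit the particular form \(P = X_{N}^{d} - F(X_{0}, \dotsc, X_{N-1})\) to decompose
\[
\alpha^{*}(P) = -\alpha^{*}(F) + X_{N}^{d-1}\frac{\partial}{\partial Y_{N}},
\]
where \(\alpha^{*}(F) = \frac{1}{d}\sum_{i=0}^{N-1}\frac{\partial F}{\partial X_{i}}\frac{\partial}{\partial Y_{i}}\) is the Gauss-type operator attached to the smooth hypersurface \(\{F=0\} \subset \P^{N-1}\). Expand \(A = \sum_{j \geq 0} A_{j} Y_{N}^{j}\) with \(A_{j} \in \C[Y_{0}, \dotsc, Y_{N-1}, X]\), expand similarly the lifts \(B, C\) appearing in a chosen equality \(\alpha^{*}(P)(A) = PB + qC\), and compare coefficients of \(Y_{N}^{j}\). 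This produces a triangular system whose top equation, for the largest \(J\) with \(A_{J} \neq 0\), is of the form \(\alpha^{*}(F)(A_{J}) \in (X_{N}^{d}-F,\, q')\) inside \(\C[Y_{0}, \dotsc, Y_{N-1}, X]\), where \(q' = \sum_{i<N} X_{i}Y_{i}\).

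The central step is then to invoke Theorem \ref{thm: partial intro} for the tuple \((\partial F/\partial X_{0}, \dotsc, \partial F/\partial X_{N-1})\), a family of homogeneous polynomials of common degree \(d-1\) sharing only the origin as common zero (by smoothness of \(\{F=0\}\)). That theorem yields that the PDE \(\alpha^{*}(F)\) admits no non-trivial bi-homogeneous solution in \(\C[Y_{0}, \dotsc, Y_{N-1}, X_{0}, \dotsc, X_{N-1}]_{r,s}\) as soon as \((d-1)r > s\). The hypothesis \(n < m(d-3)\) is precisely calibrated so that the degree inequality \((d-1)(m-J) > m+n\) holds for every relevant \(J\); this forces each \(A_{J}\) either to vanish or to be absorbable into \((P,q)\) via a suitable choice of the components \(B_{J}, C_{J}\). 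Iterating downwards on \(J\) then produces \(A \in (P,q)\), concluding the argument.

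The main obstacle is to carry out this absorption rigorously while modding out by both \(P\) and \(q\) simultaneously. The operator \(\alpha^{*}(P)\) does preserve \((P,q)\) thanks to the identities \(\alpha^{*}(P)(P) = 0\) and \(\alpha^{*}(P)(q) = P\) (the latter by Euler's relation), but tracking the residual \(X_{N}\)-terms through the \(Y_{N}\)-filtration requires careful bookkeeping. The very shape \(P = X_{N}^{d} - F\) is what allows this bookkeeping to close, as it untangles the two ideals along the distinguished variable \(X_{N}\); this is precisely why the method is not (yet) able to handle an arbitrary smooth hypersurface.
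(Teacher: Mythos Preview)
Your reduction via Theorem \ref{thm: coho hyp 1} to the injectivity of \(\alpha^{*}(P)\) on \(S_{m,m+n}/(P,q)\) is correct, and you have correctly identified that the special shape of \(P\) should let one feed the problem into Theorem \ref{thm: partial intro} (Proposition \ref{prop: bound inj}) applied to the Gauss map of \(F\). However, the \(Y_{N}\)-filtration you propose does not close. Your claimed degree inequality \((d-1)(m-J) > m+n\) for every relevant \(J\) is false: for \(J\) close to \(m\) the left-hand side is tiny while the right-hand side is of size \(m\), so the inequality fails badly (take for instance \(J=m-1\), which would require \(d-1>m+n\)). In addition, the recursion obtained by comparing \(Y_{N}^{j}\)-coefficients is not genuinely triangular, because the term \(qC=(q'+X_{N}Y_{N})C\) couples the coefficient \(C_{j-1}\) into level \(j\), and your \(A_{J}\)'s still depend on \(X_{N}\), which sits outside the domain where Theorem \ref{thm: partial intro} for \(\alpha^{*}(F)\) applies. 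The paragraph in which you acknowledge that the absorption is the main obstacle is exactly where the argument breaks.

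The paper circumvents all of this with a different decomposition: rather than expanding in \(Y_{N}\), it \emph{dehomogenizes in} \(X_{N}\), setting \(x_{i}=X_{i}/X_{N}\). This makes \(q^{in}=\sum_{i<N}x_{i}Y_{i}+Y_{N}\) unitary in \(Y_{N}\), so one can perform Euclidean division of \(A^{in}\) by \(q^{in}\), writing \(A^{in}=\sum_{k} a_{k}(x,\overset{\wedge_{N}}{Y})(q^{in})^{k}\). The relation \(\alpha^{*}(P)^{in}(q^{in})=P^{in}=1+F(x)\) then forces \(\alpha^{*}(F)(a_{0})+(1+F)a_{1}=0\); because \(P^{in}\) has constant term \(1\), a short induction on homogeneous pieces shows \(a_{1}\in\Image\alpha^{*}(F)\), whence \(\alpha^{*}(F)(a_{0}+P^{in}b)=0\) for some \(b\). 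Crucially, \(a_{0}+P^{in}b\) keeps full \(Y\)-degree \(m\), while its \(x\)-degree has grown to at most \(n+2m\); the injectivity bound \((d-1)m>n+2m\) from Proposition \ref{prop: bound inj} is then \emph{exactly} the hypothesis \((d-3)m>n\). One concludes \(X_{N}^{M}A\in(P,q)\) after rehomogenizing, and finishes using that \((P,q)\) is prime. The point is that expanding in \(q^{in}\) (not in \(Y_{N}\)) preserves the large \(Y\)-degree needed for the injectivity bound, trading it for a controlled loss in \(x\)-degree; your expansion sacrifices \(Y\)-degree instead, which is fatal.
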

\begin{remark}
The bound in the statement is the best possible. Indeed, consider the simplest case of a curve \(H \subset \P^{2}\). By adjunction, one has the isomorphism
\[
T{H}^{\otimes m}\otimes \O_{H}(n)
\simeq
\O_{H}(n-m(d-3)),
\]
and the space of global sections of such a line bundle is zero if and only if \(m(d-3) > n\).
\end{remark}
The strategy to prove the result is simply the following: by Theorem \ref{thm: coho hyp 1}, the vanishing result is  equivalent to the injectivity of the map
 \[
 \alpha^{*}(P)_{m,m+n}\colon
 \frac{S_{m,m+n}}{(P,q)}
 \longrightarrow
 \frac{S_{m-1, m+n+d-1}}{(P,q)}.
 \]

Denote by \(I \bydef (P,q)\) the ideal spanned by \(P\) and \(q\). For later use, record the following elementary lemma:
\begin{lemma}
\label{lemma: primeness}
The ideal \(I\) is prime.
\end{lemma}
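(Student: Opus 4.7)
The plan is to show that the quotient ring $R \bydef S/I = \mathbb{C}[Y,X]/(P,q)$ is a domain, which is equivalent to $I$ being prime. First, I would establish that $(P,q)$ is a regular sequence in $S$. Since the hypersurface $H$ is smooth, $P$ is an irreducible polynomial in $\C[X]$, so $S/(P) \simeq (\C[X]/(P))[Y]$ is an integral domain. Moreover, $q = \sum X_i Y_i$ is a nonzero element of this ring (each $X_i$ is nonzero modulo $P$ since $\deg P \geq 2$), hence a non-zero-divisor. Consequently $R$ is Cohen--Macaulay of pure dimension $2N$, so in particular it has no embedded primes.

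Next, I would prove that the scheme-theoretic zero locus $V(P,q) \subset \mathbb{A}^{2N+2}$ is irreducible via a projection argument. Consider $\pi\colon V(P,q) \to V(P) \subset \mathbb{A}_X^{N+1}$. Over any point $x \in V(P) \setminus \{0\}$, the fiber is the hyperplane $\{y \in \mathbb{A}^{N+1} : \sum x_i y_i = 0\}$, an irreducible affine space of dimension $N$. Hence $\pi^{-1}(V(P) \setminus \{0\})$ is irreducible of dimension $2N$, and its Zariski closure $Z$ is an irreducible component of $V(P,q)$ of top dimension $2N$. By Krull's Hauptidealsatz applied to the regular sequence, every irreducible component of $V(P,q)$ has dimension $2N$. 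Since $V(P,q) \setminus Z$ is contained in $\{0\} \times \mathbb{A}_Y^{N+1}$, which has dimension $N+1 < 2N$ (using $N \geq 2$), no other component can exist, so $V(P,q) = Z$ is irreducible. In particular, $R$ has a unique minimal (and associated) prime $\mathfrak{p}$.

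It remains to prove that $R$ is reduced, i.e.\ that $\mathfrak{p} = 0$. The strategy is to exhibit a non-zero-divisor $X_0 \in R$ whose localization makes $R$ a subring of a domain. Because $V(P) \not\subset \{X_0 = 0\}$ (for dimension reasons, since $\dim V(P) = N$ while $\dim(V(P) \cap \{X_0=0\}) \leq N-1$), the variable $X_0$ does not vanish on the unique component of $V(P,q)$, hence $X_0 \notin \mathfrak{p}$; since $\mathfrak{p}$ is the only associated prime of $R$, this means $X_0$ is a non-zero-divisor. The localization $R[X_0^{-1}]$ is easy to compute: over $\C[X][X_0^{-1}]/(P)$ (a domain, being a localization of one), the relation $q=0$ lets us solve for $Y_0$, giving $R[X_0^{-1}] \simeq (\C[X][X_0^{-1}]/(P))[Y_1,\dotsc,Y_N]$, manifestly a domain. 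Thus $R \hookrightarrow R[X_0^{-1}]$ realizes $R$ as a subring of a domain, so $R$ itself is a domain.

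The main potential obstacle lies in the irreducibility step: one must carefully argue that no ``small'' component over $\{X=0\}$ appears, and the Cohen--Macaulay/Hauptidealsatz dimension count is precisely what makes this work. Once irreducibility is secured, the reducedness step is standard, as the localization trick bypasses the difficulty that $\C[X]/(P)$ need not be a UFD (so one cannot directly invoke the irreducibility of a linear form with coprime coefficients).
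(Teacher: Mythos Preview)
Your argument is correct and, in fact, more careful than the paper's own proof. The paper simply asserts that ``the Jacobian criterion allows to show that the variety defined by the vanishing of $P$ and $q$ is smooth'' and concludes primeness from this. Taken literally this is problematic: the affine variety $V(P,q)\subset\mathbb{A}^{2N+2}$ (or its projectivization in $\P^{2N+1}$) is \emph{singular} along the locus $\{X_0=\dotsb=X_N=0\}$, because $\nabla_X P$ vanishes identically there ($P$ being homogeneous of degree $d\geq 2$ in $X$), so the Jacobian of $(P,q)$ drops to rank $\leq 1$ at those points. A charitable reading is that the author has in mind the biprojective variety in $\P^N_Y\times\P^N_X$, which \emph{is} smooth by the Jacobian criterion and is visibly a $\P^{N-1}$-bundle over $H$, hence integral; but passing from this to primeness of the affine ideal $(P,q)$ still requires checking that $(P,q)$ is saturated, which amounts precisely to the Cohen--Macaulay and non-zero-divisor verifications you carry out.

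Your route is therefore genuinely different and more complete: you use the regular-sequence structure to get equidimensionality and absence of embedded primes, establish irreducibility via the hyperplane fibration over $V(P)\setminus\{0\}$, and prove reducedness by inverting $X_0$ and eliminating $Y_0$. One small remark: when you pass from ``each fiber over $V(P)\setminus\{0\}$ is an irreducible affine space'' to ``the total space is irreducible'', you are implicitly using that this fibration is Zariski-locally trivial (on $\{X_i\neq 0\}$ one solves for $Y_i$); since irreducible equidimensional fibers over an irreducible base do not in general force an irreducible total space, it would be worth making that local triviality explicit.
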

\begin{proof}
The Jacobian criterion allows to show that the variety defined by the vanishing of \(P\) and \(q\) is smooth. This immediately implies the primeness of \(I\).
\end{proof}
Let us now proceed to the proof of Theorem \ref{thm: vanishing hyp}:
\begin{proof}[Proof of Theorem \ref{thm: vanishing hyp}]
Let  \(A \in S_{m,m+n}\) be a polynomial such that \(\alpha^{*}(P)(A) \equiv 0 \mod I\).
One must show that
\[
A \equiv 0 \mod I.
\]
Note first that, up to adding an element of \(I\) to \(A\) (therefore leaving the class \(A \mod I\) unchanged), one can always suppose that
\[
\alpha^{*}(P)(A) 
\equiv 0
\mod (q).
\]
Indeed, by hypothesis, one can write that
\[
\alpha^{*}(P)(A)=PU + qV
\]
for some polynomials \(U, V\). Then, observe that the following equality holds:
\[
\alpha^{*}(P)(A-qU)
=
q\big(V+ \alpha^{*}(P)(U)\big).
\]

Now, deshomogenize the variable \(X\) by dividing by \(X_{N}\), and set \(x_{i}\bydef \frac{X_{i}}{X_{N}}\):
\[
\left\{ 
\begin{array}{ll}
q \rightsquigarrow q^{in}=\sum\limits_{i=0}^{N-1} x_{i}Y_{i} + Y_{N}
\\
P \rightsquigarrow P^{in}(x)=1+F(x)
\\
I \rightsquigarrow I^{in} \bydef (P^{in}, q^{in})
\\
\alpha^{*}(P)
\rightsquigarrow 
\alpha^{*}(P)^{in}
=
\frac{1}{d}
\big(
\sum\limits_{i=0}^{N-1} \frac{\partial F}{\partial x_{i}} \frac{\partial}{\partial Y_{i}}
+
\frac{\partial}{Y_{N}}
\big)
\\
A \rightsquigarrow A^{in}(x,Y)
\end{array}
\right..
\]
Note that one still has the equalities:
\begin{equation}
\label{eq: eq proof 1}
\left\{ 
\begin{array}{ll}
\alpha^{*}(P)^{in}(A^{in}) \equiv 0 \mod q^{in}
\\
\alpha^{*}(P)^{in}(q^{in})=P^{in}
\end{array}
\right..
\end{equation}
As \(q^{in}\) is now unitary in \(Y_{N}\), one can write uniquely (by Euclidean division)
\[
A
=
\sum\limits_{k=0}^{m} a_{k}(x,\overset{\wedge_{N}}{Y})(q^{in})^{k},
\]
where \(a_{k}(x,\overset{\wedge_{N}}{Y})\) does not depend of the variable \(Y_{N}\). Beware that the degree in \(x\) of the \(a_{k}'s\) has increased, and one has the following inequality for \(0\leq k \leq m\):
\[
\deg_{x}(a_{k})
\leq 
n+2m.
\]

The equations \eqref{eq: eq proof 1} imply that
\[
\alpha^{*}(P)^{in}(a_{0}) + P^{in}a_{1} \equiv 0 \mod q^{in}.
\]
Since \(\alpha^{*}(P)^{in}(a_{0}) + P^{in}a_{1}\) is independent of the variable \(Y_{N}\), the above equality implies in turn the equality:
\begin{equation}
\label{eq: eq proof 2}
\alpha^{*}(P)^{in}(a_{0}) + P^{in}a_{1}
=
0.
\end{equation}
Now, the key observation, which uses crucially the form of the equation \(P\),  is that one has the following equality:
\[
\alpha^{*}(P)^{in}(a_{0})
=
\frac{1}{d}\Big(
\sum\limits_{i=0}^{N-1} \frac{\partial F}{\partial x_{i}} \frac{\partial}{\partial Y_{i}}
\Big)(a_{0})
=
\alpha^{*}(F)(a_{0}),
\]
where \(F\) is homogeneous of degree \(d\) in the variables \(x=(x_{0}, \dotsc, x_{N-1})\). Therefore, the equality \eqref{eq: eq proof 2} rewrites:
\begin{equation}
\label{eq: eq proof 3}
\alpha^{*}(F)(a_{0}) 
+
(1+F)a_{1}
=
0.
\end{equation}
One now deduces from \eqref{eq: eq proof 3} the following lemma:
\begin{lemma}
\label{lemma: lemma proof}
The element \(a_{1}\) is in the image of \(\alpha^{*}(F)\).
\end{lemma}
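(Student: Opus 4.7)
The plan is to decompose the equation \(\alpha^{*}(F)(a_{0}) + (1+F)a_{1} = 0\) according to \(x\)-degree and argue by induction. First I would observe that \(F \in \C[x_{0}, \dotsc, x_{N-1}]\) is homogeneous of degree \(d\), hence \(\alpha^{*}(F) = \frac{1}{d}\sum_{i=0}^{N-1} \frac{\partial F}{\partial x_{i}} \frac{\partial}{\partial Y_{i}}\) is a bihomogeneous derivation on \(\C[x,Y_{0}, \dotsc, Y_{N-1}]\) of bidegree \((d-1,-1)\) with respect to \((x\text{-degree}, Y\text{-degree})\). Writing \(a_{0} = \sum_{j} a_{0}^{(j)}\) and \(a_{1} = \sum_{j} a_{1}^{(j)}\) for their decompositions into \(x\)-homogeneous components (finite sums, since we are dealing with polynomials), the \(x\)-degree \(j\) piece of the equation yields the recursion
\[
a_{1}^{(j)} = -\alpha^{*}(F)(a_{0}^{(j-d+1)}) - F\cdot a_{1}^{(j-d)}.
\]

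Next I would prove by induction on \(j\) that each \(a_{1}^{(j)}\) lies in \(\Image(\alpha^{*}(F))\). For \(j < d-1\), both \(a_{0}^{(j-d+1)}\) and \(a_{1}^{(j-d)}\) vanish, so \(a_{1}^{(j)} = 0\); this is the base case. For the inductive step, the key observation is that, because \(\alpha^{*}(F)\) differentiates only in the variables \(Y_{i}\) while \(F\) depends only on the \(x_{i}\), the Leibniz rule yields the crucial identity
\[
\alpha^{*}(F)(F\cdot b) = F\cdot \alpha^{*}(F)(b)
\]
for any polynomial \(b\). If by induction \(a_{1}^{(j-d)} = \alpha^{*}(F)(b)\), then \(F\cdot a_{1}^{(j-d)} = \alpha^{*}(F)(Fb)\), and the recursion rewrites as
\[
a_{1}^{(j)} = -\alpha^{*}(F)\!\left(a_{0}^{(j-d+1)} + F b\right) \in \Image(\alpha^{*}(F)).
\]
Summing over \(j\) gives \(a_{1} \in \Image(\alpha^{*}(F))\).

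I expect no significant obstacle in carrying out this plan: the whole argument rests on the single fact that \(\alpha^{*}(F)\) annihilates \(F\), which is itself a direct consequence of the special shape \(P = X_{N}^{d} - F(X_{0}, \dotsc, X_{N-1})\). This is exactly why this form of \(P\) is imposed in the hypothesis of Theorem \ref{thm: vanishing hyp}: it makes \(\alpha^{*}(P)^{in}\) split into an \(F\)-derivation that commutes with multiplication by \(F\) and a clean \(\partial/\partial Y_{N}\) piece, whereas for a generic \(P\) the analogous recursion would mix monomials of different degrees in a way that would obstruct the clean induction above.
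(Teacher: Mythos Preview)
Your proof is correct and follows essentially the same approach as the paper: both arguments exploit the \(x\)-degree grading together with the key commutation \(\alpha^{*}(F)(F\cdot b)=F\cdot\alpha^{*}(F)(b)\), which is exactly what the special shape of \(P\) provides. Your explicit recursion-and-induction presentation is in fact a bit cleaner than the paper's iterative ``peel off the minimal homogeneous component'' argument, which carries the same content but with a harmless sign slip in the definition of \(\tilde{a}_{0}\).
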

\begin{proof}
For \(a \in S' \bydef \C[x,\overset{\wedge_{N}}{Y}]\), denote by \(a^{min}\) its minimal homogenous component. Since \(\alpha(F)\) is a graded map (i.e. it shifts the grading, since \(F\) is homogeneous), one deduces from \(\eqref{eq: eq proof 3}\) the following equality:
\[
\alpha^{*}(F)(a_{0}^{min})+a_{1}^{min}
=
0.
\]
Therefore, \(a_{1}^{min} \in \Image \alpha^{*}(F)\). One can therefore rewrite \(\eqref{eq: eq proof 3}\) as follows:
\[
\alpha^{*}(F)(\underbrace{a_{0}+(1+F)a_{0}^{min}}_{\bydef \tilde{a_{0}}})
+
(1+F)(\underbrace{a_{1}-a_{1}^{min}}_{\bydef \tilde{a_{1}}})
=
0.
\]
One can now repeat the previous reasoning, so that one eventually shows that every homogeneous component of \(a_{1}\) lies in the image of \(\alpha^{*}(F)\), which shows the result.
\end{proof}
With the previous Lemma \ref{lemma: lemma proof}, one can thus write
\[
a_{1}
=
\alpha^{*}(F)(b),
\]
with \(b \in \C[x,\overset{\wedge_{N}}{Y}]\). Accordingly, the equation \eqref{eq: eq proof 3} rewrites:
\[
\alpha^{*}(F)(a_{0}+P^{in}b)
=
0.
\]
Note that \(\deg_{Y}(a_{0}+P^{in}b_{0})=m\) (it has remained homogeneous of degree \(m\) in the variables \(Y\)), whereas \(\deg_{x}(a_{0}+P^{in}b) \leq n+2m\). By hypothesis, one has that
\[
(d-3)m > n \implies (d-1)m>n+2m.
\]
Therefore, Proposition \ref{prop: bound inj} applies, and the map
\[
\alpha^{*}(F): S'_{m, i} \to S'_{m, i+d-1}
\]
is injective for any \(i \leq n+2m\).
Therefore, one deduces that
\[
a_{0}=-P^{in}b.
\]

Now, rehomogenize everything by multiplying by a suitable power of \(X_{N}\). One deduces that there exists \(M \in \N_{\geq 1}\) such that
\[
X_{N}^{M} A \in I=(P,q).
\]
By Lemma \ref{lemma: primeness}, the ideal \(I\) is prime, so that \(A \in I\): the proof is complete.

\end{proof}

\subsection{Fourth application: effective computations in low-dimensional cases.}
\label{sect: effective}
The goal of this section is to illustrate how the explicit description of cohomology given in Section \ref{sect: coho sym power ci} can be used to recover some known results, and provide new examples of families (of surfaces) that do not satisfy a given property.

 \subsubsection{Surface of bi-tangent lines to quartic surfaces}
 \label{subs: bi-tangent}
Consider \(S \subset \P^{3}\) a quartic surface of Picard rank one. By \cite{Ottem}[Corollary 4.2], it is known that the closure of the effective cone of 
\(
\xymatrix{
\P(TS)
\ar[r]^-{\pi}
&
S
}
\)
is generated by \(H \bydef \pi^{*}\O_{S}(1)\) and \(3\xi+4H\), where \(\xi\bydef \O_{\P(TS)}(1)\). It is then natural to ask whether or not a multiple of \(3\xi+4H\) is effective. The answer is positive, as it is known since the works of \cite{Tiho} and \cite{Welters} that the surface of bi-tangent lines to \(S\) defines an hypersurface in the linear system \(\abs{6\xi+8H}\).

Suppose for a moment that one does not know the existence of such a particular surface (which exists for any smooth quartic in \(\P^{3}\)). By Serre duality, since \(K_{S}=\O_{S}\), one has the isomorphism
\[
H^{2}(S, S^{6}\Omega_{S}(-8))
\simeq
H^{0}(S, S^{6}TS(8)).
\]
On the other hand, since \(S\) is a K3 surface, one also has the isomorphism \(TS \simeq \Omega_{S}\), so that one has
\[
H^{0}(S, S^{6}\Omega_{S}(8))
\simeq
H^{2}(S, S^{6}\Omega_{S}(-8)).
\]

For sake of simplicity, pick \(S\bydef \Set{\underbrace{X_{0}^{4}+\dotsb+X_{3}^{4}}_{\bydef P}=0}\) a Fermat hypersurface. Using the description given in Section \ref{sect: coho hyp 1} (or alternatively in Section \ref{sect: coho hyp 2}), one computes (see Appendix \ref{appendix: sage}) that there is indeed one (and only one) global section of \(S^{6}\Omega_{S}(8)\). 

Explicitly, the rank computation carried over in Appendix \ref{appendix: sage} shows that there exists a (unique modulo \((P,q)\)) polynomial \(W \in S_{6,14}=\C[Y,X]_{6,14}\) which
\begin{itemize}
\item{} is not zero modulo \((P,q)\) (recall that \(q=X_{0}Y_{0}+\dotsb+X_{3}Y_{3}\));
\item{} satisfies the differential equation \(\sum\limits_{i=0}^{3} X_{i} \frac{\partial W}{\partial Y_{i}} \equiv 0 \mod (P,q)\).
\end{itemize}
With some powerful computing machine, one could even find explicitly a solution, i.e. an explicit equation for the surface of bi-tangent lines. If one is optimistic, it might even be possible to extrapolate a general formula for any smooth quartic.

\subsubsection{Examples of non-invariance under deformation of (canonically twisted) symmetric pluri-genera}
In the spirit of Brotbek's examples of family of surfaces in \(\P^{4}\) (see \cite{Brot15}) along which the \(2\)-symmetric pluri-genus (i.e. the dimension of the second symmetric power of the cotangent bundle) does not remain constant, one can consider the \(1\)-parameter family of surfaces in \(\P^{4}\)
\[
X_{t} \bydef \Set{X_{0}^{5}+X_{1}^{5}+\dotsb+X_{4}^{5}=0} \cap \Set{-2X_{0}^{5}-X_{1}^{5} + tX_{0}X_{1}X_{2}X_{3}X_{4}+X_{3}^{5}+2X_{4}^{5}=0}.
\]
In Appendix \ref{appendix: sage}, explicit computations show that \(h^{0}(S^{2}\Omega_{X_{0}})=1\), whereas \(h^{0}(S^{2}\Omega_{X_{1}})=0\) (which, by upper semi-continuity of cohomology, implies that a general member of the family has no global \(2\)-symmetric differential).

It was asked by Paun whether or not the invariance under deformation holds if one considers instead symmetric powers twisted by the canonical bundle. It was answered negatively in \cite{Bog}. We propose to give an explicit and very simple example of a one parameter family \((X_{t})_{t \in \C}\) of surfaces in \(\P^{4}\) along which the invariance of \(S^{6}\Omega_{X_{t}}(K_{X_{t}})\) does not hold:
\begin{theorem}
\label{thm: non-invariance}
The \(1\)-parameter family
\[
X_{t}
\bydef
\Set{X_{0}^{4}+X_{1}^{4}+\dotsb+X_{4}^{4}-tX_{0}^{2}X_{4}^{2}=0} \cap \Set{-2X_{0}^{4}-X_{1}^{4}+X_{3}^{4}+2X_{4}^{4}=0} \subset \P^{4}
\]
satisfies \(h^{0}(S^{6}\Omega_{X_{0}}(K_{X_{0}}))=1\), whereas \(h^{0}(S^{6}\Omega_{X_{0}}(K_{X_{t}}))=0\) for a general \(t \in \C\).
\end{theorem}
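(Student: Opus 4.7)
Since $X_t$ is a smooth complete intersection of codimension $2$ and multi-degree $\mathbi{d}=(4,4)$ in $\P^{4}$, the adjunction formula gives $K_{X_t}=\O_{X_t}(|\mathbi{d}|-(N+1))=\O_{X_t}(3)$. Hence the quantity to compute is $h^{0}(X_{t},S^{6}\Omega_{X_{t}}(3))$, which in the notation of Theorem \ref{thm: coho ci 2 intro} corresponds to the bi-graded piece $(m,n)=(6,3)$ of $\bigoplus_{m\geq 2,n>1}S^{m}\Omega_{X}(m-n)$; note that the inequalities $m\geq c=2$ and $n>1$ are both satisfied.

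The plan is to run the explicit complex of Theorem \ref{thm: coho ci 2 intro} on this particular bi-degree, so that $h^{0}(X_{t},S^{6}\Omega_{X_{t}}(3))$ is computed as the $N$-th (that is, $4$th) cohomology of the appropriate graded component of the finite complex of finite-dimensional $\C$-vector spaces
\[
\Big(\tfrac{S}{(q)}\longrightarrow\tfrac{S}{(q^{2})}[0,4]^{\oplus 2}\longrightarrow\cdots\longrightarrow\tfrac{S}{(q)}[-2,6]\Big)[0,-5],
\]
with the differentials $g_{1i},g_{2i},g_{3i},g_{4}$ given by the multiplication maps and the partial differential operators $\alpha^{*}(P_{j}),\beta^{*}(P_{j})$, where $P_{1},P_{2}$ are the defining equations of $X_{t}$. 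Concretely, the relevant graded pieces are finite-dimensional quotients of $\C[Y,X]_{m',n'}$ by the appropriate powers of $q=\sum X_{i}Y_{i}$, and the differentials are linear maps whose matrices in the monomial basis depend only on the coefficients of $P_{1}$ and $P_{2}$ (hence on $t$).

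Having set up this purely linear-algebraic problem, the remainder of the proof is a finite computation, carried out on a computer as explained in Section \ref{sect: effective} and Appendix \ref{appendix: sage}. I would perform two separate runs: first at $t=0$, where I expect the matrices to drop rank and yield a $1$-dimensional cohomology group at the position corresponding to $h^{0}$; and second at one explicit generic value, e.g.\ $t=1$, where I expect the rank to be maximal and the cohomology group to vanish. Upper semi-continuity of cohomology in flat families then upgrades the vanishing at $t=1$ to the vanishing for a general $t\in\C$, completing the proof.

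The main obstacle is purely computational: the bi-graded pieces $S_{m',n'}/(q^{i})$ involved are reasonably large (the symmetric powers $S^{6}$ in five variables together with polynomial degrees of order $\leq 6$ in the $X$-variables lead to matrices with thousands of entries), so the computation must be organized carefully to remain tractable in exact arithmetic. The particular form of the perturbation $-tX_{0}^{2}X_{4}^{2}$ is chosen so that the matrix entries at $t=0$ and at $t\neq 0$ differ in a controlled way, which should make the jump in rank easy to localize; verifying that this ansatz indeed produces a jump precisely at the $h^{0}$ position is the one point requiring an actual computer run rather than a hand argument.
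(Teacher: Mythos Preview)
Your approach is essentially the paper's: adjunction gives \(K_{X_t}=\O_{X_t}(3)\), then an explicit computer computation of \(h^{0}(S^{6}\Omega_{X_t}(3))\) at \(t=0\) and at \(t=1\) via the complexes of Section \ref{sect: coho sym power ci}, followed by upper semi-continuity. The paper's actual implementation (Appendix \ref{appendix: sage}) uses the simpler first complex of Theorem \ref{thm: coho ci 1} rather than the second complex you cite, and note one numerical slip in your display: with \(|\mathbi{d}|=8\) the last shift is \([-2,2|\mathbi{d}|-2]=[-2,14]\), not \([-2,6]\).
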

\begin{proof}
By the adjunction formula, for any \(t \in \C\), one has the equality
\[
S^{6}\Omega_{X_{t}}(K_{X_{t}})
=
S^{6}\Omega_{X_{t}}(3).
\]
In Appendix \ref{appendix: sage}, explicit computations show that \(h^{0}(S^{6}\Omega_{X_{0}}(3))=1\), whereas \(h^{0}(S^{6}\Omega_{X_{1}}(3))=0\). By semi-continuity of cohomology, this proves the statement.
\end{proof}

\subsection{Fifth application: a criteria ensuring ampleness of cotangent bundles of complete intersection surfaces in \(\P^{4}\).}
\label{subs: criteria ampleness}
This section is not devoted to proving anything new (as the scheme of proof for the criteria given in Lemma \ref{lemma: criteria ample} is classical), but rather to discuss strategies to provide concrete examples of surfaces with ample cotangent bundles (which is still an open question). 



\subsubsection{Discussion on Brotbek's work: \textsl{explicit} constructions of symmetric differentials}
In his seminal paper \cite{Brot15}, Brotbek provided an explicit way to describe the space of global sections of complete intersections of codimension at least as large as their dimension. For people very familar with Brobek's work, the description we give in Section \ref{sect: coho ci 1} (as far as the space of global sections is concerned) is reminiscent of Brotbek's description.

The drawback of Brotbek's description, or the one presented in this paper, is that it only allows to tell whether or not there exists global (negatively twisted) symmetric differentials\footnote{Recall that knowing that a given complete intersection, say a smooth surface in \(\P^{4}\), has global symmetric differentials vanishing along an ample is particularly interesting if one wishes to study the positivity of the cotangent bundle.}, but it gives no information on their form (contrarily to the case of global symmetric vector fields: see Section \ref{subs: bi-tangent}\footnote{The specificity in this case is that one has a natural isomorphism between the tangent and cotangent bundle.}).

The crucial input in Brotbek's work, first implemented in \cite{Brot15} to prove particular cases of Debarre's conjecture, then improved in \cite{BD} to prove Debarre's conjecture in full generality, is the following. In the special case of Fermat-type complete intersections, Brotbek's description of cohomology allows to tell that there exists global negatively twisted symmetric differentials. The difficult part consists in understanding the explicit form of these sections: this is exactly what Brotbek did in \cite{Brot15}[Lemma 4.5]. 
Of course, there is still a lot of work to be done from here to obtain Brotbek's results, but we believe that they are mainly of technical nature. 

It is important to mention that, in order to be able to control the base locus of the constructed global sections (which is necessary to obtain ampleness), it is fundamental to consider the whole situation in \textsl{family} (and this is where technicality arises). It is of course always possible to consider a fixed example, but in this case, there is almost no flexibility to work with: one is reduced to proving that the base locus of the set of sections is empty\footnote{zero dimensional is actually enough.}. Theoretical reasons for which this could be true are yet to be found, but one could always try doing this on a computer. It amounts to computing a Groebner basis, which, for the situation under study, seems out of of reach.

\subsubsection{A cohomological criteria ensuring ampleness}
The approach we propose here is transversal to Brotbek's approach: instead of exhibiting enough global sections to derive ampleness, we would like to use a cohomological approach. We are going to give a simple criteria, but as in the previous case, it would most probably require too much computations to conclude.

Let us consider \(\Flag_{(1,2)} \C^{N+1} \simeq \P(T\P^{N})\), and \(\xi \in \Flag_{(1,2)} \C^{N+1}\). The ideal sheaf \(m_{\xi}\) of the closed point \(\xi\) admits a simple resolution:
\begin{lemma}
\label{lemma: resolution of closed point}
The ideal sheaf \(m_{\xi}\) admits a locally free resolution of the following form
\[
0 \to A_{2N-2} \to A_{2N-3} \to \dotsb \to A_{0} \to m_{\xi} \to 0,
\]
where for each \(0 \leq i \leq 2N-2\), the vector bundle \(A_{i}\) writes as a direct sum of line bundles of the following type
\[
\mathcal{L}_{-k,-i} 
\]
for \(0 \leq k \leq i\).
\end{lemma}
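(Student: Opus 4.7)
The plan is to construct a regular global section of a split vector bundle of rank $2N-1 = \dim \Flag_{(1,2)}\C^{N+1}$ whose zero locus is the reduced point $\xi$, and then to invoke the associated Koszul complex. Let $p \bydef \pi(\xi) \in \P^N$ and write $\xi=(\C x \subset \C x \oplus \C v)$. The cotangent space $T^\vee_\xi \Flag_{(1,2)}\C^{N+1}$ decomposes via $\pi$ into a horizontal piece $T^\vee_p \P^N$ of dimension $N$ and a vertical piece $T^\vee_\xi \pi^{-1}(p)$ of dimension $N-1$, which naturally suggests splitting the sections into two groups.

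For the horizontal part, one takes $N$ linear forms $\ell_1, \dotsc, \ell_N$ vanishing at $\C x$, viewed as global sections of $\L_{0,1} = \pi^*\O_{\P^N}(1)$; their common zero locus cuts out the fiber $\pi^{-1}(p)$ transversally. For the vertical part, one uses sections of $\L_{1,1}$: by Proposition \ref{prop: geom inter}, $\L_{1,1}$ restricts on the fiber $\pi^{-1}(p) \simeq \P^{N-1}$ to the hyperplane line bundle $\O_{\P^{N-1}}(1)$. The key point is that $\pi_*\L_{1,1} \simeq \Omega_{\P^N}(2)$ is globally generated and its higher direct images vanish (both facts following from Bott's formulas together with the dual Euler sequence), so the restriction map $H^0(\Flag_{(1,2)}\C^{N+1}, \L_{1,1}) \to H^0(\pi^{-1}(p), \O_{\P^{N-1}}(1))$ is surjective. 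One can thus select $N-1$ sections $\sigma_1, \dotsc, \sigma_{N-1}$ of $\L_{1,1}$ whose restrictions to $\pi^{-1}(p)$ generate the maximal ideal of $\xi$ inside the fiber, and which in particular vanish at $\xi$.

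The combined section $(\ell_1, \dotsc, \ell_N, \sigma_1, \dotsc, \sigma_{N-1})$ of $\E \bydef \L_{0,1}^{\oplus N} \oplus \L_{1,1}^{\oplus(N-1)}$ then vanishes precisely at the reduced point $\xi$: regularity is checked by observing that the differentials of the $\ell_i$'s give a basis of the horizontal cotangent piece, while those of the $\sigma_j$'s project to a basis of the vertical cotangent piece. The Koszul complex hence produces an exact locally free resolution $0 \to \bigwedge^{2N-1}\E^\vee \to \dotsb \to \bigwedge^1 \E^\vee \to m_\xi \to 0$. Since $\E^\vee = \L_{0,-1}^{\oplus N} \oplus \L_{-1,-1}^{\oplus(N-1)}$, a direct expansion of the exterior powers yields
\[
\bigwedge^i \E^\vee \simeq \bigoplus_{k=0}^{\min(i, N-1)} \L_{-k,-i}^{\oplus \binom{N}{i-k}\binom{N-1}{k}},
\]
which gives each syzygy module the claimed form as a direct sum of line bundles $\L_{-k,-i}$ with $0 \le k \le i$. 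The only subtlety is the transverse vanishing at $\xi$, which follows immediately from the cotangent space decomposition above.
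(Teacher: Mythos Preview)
Your proof is correct and follows essentially the same approach as the paper: construct a regular section of \(\L_{0,1}^{\oplus N}\oplus\L_{1,1}^{\oplus(N-1)}\) vanishing exactly at \(\xi\), and take the associated Koszul complex. The only difference is presentational: the paper first moves \(\xi\) to the standard flag \((\C e_0\subsetneq \C e_0\oplus\C e_1)\) by an automorphism of \(\P^N\) and then writes down the sections explicitly as \(X_1,\dotsc,X_N\in H^0(\L_{0,1})\) and \(X_0Y_i-X_iY_0\in H^0(\L_{1,1})\) for \(2\le i\le N\), whereas you argue their existence via global generation of \(\pi_*\L_{1,1}\simeq\Omega_{\P^N}(2)\). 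One small caveat: avoid the letter \(\E\) for your auxiliary bundle, since in this paper \(\E\) already denotes a fixed rank-two bundle on the flag variety.
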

\begin{proof}
Up to applying a suitable automorphism of \(\P^{N}\), one can always suppose that 
\[
\xi=\big( \C \mathbi{e}_{0} \subsetneq \C \mathbi{e}_{0} \oplus \C \mathbi{e}_{1} \big),
\]
where \(\mathbi{e}_{0}, \dotsc, \mathbi{e}_{N}\) is the canonical basis of \(\C^{N+1}\). 
Consider the \(N\) global sections of  \(\mathcal{L}_{0,1}\)
\[
\big(s_{i}=X_{i}\big)_{1 \leq i \leq N},
\] 
as well as the \(N-2\) global sections of \(\mathcal{L}_{1,1}\)\
\[
\big(t_{i}=X_{0}Y_{i}-X_{i}Y_{0}\big)_{2 \leq i \leq N}.
\]
Here, one identifies \(H^{0}(\Flag_{(1,2)}\C^{N+1}, \mathcal{L}_{1,1})\) with the set of \((1,1)\) bi-homogeneous polynomials \(P \in \C[Y,X]\) satisfying the functional equation (see Section \ref{subs: geom inter})
\[
\forall \ t \in \C,
P(Y+tX,X)=P(Y,X).
\]

One easily sees that the zero locus of the sections \(s_{1}, \dotsc, s_{N}, t_{2}, \dotsc, t_{N}\) defines the closed point \(\xi\) without multiplicity. Therefore, the Koszul complex associated to these sections provide a locally free resolution of the ideal sheaf \(m_{\xi}\), which takes the form given in the statement.
\end{proof}

With this Lemma \ref{lemma: resolution of closed point} at hand, we can now prove the following criterion:
\begin{lemma}
\label{lemma: criteria ample}
Let \(X \subset \P^{4}\) be smooth surface. Suppose that there exists \(m \geq 2\) such that
\begin{itemize}
\item{}
\(
H^{1}(X, S^{m}\Omega_{X}(-1))=(0)
\);
\item{}
\(
H^{2}(X,  S^{m}\Omega_{X}(-2))=H^{2}(X,S^{m-1}\Omega_{X}(-3))=(0).
\)
\end{itemize}
Then the cotangent bundle \(\Omega_{X}\) is ample.
\end{lemma}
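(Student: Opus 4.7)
My plan is to deduce ampleness of $\Omega_X$, equivalently of $\O_{\P(TX)}(1)$ on the smooth projective threefold $\P(TX)$, by first showing that the line bundle $L\bydef\O_{\P(TX)}(m)\otimes\pi^*\O_X(-1)$ is globally generated, and then combining with the ampleness of $\O_X(1)$ via Nakai--Moishezon. Writing $m\,\O_{\P(TX)}(1)=L+\pi^*\O_X(1)$, both summands are nef once $L$ is base-point free, so $\O_{\P(TX)}(1)$ is itself nef; strict positivity on every irreducible subvariety is then checked by a simple case analysis: on $\pi$-fibers $\O_{\P(TX)}(1)$ restricts to $\O_{\P^1}(1)$; on curves dominating a curve in $X$, $\pi^*\O_X(1)$ already contributes strictly; on surfaces dominating $X$, $(\pi^*\O_X(1))^2\cdot S>0$; on surfaces with $\pi$-image a curve $D$, $L\cdot\pi^*\O_X(1)\cdot S=m\deg(\O_X(1)|_D)>0$ (using that $L$ has relative degree $m$); and $\O_{\P(TX)}(1)^3\geq 3\deg(X)/m^2>0$ via the mixed term $L\cdot(\pi^*\O_X(1))^2$.

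\textbf{First input from the hypotheses.} The first two vanishings combined with the automatic $H^i(X,\mathcal{F})=0$ for $i>\dim X=2$ say precisely that $S^m\Omega_X$ is $0$-regular on $\P^4$ in the Castelnuovo--Mumford sense. Hence $S^m\Omega_X$ is globally generated on $X$, which via the tautological surjection $\pi^*S^m\Omega_X\twoheadrightarrow\O_{\P(TX)}(m)$ already yields global generation of $\O_{\P(TX)}(m)$. This much is enough to control positivity on $\pi$-fibers but not on curves that dominate curves in $X$; for the latter one needs the sharper global generation of the negatively twisted $L$, which is where the third hypothesis enters.

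\textbf{Upgrading to base-point freeness of $L$.} Fix $\xi\in\P(TX)$. I claim that the three assumed vanishings together imply $H^1(\P(TX),m_{\xi,\P(TX)}\otimes L)=0$, hence that some section of $L$ does not vanish at $\xi$. Denote $j\colon\P(TX)\hookrightarrow\Flag_{(1,2)}\C^5\simeq\P(T\P^4)$ the closed immersion and set $\bar L\bydef\L_{m,-1-m}$, which restricts to $L$ under $j$. From the short exact sequence
\begin{equation*}
0\to\mathcal{I}_{\P(TX)}\otimes\bar L\to m_\xi\otimes\bar L\to j_*(m_{\xi,\P(TX)}\otimes L)\to 0
\end{equation*}
on $\Flag_{(1,2)}\C^5$, the desired vanishing is reduced to $H^1(\Flag_{(1,2)}\C^5,m_\xi\otimes\bar L)=0$ and $H^2(\Flag_{(1,2)}\C^5,\mathcal{I}_{\P(TX)}\otimes\bar L)=0$. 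The first I would handle with the locally free resolution of $m_\xi$ provided by Lemma \ref{lemma: resolution of closed point}, whose pieces are direct sums of line bundles $\mathcal{L}_{-k,-i}$, using Bott's formulas in the absolute setting on $\Flag_{(1,2)}\C^5$. The second I would handle via the Koszul resolution of $j_*\O_{\P(TX)}$ from Section \ref{sect: Koszul complex 2: ci} (truncated to a resolution of $\mathcal{I}_{\P(TX)}$), twisted by $\bar L$ and pushed down by $\pi$: applying Bott in the relative setting together with Leray expresses everything in terms of cohomology groups on $X$ of the form $H^i(X,S^{m-j}\Omega_X(-k))$ with $i\leq 2$, and the three non-automatic residual vanishings are exactly the ones in the statement.

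\textbf{Main obstacle.} The real work is the bookkeeping in the last step: organizing the spectral sequences associated to tensoring $\bar L$ against the two resolutions above, tracking how the bi-degrees evolve through successive applications of Bott's formulas (both absolute on $\Flag_{(1,2)}\C^5$ and relative for $\pi$), and verifying that after exploiting $m\geq 2$ to kill everything automatic, exactly the three hypothesized vanishings remain. Once that is settled, base-point freeness of $L$ is in hand, and Nakai--Moishezon as described in Paragraph~1 concludes the ampleness of $\Omega_X$.
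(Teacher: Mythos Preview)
Your overall strategy---show that $L=\O_{\P(TX)}(m)\otimes\pi_X^*\O_X(-1)$ is globally generated, then deduce ampleness of $\O_{\P(TX)}(1)$---is exactly the paper's. (The paper dispatches the final step in one sentence, observing that $\O_{\P(TX)}(m)=L+\pi_X^*\O_X(1)$ is then both nef and $\pi_X$-ample; your Nakai--Moishezon case analysis is a correct elaboration of this.) The Castelnuovo--Mumford observation is a pleasant aside but, as you say, insufficient.

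The divergence is in how you establish $H^1(\P(TX),m_{\xi,\P(TX)}\otimes L)=0$, and here there is a genuine gap. Your reduction to (a) $H^1(\Flag_{(1,2)}\C^5,m_\xi\otimes\bar L)=0$ and (b) $H^2(\Flag_{(1,2)}\C^5,\mathcal{I}_{\P(TX)}\otimes\bar L)=0$ is a valid sufficient condition, and (b) does follow from the first hypothesis. But (a) is \emph{false for every} $m\geq 1$: since $\bar L=\L_{m,-m-1}$ has $H^0=H^1=0$ on $\Flag_{(1,2)}\C^5$ (Theorem~\ref{thm: coho sym proj}), the sequence $0\to m_\xi\otimes\bar L\to\bar L\to\bar L|_\xi\to 0$ gives $H^1(\Flag_{(1,2)}\C^5,m_\xi\otimes\bar L)\simeq\bar L|_\xi\simeq\C$. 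No amount of Bott bookkeeping on the Koszul resolution of $m_\xi$ will make this vanish. Worse, if you try to salvage the argument by showing instead that $H^1(\mathcal{I}_{\P(TX)}\otimes\bar L)\to H^1(m_\xi\otimes\bar L)$ is surjective, you find $H^1(\mathcal{I}_{\P(TX)}\otimes\bar L)\simeq H^0(\P(TX),L)$ and the map is precisely evaluation at $\xi$---so you would be assuming exactly the global generation you set out to prove.

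The paper sidesteps this by never lifting to the ambient flag variety for this step: it restricts the sections from Lemma~\ref{lemma: resolution of closed point} to $\P(TX)$ and builds a length-$3$ Koszul resolution of $m_{\xi,\P(TX)}$ directly on the threefold $\P(TX)$. After twisting by $L$ and pushing to $X$, the terms become groups $H^j(X,S^{m-k}\Omega_X(-k-i-1))$, and a short diagram chase using the three hypotheses (together with the automatic vanishing in degrees $>2$ on a surface) yields $H^1(\P(TX),m_{\xi,\P(TX)}\otimes L)=0$. The moral is that working on $\P(TX)$ keeps the resolution short enough that every relevant group stays in the range controlled by the hypotheses, whereas on the $7$-dimensional $\Flag_{(1,2)}\C^5$ you inevitably meet the obstruction $\bar L|_\xi$ coming from the fact that $\bar L$ has no global sections there.
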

\begin{proof}
Note first that it is enough to prove that there exists \(m \geq 2\) such that \(\O_{\P(TX)}(m) \otimes \pi_{X}^{*}\O_{X}(-1)\) is globally generated. Indeed, in this case, \(\O_{\P(TX)}(m)\) writes as the sum of nef line bundle and a \(\pi_{X}\)-relatively ample line bundle, so that \(\O_{\P(TX)}(m)\), and thus \(\O_{\P(TX)}(1)\), is ample.

Consider \(\xi \in \P(TX)\) a closed point. By Lemma \ref{lemma: resolution of closed point} and Proposition \ref{prop: geom inter}, the ideal sheaf \(m_{\xi}\) admits a resolution of the following form:
\[
0 \to A_{3} \to A_{2} \to A_{1} \to A_{0} \to m_{\xi} \to 0,
\]
where \(A_{i}\) is a direct summand of line bundles of the form
\(
\O_{\P(TX)}(-k)\otimes \pi_{X}^{*}\O_{X}(-k-i),
\)
where \(k\) is allowed to range between \(0\) and \(i\).

Now, tensor the above complex by \(\O_{\P(TX)}(m)\otimes \pi_{X}^{*}\O_{X}(-1)\). By cutting it into \(3\) short exact sequences, taking long exact sequences in cohomology and using:
\begin{itemize}
\item{} the two vanishing hypothesis of the statement;
\item{} the fact that, for any \(0 \leq i \leq 3\), the cohomology of \(A_{i}\otimes \O_{\P(TX)}(m)\otimes \pi_{X}^{*}\O_{X}(-1)\) is supported in degrees less or equal than \(2\) (by Bott's formulas, and the fact that the cohomology of a coherent sheaf on a surface is supported in degree less or equal than \(2\)),
\end{itemize}
one sees that the following vanishing result holds:
\[
H^{1}(\P(TX), \O_{\P(TX)}(m)\otimes \pi_{X}^{*}\O_{X}(-1)\otimes m_{\xi})
=
(0).
\]

In order to conclude, consider the short exact sequence induced by the evaluation map at \(\xi\)
\[
0 \to m_{\xi} \to \O_{\P(TX)} \to \C_{\xi} \to 0,
\]
where \(\C_{\xi}\) is the skyscraper sheaf supported on \(\Set{\xi}\). Twisting this short exact sequence by \(\O_{\P(TX)}(m)\otimes \pi_{X}^{*}\O_{X}(-1)\), taking the long exact sequence in cohomology and using the above vanishing result, one obtains a surjection:
\[
\resizebox{\displaywidth}{!}{
\xymatrix{
H^{0}(\P(TX), \O_{\P(TX)}(m)\otimes \pi_{X}^{*}\O_{X}(-1)) 
\ar@{->>}[r]
&
H^{0}(\P(TX), \O_{\P(TX)}(m)\otimes \pi_{X}^{*}\O_{X}(-1) \otimes \C_{\xi}).
}
}
\]
This shows that there exists a global section of \(\O_{\P(TX)}(m)\otimes \pi_{X}^{*}\O_{X}(-1)\) that does not vanish when evaluated at \(\xi\). As this holds for any closed point \(\xi\), one deduces that the line bundle  \(\O_{\P(TX)}(m)\otimes \pi_{X}^{*}\O_{X}(-1)\) is globally generated. With the remark at beginning, this concludes the proof.
\end{proof}

With this criterion, one only has to check a few vanishing of cohomology groups (three to be precise) in order to conclude that the cotangent bundle is ample. Since we have a completely explicit description of these cohomology groups for any complete intersection surfaces, one could hope to provide explicit examples by pure computations. The issue is that one would need a huge computing capacity in order to do so. This might be within reach provided the bound \(m\) for which the statement of Lemma \ref{lemma: criteria ample} holds is small, say less than \(30\), and provided that the multi-degree \(\mathbi{d}=(d_{1}, d_{2})\) for which there exists complete intersection surfaces with ample cotangent bundle is also small, ideally \(d_{i} \leq 6\). And even in these specific cases, the amount of computation remains very important: being able to find a concrete example via this method might be a mirage.

There is still the purely theoretic approach, and this is mainly the reason why we described completely the complex computing cohomology of twisted symmetric powers of cotangent bundles of complete intersections of codimension \(2\) (see Section \ref{sect: coho ci 2}). This is a subject we are working on. As a first step, one would already need to settle the case of hypersurfaces in full generality: see Section \ref{sect: intermediate ampleness} where we treat a particular case.

\appendix

\section{Reminder on Bott's formulas.}
\label{appendix: Bott}
The reference for this part is \cite{Wey}.

Let \(V\) be a \(\C\)-vector space of dimension \(n\).
For a decreasing sequence of integers \[\mathbi{s}=(s_{1}>s_{2}>\dotsb>s_{t}),\] with \(t \geq 1\), and \(s_{1} \leq n-1\), the \textsl{partial flag variety \(\Flag_{\mathbi{s}}(V)\) of sequence \(\mathbi{s}\)} is defined as
\[
  \Flag_{\mathbi{s}}(V)
  \bydef
  \Set*{
    V \supsetneq F_{1}  \supsetneq \dotsb \supsetneq F_{t} \supsetneq \Set{0}
    \suchthat
    \text{\(F_{i}\) is a \(\kk\)-vector subspace of dimension \(s_{i}\)}
  }.
\]
On \(\Flag_{s}(V)\), there is a filtration of the trivial bundle \(\Flag_{s}(V)\times V\) by universal subbundles \(U_{s_{1}}\supsetneq\dotsb\supsetneq U_{s_{t}}\), given over the point \(\eta=(F_{1}\supsetneq \dotsb \supsetneq F_{t})\in\Flag_{s}V\) by \(U_{s_{i}}(\eta)=F_{i}\).

\begin{example}
  \label{ex: Grass}
  If we take the sequence \(\mathbi{s}=(k)\), where \(k \geq 1\), then \(\Flag_{\mathbi{s}}(V)\) is the Grassmannian of \(k\)-dimensional subspaces of \(V\), denoted \(\Grass(k, V)\), together with the tautological subbundle \(U_{k}\).
\end{example}

A \(n\)-uple of integer \(\mathbi{\alpha} \in \Z^{n}\) is called \textsl{admissible with jump sequence \(\mathbi{s}\)} if and only if it writes (uniquely) as follows:
\[
\mathbi{\alpha}
=
(\underbrace{\alpha_{1}, \dotsc, \alpha_{1}}_{\times s_{1}}, \underbrace{\alpha_{2}, \dotsc, \alpha_{2}}_{\times s_{2}}, \dotsc, \underbrace{\alpha_{t}, \dotsc, \alpha_{t}}_{\times s_{t}}),
\]
where the \(\alpha_{i}\)'s are all distinct.
To any such  \(\mathbi{\alpha}\), one associates the line bundle
\(\L_{\mathbi{\alpha}}(V)\)\footnote{Note that, in the body of the text, we used the abbreviated notation \(\mathcal{L}_{\mathbi{\alpha}}\): this does not lead to any confusion as the underlying vector space remains the same everywhere (namely, \(\C^{N+1}\)).} on \(\Flag_{\mathbi{s}}(V)\) by setting:
\[
  \L_{\mathbi{\alpha}}(V)
  \bydef
  \bigotimes_{i=1}^{t}
  \det((U_{s_{i}}/U_{s_{i+1}})^{\vee})^{\alpha_{s_{i}}},
\]
where by convention \(U_{s_{t}+1}=\Set{0}\).
\begin{example}
  For \(\mathbi{\alpha}=(m, \dotsc, m) \in \N^{n}\), \(\L_{\mathbi{\alpha}}(V)\) is the \(m\)th tensor power of the Plücker line bundle \(\O(1)=\det(U_{k}^{\vee})\) on \(\Grass(k,V)\). This line bundle allows one to embed \(\Grass(k,V)\) in the projective space \(\P(\Ext^{k}V)\).
\end{example}

Let us now take \(E\) a vector bundle of rank \(n\) over a variety \(M\). Let us fix a sequence \(\mathbi{s}=(s_{1}>s_{2}>\dotsb>s_{t})\) and an admissible \(n\)-uple of integers \(\mathbi{\alpha} \in \Z^{n}\) with jump sequence \(s\). The previous considerations make sense fiberwise, and the constructions globalize.
This allows one to define
\begin{itemize}
  \item{} the projective bundle \(\pi\colon\Flag_{\mathbi{s}}(E)\to M\), such that the fiber over \(x \in M\) is \(\Flag_{\mathbi{s}}(E_{x})\);
  \item{} the line bundle \(\L_{\mathbi{\alpha}}(E)\) over \(\Flag_{\mathbi{s}}(E)\), such that \((\L_{\mathbi{\alpha}}(E))_{| \Flag_{\mathbi{s}}(E_{x})}= \L_{\mathbi{\alpha}}(E_{x})\).
\end{itemize}

The object of Bott's formulas is to compute the higher direct image functors 
\[
R^{i}\pi_{*}\mathcal{L}_{\mathbi{\alpha}}(E)
\]
for any admissible \(n\)-uple \(\mathbi{\alpha} \in \Z^{n}\) with jump sequence \(\mathbi{s}\), and for any \(i \geq 0\). In order to state the result properly, let us first make a couple of definitions. First, for sake of simplicity, let us define Schur functors in the following fashion (for more details on Schur functors, see e.g. \cite{Wey} or \cite{Fulton}):
\begin{definition}
Let \(\mathbi{\lambda} \in \Z^{n}\) be an admissible \(n\)-uple with jump sequence \(\mathbi{s}\). Suppose furthermore that \(\mathbi{\lambda}\) is a partition, namely that \(\lambda_{1} \geq \dotsb \geq \lambda_{n} \geq 0\).
 The \textsl{Schur bundle} associated to the partition \(\mathbi{\lambda}\) with jump sequence \(\mathbi{s}\) is the direct image
  \[
    \S^{\mathbi{\lambda}}(E) 
    \bydef 
    \pi_{*}(\L_{\mathbi{\lambda}}(E^{\vee})).
  \]
\end{definition}

Next, let us denote 
\[
\rho=(n-1, n-2, \dotsc, 0) \in \N^{n}.
\]
For any permutation \(\sigma \in \mathcal{S}_{n}\), define the following induced action on \(\Z^{n}\):
\[
\tilde{\sigma}(\cdot)\bydef \sigma(\cdot+\rho)-\rho.
\]
We can now state Bott's formulas\footnote{Almost in its full generality: see \cite{Wey}, where  a larger class of line bundles than the one introduced here is considered.}, where we keep the notations introduced above:
\begin{theorem}[Bott]
Let \(\mathbi{\alpha} \in \Z^{n}\) be an admissible \(n\)-uple with jump sequence \(\mathbi{s}\).
One of the mutually exclusive possibilities occurs.
\begin{enumerate}
\item{} There exists \(\sigma\in \mathcal{S}_{n} \setminus \Set{Id}\) such that
\[
\tilde{\sigma}(\mathbi{\alpha})=\mathbi{\alpha}.
\]
In this case, all higher direct images \(R^{i}\pi_{*}\mathcal{L}_{\mathbi{\alpha}}(E)\) vanish for \(i \geq 0\).
\item{}
There exists a unique \(\sigma \in \mathcal{S}_{n}\) and a unique \(r \in \Z\) such that
\[
\tilde{\sigma}(\mathbi{\alpha})+(r, \dotsc, r)
=
\underbrace{(\beta_{1}, \dotsc, \beta_{n-1}, 0)}_{\bydef \mathbi{\beta}}
\]
is a partition. In this case, all higher direct images \(R^{i}\pi_{*}\mathcal{L}_{\mathbi{\alpha}}(E)\) vanish for \(i \neq \ell(\sigma)\)\footnote{\(\ell(\sigma)\) is the cardinality of the support of the partition \(\sigma\).}, and one has
\[
 R^{\ell(\sigma)}\pi_{*}\mathcal{L}_{\mathbi{\alpha}}(E)
 \simeq
 S^{\mathbi{\beta}}E^{\vee} \otimes \det(E^{\vee})^{-r}.
\]
\end{enumerate}
\end{theorem}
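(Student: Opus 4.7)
The plan is to reduce to the classical Borel--Weil--Bott statement on a complete flag variety, and then to prove the latter by induction on the rank $n$ using the tower of $\mathbf{P}^1$-bundles coming from a reduced decomposition of the relevant Weyl group element.

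\textbf{Step 1 (Reduction to the complete flag variety).} Consider the tower
\[
\xymatrix{
\Flag(E) \ar[r]^-{p} & \Flag_{\mathbi{s}}(E) \ar[r]^-{\pi} & M,
}
\]
where $\Flag(E)$ denotes the complete relative flag variety. The fibers of $p$ are products of complete flag varieties in the successive quotients $U_{s_i}/U_{s_{i+1}}$. Adding ``repeated'' entries to $\mathbi{\alpha}$ in a trivial way yields an admissible $n$-uple $\widetilde{\mathbi{\alpha}}$ on $\Flag(E)$ whose associated line bundle satisfies $p^*\mathcal{L}_{\mathbi{\alpha}}(E) = \mathcal{L}_{\widetilde{\mathbi{\alpha}}}(E)$, and the projection formula combined with Bott's formula applied to $p$ identifies $R^i\pi_*\mathcal{L}_{\mathbi{\alpha}}$ with $R^i(\pi\circ p)_*\mathcal{L}_{\widetilde{\mathbi{\alpha}}}$. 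Hence it suffices to prove the theorem for the complete flag bundle, i.e.\ when $\mathbi{s}=(n-1,n-2,\dotsc,1)$.

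\textbf{Step 2 (The $\mathbf{P}^1$-bundle reductions).} Factor $\pi : \Flag(E) \to M$ as a tower of $\mathbf{P}^1$-bundles corresponding to the simple reflections $s_1,\dotsc,s_{n-1}$ of $\mathcal{S}_n$. For each simple reflection $s_i$, the relevant projection $q_i$ has fiber $\mathbf{P}^1$, and a direct local computation (using $q_i^*U_{s_i}/U_{s_{i-1}}$) shows that $\mathcal{L}_{\mathbi{\alpha}}(E)$ restricts to $\mathcal{O}_{\mathbf{P}^1}(\alpha_i-\alpha_{i+1})$ on each fiber. The key $\mathbf{P}^1$-trichotomy is then:
\begin{enumerate}
\item[(a)] if $\alpha_i-\alpha_{i+1}\geq 0$, then $(q_i)_*\mathcal{L}_{\mathbi{\alpha}}$ is the line bundle associated to $\mathbi{\alpha}$ read on the quotient flag variety (which is again of the form $\mathcal{L}_{\mathbi{\alpha}'}$), with no higher direct images;
\item[(b)] if $\alpha_i-\alpha_{i+1}=-1$, all direct images $R^j(q_i)_*$ vanish;
\item[(c)] if $\alpha_i-\alpha_{i+1}\leq -2$, relative Serre duality for $q_i$ gives $R^1(q_i)_*\mathcal{L}_{\mathbi{\alpha}} \simeq \mathcal{L}_{\tilde{s_i}(\mathbi{\alpha})}$ and $(q_i)_*\mathcal{L}_{\mathbi{\alpha}} = 0$; here $\tilde{s_i}(\mathbi{\alpha})$ is precisely the dotted action of $s_i$.
\end{enumerate}

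\textbf{Step 3 (Running the reduction).} Given $\mathbi{\alpha}$, look at $\mathbi{\alpha}+\rho$. If two of its entries coincide, there is a simple reflection $s_i$ with $\alpha_i-\alpha_{i+1}=-1$: apply case (b) at the corresponding $q_i$ and use the Leray spectral sequence to conclude that \emph{all} $R^j\pi_*\mathcal{L}_{\mathbi{\alpha}}$ vanish. This is case (1) of the theorem, for the stabilizer of $\mathbi{\alpha}$ under the dotted action of $\mathcal{S}_n$ is nontrivial precisely when $\mathbi{\alpha}+\rho$ has a repeated entry. Otherwise, $\mathbi{\alpha}+\rho$ has pairwise distinct entries and there is a unique $\sigma\in\mathcal{S}_n$ sorting them into strictly decreasing order; choose a reduced expression $\sigma=s_{i_1}\cdots s_{i_{\ell(\sigma)}}$. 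Each elementary step applies case (c), shifting the cohomological degree by $+1$ and replacing $\mathbi{\alpha}$ by the dotted action of the corresponding reflection, while the induction ensures that no intermediate step creates coincidences (by minimality of the reduced expression, equivalently by a standard exchange-condition argument in $\mathcal{S}_n$). After $\ell(\sigma)$ steps one reaches $\tilde{\sigma}(\mathbi{\alpha})$, now weakly decreasing; a final normalization by subtracting a constant $-r$ to make the last entry $0$ puts us in case (a) at subsequent projections, and the remaining direct image is identified with the Schur bundle $\S^{\mathbi{\beta}}E^\vee \otimes \det(E^\vee)^{-r}$ by very definition of Schur functors as direct images of line bundles on the complete flag.

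\textbf{Main obstacle.} The conceptual content is transparent; the main technical nuisance is the bookkeeping in Step~3: one must check that a \emph{reduced} expression for the sorting permutation precisely encodes the sequence of $\mathbf{P}^1$-bundle reductions of case~(c), and in particular that no coincidence of consecutive entries arises at any intermediate stage. This is equivalent to the exchange lemma in the symmetric group and is what forces the cohomological degree to equal $\ell(\sigma)$ (rather than a smaller integer). Finally, one must check that the invariant description of the answer does not depend on the chosen reduced expression, which follows from the uniqueness of $\sigma$ and $r$ in the statement.
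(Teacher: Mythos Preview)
The paper does not actually prove this theorem: it is stated in Appendix~\ref{appendix: Bott} as a reminder, with the reference \cite{Wey} given for a proof. So there is no ``paper's own proof'' to compare against. Your sketch is the standard Demazure approach, and the strategy is correct; a few imprecisions would need fixing before it becomes an honest proof.

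First, in Step~1 your justification is slightly circular: you invoke ``Bott's formula applied to $p$'' to reduce to the complete flag, but Bott's formula is what you are proving. The correct observation is that, since $\mathbi{\alpha}$ is constant on the blocks determined by $\mathbi{s}$, the pullback $p^*\mathcal{L}_{\mathbi{\alpha}}(E)$ is fiberwise \emph{trivial} along $p$; hence $p_*p^*\mathcal{L}_{\mathbi{\alpha}}(E)\simeq \mathcal{L}_{\mathbi{\alpha}}(E)$ and higher direct images vanish by the projection formula, with no appeal to Bott.

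Second, and more substantively, the framing of Step~2 is misleading. The complete flag bundle does factor as a tower of $\mathbf{P}^1$-bundles, but that tower corresponds to a \emph{fixed} reduced word, whereas sorting an arbitrary $\mathbi{\alpha}+\rho$ requires choosing a simple reflection $s_i$ adapted to $\mathbi{\alpha}$ at each stage. What Demazure's argument actually uses is, for each $i$, the single $\mathbf{P}^1$-fibration $q_i\colon \Flag(E)\to \Flag_{\hat{\imath}}(E)$ (forgetting the $i$th step); one applies case (b) or (c) along $q_i$, obtains a line bundle on $\Flag_{\hat{\imath}}(E)$, and then \emph{pulls back} to $\Flag(E)$, which preserves the higher direct images since the pullback is fiberwise trivial. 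In this way each step stays on $\Flag(E)$ and replaces $\mathcal{L}_{\mathbi{\alpha}}$ by $\mathcal{L}_{\tilde{s_i}(\mathbi{\alpha})}$ with a cohomological shift. Relatedly, your case~(a) is misstated: when $\alpha_i-\alpha_{i+1}>0$ the pushforward $(q_i)_*\mathcal{L}_{\mathbi{\alpha}}$ is a vector bundle of rank $\alpha_i-\alpha_{i+1}+1$, not a line bundle. You do not need case~(a) in the reduction; once $\tilde{\sigma}(\mathbi{\alpha})$ is dominant, the identification of $\pi_*$ with the Schur bundle is the \emph{definition}, and the vanishing of the higher $R^j\pi_*$ in the dominant case follows by running the fixed tower and using only that $R^1(q_i)_*=0$ when $\alpha_i\geq\alpha_{i+1}$.

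With these corrections the argument goes through; your handling of the ``main obstacle'' (that a reduced expression for the sorting permutation never creates a coincidence at an intermediate stage, via the exchange lemma) is exactly right.
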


Let us now mention two particular cases where these formulas apply. 

First, let us consider the simplest where the flag variety is the variety of lines. In this situation, one deals with the projectivized bundle \(\P(E) \overset{\pi}{\longrightarrow} M\), and Bott's formula allows to recover the description given in \cite{Harts}[III. Exercice 8.4]. In particular, this says that
\[
R^{i} \pi_{*} \O_{\P(E)}(m)
=
(0)
\]
for any \(i >0\) and any \(m \in \N\).
Second, let us consider the case where \(M\) is a point. In this case, a straightforward corollary of Bott's formula is the following:
\begin{corollary}
\label{cor: Bott}
Let \(\mathbi{\alpha} \in \Z^{n}\) be an admissible \(n\)-uple  with jump sequence \(\mathbi{s}\).
One of the mutually exclusive possibilities occurs.
\begin{enumerate}
\item{} There exists \(\sigma\in \mathcal{S}_{n} \setminus \Set{Id}\) such that
\[
\tilde{\sigma}(\mathbi{\alpha})=\mathbi{\alpha}.
\]
In this case, all cohomology groups of the line bundle \(\L_{\mathbi{\alpha}}(\C^{n}) \to \Flag_{\mathbi{s}}\C^{n}\) are zero.
\item{}
There exists a unique \(\sigma \in \mathcal{S}_{n}\) and a unique \(r \in \Z\) such that
\[
\tilde{\sigma}(\mathbi{\alpha})+(r, \dotsc, r)
=
\underbrace{(\beta_{1}, \dotsc, \beta_{n-1}, 0)}_{\bydef \mathbi{\beta}}
\]
is a partition. In this case, all cohomology groups of \(\L_{\mathbi{\alpha}}(\C^{n})\) vanish for \(i \neq \ell(\sigma)\), and one has
\[
 H^{\ell(\sigma)}\big(\Flag_{\mathbi{s}}\C^{n}, \L_{\mathbi{\alpha}}(\C^{n})\big)
 \simeq
 S^{\mathbi{\beta}}\C^{n}.
\]
\end{enumerate}
\end{corollary}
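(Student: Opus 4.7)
The plan is to deduce the corollary as a direct specialization of the general Bott theorem (stated immediately above the corollary) to the degenerate situation in which the base variety $M$ is a point and the vector bundle $E$ is taken to be $\C^{n}$. First I would observe that, for the structural map $\pi\colon \Flag_{\mathbi{s}}\C^{n} \to \mathrm{pt}$, the higher direct image functors $R^{i}\pi_{*}$ are tautologically identified with the cohomology functors $H^{i}(\Flag_{\mathbi{s}}\C^{n}, \cdot)$, since pushing forward a quasi-coherent sheaf to a point recovers its global sections, and the derived functors coincide with sheaf cohomology. Under this identification, case (1) of the corollary is just the transcription of case (1) of the general theorem: the hypothesis $\tilde{\sigma}(\mathbi{\alpha})=\mathbi{\alpha}$ for some $\sigma \neq \mathrm{Id}$ forces all $R^{i}\pi_{*}\L_{\mathbi{\alpha}}(\C^{n})$, hence all $H^{i}\bigl(\Flag_{\mathbi{s}}\C^{n},\L_{\mathbi{\alpha}}(\C^{n})\bigr)$, to vanish.

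For case (2), the general theorem yields the isomorphism
\[
H^{\ell(\sigma)}\bigl(\Flag_{\mathbi{s}}\C^{n}, \L_{\mathbi{\alpha}}(\C^{n})\bigr)
\;\simeq\;
S^{\mathbi{\beta}}(\C^{n})^{\vee} \otimes \det\bigl((\C^{n})^{\vee}\bigr)^{-r},
\]
together with the vanishing of all other cohomology groups. To reconcile this with the cleaner expression $S^{\mathbi{\beta}}\C^{n}$ appearing in the corollary, I would invoke two standard representation-theoretic identifications: first, the canonical isomorphism $(\C^{n})^{\vee} \simeq \C^{n}$ (any choice of basis will do; here we have the tautological one), and second, the possibility of absorbing the determinantal twist into the partition defining the Schur functor via the classical identity $S^{(\lambda_{1},\dots,\lambda_{n-1},0)}V \otimes (\det V)^{r} \simeq S^{(\lambda_{1}+r,\dots,\lambda_{n-1}+r,r)}V$. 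After these manipulations, the right-hand side collapses to the stated $S^{\mathbi{\beta}}\C^{n}$ (where the normalization of $\mathbi{\beta}$ as ending with a $0$ is precisely what pins down the integer $r$ uniquely in the corollary's statement).

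The only step that requires any care is this last piece of bookkeeping at the level of partitions: one has to check that the unique pair $(\sigma, r) \in \mathcal{S}_{n}\times \Z$ for which $\tilde{\sigma}(\mathbi{\alpha})+(r,\dots,r)$ is a partition ending in $0$ is well-defined, and that the twist produced by the general theorem is exactly absorbed by this shift. But this is purely a verification at the level of $n$-tuples of integers, and contains no real geometric content; the heart of the proof lies entirely in the general Bott theorem invoked at the start. Accordingly, I do not expect any genuine obstacle — the corollary is truly a \emph{corollary} of the general statement.
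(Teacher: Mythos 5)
Your proposal is correct and is essentially the paper's own argument: the paper offers no separate proof of Corollary \ref{cor: Bott}, presenting it exactly as you do, namely as the immediate specialization of the general Bott theorem to the case where the base \(M\) is a point and \(E=\C^{n}\), with \(R^{i}\pi_{*}\) over a point identified with \(H^{i}\). The only place where you are slightly more explicit than the paper is the bookkeeping reconciling \(S^{\mathbi{\beta}}E^{\vee}\otimes\det(E^{\vee})^{-r}\) with \(S^{\mathbi{\beta}}\C^{n}\); be aware that the identification \((\C^{n})^{\vee}\simeq\C^{n}\) "by a choice of basis" is not \(\GL_{n}\)-equivariant, so this step really rests on the duality/twist conventions built into the paper's definition of \(\S^{\mathbi{\lambda}}(E)\bydef\pi_{*}(\L_{\mathbi{\lambda}}(E^{\vee}))\) rather than on an arbitrary basis choice.
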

Let us note that, if one takes \(\mathbi{\alpha}\) to be a partition, then the conclusion of the previous corollary regarding the space of global sections (i.e. the \(H^{0}\)) is due to Boreil-Weil \cite{BW}.

\section{Computations on Sage.}
\label{appendix: sage}
In this section, we provide the outcome of a few programs written on the language Sage. We used the description of the \(H^{0}\) provided by Theorem \ref{thm: coho ci 1} (for codimension \(1\) and codimension \(2\) complete intersections).

\includepdf[pages=-]{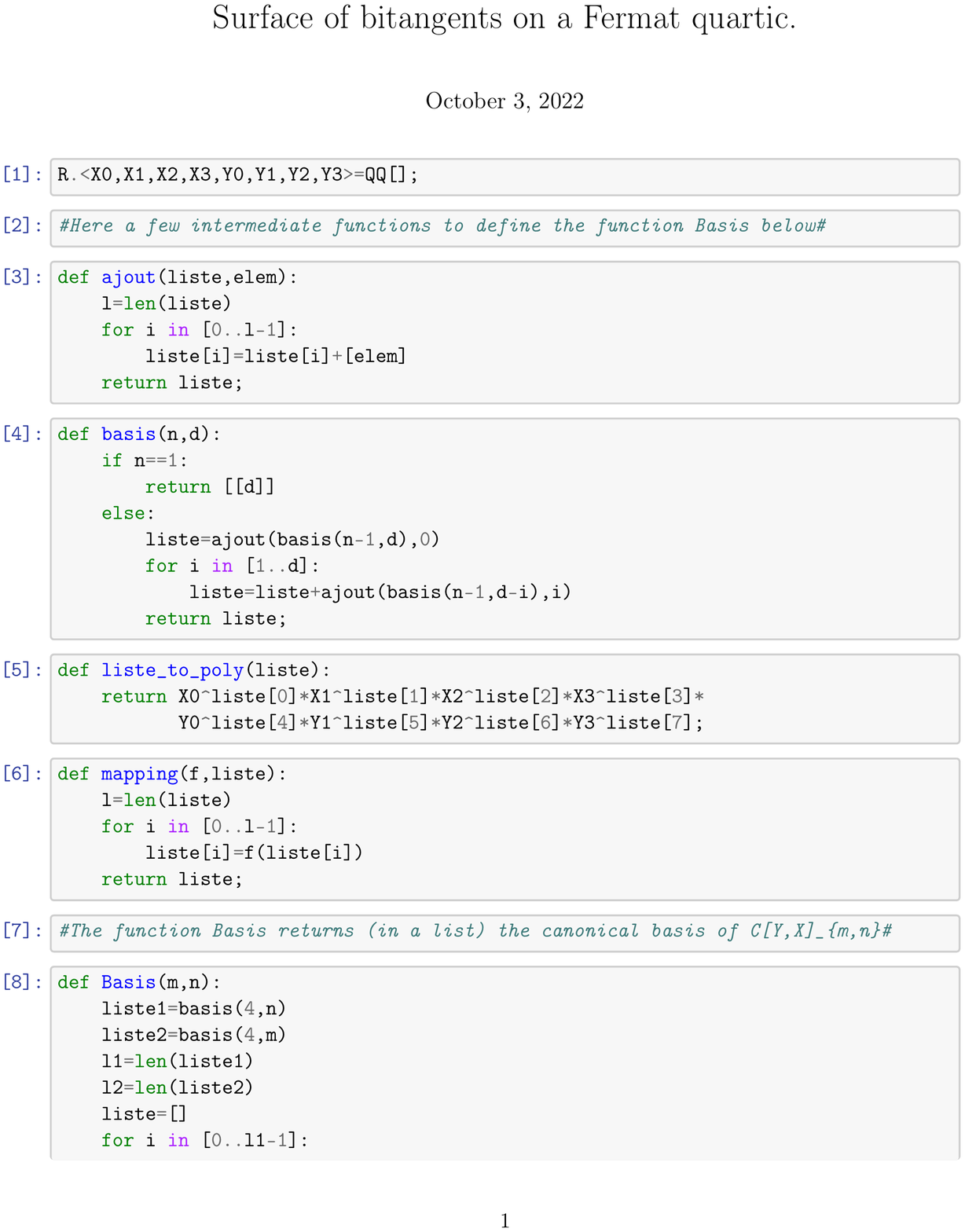}

\includepdf[pages=-]{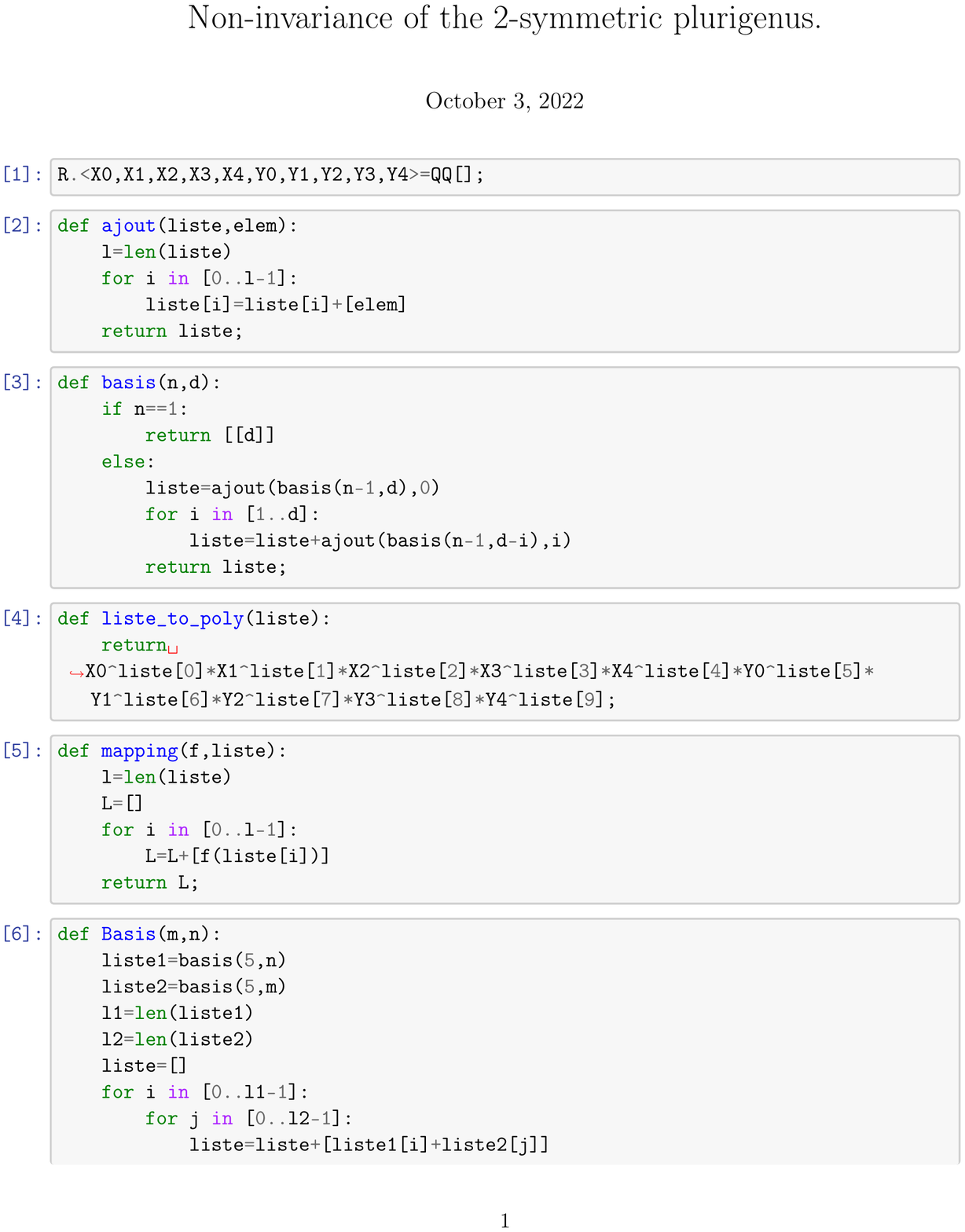}

\includepdf[pages=-]{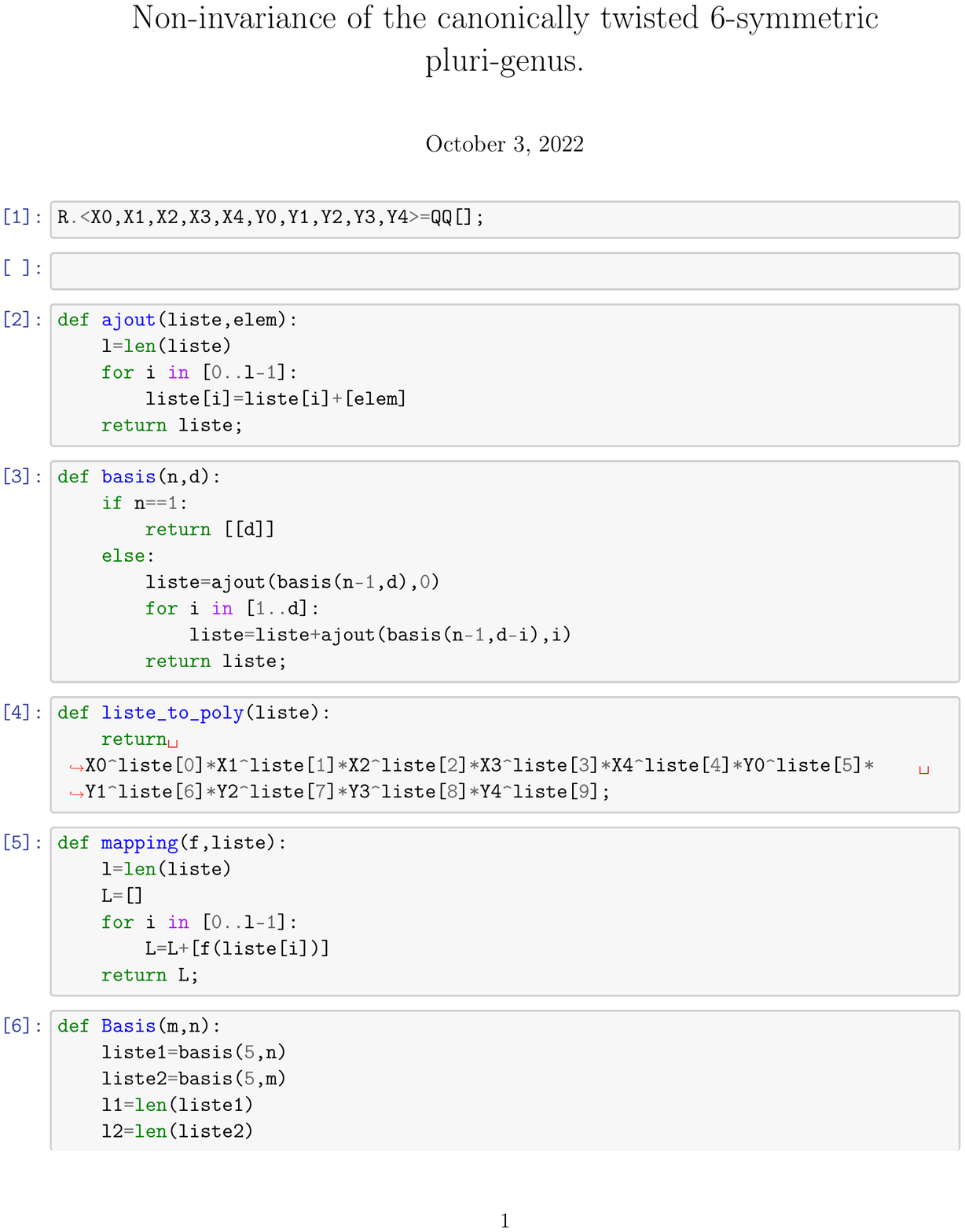}

\bibliographystyle{alpha}
\bibliography{Coho_sym}

 \end{document}